\newtheorem{theorem}{Theorem}[section]
\newtheorem{corollary}{Corollary}[theorem]
\newtheorem{lemma}[theorem]{Lemma}
\newtheorem{proposition}[theorem]{Proposition}
\newtheorem{conjecture}[theorem]{Conjecture}
\numberwithin{equation}{section}
\numberwithin{table}{section}
\numberwithin{figure}{section}
\newcommand{\add}[1]{\textcolor{black}{#1}}
\newcommand{\define}[1]{\textit{#1}}
\newenvironment{myquote}{%
  \list{}{\leftmargin=2.6em\rightmargin=\leftmargin}%
  \item[]\ignorespaces%
}{%
  \endlist\ignorespacesafterend%
}
\begin{document}

\begin{center}
\Large{\textbf{Professor Preece's tredoku tilings}}\\[2ex]

\large{Martin Ridout}\footnote{University of Kent, UK. \texttt{msr@kent.ac.uk}}\\[3.5ex]

\small
\textbf{Abstract}\\[0.75ex]
\end{center}

\begin{myquote}
Shortly before he died in 2014, Donald Preece gave two talks about what he called tredoku tilings, inspired by the puzzle of the same name.
In these talks he presented a conjecture about the existence of these tilings that has been proved recently by Simon Blackburn. This
paper provides an overview of Donald's work in this area, including his work on a natural generalisation of a tredoku tiling that he
called a quadridoku tiling. Additionally, the paper gives alternative proofs of some parts of the existence theorem for tredoku tilings, presents
a computer enumeration of the isomorphism classes of tredoku tilings with up to 16 tiles and provides a brief introduction to tilings with holes.
\end{myquote}

\normalsize
\section{Introduction}

My friend Donald Preece, who died in 2014, was a British statistician and combinatorialist. Bailey (2014) provides an overview of his life and work.

Donald gave talks in July and October 2013 about his work on \emph{tredoku tilings}, inspired by the puzzle of the same name.\footnote{Tredoku is a
`three-dimensional' generalisation of sudoku. It is a trademark of Mindome Ltd.} We define tredoku tilings in Section~\ref{tredtilings} but for now note
that two important parameters of a tredoku tiling are the number of tiles, $\tau$, and the number of `runs', $\rho$; a run is a sequence of three tiles
that are edge-connected in a specific way.
In his talks, Donald proposed an answer to the question `For what values of $\tau$ and $\rho$ does a tredoku tiling exist?'. He showed that necessary
conditions are $\tau \ge 5$ and
\[
 \frac{3}{2} \rho \le \tau \le 2 \rho + 1,
\]
and was able to construct tilings for most combinations of $\tau$ and $\rho$ within this range. He conjectured that tilings do not exist for the
remaining combinations.

Recently, Blackburn (2024) provided a complete proof of this conjecture. He developed constructions similar to Donald's for combinations
of $\tau$ and $\rho$ for which tilings exist and gave proofs of non-existence for each of the remaining combinations. Some of these proofs are quite
intricate, particularly that for $\tau=12, \rho=8$.

Donald's \textit{Nachlass} included his work on tredoku tilings, consisting mostly of pictures of tilings and material prepared for the talks.
One objective of this article is to provide an archive of Donald's work. Appendix A contains the various constructions that he devised to establish
the existence of tilings and Appendix B shows the numerous tredoku tilings that he discovered.

Many of Donald's tilings are \define{reducible}, in the sense that they are constructed by merging two smaller tilings so that some tiles overlap.
Considering reducible and irreducible tilings separately is beneficial in two ways. First it facilitates computer enumeration of the isomorphism
classes of tilings. My interest in enumeration was partly to provide a computer check of Donald's claim that tilings with $\tau=12$ and $\rho=8$ and
$\tau=15$ and $\rho=7$ do not exist. Such a check is redundant following Blackburn's proof, but the enumeration results remain of interest and are
included later in the paper. The second benefit of studying reducible and irreducible tilings separately is that it leads to an alternative proof of
the non-existence parts of Donald's conjecture that differs considerably from Blackburn's proof.

We also introduce a graph associated with a tredoku tiling, termed the \emph{run graph},  and provide another proof of the non-existence of certain
tredoku tilings by demonstrating that the corresponding run graphs do not exist.

Donald also explored what he termed quadridoku and quindoku tilings. These are generalisations of tredoku tilings in which the runs of three tiles that
characterise tredoku tilings are replaced by runs of four or five tiles respectively. Appendices C and D show the tilings that Donald discovered. While
there are only a handful of quindoku tilings, Donald's work on quadridoku tilings was sufficiently extensive to allow him to formulate a conjecture about
the combinations of $\tau$ and $\rho$ for which they exist. Many aspects of this conjecture remain to be proved or disproved.

The abstract for the first of Donald's talks states that a tredoku tiling must not contain holes. However, this restriction is omitted from the abstract
of the second talk and his notes include a few tilings with a single large hole. We provide a brief introduction to tredoku tilings with holes, but this
topic remains largely unexplored.

The paper is organised as follows. Section~\ref{tredtilings} provides a leisurely introduction to tredoku tilings, including their definition and various
notions of equivalence of tilings. Section~\ref{sect:exist} provides a brief account of the existence theorem for tredoku tilings, while Section~\ref{sect:altprf}
gives a new proof of the non-existence aspects. Section~\ref{sect:enum} covers enumeration of tilings with 16 or fewer tiles and Section~\ref{sect:rungraphs}
introduces run graphs. Section~\ref{sect:quadridoku} discusses Donald's work on quadridoku and quindoku tilings and Section~\ref{sect:kdoku} gives a few results
about what we term $\kappa$-doku tilings, where runs consist of $\kappa$ tiles. Tilings with holes are introduced in Section~\ref{sect:holes} and we conclude in
Section~\ref{sect:discuss} with some suggestions for future work. Donald's tilings are shown in Appendices A--D; because they contain many figures, these
Appendices are provided as separate documents.

\section{Tredoku tilings}\label{tredtilings}

Donald's definition of a tredoku tiling appears in the abstracts of his 2013 talks. The version that is preserved in his notes and reproduced below, appears
to be the abstract for the second talk. Blackburn (2024) reproduces the abstract for the first talk, which is identical except for the additional requirement
that the tiling `must not have any holes in it'. Here, tilings will be assumed to have no holes, except in Section~\ref{sect:holes}.

This section provides examples of tredoku tilings and introduces some terminology from the mathematical tiling literature, generally following
Gr{\"u}nbaum \& Shephard (2016).

\begin{center}
\noindent\fbox{
    \parbox{0.9\textwidth}{
\begin{center}
\LARGE{Tredoku tilings}\\[0.5ex]
\Large{Donald A. Preece}\\[1ex]
\large{Queen Mary, University of London; University of Kent}\\[1.2ex]
\normalsize
\end{center}

We have a supply of identical tiles, each in the shape of a rhombus with alternate angles $60^{\circ}$ and $120^{\circ}$. We use $\tau$ of these tiles
($\tau > 4$) to form a tiling $\mathcal{T}$ of part of a plane. For convenience of description, we may regard $\mathcal{T}$ as a graph having sides and
vertices in the positions of the outlines of the positioned tiles.\\

To be a \textit{tredoku tiling}, $\mathcal{T}$ must have these properties:
\begin{enumerate}
\item [P1] $\mathcal{T}$ is connected, with no connection consisting merely of a single vertex;
\item [P2] If two tiles in $\mathcal{T}$ touch, either they do so only at a vertex or they have a side in common;
\item [P3] $\mathcal{T}$ cannot be disconnected by removing just one tile (but connection by just a vertex is allowed after the removal);
\item [P4]If tiles A and B share a side $p$ of $\mathcal{T}$, then there is a third tile C that shares a side $q$ of either A or B, where $q$ is parallel
to $p$. This gives a ``run'' of three tiles.
\item [P5] Runs of more than three tiles are not allowed.
\end{enumerate}
(There is no requirement for any part of the tiling to repeat.) If a tredoku tiling has $\rho$ runs of three tiles, then
\[
  \frac{3}{2} \rho \le \tau \le 2 \rho + 1.
\]
Many existence results and constructions are available for tredoku tilings. Motivation: If each tile in $\mathcal{T}$ is subdivided into a $3 \times 3 $ grid,
we have an overall grid for a tredoku$^{\copyright}$ puzzle such as appears daily in \textit{The Times}.
    }
}
\end{center}

The left panel of Figure~\ref{fig:fig2pt1} shows a tredoku tiling with seven tiles. Clearly, this tiling satisfies the defining properties P1 and P2.
It also satisfies property P3, but illustrates the necessity of allowing connection only via vertices after removing a tile (e.g.\ tile 2). Property P4
is satisfied, as the tiling has four runs of length three, with every tile belonging to at least one of these runs (Figure~\ref{fig:fig2pt2}). Finally,
there are no runs of length greater than three (P5).

\begin{figure}[htb!]
   \centering
   \makebox[\textwidth][c]{\includegraphics[trim=0in 6.8in 0in 0.0in, clip, width=0.8\textwidth]{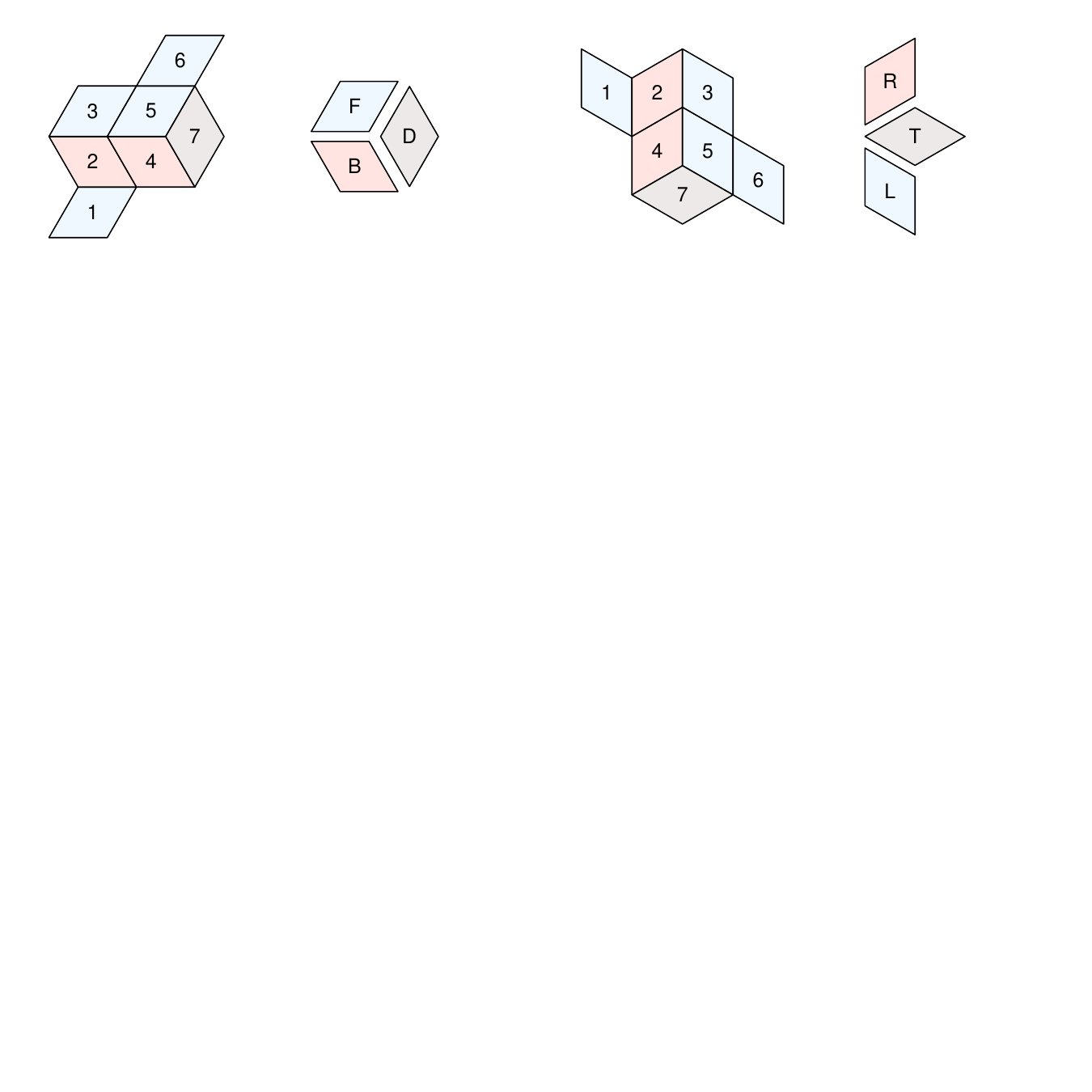}}
   \caption{The tredoku tiling dap7.4a, which has $\tau=7$ tiles and \mbox{$\rho=4$} runs, drawn in vertical format (left panels) and horizontal format
            (right panels). The three possible tile types are shown in each case.}
   \label{fig:fig2pt1}
\end{figure}

The colouring of the tiles in Figure~\ref{fig:fig2pt1} distinguishes their types and emphasises the three-dimensional appearance of the structure.
Similarly, the numbering helps to identify the individual tiles. However, neither the colouring nor the numbering is an inherent part of the tiling.

I use the notation 7.4 to denote a tiling with $\tau=7$ tiles and $\rho=4$ runs and prefix any tiling that appears in Donald's notes with his initials, `dap'.
Lower case letters are appended to distinguish tilings with the same values of $\tau$ and $\rho$.

\begin{figure}[htb!]
   \centering
   \makebox[\textwidth][c]{\includegraphics[trim=0in 7.7in 0in 0.01in, clip, width=0.8\textwidth]{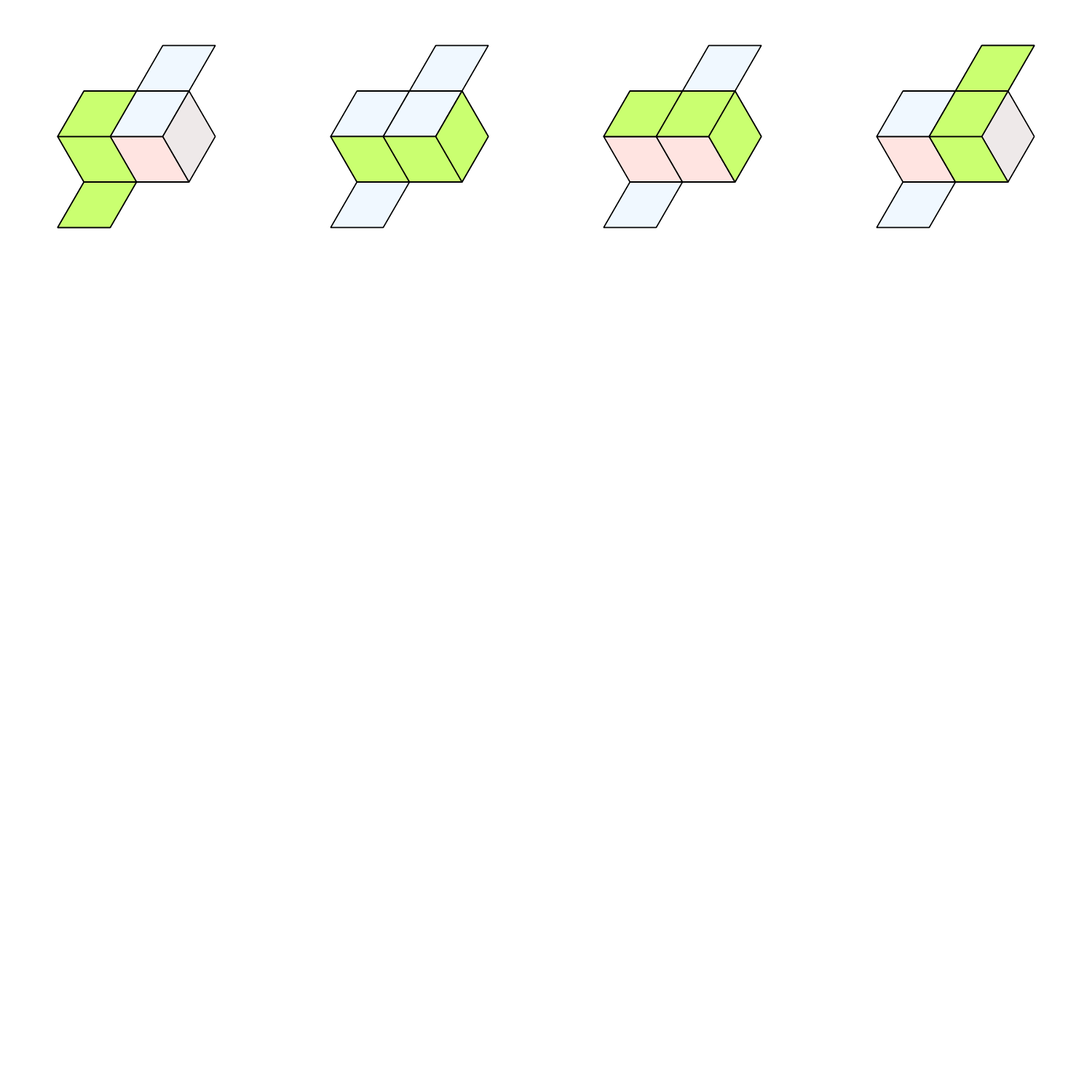}}
   \caption{The four runs of the tiling dap7.4a, highlighted in green.}
   \label{fig:fig2pt2}
\end{figure}

Tredoku tilings are \define{monohedral}, that is based on a single tile shape, sometimes called the \define{prototile}, but this can appear in up to three
different orientations, as indicated in the second panel of of Figure~\ref{fig:fig2pt1}. I shall refer to these as tile \define{types} and label them
B, D and F, for backward slanting rhombus, diamond and forward slanting rhombus respectively. Any tredoku tiling can be rotated so that it is made up solely
of tiles of these three types. I refer to this as the \define{vertical format} for drawing a tiling, based on the orientation of the D tile.

Alternatively, by rotating through 90$^\circ$ we may draw the tiling in \define{horizontal} format, as shown in the right two panels of Figure~\ref{fig:fig2pt1}.
Blackburn (2024) uses horizontal format and refers to the corresponding tile types as R(ight), T(op) and L(eft). I have generally used vertical format, but have
sometimes used horizontal format  when this fits more conveniently on the page, for example in Appendix A.

The prototile used in tredoku tilings consists of two equilateral triangles joined along a common edge and is often called a \define{lozenge}. Lozenge tilings
have been widely studied by both mathematicians and physicists; see Gorin (2021) for an overview.

\subsection{Connectedness} \label{connected}

We now consider properties P1--P3
in more detail. Two tiles in a tiling are said to be \define{adjacent} if they share an edge and are \define{neighbours} if their intersection is non-empty.
Property P2 states that neighbours in a tredoku tiling are either adjacent or have a single vertex in common. This property, together with the shape of the
prototile, implies that tredoku tilings are \define{normal} tilings (Gr{\"u}nbaum \& Shephard, 2016, Section 3.2).

The \define{dual graph} of a tiling $\mathcal{T}$ is the graph that has a vertex corresponding to each tile and an edge joining vertices $i$ and $j$ if and
only if the corresponding tiles are adjacent. It is also useful to introduce the \define{neighbour dual graph} or \define{n-dual graph}, which is a weighted
graph defined similarly but with an edge joining vertices $i$ and $j$ if and only if the corresponding tiles are neighbours. Edges that represent tiles that
have only a vertex in common are assigned weight 1, while edges that represent adjacent tiles are assigned weight 2.\footnote{These weights are arbitrary;
any two distinct values could have been chosen.} Figure~\ref{fig:dualgraph} shows the dual and n-dual graphs for the tiling shown in Figure~\ref{fig:fig2pt1}.

\begin{figure}[h!]
   \centering
   \makebox[\textwidth][c]{\includegraphics[trim=0in 2.7in 0in 2.7in, clip, width=0.8\textwidth]{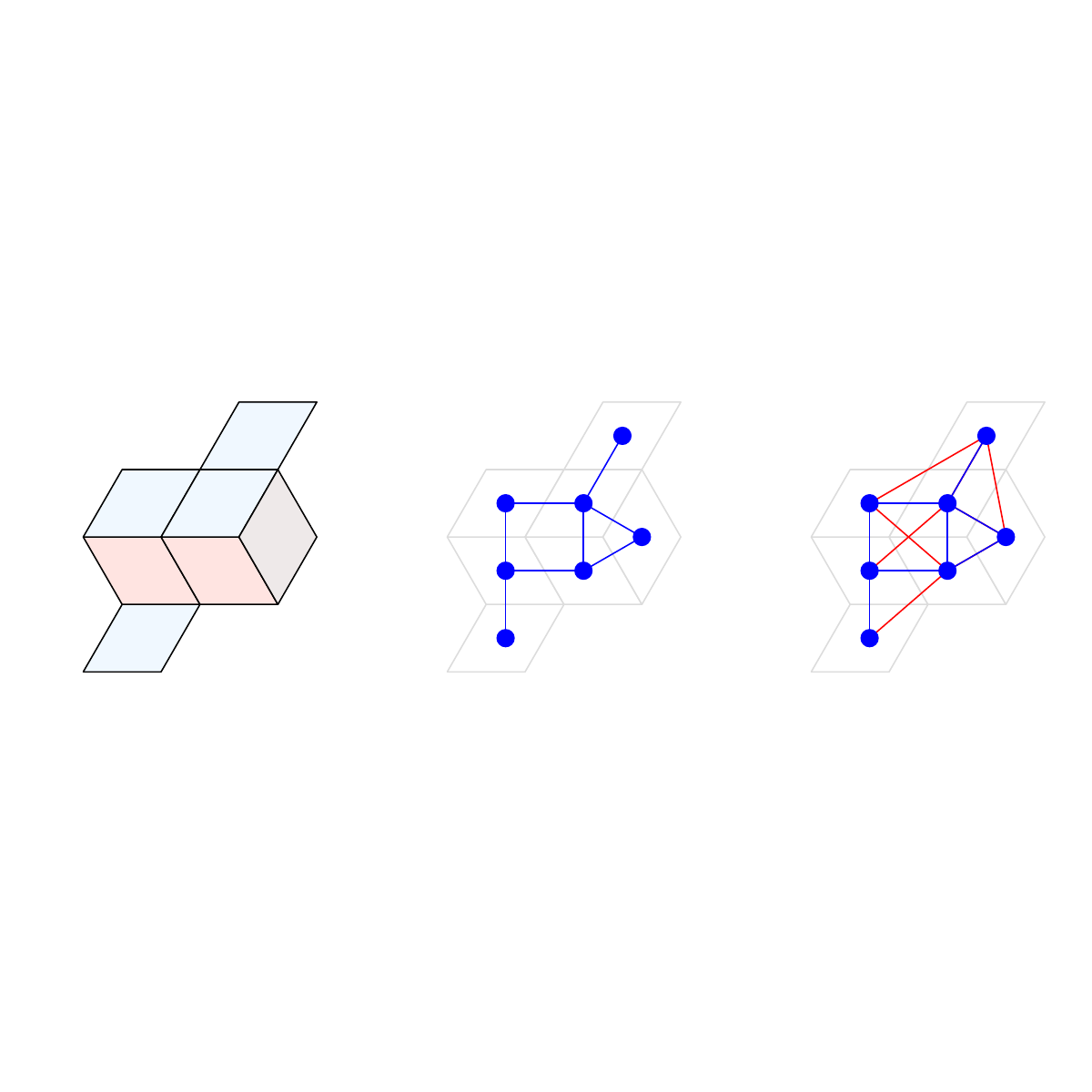}}
   \caption{The dual and n-dual graphs of the tiling dap7.4a. The left panel shows the tiling, the middle panel the dual graph and the right panel the n-dual
            graph; for the n-dual graph, the colours of the edges indicate their weights (red=1, blue=2).}
   \label{fig:dualgraph}
\end{figure}

Property P1 requires that for any pair of tiles $i$ and $j$ in $\mathcal{T}$, there is a sequence of tiles beginning with $i$ and ending with $j$ wuch that every pair
of consecutive tiles is adjacent. An equivalent condition is that the dual graph is connected in the standard graph-theory sense. A tiling that satisfies property P1
is \define{edge-connected}. Similarly, property P3 requires that the \mbox{n-dual} graph is 2-connected, i.e.\ it is connected and remains connected if any single
vertex is removed. Reframing properties P1 and P3 in terms of connectivity properties of graphs is useful for checking these properties by computer, because efficient
algorithms are readily available for checking graph connectivity.

Computer checking of property P2 can exploit the fact that if the property is \emph{not} satisfied then at least one tile contains in its interior a vertex of another
tile. Checking this is a well-studied problem in computational geometry, known as the point-in-polygon problem, for which several efficient algorithms are known (see,
e.g., Hormann \& Agathos, 2001).

\subsection{Runs} \label{sect:runs}

A minor difference between this paper and Blackburn (2024), is that I have required a run to consist of at least two tiles, rather than allowing a single tile to constitute a run of length one.

What Donald refers to as a \define{run}, has a variety of other names in the tiling literature, including \define{worm}, \define{de Bruijn line} and \define{ribbon}; see Frettl{\"o}h \& Harriss (2013) for references.

Following Blackburn (2024), we note that runs lie in one of three directions:
\begin{description}
\item \emph{BD runs} consist of B and/or D tiles, where the common edges are the E and W edges of the B tiles and the NE and SW edges of the D tiles.
\item \emph{BF runs} consist of B and/or F tiles, where the common edges are the N and S edges of the B and F tiles.
\item \emph{DF runs} consist of D and/or F tiles, where the common edges are the E and W edges of the F tiles and the NW and SE edges of the D tiles.
\end{description}
Figure~\ref{fig:directions} gives examples of each type. Note that a horizontal run of three B tiles lies in a different direction to a horizontal run of three F tiles.\\[-1ex]
\begin{figure}[htb!]
   \centering
   \makebox[\textwidth][c]{\includegraphics[trim=0in 7.88in 0in 0.0in, clip, width=1\textwidth]{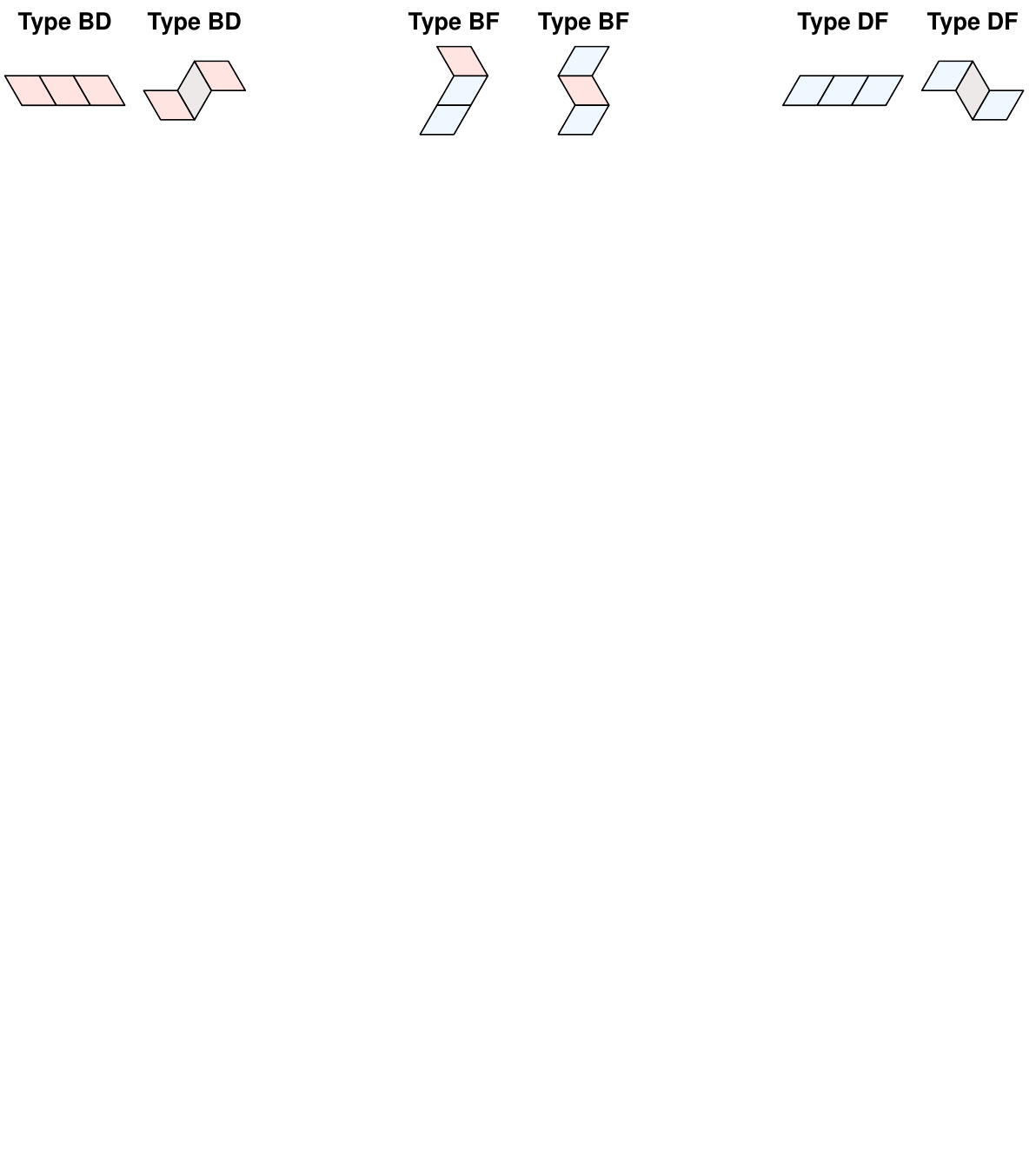}}
   \caption{Examples of runs in each of the three possible directions.}
   \label{fig:directions}
\end{figure}

Tredoku tilings are part of broader class of tilings in which all runs have the same length, $\kappa$, which we refer to as \define{$\kappa$-doku} tilings.\footnote{Like
tredoku, and Donald's terms quadridoku and quindoku, the name $\kappa$-doku makes no sense in relation to the etymology of the word sudoku.} Donald's work on
$\kappa$-doku tilings, mostly involving $\kappa=4$ or $\kappa=5$,  is discussed in Sections~\ref{sect:quadridoku} and~\ref{sect:kdoku}.

\subsection{Tiling annotations} \label{annotations}

Donald annotated many of the tilings that he discovered in various ways that are illustrated in Figure~\ref{fig:fig2pt3}, which shows dap7.4a and the three other
7.4 tilings that Donald found.

\begin{figure}[h!]
   \centering
   \makebox[\textwidth][c]{\includegraphics[trim=0in 5.9in 0in 0.0in, clip, width=0.8\textwidth]{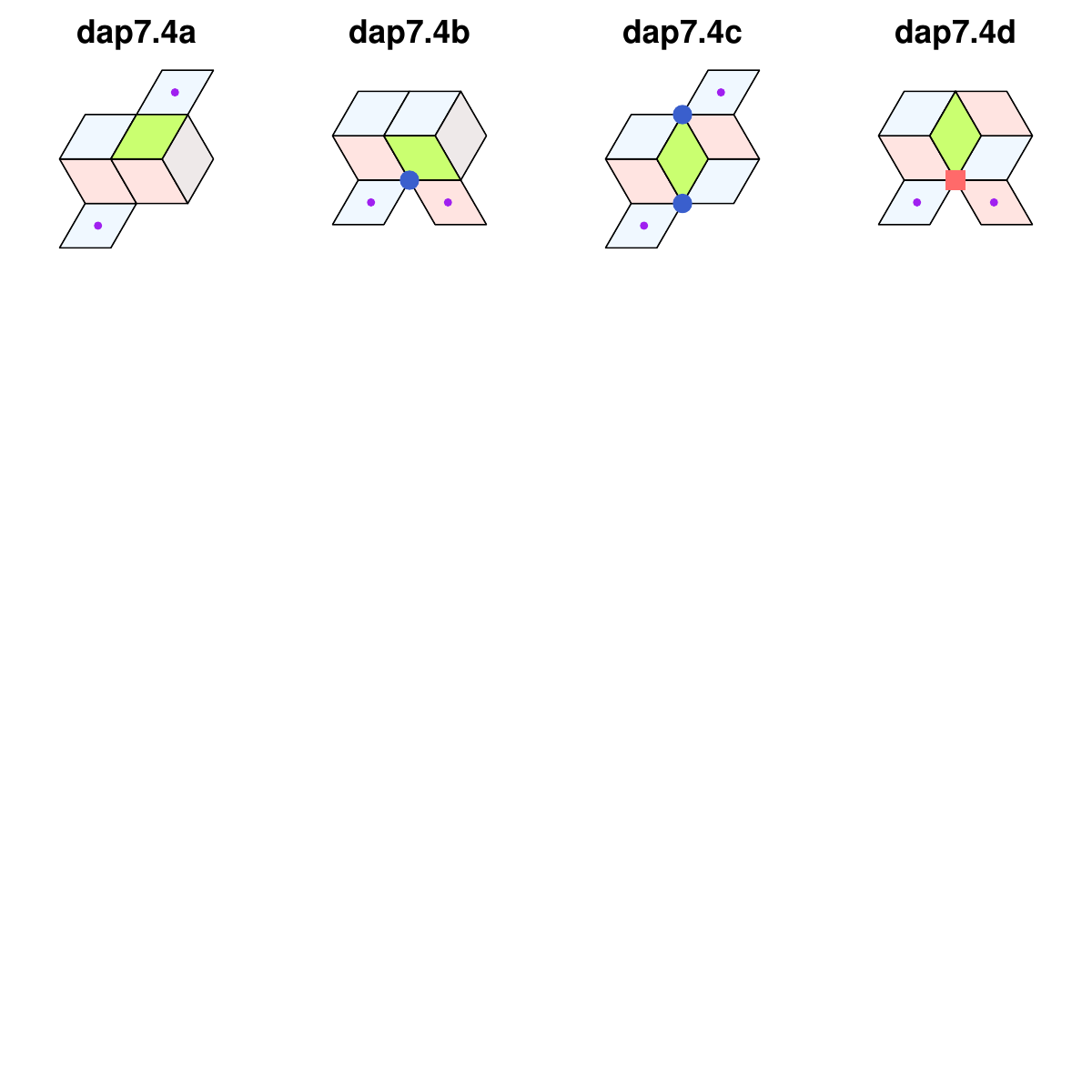}}
   \caption{The four 7.4 tilings with Donald's annotations. Green tiles are 4--tiles and blue dots and red filled squares indicate vertices where five or six tiling
            edges meet, respectively. A small dot indicates a leaf tile. }
   \label{fig:fig2pt3}
\end{figure}

The first annotation highlights tiles that appear in the central position of two distinct runs, which I refer to as \define{4--tiles}, because they are adjacent
to four other tiles; \mbox{3--, 2-- and 1--tiles} are defined similarly. As discussed in Section~\ref{sect:altproof}, 4--tiles are relatively uncommon. Each of the 7.4
tilings has exactly one 4--tile.

The second annotation highlights vertices that are the meeting point of either five edges (highlighted with a blue filled circle) or six edges (highlighted with a red
filled square). The number of edges that meet at a vertex is the \define{valency} of that vertex,  so these are \define{5-valent} and \define{6-valent} vertices
respectively. Again they are relatively uncommon and each of the four 7.4 tilings has a different pattern of 5- and 6-valent vertices.

The final annotation indicates the \define{leaves} of the tiling with a small dot. Donald introduced the term leaf to describe a tile that is adjacent to only one other tile.
Therefore, a leaf is the same as a 1--tile. Blackburn (2024) instead defines a leaf to be a tile that appears in exactly one run of length 3. The two definitions
are equivalent for tilings without holes, but for tilings with holes, Blackburn's definition turns out to be more useful (Section~\ref{sect:holes}).

The valency of a tredoku tiling vertex can range from 2 to 6 whereas the number of tiles that share a vertex can range from 1 to 6. The smallest example of a tiling
where six tiles meet at a vertex is dap10.6j (Figure~\ref{fig:dap106j}).

\begin{figure}[h!]
   \centering
   \makebox[\textwidth][c]{\includegraphics[trim=0in 5.2in 0in 0.01in, clip, width=0.55\textwidth]{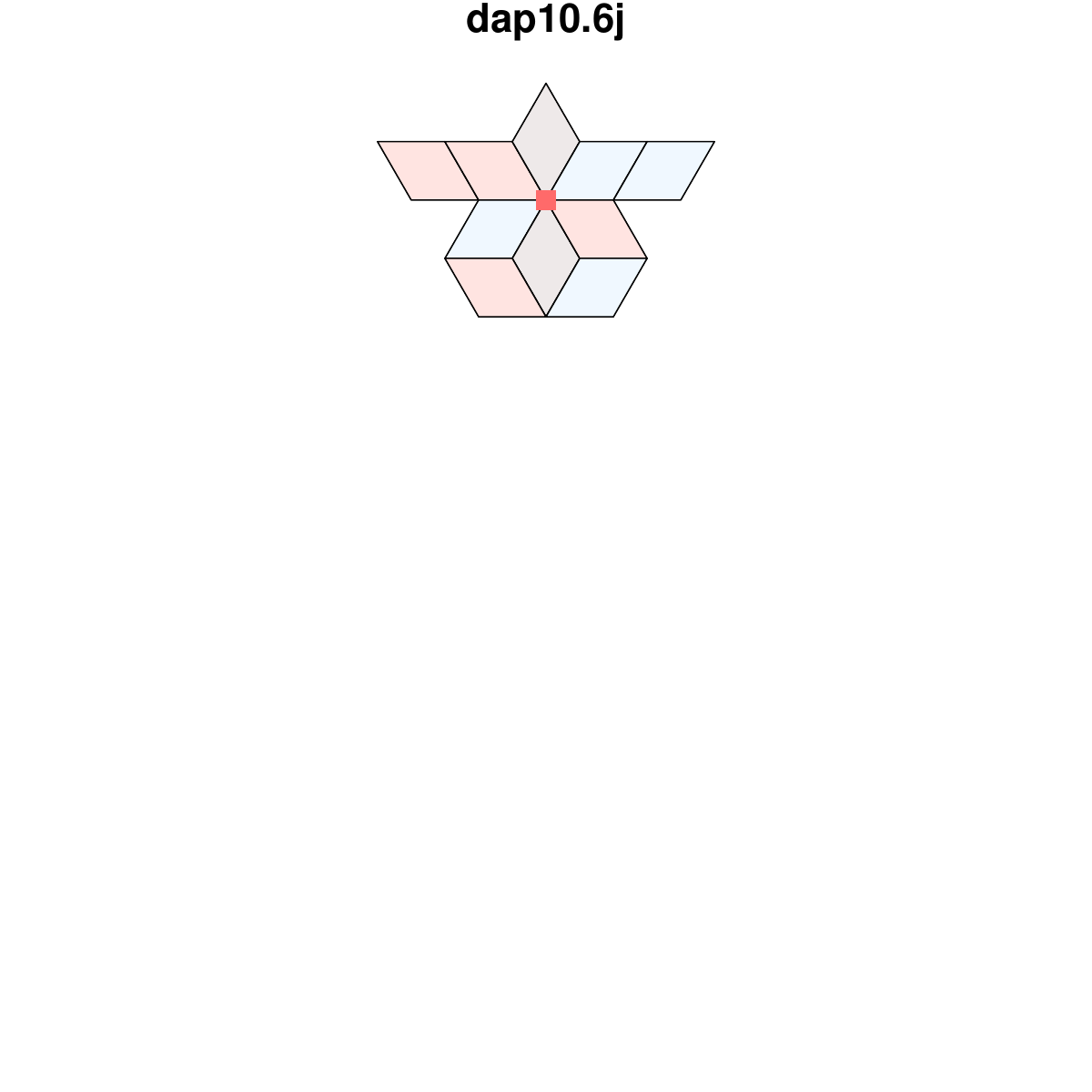}}
   \caption{The smallest tredoku tiling that has a vertex that is shared by 6 tiles (highlighted).}
   \label{fig:dap106j}
\end{figure}

\subsection{Equivalent tilings}

Donald presented only two tredoku tilings with 6 tiles and 3 runs, dap6.3a and dap6.3b (Figure~\ref{fig:transform2}). Many other tilings exist, but he evidently regarded
these as equivalent to either dap6.3a or dap6.3b. This raises the question of what it means for two tilings to be equivalent. We shall consider three types of equivalence,
congruence, isomorphism and flip-equivalence. Although Donald's notes do not discuss this issue, it is clear that he considered two tilings to be equivalent
if are isomorphic. It is sometimes useful to regard a designation such as dap7.4a as referring to an isomorphism class of tilings rather than a particular tiling.

\subsubsection{Congruence}

One simple type of equivalence is congruence; two tilings are \define{congruent} if one can be transformed to the other by an \define{isometry}, i.e.\ a transformation
that preserves distances and angles. Plane isometries involve combinations of translations, rotations and reflections, but because tredoku tilings are determined by the
relative rather than absolute locations of their tiles, we need only consider rotations and reflections.

If we require the tiling to be oriented so that all tiles are of type B, D or F then the only isometries that we need to consider are those that map these tile types onto
one another. There are just twelve such transformations. They are defined in terms of the \define{centroid} of the tiling, $C$, which is the point whose $(x,y)$
coordinates are the arithmetic means of the $x$- and $y$-coordinates of the tile vertices. The twelve transformations comprise, rotations about $C$ through an angle
\mbox{$\theta = j \pi / 3$} \mbox{($j = 0, \ldots, 5)$}, and reflections in the line through $C$ that makes an angle \mbox{$\phi = j \pi / 6$} \mbox{$ (j = 0, \ldots, 5)$}
with the horizontal axis. These are the symmetries of the regular hexagon and form the \define{dihedral group} $D_6$ under composition.

The tilings dap6.3a and dap6.3b are not congruent. However, there are numerous other 6.3 tilings that are not congruent to either dap6.3a or dap6.3b. For example, none
of the tilings in Figure~\ref{fig:transform2} are congruent. On the other hand, variants 1--3 are only minor variations on dap6.3a, they simply modify
the type of one or more of the leaf tiles. There is no change to the basic structure of the tiling in terms of which tiles are adjacent and which are neighbours.

\begin{figure}[h!]
   \centering
   \makebox[\textwidth][c]{\includegraphics[trim=0in 6.1in 0in 0.01in, clip, width=0.9\textwidth]{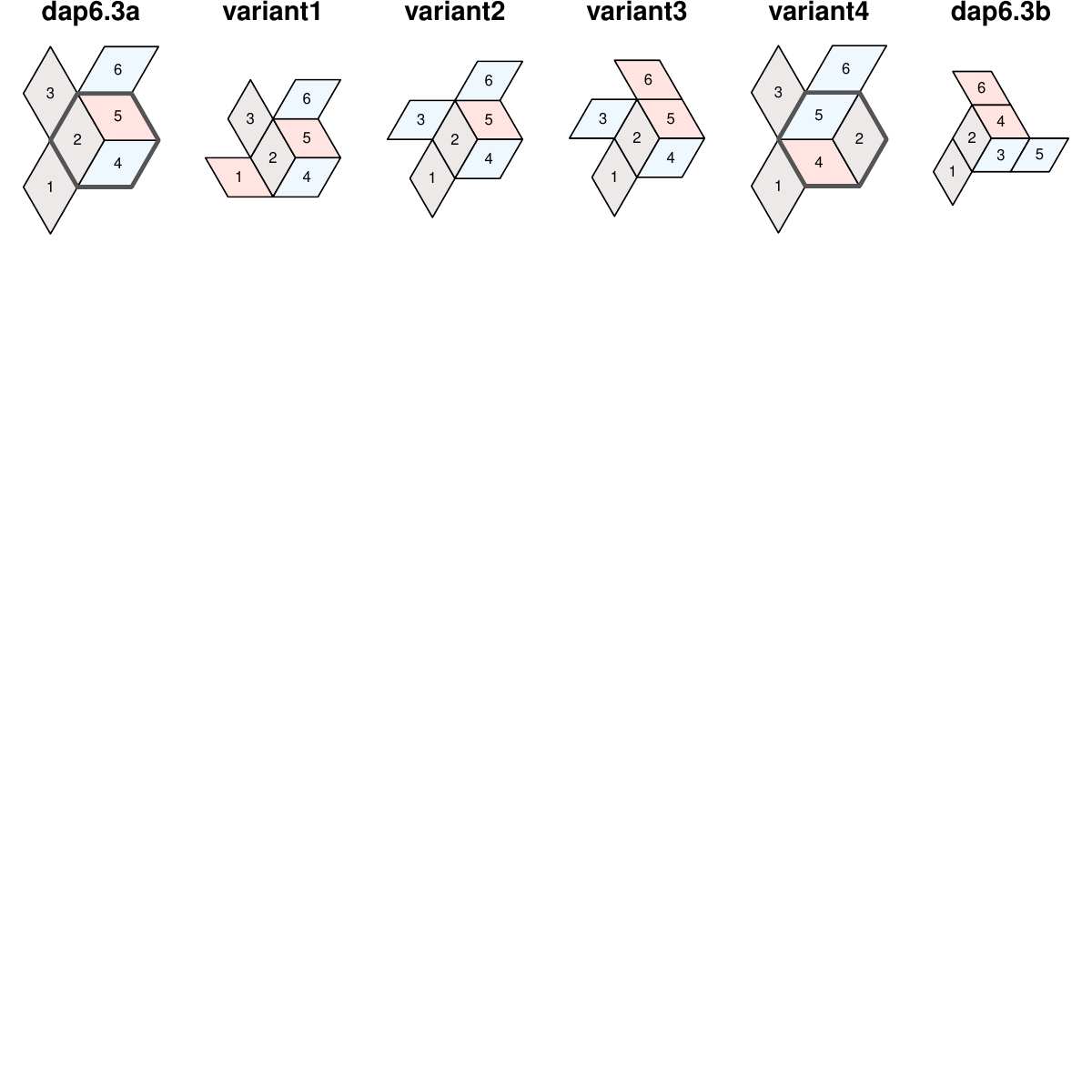}}
   \caption{The tilings dap6.3a on the far left and dap6.3b on the far right. In between are four variants of dap6.3a that are not rotations or reflections.}
   \label{fig:transform2}
\end{figure}

\subsubsection{Isomorphism}

For variants 1–3 of Figure ~\ref{fig:transform2} to be considered equivalent to dap6.3a, a weaker notion of equivalence than congruence is evidently needed.
This is provided by combinatorial equivalence; two tilings are \define{combinatorially equivalent} or (combinatorially) \define{isomorphic} if it is possible
to label the tiles of each tiling with the symbols $1, 2, \ldots, \tau$ in such a way that the \define{incidence relations} of the two tilings are identical.
Two elements of a tiling are \define{incident} if their intersection is non-empty (Gr{\"u}nbaum \& Shephard, 2016, p.18). For example, a tile is incident with its
edges and its vertices, and vice versa, and an edge is incident with the vertices at each end, and vice versa.

Gr{\"u}nbaum \& Shephard (2016, p.\,168) give a more formal definition of combinatorial equivalence and show that for normal tilings, which include tredoku tilings,
it is equivalent to \define{topological equivalence}. With this definition, all four variants shown in Figure~\ref{fig:transform2} are isomorphic to dap6.3a.

It can be difficult to assess visually whether two tilings are isomorphic and it seems likely that Donald used the various annotations discussed in
Section~\ref{annotations} to help with this, since the features that are highlighted (leaves, 4--tiles and 5- and 6-valent vertices) are preserved under isomorphism.
To determine whether two tilings are isomorphic using a computer, we can check whether their 1-skeletons are isomorphic, where the 1-skeleton of a tiling is the graph
comprising its vertices and edges. I used the bliss algorithm (Junttila, \& Kaski, 2007, 2011) to determine whether two skeletons are isomorphic.

\subsubsection{Flip-equivalence} \label{sect:flip}

Figure~\ref{fig:kast1} shows the two ways of forming a regular hexagon using a single tile of each type. The operation of switching from one form to the other is
known as a \define{flip} and is used widely in the study of  lozenge tilings.

\begin{figure}[h!]
   \centering
   \makebox[\textwidth][c]{\includegraphics[trim=0in 6.2in 0in 0.01in, clip, width=0.9\textwidth]{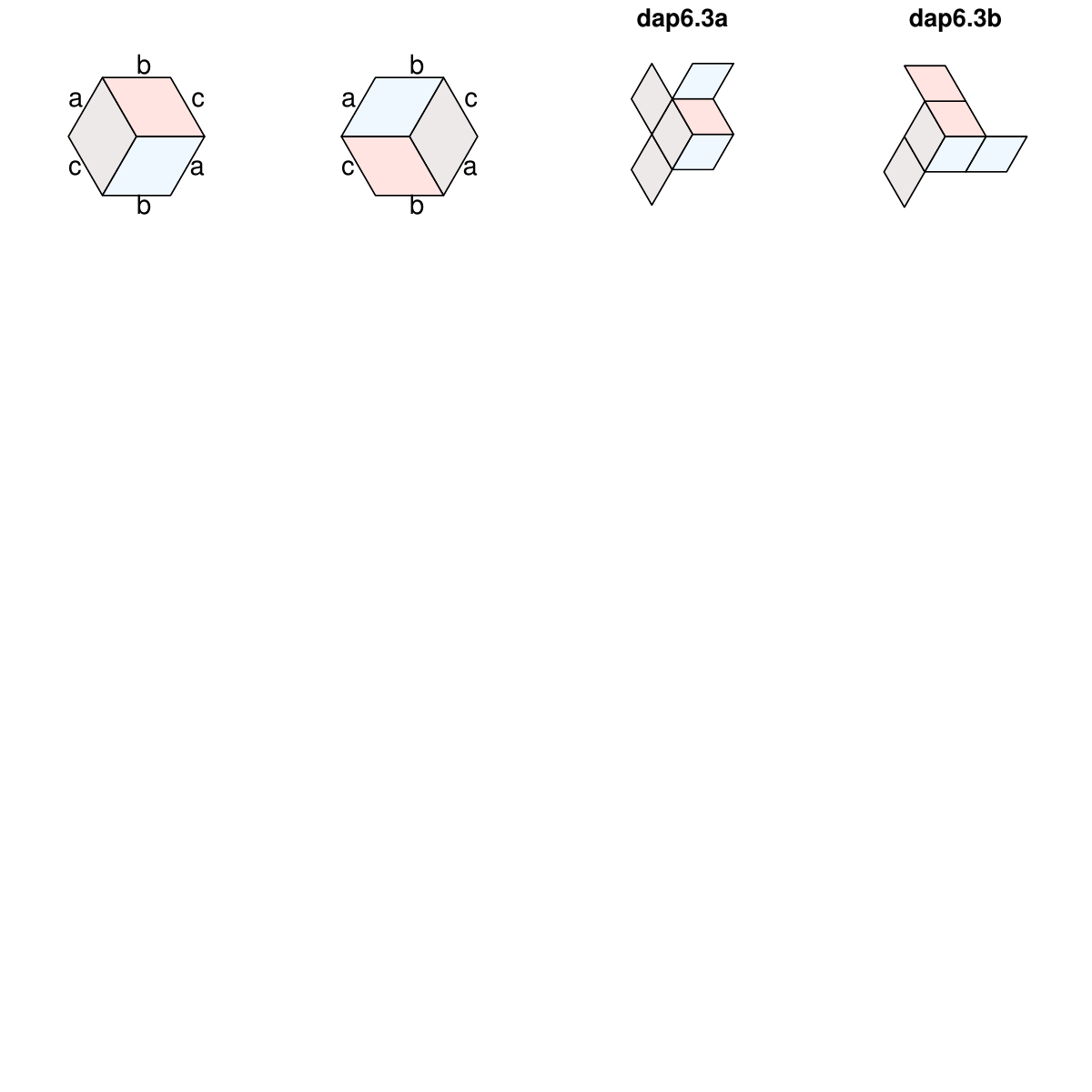}}
   \caption{The two possible ways of forming a hexagon using a single tile of each type. Tilings dap6.3a and 6.3b are the smallest tredoku tilings that
            include such a hexagon.}
   \label{fig:kast1}
\end{figure}

If a tredoku tiling contains such a hexagon, it remains a tredoku tiling even after the hexagon is flipped. This follows from the fact that, in order to have runs
of length three, there must be a tile attached to exactly one of each pair of opposite sides labelled a, b and c in Figure~\ref{fig:kast1} and, however the
tiles are attached, this will still give three runs of length three after flipping the hexagon.

Flipping may or may not preserve isomorphism. For example, flipping the hexagon in either dap6.3a or dap6.3b (Figure~\ref{fig:kast1}) gives a tiling that is isomorphic
to the original tiling. On the other hand, flipping the hexagon in dap7.4a gives the non-isomorphic tiling dap7.4c (Figure~\ref{fig:fig2pt3}).
Flipping the newly formed hexagon gives a tiling that is isomorphic to dap7.4a. We shall say that tilings dap7.4a and dap7.4c are \define{flip-equivalent}. More
generally, two tilings are \define{flip-equivalent} if one can be transformed to the other by a sequence of flips. Tilings dap7.4b and
dap7.4d are also flip-equivalent. Thus, although there are four isomorphism classes of 7.4 tilings, there are only two flip-equivalence classes.

More generally we might consider the simply connected region of the plane, $\mathcal{R}$, covered by a tredoku tiling. Are there alternative lozenge tilings
of $\mathcal{R}$, and if so how many? Will any alternative tilings still be tredoku tilings?

The first question can be addressed using `Kasteleyn-Temperley-Fisher theory', as described in Gorin (2021 Chapter 2). The starting point is the \define{Kasteleyn
matrix} corresponding to the original tiling. To generate this matrix,  first divide each tile into two equilateral triangles and mark them with black and white discs as
illustrated in the right three panels of  Figure~\ref{fig:kast2}. This creates a chequerboard pattern of alternating black and white triangles covering
$\mathcal{R}$. Triangles that share a common edge are said to be \define{adjacent}.

\begin{figure}[h!]
   \centering
   \makebox[\textwidth][c]{\includegraphics[trim=0in 6.2in 0in 0.0in, clip, width=0.9\textwidth]{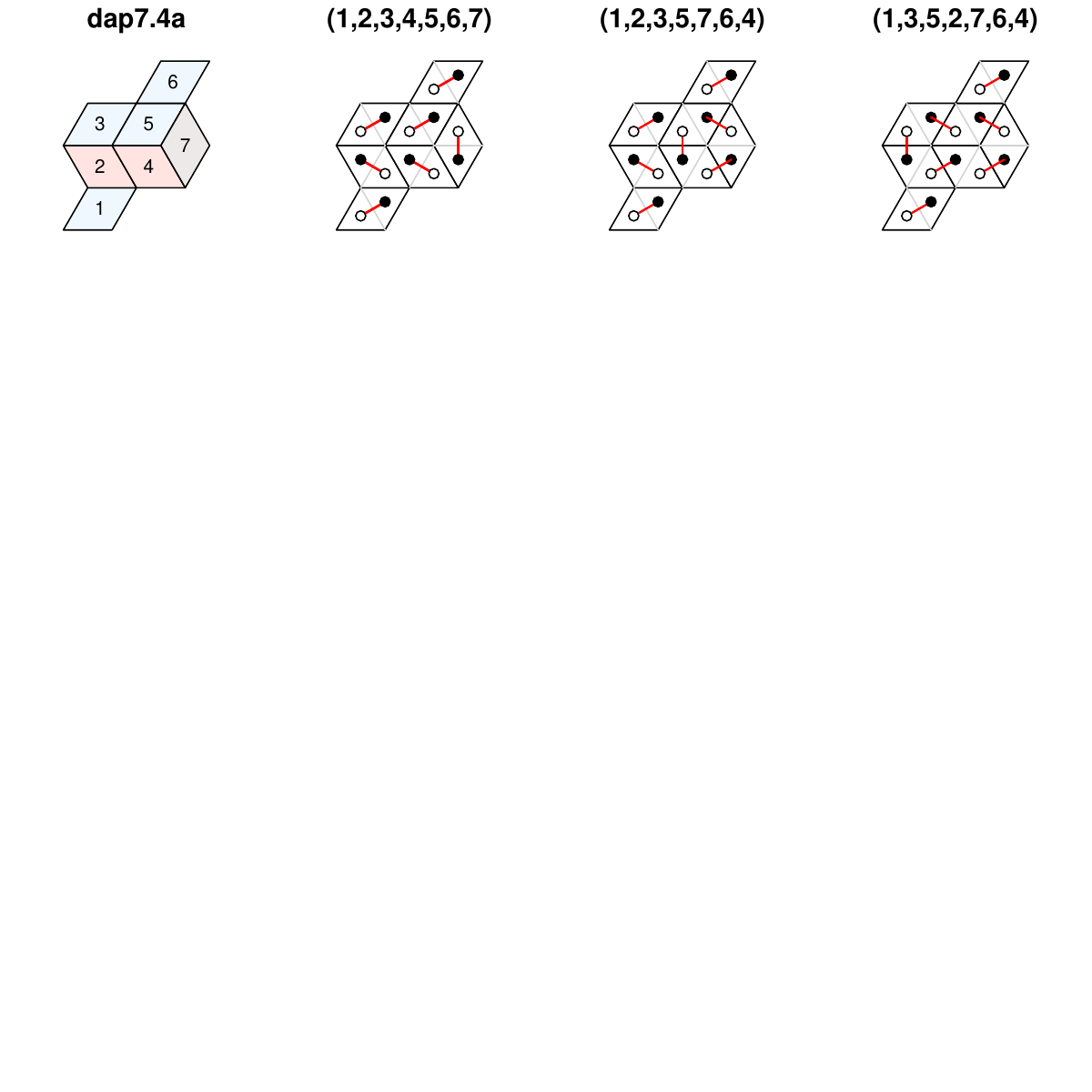}}
   \caption{The left panel shows a possible numbering of dap7.4a. The right three panels show the triangular grid of black and white dots and one of the three possible
            perfect matchings of the two colours. Each of these perfect matchings corresponds to a possible tiling of the region of the plane covered by dap7.4a.}
   \label{fig:kast2}
\end{figure}

Now number the tiles from 1 to $\tau$ and assign these numbers to both triangles. The left panel of Figure~\ref{fig:kast2} shows one possible numbering for the
tiling dap7.4a. The Kasteleyn matrix is the $\tau \times \tau$ matrix $K = \left( k_{ij} \right)$, where
\[
       k_{ij} = \begin{cases}
        1 & \quad \mathrm{if\ black\ triangle\ } i \mathrm{\ is\ adjacent\ to\ white\ triangle\ } j \\
        0 & \quad \mathrm{otherwise}.
        \end{cases}
\]
Thus, for dap7.4a
\[
   K = \left( \begin{array}{ccccccc}
   1  &  1  &  0 &  0 &  0 &  0  &  0 \\
   0  &  1  &  1 &  0 &  0 &  0  &  0 \\
   0  &  0  &  1 &  0 &  1 &  0  &  0 \\
   0  &  1  &  0 &  1 &  1 &  0  &  0 \\
   0  &  0  &  0 &  0 &  1 &  1  &  1 \\
   0  &  0  &  0 &  0 &  0 &  1  &  0 \\
   0  &  0  &  0 &  1 &  0 &  0  &  1
       \end{array} \right).
\]

The original tiling pairs each black triangle with the white triangle of the same number, as indicated by the red lines in the second panel of Figure~\ref{fig:kast2}.
This is an example of a \define{perfect matching} of the black and white triangles; each black triangle is matched to a distinct adjacent white triangle. Equally,
since any two adjacent triangles can be combined to give a lozenge tile, any perfect matching of the black and white triangles corresponds to a lozenge tiling of
$\mathcal{R}$. For dap7.4a, there are two other perfect matchings which are shown in the right two panels of Figure~\ref{fig:kast2}. Perfect matchings can be identified
by a permutation $\pi$, where black triangle $i$ is matched to white triangle $\pi(i)$.

Thus, to count the number of distinct lozenge tilings of $\mathcal{R}$, we can count perfect matchings, but it turns out that this is equivalent to simply
evaluating $|\det(K)|$; for a proof, see Gorin (2021, Section 2.1).

The second question is answered by a result of Kenyon (1993), that if $\mathcal{R}$ is simply connected then any two tilings of $\mathcal{R}$ are flip-connected.
It follows that all lozenge tilings of $\mathcal{R}$ are tredoku tilings.

\section{The existence theorem for tredoku tilings} \label{sect:exist}

Donald's existence theorem for tredoku tilings may be stated as follows:

\begin{theorem}[Preece and Blackburn]
A tredoku tiling with $\tau$ tiles and $\rho$ runs must satisfy
\begin{equation}
 \max \left(5, \frac{3 \rho}{2} \right)\le \tau \le 2 \rho + 1.
\label{eq:inequality1}
\end{equation}
Subject to these constraints:
\begin{itemize}
\item[(i)]
For $\rho \ge 3$, tredoku tilings exist for all $\tau \le 2 \rho$, except for the combinations 5.3, 6.4 and 12.8.
\item[(ii)]
The only tredoku tilings that exist with $\tau = 2\rho + 1$ are 5.2, 7.3, 9.4, 11.5, 13.6 and 17.8.
\end{itemize}
\label{thm:thm1}
\end{theorem}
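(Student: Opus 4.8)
The plan is to split the theorem into its three parts: the two‑sided inequality, which is a double count; the existence assertions of part~(i), which are constructive together with three isolated non‑existence arguments; and the classification in part~(ii), which is a structural analysis of the extremal case $\tau=2\rho+1$.

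\textbf{The inequality.} Since a lozenge has two pairs of parallel edges, every tile lies in exactly two de Bruijn lines, one per pair. Property~P4 forbids a de Bruijn line of length exactly~$2$ (the third tile it would supply lengthens the line) and P5 forbids length~$\ge 4$, so every de Bruijn line is either a \emph{singleton} of length~$1$ or a run of length~$3$. Counting tile--line incidences two ways gives $2\tau = 3\rho + s$ with $s\ge 0$ the number of singletons, whence $\tau\ge 3\rho/2$. On the other side, each pair of adjacent tiles lies in exactly one run --- the straight de Bruijn line through their shared edge --- and each run contributes exactly two adjacent pairs, so the dual graph has precisely $2\rho$ edges; by~P1 it is connected, so $2\rho\ge\tau-1$, i.e.\ $\tau\le 2\rho+1$. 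The bound $\tau\ge 5$ is part of the definition, but in fact it is forced: for $\tau\le 4$ the inequality leaves only $\tau=4$, $\rho\in\{2,3\}$, and these die respectively from the geometric impossibility of two straight runs sharing both endpoints and from the incidence bound $3\rho\le 2\tau$.

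\textbf{Part~(i).} The approach is constructive. Fix a short list of base tilings, verified against P1--P5 by inspection, and introduce \emph{merging} moves: overlap a tiling with a fresh run, or with a small gadget such as a flip‑hexagon or a bent triple, so that exactly one or two tiles coincide. For each move one records the shift in $(\tau,\rho)$ and checks that P1--P5 survive; the only delicate points are P2 (an attached tile must not cover a vertex of an old one) and P3 (after its neighbour is deleted, a freshly attached pendant must still meet the rest at a vertex), both of which can be guaranteed by bending the attached piece. Two independent moves --- for instance one shifting $(\tau,\rho)$ by $(2,1)$ and one by $(1,1)$ --- then reach every lattice point with $\max(5,\lceil 3\rho/2\rceil)\le\tau\le 2\rho$, apart from a dense boundary family along $\tau=\lceil 3\rho/2\rceil$ (where singletons are almost absent and $4$--tiles abundant) that is built periodically. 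It then remains to show the three exceptional pairs admit no tiling: $5.3$ and $6.4$ by short structural/exhaustive arguments, and $12.8$ --- the genuinely hard case --- either by Blackburn's case analysis or by separating reducible from irreducible tilings and excluding each, as in Section~\ref{sect:altprf}.

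\textbf{Part~(ii).} When $\tau=2\rho+1$ the dual graph is a tree, and a companion count shows the \emph{run graph} of Section~\ref{sect:rungraphs} is also a tree: every tile lies in at least one run (else the dual graph is disconnected), so the number of tiles lying in two runs is $\rho-1$, and since two distinct runs share at most one tile the run graph has $\rho$ vertices and $\rho-1$ edges; from $2\tau=3\rho+s$ there are $s=\rho+2$ singletons. Feeding the tree structure the local data --- each tile is in at most two runs, a $4$--tile is exactly a tile at which two runs cross as their middles, and the dual tree has $2+n_3+2n_4$ leaves where $n_3,n_4$ count the $3$- and $4$--tiles --- pins the combinatorial type of an admissible tiling to a short list indexed by $\rho$. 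The decisive step is geometric realizability: one must decide which of these abstract trees can actually be drawn as a lozenge tiling of a simply connected region $\mathcal{R}$ still obeying P2 and P3. Because the dual is a tree, \emph{all} the $2$-connectivity required by P3 must come from tiles meeting at single vertices, and a simply connected lozenge region can fold back on itself only so far before it forces a run of length~$\ge 4$ or a tile onto another's vertex; quantifying this, via a turning‑angle/Euler‑characteristic accounting along $\partial\mathcal{R}$, caps $\rho$, and a finite check then leaves exactly $5.2$, $7.3$, $9.4$, $11.5$, $13.6$ and the sporadic $17.8$ --- in particular there is no $15.7$. Everything here and in part~(i) is bookkeeping except the non‑existence of $12.8$ and this realizability step, and I expect those two to carry essentially all of the difficulty.
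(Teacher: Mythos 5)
Your treatment of the inequalities is correct and is essentially the paper's double count: the lower bound via tile--run incidences is identical to the derivation of $\lambda=2\tau-3\rho\ge 0$, and your upper bound (the dual graph has exactly $2\rho$ edges, so connectedness forces $\tau-1\le 2\rho$) is a clean equivalent of the paper's ``each new run adds at most two tiles'' argument. But the proposal does not close the places where the theorem is actually hard. In part (i) you never exhibit the constructions: asserting that two moves with increments $(2,1)$ and $(1,1)$ in $(\tau,\rho)$ sweep the admissible region is not a proof --- the content is in producing concrete base tilings and concrete extension gadgets, for both parities of $\rho$ and all the way down to the leafless boundary $\tau=3\rho/2$, and verifying P1--P5 for each (the paper does this with eight explicit internal extensions, Table~\ref{tab:exist1} and Figure~\ref{fig:proof}, plus the sporadic tilings of Figure~\ref{fig:exist1}); and you defer the non-existence of 5.3, 6.4 and 12.8 to ``short exhaustive arguments'' and to Blackburn or Section~\ref{sect:altprf}, which cannot stand in a blind proof.

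The more serious gap is part (ii). You correctly deduce that for $\tau=2\rho+1$ the dual graph and the run graph are trees, but the decisive step --- that a ``turning-angle/Euler-characteristic accounting along $\partial\mathcal{R}$'' caps $\rho$ and a finite check then yields exactly 5.2, 7.3, 9.4, 11.5, 13.6, 17.8 --- is an unsubstantiated assertion, and it is precisely where the difficulty lives. Any such monotone perimeter/curvature bound would have to explain the irregular answer: verdant tilings exist for $\rho=2,\dots,6$ and $\rho=8$ but not $\rho=7$, so no cap on $\rho$ by itself settles the classification. The paper's route is genuinely different: the inequality $n_1+n_2\ge n_3$ (equation~\ref{lem:n123}) together with Lemma~\ref{lem:n4} (an irreducible tiling has at most one 4--tile) forces an irreducible verdant tiling with $\rho\ge 4$ to have $n_2=0$ and $n_4=1$, and a short geometric analysis (Figure~\ref{fig:proof2}) shows its runs cannot be completed without a 2--tile; hence every verdant tiling with $\rho\ge 4$ is a single merging of two smaller verdant tilings (Theorem~\ref{thm:thm2}), so is assembled from 5.2 and 7.3 tilings, and a finite check of where their leaves lie gives exactly 9.4, 11.5, 13.6 and 17.8 while excluding 15.7 and everything larger. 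You would need either to carry out your realizability analysis in full, with an argument that distinguishes $\rho=7$ from $\rho=8$, or to adopt a reduction of this kind; and the existence half of (ii) still requires exhibiting the tilings up to 17.8, which your sketch does not do.
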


Donald's proof of the inequalities~(\ref{eq:inequality1}) was as follows. First, a tiling that consists of a single run of three tiles is not a tredoku tiling
because it becomes disconnected if the central tile is removed. Therefore a tredoku tiling must contain at least two runs, and since distinct runs can have at
most one tile in common, we must have $\tau \ge 5$.

Next, every tile in a tredoku tiling occurs in either one or two runs of length three. Let $\lambda$ denote the number of tiles that occur in a single run (these
tiles are the leaves of the tiling). Then, counting the number of tiles in all runs in two ways, $\lambda + 2(\tau-\lambda) = 3 \rho$. Hence
\begin{equation}
    \lambda = 2 \tau - 3 \rho,
\label{eq:leaves}
\end{equation}
and since $\lambda \ge 0$ we have
  $ \tau \ge {3 \rho} \slash {2}$.

Finally, because of connectivity, every run except the first must have at least one tile in common with an earlier run. Therefore,
$\tau \le 3 + 2 (\rho-1) = 2 \rho + 1$.\footnote{Blackburn (2024) gives a different argument in the proof of his Lemma 3.}

Inequalities~(\ref{eq:inequality1}) may also be expressed in terms of the number of leaves as
\begin{equation}
   0 \le \lambda \le \rho+2.
\label{eq:lambda_inequality2}
\end{equation}
Tilings with $\lambda=0$ are called \define{leafless} and Blackburn (2024) defines \define{verdant} tilings to be those with $\tau = 2 \rho + 1$ and hence
$\lambda=\rho+2$. It follows from equation~(\ref{eq:leaves}) that leafless tilings can exist only for even $\rho$ and according to part (i) of the theorem,
they exist for all $\rho \ge 6$ except for $\rho=8$. On the other hand, part (ii) of the theorem says that verdant tredoku tilings exist only for a few values
of $\rho$.

As we have remarked earlier, Donald gave constructions for all combinations of $\tau$ and $\rho$ satisfying inequalities~(\ref{eq:inequality1}) for which tilings
exist and we give details of these constructions below. In his proof of Donald's theorem, Blackburn (2024) developed constructions that are similar in spirit,
but generally different in detail.

Figure~\ref{fig:exist1} gives examples of 5.2, 7.3, 8.5, 9.4, 9.6, 11.5, 11.7, 13.6, 15.10 and 17.8 tilings.
Apart from the 8.5 and 15.10 tilings, these tilings are  unique up to isomorphism and they include all of the tilings mentioned in part (ii) of the theorem.
\begin{figure}[h!]
   \centering
   \makebox[\textwidth][c]{\includegraphics[trim=0in 2.2in 0in 0.01in, clip, width=0.9\textwidth]{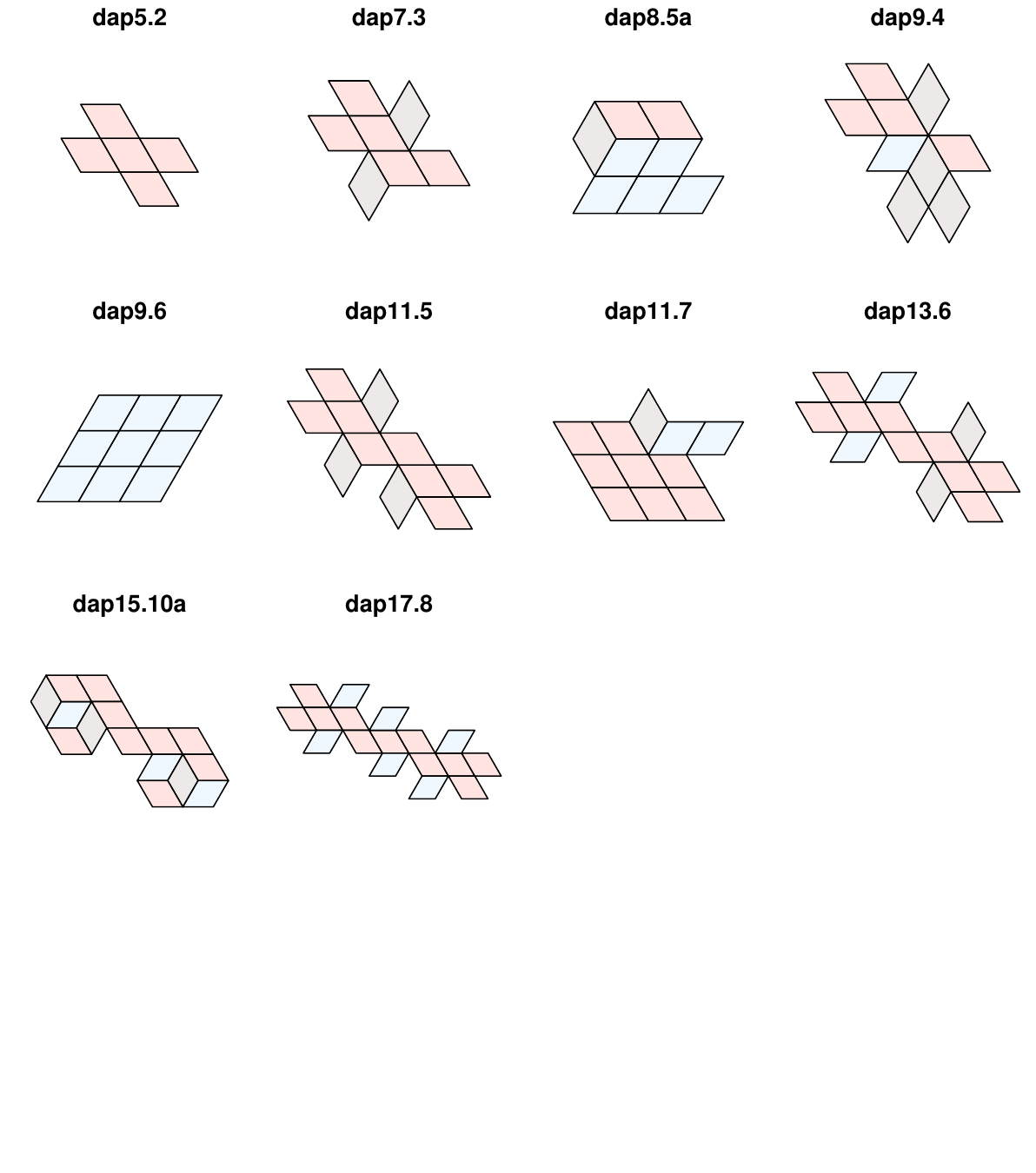}}
   \caption{Tredoku tilings for specific values of $\tau$ and $\rho$ that are not generated by the general constructions described in the text.}
   \label{fig:exist1}
 \end{figure}

For all other combinations of $\tau$ and $\rho$, there is a construction that yields an infinite sequence of tilings, including a tiling with the specified
values of $\tau$ and $\rho$. Donald's notes include 28 such constructions, which are shown in Appendix A. Twenty five of these are what he called \define{internal
extensions} because they modify the interior of a tiling. There are also three \define{external extensions} that add tiles to the external boundary of a tredoku tiling.

Figure~\ref{fig:exist2} provides an example of an internal extension. It shows a 17.9 tredoku tiling within which two small segments have been identified, labelled A
and B. New tredoku tilings can be generated by repeating these segments as many times as desired. In addition, segment A can be removed entirely. If there are $a$
copies of segment A ($a \ge 0$) and $b$ copies of segment B ($b \ge 1$) then the resulting tiling has $\tau = 10 + 4a + 3b$ tiles with $\rho=5+2a+2b$ runs, implying
that $\rho$ is odd and $\rho \ge 7$, and that $\tau=2\rho-b$.

 \begin{figure}[h!]
   \centering
   \makebox[\textwidth][c]{\includegraphics[trim=0in 1.5in 0in 1.5in, clip, width=0.52\textwidth]{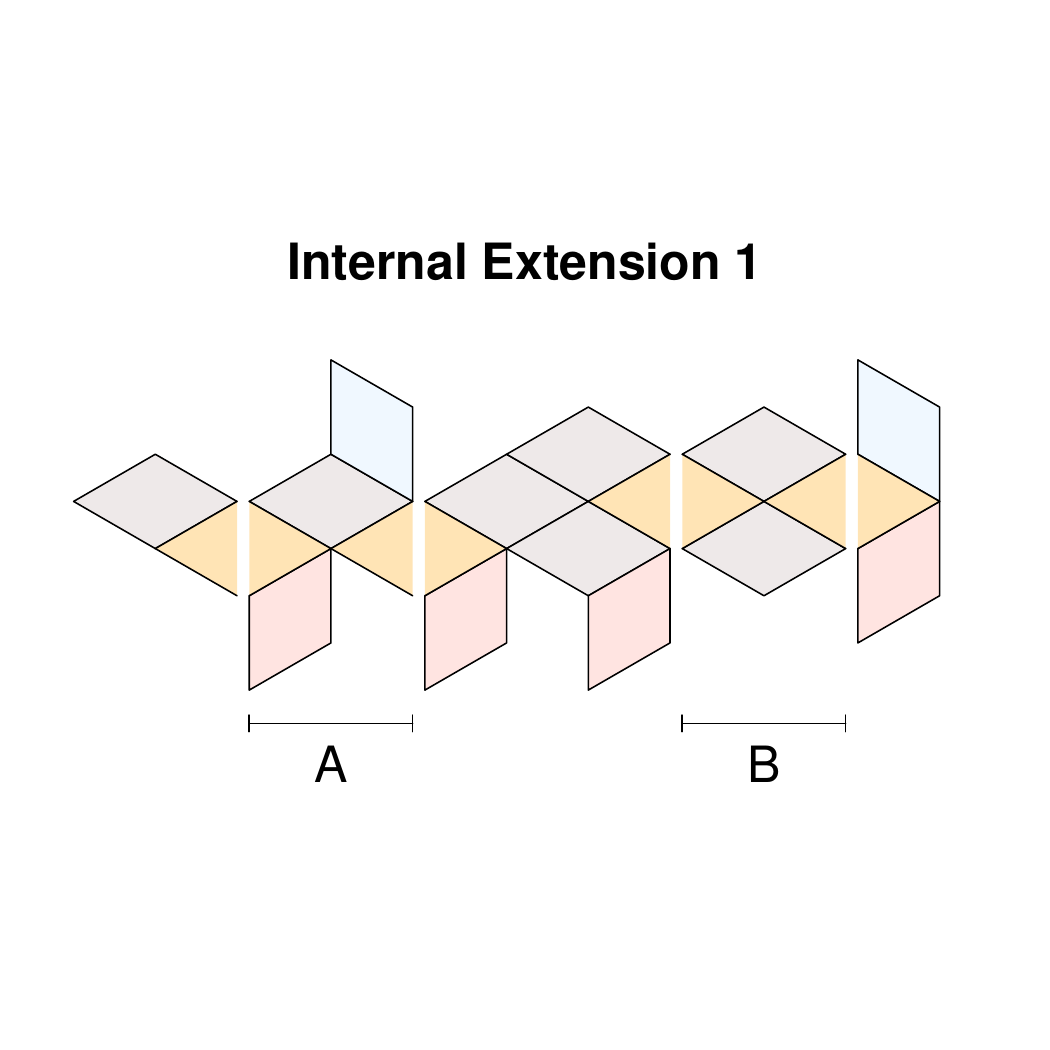}}
   \caption{In this construction, which is drawn in horizontal format, segment A is repeated $a$ times \mbox{($a \ge 0$)} and segment B is repeated $b$ times
            ($b \ge 1$), giving {$\tau = 10 + 4a + 3b$} and $\rho = 5+2a+2b$, so that $\tau = 2\rho - b$ for odd $\rho \ge 7$.}
   \label{fig:exist2}
 \end{figure}

Many of the internal extensions have either an A segment or a B segment, but not both. The segment can be repeated as many times as desired to generate an infinite
sequence of tredoku tilings. The distinction, as above, is that an A segment can be removed, but a B segment cannot.

Eight of the internal extensions suffice to generate tredoku tilings for all combinations of $\tau$ and $\rho$ for which tilings exists, apart from the combinations
shown in Figure~\ref{fig:exist1}. Table~\ref{tab:exist1} lists these extensions and the range of values of $\tau$ and $\rho$ that they generate.

\begin{table}[h!]
\centering
\caption{Internal extensions used to confirm the existence of tredoku tilings with $\tau$ tiles and $\rho$ runs. For a given value of $\rho$, there is an internal
         extension that yields a tiling with $\tau$ tiles for all $\tau$ satisfying $ 3 \rho / 2  \le \tau \le 2\rho$.}
\label{tab:exist1}
\begin{tabular}{crrcc}
\hline
Internal & \multicolumn{2}{c}{Number of runs, $\rho$}  & Number of  & Number of\\
Extension & Parity & Range & tiles, $\tau$ & leaves, $\lambda$\\
\hline
 18 &   odd & $\rho \ge 5$  & $\tau = 2\rho$ & $\lambda = \rho$ \\
  1 &   odd & $\rho \ge 7$  & $( 3 \rho + 5) / 2 \le \tau \le 2\rho - 1$ & odd, $5 \le \lambda \le \rho-2$\\
 10 &   odd & $\rho \ge 3$  & $\tau = (3\rho+3)/2$ & $ \lambda = 3$\\
 14 &   odd & $\rho \ge 9$  & $\tau = (3\rho+1)/2$ & $ \lambda = 1$ \\[1.8ex]

 16 &  even & $\rho \ge 4$  & $\tau = 2\rho$ & $\lambda = \rho$ \\
  5 &  even & $\rho \ge 6$  & $ (3 \rho + 4) / 2  \le \tau \le 2\rho - 1$ & even, $4 \le \lambda \le \rho-2$\\
 11 &  even & $\rho \ge 4$  & $\tau = (3\rho+2)/2$ & $\lambda = 2$\\
 24 &  even & $\rho \ge 12$ & $\tau = 3\rho/2$ & $\lambda = 0$\\
\hline
\end{tabular}
\end{table}

Figure~\ref{fig:proof} shows how the different extensions combine to cover the range of allowable combinations of $\tau$ and $\rho$. The boxes indicate the combinations
that are potentially allowable according to inequalities~(\ref{eq:inequality1}). Boxes marked with an asterisk are the combinations of $\tau$ and $\rho$ for which example
tredoku tilings are given in Figure~\ref{fig:exist1}. Shaded boxes are combinations for which no tiling exists. The numbers in the remaining boxes indicate the internal
extension that may be used to construct a $\tau.\rho$ tiling.

 \begin{figure}[h!]
   \centering
   \makebox[\textwidth][c]{\includegraphics[trim=0in 0.2in 0in 0.5in, clip, width=0.95\textwidth]{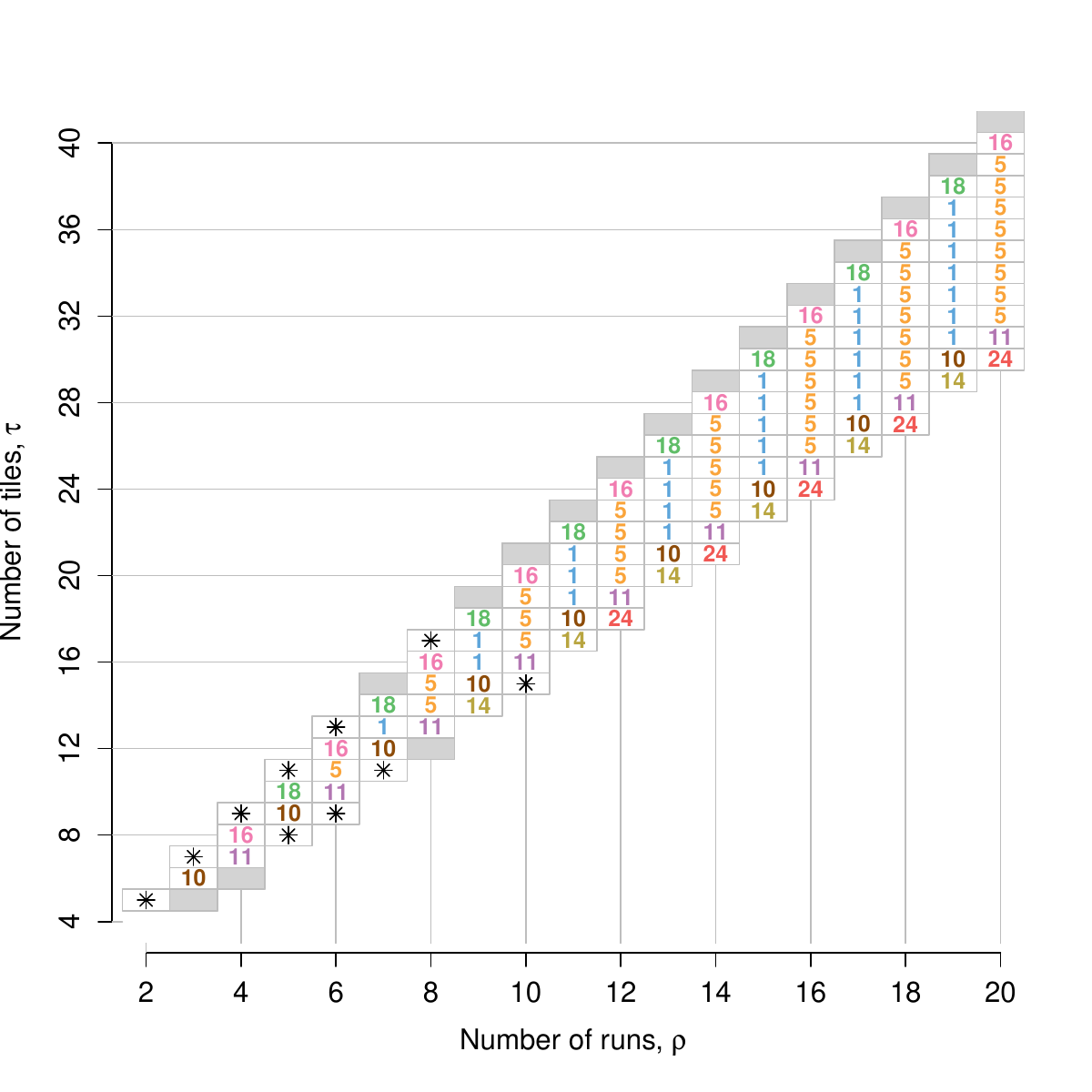}}
   \caption{This graphic shows how the constructions listed in \mbox{Table 1} combine together. Numbers in cells indicate the construction that is used to create a tiling
            with the given values of $\tau$ and $\rho$. Asterisks indicate that a tredoku tiling is shown in Figure~\ref{fig:exist1}. Shaded cells indicate combinations of
            $\tau$ and $\rho$ for which no tredoku tiling exists.}
   \label{fig:proof}
 \end{figure}

\section{An alternative proof of the existence theorem for tredoku tilings}\label{sect:altprf}

Donald noted that many of the tilings that he found could be generated by merging two or more smaller tredoku tilings, indeed it seems likely that this is how
he obtained many of these tilings. We call such tilings \define{reducible}. Tilings that cannot be generated in this way are called \define{irreducible}.

After merging, the two component tilings of a reducible tiling share either one or two tiles, as illustrated in Figure~\ref{fig:combine1}. We use the terms \define{single merging}
and \define{double merging} to distinguish these.\footnote{The term \define{merging} originates with Blackburn (2024), who uses it for what we call a single merging.}

\begin{figure}[h!]
   \centering
   \makebox[\textwidth][c]{\includegraphics[trim=0in 5.5in 0in 0.001in, clip, width=0.9\textwidth]{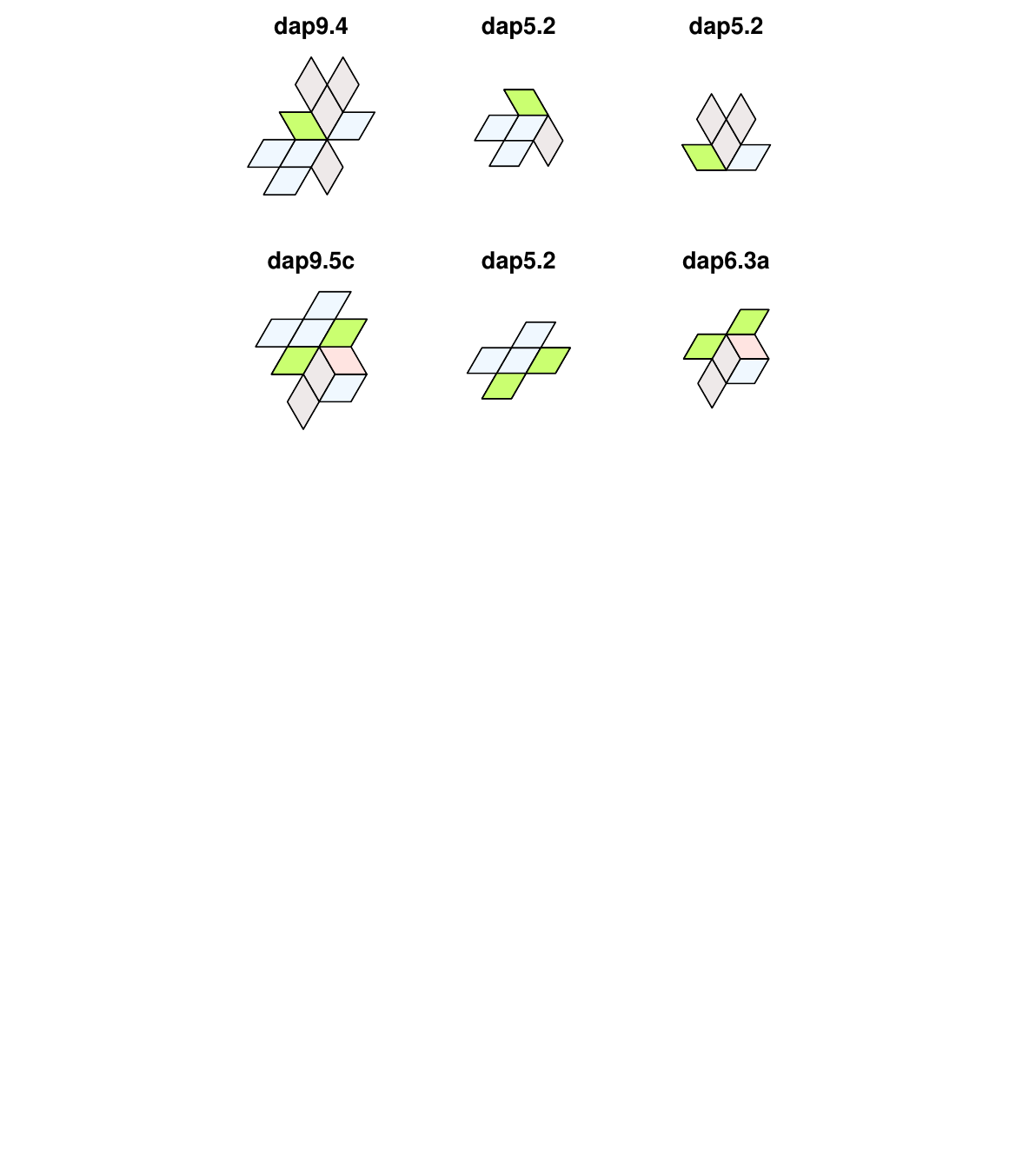}}
   \caption{Two examples of merging tredoku tilings to produce a larger tiling. The top row shows a single merging and the bottom row a double merging.}
   \label{fig:combine1}
\end{figure}

The top row of Figure~\ref{fig:combine1} shows, on the left, the tiling dap9.4, which may be obtained by merging the two variants of dap5.2 on the right so that the
highlighted tile overlaps. On the bottom row, tiling dap9.5c on the left may be obtained similarly by merging variants of dap5.2 and dap6.3a, but this time by overlapping
two tiles.

Considering irreducible and reducible tilings separately provides an alternative approach to establishing some aspects of the existence theorem for tredoku tilings that
has little in common with Blackburn's proof. Specifically, we show in Theorem~\ref{thm:thm2} that irreducible tredoku tilings exist only for a subset of the parameter
values allowed by Theorem~\ref{thm:thm1}. In particular, there is no irreducible 12.8 tiling and no irreducible verdant tilings with $\rho > 3$. Then we show, by examining
a small number of possibilities, that no reducible 12.8 tiling exists and that reducible verdant tilings exist only for $\rho = 4, 5, 6$ and $8$.

\subsection{Reducible and irreducible tilings}

To begin with, recall that an $i$-tile is a tile that is adjacent to  $i$ other tiles. 1--tiles are leaves, 3--tiles lie at the end of one run and the centre of another
and 4--tiles lie at the centre of two runs. The following lemma characterises 2--tiles.
\begin{lemma}
A 2--tile lies at the end of two runs.
\label{lem:type2}
\end{lemma}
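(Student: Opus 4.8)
The plan is to study the (at most two) edges along which the 2--tile meets other tiles, to show that these two edges cannot be parallel, and then to deduce that the tile sits at the \emph{end}, rather than the centre, of each of the two runs through it.

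So let $T$ be a 2--tile with neighbours $A$ and $B$. Since two congruent lozenges share at most one edge, $T$ shares exactly one edge $p_A$ with $A$ and exactly one edge $p_B$ with $B$, and these are the only edges of $T$ shared with another tile. A lozenge has just two pairs of parallel edges, so either $p_A\parallel p_B$ or not, and the heart of the proof is to exclude the first case. First I would record what the second case delivers. If $p_A\not\parallel p_B$, then $T$ cannot be the centre tile of a run: a centre tile is incident with the two internal edges of its run, these are parallel, and each is shared with another tile of the run, whereas the only shared edges of $T$ are the non-parallel pair $p_A,p_B$. Hence in every run through $T$ the tile $T$ is an end tile. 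By P4 and P5 every shared edge belongs to exactly one run of length three, so $T$ lies in at most two runs; and the run through $p_A$ and the run through $p_B$ are distinct, because if they were equal then (as $A\neq B$) $A$ and $B$ would share an edge that is parallel to $p_A$ as an edge of $A$ and to $p_B$ as an edge of $B$, forcing $p_A\parallel p_B$. So $T$ lies at the end of exactly two runs, which is the assertion.

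It therefore remains to exclude the case $p_A\parallel p_B$, and this is where I expect the real work to lie --- and where the no-holes hypothesis is indispensable, since in a tiling with a hole a 2--tile can be the centre of a single run, which is precisely why Donald's and Blackburn's notions of a leaf diverge there. Suppose $p_A\parallel p_B$. Then $p_A$ and $p_B$ are opposite edges of the lozenge $T$, so $A,T,B$ form a run with $T$ at its centre, and since $T$ is a 2--tile its other two edges $q,q'$ --- also an opposite pair --- are shared with nothing, hence lie on the boundary of the tiling. Thus the tiled region $\mathcal{R}$ is ``pinched'' at $T$: since the tiling has no holes $\mathcal{R}$ is simply connected and (also using P1) its boundary is a simple closed curve, and the closed tile $\bar{T}$ meets that boundary along the two opposite edges $q$ and $q'$. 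A closed topological disk meeting the boundary circle of a larger disk in two such arcs separates it, so $\mathcal{R}\setminus\bar{T}$ breaks into two components, one meeting $A$ across $p_A$ and the other meeting $B$ across $p_B$; by P2 these components do not even share a vertex of $T$. Hence deleting the single tile $T$ disconnects the tiling, contradicting P3, so the case $p_A\parallel p_B$ cannot occur.

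The main obstacle, as indicated, is the last paragraph: the case analysis and the counting of runs through $T$ are routine, but converting the evident ``neck at $T$'' picture into a clean separation statement needs a small Jordan-curve argument, and one must check carefully that the two residual pieces are genuinely disjoint once $T$ is removed --- this is where P1, P2 and the lozenge shape of $T$ enter. A slightly more combinatorial alternative would be to trace how the outer boundary of the tiling passes through the vertices of $T$ and of $A$ and $B$, but the topological formulation seems the cleanest.
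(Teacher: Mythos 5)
Your proof is correct and follows essentially the same route as the paper's: both reduce to the case where the 2--tile sits at the centre of a run, observe that its two unshared parallel edges must then lie on the outer boundary because the tiling has no holes, and conclude that deleting the tile disconnects the tiling, contradicting P3 --- the paper packages the separation as ``any path from $A$ to $B$ avoiding the tile would close up to a cycle enclosing one of the free edges and hence create a hole'', while you package the same fact as a Jordan-curve separation of the simply connected tiled region. The one soft spot is attributing the vertex-disjointness of the two residual pieces to P2 alone: that step really needs the no-holes/P1 disk structure as well (since the free edges are boundary edges, the tiles around each vertex of the removed tile form a single contiguous fan with that tile at one end, so all other tiles at that vertex lie on the same side), but you explicitly flag this as the point requiring care, and the paper's own proof is no more explicit about vertex connections there.
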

\begin{proof}
Let $X$ be a 2--tile. Because $X$ is adjacent to two other tiles, it lies either at the end of two runs or at the centre of a run. In the latter case, there is a tile
$A$ that shares an edge $a$ with $X$ and a tile $B$ that shares the parallel edge $b$ of $X$, whilst the other two edges of $X$, $c$ and $d$, are not shared with another tile.
Suppose $\mathcal{P}$ is a path from $A$ to $B$ that does not include $X$. Then the cycle $\mathcal{P}, X, A$ contains one of the edges $c$ or $d$ in its interior and the
tiling therefore contains a hole with this edge on its boundary. Therefore, the only path from $A$ to $B$ is $A$, $X$, $B$. But then if tile $X$ is removed there is no path
from $A$ to $B$ and the tiling is no longer connected. Thus $X$ cannot lie at the centre of a run.
\end{proof}

The reducibility of a tiling may be determined by examining the  \define{spine} of the tiling, defined  to be the set of its 3- and 4--tiles. If the spine is edge-connected,
we say that the tiling is \define{irreducible}. Otherwise, the spine consists of two or more components, each of which is connected and is the spine of a smaller irreducible
tiling. In this case the original tiling is \define{reducible} and results from merging the tilings corresponding to each spine component. These claims are formalised in the
following lemmas.

\begin{lemma}
Every spine component shares a vertex with at least one other spine component, but no two components share more than a single vertex.
\label{lem:lem3}
\end{lemma}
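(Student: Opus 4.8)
The plan is to understand how the leaves and 2--tiles of $\mathcal{T}$ attach to the spine, and then to combine this with the connectedness axioms P1 and P3 and with the no--holes assumption. Two preliminary observations drive everything. First, the centre tile of every run is a spine tile: it is adjacent to the two end tiles of the run, and were it adjacent to only those two it would be a 2--tile, which by Lemma~\ref{lem:type2} lies at an \emph{end} of each of its two runs, not at the centre; hence it is adjacent to at least three tiles. Second, because every tile lies in one or two runs, a leaf lies at one end of a single run and is adjacent only to that run's centre, while a 2--tile lies at the ends of exactly two runs and is adjacent only to those two runs' centres. Consequently, in the dual graph $D$ every non--spine tile is either a degree--$1$ vertex (a leaf), whose neighbour is a spine tile, or a degree--$2$ vertex (a 2--tile), both of whose neighbours are spine tiles.

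I may assume $\mathcal{T}$ is reducible, so it has at least two spine components. For the first assertion I would use P1, that $D$ is connected. Deleting every leaf, then suppressing every (still degree--$2$) 2--tile, then contracting each spine component to a single vertex, preserves connectedness and yields a connected graph $G^{*}$ whose vertices are the spine components and whose edges are exactly the \emph{bridging} 2--tiles, i.e.\ those 2--tiles whose two run--centres lie in different components. Since $G^{*}$ is connected with at least two vertices, every spine component $\mathcal{C}$ is incident with some bridging 2--tile $t$, joining $\mathcal{C}$ to another component $\mathcal{C}'$; let $m\in\mathcal{C}$ and $m'\in\mathcal{C}'$ be the centres of the two runs through $t$. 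As $t$ is a 2--tile it is an end, never the centre, of each of its runs, so the edge of $t$ shared with $m$ and the edge of $t$ shared with $m'$ cannot be opposite edges of $t$ (otherwise $m,t,m'$ would form a run centred at $t$); being distinct and non--opposite, these two edges meet at a vertex $v$ of $t$, and then $v\in m$ and $v\in m'$, so $\mathcal{C}$ and $\mathcal{C}'$ share the vertex $v$. The same local picture shows that a single bridging 2--tile contributes only one shared vertex, since the congruent rhombi $m$ and $m'$ have disjoint interiors and share distinct edges of $t$, hence have at most one vertex in common.

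For the second assertion, suppose components $\mathcal{C}_{i}$ and $\mathcal{C}_{j}$ had two distinct common vertices $u$ and $v$. No tile of $\mathcal{C}_{i}$ is adjacent to a tile of $\mathcal{C}_{j}$ (else the components would coincide), so the closed regions $R_{i}$ and $R_{j}$ formed by their tiles have disjoint interiors and meet only in finitely many vertices, among them $u$ and $v$. Thus $R_{i}\cup R_{j}$ is connected but ``pinched'' at $u$ and $v$, and --- $\mathcal{C}_{i}$ and $\mathcal{C}_{j}$ being edge--connected so that $R_{i}$ and $R_{j}$ are path--connected --- it separates from $\mathcal{R}$ a nonempty region $E$ whose boundary is a union of tile edges lying on $\partial R_{i}\cup\partial R_{j}$. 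Since $\mathcal{R}$ has no holes it is simply connected, so $E\subseteq\mathcal{R}$, and therefore $E$ is covered by tiles of $\mathcal{T}$, none of which belong to $\mathcal{C}_{i}$ or $\mathcal{C}_{j}$.

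The step I expect to be the main obstacle is turning this enclosed configuration into a contradiction. By the first observation, any tile inside $E$ that is adjacent to $\partial R_{i}$ or $\partial R_{j}$ is adjacent to a spine tile of $\mathcal{C}_{i}$ or of $\mathcal{C}_{j}$ and so, not lying in either component, must be a leaf or a 2--tile. Tracking these pendant tiles, together with whatever spine tiles occur strictly inside $E$, one should be forced into one of three impossible situations: a hole bounded entirely by tile edges (contradicting the no--holes hypothesis), a single tile whose removal disconnects the n--dual graph (contradicting P3), or an edge--path inside $E$ joining $\mathcal{C}_{i}$ to $\mathcal{C}_{j}$ (contradicting that they are distinct spine components). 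Establishing that one of these always arises, in full generality, is the delicate point; the remaining steps are routine.
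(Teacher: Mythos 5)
Your first half is sound and is essentially the paper's argument in different clothing: the only non-spine tiles on a route between two spine components are 2--tiles, two 2--tiles cannot be adjacent, and since a 2--tile is never a run centre its two shared edges are non-parallel and therefore meet at a vertex common to its two spine neighbours, which lie in different components. That part needs no repair.

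The second half, however, has a genuine and indeed self-acknowledged gap: you construct the pinched region $E$ correctly (two shared vertices give a closed curve through the two components, and the no-holes hypothesis forces $E$ to be covered by tiles not belonging to either component), but you never derive the contradiction, offering instead three candidate impossibilities and conceding that establishing one of them ``in full generality is the delicate point.'' The missing observation is short: every tile lying in $E$ has \emph{all four} of its edges shared with other tiles, because across any of its edges lies either another tile of $E$ (you have already shown $E$ is tiled) or a tile of one of the two components bounding $E$; an unshared edge would put that tile on the boundary of the tiling, which is impossible for a tile enclosed by $R_i\cup R_j$ when there are no holes. Hence every tile of $E$ is a 4--tile, i.e.\ a spine tile. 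This collides immediately with your own (correct) deduction that a tile of $E$ sharing an edge with a spine tile of $\mathcal{C}_i$ or $\mathcal{C}_j$ cannot be a spine tile (else it would join that component); equivalently, the spine tiles filling $E$ connect $\mathcal{C}_i$ to $\mathcal{C}_j$ through edge-adjacencies, contradicting that they are distinct components. This is exactly how the paper closes the argument (``the interior of this cycle would be filled with tiles which would necessarily be 4--tiles''), so you were one observation short rather than on a wrong track; none of your three speculative escape routes (a hole, a P3 violation, a path) needs to be chased separately once the 4--tile fact is in hand.
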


\begin{proof}
Choose a tile $A_1$ in a spine component $A$ and another tile $B_1$ in a different spine component $B$. Because the tiling is connected, but the spine components are not,
any path from $A_1$ to $B_1$ must include at least one non-spine tile. The path cannot include any 1--tiles, since these are adjacent to only one other tile, and so the
non-spine tiles are 2--tiles. Clearly, from Lemma~\ref{lem:type2}, two 2--tiles cannot be adjacent, and so the path must have the form  $A_1, \ldots, A_r, X, C_1, \ldots B_1$,
where $A_r$ is the last tile that belongs to spine component $A$, $X$ is a 2--tile, and $C_1$ belongs to a spine component $C$, which may or may not coincide with component
$B$. The edges of tile $X$ that are shared with tiles $A_r$ and $C_1$ are not parallel, because tile $X$ does not lie in the middle of a run,
and therefore there is a vertex that is shared by tiles $A_r$, $X$, and $C_1$. Thus spine components $A$ and $C$ share a vertex.

Suppose $A$ and $C$ share two vertices. Then we could create a cycle starting and ending at $C_1$ and passing through $A_r$. Because the tiling does not contain holes,
the interior of this cycle would be filled with tiles which would necessarily be 4--tiles. But then $A$ and $C$ would be connected, contradicting the premise that they
are separate components. Therefore, two components cannot share more than a single vertex.
\end{proof}

\begin{lemma}
The sub-tiling consisting of the tiles of a spine component, together with any tiles that are adjacent to these tiles, is a tredoku tiling.
\label{lem:lem4}
\end{lemma}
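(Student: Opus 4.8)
The plan is to check the five defining properties P1--P5 for the sub-tiling $\mathcal{T}'$ obtained from a spine component $S$ (which is edge-connected, being a component of the spine), together with the facts that $\mathcal{T}'$ has more than four tiles and has no holes. Most of this follows from a few structural observations, which I would record first. Write $N=\mathcal{T}'\setminus S$ for the set of added tiles. Since the spine is closed under edge-adjacency, any spine tile adjacent to a tile of $S$ already lies in $S$, so every tile of $N$ is a 1--tile or a 2--tile of $\mathcal{T}$. By Lemma~\ref{lem:type2}, together with the observation that the unique neighbour of a 1--tile is the centre of a run and hence a spine tile, no two of the 1--tiles and 2--tiles of $\mathcal{T}$ are adjacent; in particular the tiles of $N$ are pairwise non-adjacent. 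Finally, every tile adjacent to a tile of $S$ lies in $\mathcal{T}'$ by construction, and $\mathcal{T}'$ creates no new adjacencies, so each tile of $S$ has exactly the same adjacent tiles in $\mathcal{T}'$ as in $\mathcal{T}$; consequently the spine of $\mathcal{T}'$ is again $S$, and in $\mathcal{T}'$ each tile of $N$ is a leaf or a 2--tile, adjacent only to tiles of $S$.

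Given these, several properties are immediate. P2 is inherited, since tiles of $\mathcal{T}'$ that meet are tiles of $\mathcal{T}$ that meet. P5 is inherited, since four or more collinear tiles of $\mathcal{T}'$ sharing parallel edges would already form a forbidden run in $\mathcal{T}$. For P1, the dual graph of $\mathcal{T}'$ is connected because $S$ is edge-connected and every tile of $N$ is edge-adjacent to a tile of $S$. For P4, suppose $A,B\in\mathcal{T}'$ share an edge $p$; by pairwise non-adjacency of the tiles of $N$ at least one of $A,B$, say $A$, lies in $S$. Property P4 in $\mathcal{T}$ produces a third tile $C$ on an edge parallel to $p$ belonging to $A$ or to $B$; if it is the parallel edge of $A$ then $C$ is adjacent to $A\in S$ and so lies in $\mathcal{T}'$, while if it is the parallel edge of $B$ then $B$ is the centre of a run, hence a 3--tile or 4--tile, so $B\in S$ and again $C\in\mathcal{T}'$.

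The substantive property is P3: deleting a single tile $T$ must leave $\mathcal{T}'$ connected once vertex-connections are allowed. If $T\in N$ this is easy, since $T$ is a leaf or a 2--tile of $\mathcal{T}'$ and the remaining tiles of $N$ are still edge-adjacent to the edge-connected $S$. The hard case is $T\in S$. Here the plan is to show that the (at most four) tiles adjacent to $T$ form a chain that is connected through vertices of $T$: writing the shared edges of $T$ in cyclic order, two consecutive such edges are non-parallel and hence meet at a vertex of $T$ lying on both of the corresponding neighbours; if $T$ is a 4--tile this gives a vertex-cycle through all four neighbours, and if $T$ is a 3--tile (so its unshared fourth edge lies on $\partial\mathcal{T}$, and $T$ is the centre of the run through its parallel pair of shared edges) then its three shared edges are consecutive and the neighbours form a vertex-path. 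Since $S$ is edge-connected, each connected component of $S\setminus\{T\}$ contains one of these neighbours, so the chain reconnects those components inside $\mathcal{T}'\setminus\{T\}$; and every remaining tile of $N$ is either one of the neighbours of $T$ or is edge-adjacent to some tile of $S$ other than $T$. Hence $\mathcal{T}'\setminus\{T\}$ is connected. (When $S=\{T\}$ is a lone 4--tile, $\mathcal{T}'$ has exactly five tiles and $\mathcal{T}'\setminus\{T\}$ is the vertex-cycle formed by its four leaves; this is the 5.2 tiling.)

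It remains to note that $\mathcal{T}'$ has more than four tiles --- which follows from a short count, using that every spine tile has at least three neighbours and that these cannot all coincide --- and that $\mathcal{T}'$ has no holes. For the latter, a hole of the region $\mathcal{R}'$ covered by $\mathcal{T}'$ would lie inside the region $\mathcal{R}$ covered by $\mathcal{T}$ and be filled by tiles of $\mathcal{T}\setminus\mathcal{T}'$; each boundary edge of the hole would have, on its $\mathcal{T}'$-side, a tile which by the structural facts is neither in $S$ nor a 1--tile, hence is a 2--tile of $\mathcal{T}$ having one shared edge with a tile of $S$ (interior to $\mathcal{R}'$), the opposite edge facing the hole, and its other two edges on $\partial\mathcal{T}$ and therefore facing the unbounded exterior rather than the hole --- and tracing the hole boundary through such tiles produces a contradiction. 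I expect the main obstacle to be the $T\in S$ case of P3: getting the right picture of the ring of former neighbours around the deleted tile, and seeing that it simultaneously reglues the severed pieces of $S\setminus\{T\}$ and rescues any tile of $N$ that would otherwise be left pendant. The hole-freeness argument should be routine but a little delicate.
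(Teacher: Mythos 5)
Your verification of P1, P2, P4 and P5 is correct and follows the same basic strategy as the paper (check the defining properties directly), though in much greater detail. The one place where you genuinely diverge is P3 for the removal of a spine tile: the paper disposes of this in a single clause, asserting that if the sub-tiling became disconnected then so would the full tiling, whereas you give a direct geometric argument via the vertex-connected chain of neighbours around the deleted tile. Your version is the more careful one --- the paper's appeal to P3 of the ambient tiling is not obviously valid as stated, since the pieces of the sub-tiling could in principle be reconnected in the full tiling through tiles outside the sub-tiling, and your chain argument closes exactly that gap. Your observation that the tiles of $N$ are pairwise non-adjacent 1-- and 2--tiles whose neighbours all lie in $S$ is the right structural backbone and matches what the paper uses implicitly (and explicitly in the proof of Lemma~\ref{lem:lem3}).

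The weak point is the hole-freeness check. First, a concrete error: you describe a boundary tile of the putative hole as a 2--tile with one edge shared with $S$ and ``the opposite edge facing the hole''. By Lemma~\ref{lem:type2} a 2--tile lies at the end of two runs, so its two shared edges are \emph{not} parallel; the $S$-facing and hole-facing edges are adjacent, meeting at a vertex, and the two edges on $\partial\mathcal{T}$ are the other adjacent pair. Second, the final step --- ``tracing the hole boundary through such tiles produces a contradiction'' --- is asserted but never exhibited, and with the corrected configuration the contradiction is not immediate; a workable route is to show first that every tile of $\mathcal{T}$ filling the hole has all four edges shared (it cannot have an edge on $\partial\mathcal{T}$ without connecting the hole to the unbounded exterior, and $\mathcal{T}$ itself has no holes), hence is a 4--tile, and then argue against a region tiled entirely by 4--tiles surrounded by 2--tiles of $N$. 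In mitigation, the paper's own proof does not address holes at all, so on the properties the paper actually verifies your argument is complete and, on P3, more rigorous; but as a self-contained proof that the sub-tiling is a tredoku tiling in the paper's (hole-free) sense, this part is a genuine gap. The claim that $\mathcal{T}'$ has more than four tiles is also only gestured at, though it follows for free once P1--P5 are established, by the paper's own counting argument that any tiling satisfying them has at least two runs and hence at least five tiles.
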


\begin{proof}
Clearly the sub-tiling is connected and remains connected if any tile that is not part of the spine is removed. It also remains connected if any tile of the spine is
removed, because otherwise removing this tile would disconnect the full tiling. Every tile in the spine lies at the centre of one or two runs and the inclusion of all
adjacent tiles ensures that the complete run is included in the sub-tiling. Finally, because it is a sub-tiling of a tredoku tiling, there are no runs of length
greater than three.
\end{proof}

\begin{lemma}
The tredoku tilings associated with two spine components that share a common vertex have either one or two tiles in common.
\label{lem:lem5}
\end{lemma}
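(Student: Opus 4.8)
The plan is to show that every tile common to the two associated tredoku tilings is a 2--tile touching the shared vertex, and then to bound the number of such tiles by a short counting argument at that vertex. Let $v$ be the vertex common to the spine components $A$ and $C$, and write $\mathcal{T}_A$ and $\mathcal{T}_C$ for the tredoku tilings of Lemma~\ref{lem:lem4} associated with $A$ and $C$; thus $\mathcal{T}_A$ consists of the spine tiles of $A$ together with all tiles adjacent to them, and similarly for $\mathcal{T}_C$. Since distinct spine components are disjoint and two adjacent spine tiles always lie in a common spine component, a tile $X$ of $\mathcal{T}_A\cap\mathcal{T}_C$ is a spine tile of neither $A$ nor $C$: were it a spine tile of $A$, then, lying also in $\mathcal{T}_C$ but not among the spine tiles of $C$, it would be adjacent to a spine tile of $C$ and hence belong to $C$. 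So $X$ is adjacent to a spine tile $t_A$ of $A$ and to a spine tile $t_C$ of $C$; these are distinct, so $X$ is not a leaf, and --- being adjacent to the spine tile $t_A$ --- $X$ is not a spine tile of any component, hence not a 3-- or 4--tile. Thus $X$ is a 2--tile with $t_A$ and $t_C$ its only adjacent tiles. By Lemma~\ref{lem:type2}, $X$ does not lie at the centre of a run, so the edges it shares with $t_A$ and with $t_C$ are not parallel and therefore meet at a vertex $w$ of $X$. This $w$ is incident to a tile of $A$ and to a tile of $C$, so by Lemma~\ref{lem:lem3} it is their unique common vertex, i.e.\ $w=v$. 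Hence every common tile is a 2--tile incident to $v$, adjacent through two of the edges at $v$ to a spine tile of $A$ and to a spine tile of $C$, all incident to $v$.

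Next I would count. Suppose there were three common tiles $X_1,X_2,X_3$. Each $X_i$ is adjacent, through one of its edges at $v$, to a uniquely determined spine tile of $A$ incident to $v$, and likewise to a uniquely determined spine tile of $C$ incident to $v$. A fixed spine tile has just two edges at $v$, each shared with at most one other tile, so it can play this role for at most two of the $X_i$; hence at least two distinct spine tiles of $A$, and at least two distinct spine tiles of $C$, occur among these neighbours. These, together with $X_1,X_2,X_3$, are seven pairwise distinct tiles --- they fall into the three disjoint classes of spine tiles of $A$, spine tiles of $C$ and 2--tiles --- and all are incident to $v$, contradicting the fact, noted earlier, that at most six tiles meet at a vertex of a tredoku tiling. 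So $\mathcal{T}_A$ and $\mathcal{T}_C$ have at most two tiles in common.

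For the remaining claim, that they have at least one tile in common, I would use that the region covered by a hole-free tiling is simply connected and in particular has no pinch vertex, so the tiles at $v$ form a single fan in which consecutive tiles are adjacent. Traversing this fan from the spine tile of $A$ at $v$ to the spine tile of $C$ at $v$, and using that no two 2--tiles are adjacent while no spine tile of $A$ is adjacent to one of $C$, one necessarily meets a 2--tile both of whose adjacent tiles are spine tiles, one in $A$ and one in $C$; such a tile lies in $\mathcal{T}_A\cap\mathcal{T}_C$.

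The main obstacle is the first step: getting the classification of the common tiles right and showing that they all accumulate at $v$ --- this is where Lemma~\ref{lem:type2}, Lemma~\ref{lem:lem3} and the distinction between leaves, 2--tiles and spine tiles all have to be combined carefully. Once that localisation is established the counting argument is immediate. A minor technicality is the ``single fan'' assertion used in the lower bound, namely that a hole-free tredoku tiling has no pinch vertex, which follows from simple connectivity of the tiled region together with property P1.
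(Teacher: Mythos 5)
Your ``at most two'' half is correct and is essentially the paper's own argument, carried out more carefully: the paper likewise notes that each common tile shares an edge with a tile of each spine component and has the shared vertex $v$ among its vertices, and then invokes the fact that at most six tiles can meet at a vertex of a lozenge tiling. Your contribution is to justify the localisation (every common tile is a 2--tile, by the disjointness of spine components together with Lemma~\ref{lem:type2}, and it sits at $v$ by the uniqueness of the shared vertex in Lemma~\ref{lem:lem3}) and to make the count explicit ($3+2+2=7>6$); that part is sound and fills in details the paper leaves implicit.

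The gap is in the ``at least one'' half. Your traversal of the fan at $v$ uses only that two 2--tiles cannot be adjacent and that adjacent spine tiles lie in the same component, so what it actually produces is a 2--tile sandwiched between spine tiles of two \emph{different} components --- not necessarily $A$ and $C$. If a third spine component $B$ also has tiles incident to $v$, the fan could read (tiles of $A$), 2--tile, (tiles of $B$), 2--tile, (tiles of $C$), and the 2--tiles you find lie in $\mathcal{T}_A\cap\mathcal{T}_B$ and $\mathcal{T}_B\cap\mathcal{T}_C$ rather than in $\mathcal{T}_A\cap\mathcal{T}_C$. At an interior vertex this can be repaired (three components force exactly six tiles around $v$, three single spine tiles alternating with three 2--tiles, so every pair of components at $v$ is separated by some common 2--tile around the cycle), but at a boundary vertex the fan is an arc and your argument gives nothing for the pair sitting at its two ends; you would have to rule out, or separately handle, a third component meeting $v$, and you do not. (A smaller omission: you should also note why no leaf can occur strictly inside the traversed part of the fan --- it would be adjacent to both of its fan neighbours.) The paper takes a different and shorter route here: it simply re-uses the 2--tile $X$ exhibited in the proof of Lemma~\ref{lem:lem3}, which is adjacent to a tile of each of the two components concerned and hence already lies in both associated tilings, so no analysis of the local fan structure at $v$ is needed.
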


\begin{proof}
The 2--tile denoted by $X$ in the proof of Lemma~\ref{lem:lem3} is part of the tredoku tilings associated with the spine components $A$ and $C$ that share a common
vertex, so there is always at least one such tile. Figure~\ref{fig:combine1} shows that the tredoku tilings may also have two tiles in common.

Suppose the tilings had three tiles in common. Each such tile would have an edge in common with a tile from the first spine component, and another edge in common with
a tile from the second spine component. Moreover, all of these tiles would have as one of their vertices the vertex that is shared by the two spine components. But
this is not possible, because the maximum number of tiles that can meet at the vertex of a lozenge tiling is six.
\end{proof}

\begin{lemma}
If a tredoku tiling is a double merging of two tredoku tilings, $T_1$ and $T_2$, then the two merged tiles do not lie in the same run in either $T_1$ or $T_2$.
\label{lem:lem5b}
\end{lemma}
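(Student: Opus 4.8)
The plan is to reduce the statement to a single incompatibility: the two merged tiles are forced to share a vertex, whereas a merged tile can never be the middle tile of a run and the two end tiles of a run of three lozenges share no vertex.

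Write $P$ and $Q$ for the two tiles common to $T_1$ and $T_2$, and let $A$, $B$ be the spine components of $\mathcal{T}$ whose associated tilings (in the sense of Lemma~\ref{lem:lem4}) are $T_1$ and $T_2$; by Lemma~\ref{lem:lem3} these meet in exactly one vertex, which I call $v$. The first step is to record, arguing as in the proofs of Lemmas~\ref{lem:type2}, \ref{lem:lem3} and \ref{lem:lem5}, that each of $P$ and $Q$ is a $2$--tile of $\mathcal{T}$: it cannot be a leaf, since it is adjacent to a tile of spine $A$ and a tile of spine $B$, and it cannot be a spine tile, since an edge-adjacency from a spine tile to a tile of a different spine component would fuse the two components. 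Being a $2$--tile, by Lemma~\ref{lem:type2} it is not the middle tile of any run, so its two shared edges are non-parallel and hence meet at a vertex of the tile; that vertex belongs to the adjacent tile of spine $A$ and to the adjacent tile of spine $B$, hence is a vertex common to the two spine components, hence equals $v$. Thus both $P$ and $Q$ contain $v$. The same bookkeeping shows that, inside $T_1$, each of $P$ and $Q$ is adjacent only to its single neighbour in spine $A$ (its spine-$B$ neighbour lies outside $T_1$), so $P$ and $Q$ are leaves of $T_1$, and symmetrically leaves of $T_2$.

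Granting this, suppose $P$ and $Q$ both lay in a run $(X,M,Y)$ of $T_1$ with middle tile $M$. Since the middle tile of a run is adjacent to both ends and so is not a leaf, and since $P$ and $Q$ are leaves of $T_1$, we must have $\{P,Q\}=\{X,Y\}$. But $X$ and $Y$ are attached to a pair of parallel, hence opposite, edges of the lozenge $M$; opposite edges of a lozenge share no vertex, and a single further lozenge attached flush to one of them cannot reach around $M$ to the other, so $X$ and $Y$ do not intersect. This contradicts the fact that $P$ and $Q$ both contain $v$. The identical argument with the roles of $T_1$ and $T_2$ exchanged completes the proof.

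The step I expect to be the main obstacle is the first one: showing that $P$ and $Q$ contain the \emph{same} vertex. That each merged tile contains \emph{some} vertex shared by the two spine components follows routinely from Lemma~\ref{lem:type2} and the lozenge shape; the crucial input is Lemma~\ref{lem:lem3}, which says two spine components meet in at most one vertex and so forces those two vertices to coincide. I would also say a word at the outset to justify that in a double merging the two spine components involved may be taken to share exactly one vertex, so that Lemma~\ref{lem:lem3} applies in the form needed.
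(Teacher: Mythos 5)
Your proof is correct and takes essentially the same route as the paper: the paper's one-line argument is that the two merged tiles are leaf tiles of $T_1$ and $T_2$ that share a vertex, and hence cannot occupy the only positions available to leaves in a run, namely its two ends, whose tiles are disjoint. You simply supply, via Lemmas~\ref{lem:type2}, \ref{lem:lem3} and \ref{lem:lem5}, the justification of the shared-vertex and leaf claims that the paper asserts without proof.
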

\begin{proof}
This follows from the fact that the two tiles are leaf tiles that share a vertex and therefore cannot lie at opposite ends of the same run.
\end{proof}

\subsection{The alternative proof}\label{sect:altproof}

Let $n_i$ denote the number of $i$--tiles in a tiling. Counting the number of tiles that lie at the ends of runs and in the centre of runs gives
\begin{align}
    n_1 + 2 n_2 + n_3 &=  2 \rho,  \label{eq:n2} \\
    n_3  + 2 n_4 &=  \rho. \label{eq:n3}
\end{align}
Together with the constraint  $n_1 + n_2 + n_3 + n_4 = \tau$, these imply Donald's formula for the number of leaves, $n_1 = 2 \tau - 3 \rho$ (equation~\ref{eq:leaves}).
The following lemma gives an additional constraint on the $n_i$'s.

\begin{lemma}
For a tredoku tiling
\begin{equation}
n_1 + n_2 \ge n_3.
\label{lem:n123}
\end{equation}
\end{lemma}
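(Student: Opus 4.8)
The plan is to exploit the spine structure and the merging lemmas (Lemmas~\ref{lem:lem3}--\ref{lem:lem5b}) to bound $n_3$ in terms of $n_1$ and $n_2$. The key observation is that the spine, being the union of its connected components, can be thought of as a collection of irreducible sub-tilings glued at shared vertices via intervening 2--tiles. So I would first reduce to the irreducible case: if the tiling is reducible with spine components $S_1, \ldots, S_k$, each $S_j$ is the spine of an irreducible tredoku tiling $T_j$, and every tile is accounted for in exactly one $T_j$ except that merged tiles (which are leaves of the $T_j$, by Lemma~\ref{lem:lem5b}) and the connecting 2--tiles are shared. Provided the inequality holds for each $T_j$, I would add the inequalities up and check that the sharing of tiles across components does not destroy it — merged tiles and 2--tiles contribute to $n_1 + n_2$ in the pieces but may be double-counted, which only helps, since $n_3$ for the whole tiling is exactly $\sum_j n_3(T_j)$ (3--tiles are never shared: a shared tile is a leaf or a 2--tile).

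For the irreducible case the spine is edge-connected. Here I would count using the dual graph restricted to the spine. Let $G$ be the subgraph of the dual graph induced by the $n_3 + n_4$ spine tiles; it is connected, so it has at least $n_3 + n_4 - 1$ edges. Each spine tile sits at the centre of a run, and a 4--tile at the centre of two runs; the edges of $G$ are adjacencies between spine tiles. The idea is that a 3--tile has exactly one "free" run-end — the run it lies at the end of — which must terminate either at a leaf or at a 2--tile or at another spine tile reached through the run; a careful bookkeeping of run-ends, together with equations~(\ref{eq:n2})--(\ref{eq:n3}), should convert "each 3--tile has a spare run-end pointing outward" into the bound $n_1 + n_2 \ge n_3$. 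Concretely, I expect the cleanest route is: every 3--tile $X$ lies at the end of some run $R$; the tile adjacent to $X$ \emph{along} $R$ is either a leaf, or a 2--tile, or a 3/4--tile that is the centre of $R$. Charging $X$ to that neighbour and showing the charging is injective onto $\{1\text{--tiles}\} \cup \{2\text{--tiles}\}$ — possibly after handling the bounded number of 3--tiles whose outward run-neighbour is another spine tile via the connectivity of $G$ and an Euler-characteristic / no-holes argument — gives the result.

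The main obstacle is the charging step in the irreducible case: a 3--tile's outward run-neighbour need not be a leaf or a 2--tile — it can be another 3-- or 4--tile (when two runs overlap in their shared centre/end tile configuration), so the naive injection into $n_1 + n_2$ fails locally. I would handle this by observing that such "internal" chargings correspond to edges of the connected spine graph $G$, and since $G$ has at most (number of spine tiles)$\,-\,1 + (\text{independent cycles})$ edges and the no-holes condition forces every cycle in $G$ to bound a region filled by 4--tiles (as in the proof of Lemma~\ref{lem:lem3}), the slack can be controlled. Balancing the run-end count $2 n_3 + 4 n_4$ against the $2\rho = 2 n_3 + 4 n_4$ from~(\ref{eq:n3}) and peeling off the run-ends absorbed internally should leave exactly $n_3$ run-ends that must reach a 1-- or 2--tile. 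I would double-check the argument against the 7.4 tilings (where $n_1 = 2, n_3 = 1, n_4 = 1, n_2 = 2$, so $n_1 + n_2 = 4 \ge 1$) and against the leafless tilings with $n_1 = 0$, where the inequality forces $n_2 \ge n_3$ and must be shown to be exactly what the construction delivers.
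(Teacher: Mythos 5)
There is a genuine gap, in fact two. First, your reduction to the irreducible case does not work as stated. Summing the per-piece inequalities gives $\sum_j \bigl(n_1(T_j)+n_2(T_j)\bigr) \ge \sum_j n_3(T_j) = n_3$, but the left-hand side overcounts precisely the shared tiles: a merged leaf or connecting 2--tile is a leaf of \emph{two} different pieces yet only a single 2--tile of the full tiling, so $\sum_j \bigl(n_1(T_j)+n_2(T_j)\bigr) = n_1+n_2+s$ with $s\ge 1$ whenever the tiling is reducible. Your remark that the double counting ``only helps'' has the direction backwards: the inflation is on the side you need to be small, so the summed inequality only yields $n_1+n_2+s\ge n_3$, which is strictly weaker than the lemma. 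To salvage this route you would need a stronger per-piece statement (e.g.\ that each piece has at least $n_3(T_j)$ many 1-- or 2--tiles that are \emph{not} consumed by mergings), and you have neither formulated nor proved such a statement.

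Second, the irreducible case is not actually proved: you identify yourself that charging a 3--tile to its outward run-neighbour can land on another spine tile, and the proposed repair (``the slack can be controlled'' via cycles of the spine graph and a no-holes argument) is a hope rather than an argument --- no bookkeeping identity is established, and the run-end count $2n_3+4n_4=2\rho$ you invoke is just equation~(\ref{eq:n3}) again and carries no new information. (Your sanity check is also off: a 7.4 tiling has $n_3=2$, not $1$, since $n_3+2n_4=\rho=4$ with $n_4=1$.) The paper avoids all of this machinery with a short boundary argument: every 1--, 2-- and 3--tile has an edge on the single, hole-free outer boundary, a 3--tile has exactly one such edge and lies at the centre of runs in the two directions not parallel to it, so two 3--tiles can never be consecutive in the cyclic walk around the boundary; hence between any two 3--tiles there is a 1-- or 2--tile and $n_1+n_2\ge n_3$. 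I suggest you abandon the spine decomposition here and argue directly along those lines.
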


\begin{proof}
If we move around the boundary of the tiling, we define a sequence of tiles whose edges lie on the boundary. These tiles will be 1--, 2-- and 3--tiles, all of which have
at least one edge on the boundary of the tiling. We claim that the sequence of tiles cannot contain two consecutive 3--tiles. A 3--tile has one edge on the perimeter of
the tiling and lies at the end of a run in the direction of that edge. It therefore lies at the centre of a run in the direction of the two edges that are not parallel to
the boundary edge. Therefore, a tile attached to either of these edges cannot also be in the centre of a run in this direction and therefore cannot be a 3--tile. Therefore,
the sequence of tiles must contain a 1--tile or a 2--tile between each pair of 3--tiles, and hence $n_1 + n_2 \ge n_3$.
\end{proof}

Our existence result for irreducible tilings depends upon the following Lemma.
\begin{lemma}
For an irreducible tredoku tiling, $n_4=0$ or $n_4=1$.
\label{lem:n4}
\end{lemma}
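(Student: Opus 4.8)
The plan is to argue by contradiction: suppose an irreducible tredoku tiling has $n_4 \ge 2$, and show this forces the spine to be disconnected, contradicting irreducibility. The key geometric fact to exploit is that a $4$-tile sits at the centre of two runs whose directions are two of the three possible run directions (BD, BF, DF). So each $4$-tile is adjacent to four tiles, two along one run direction and two along the other, and — crucially — a $4$-tile cannot have any edge on the boundary of the tiling (all four of its edges are shared), so $4$-tiles are ``interior'' in a strong sense.

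First I would set up the local picture around a single $4$-tile $X$: its four neighbours $A,B$ (opposite, in direction one) and $C,D$ (opposite, in direction two). Each of $A,B,C,D$ shares an edge with $X$ that is the centre edge of a run through $X$, hence each of them lies at a position in that run that is \emph{not} its own run-centre in that direction; so each of $A,B,C,D$ is an end-tile of the run through $X$, meaning each is a $2$-, $3$-, or $4$-tile but with a ``used up'' direction. The point I want to extract is that the spine tile $X$ connects (in the spine, via edges) only to those of $A,B,C,D$ that are themselves spine tiles, i.e. $3$- or $4$-tiles. So I would next analyse which of $A,B,C,D$ can be $3$- or $4$-tiles and how spine-edges propagate. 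The natural object is the ``spine as a subgraph of the dual graph''; irreducibility says this subgraph is connected.

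Then I would take two distinct $4$-tiles $X$ and $Y$. Since the spine is connected, there is a path in the spine from $X$ to $Y$ through $3$- and $4$-tiles. I would try to show that this is impossible — or rather, to show that the presence of two $4$-tiles creates a cycle or a configuration that, because the tiling has no holes, must be filled entirely by $4$-tiles, and then (as in the proof of Lemma \ref{lem:lem3}) leads to a contradiction with boundary structure or with the inequality $n_1+n_2 \ge n_3$ from Lemma \ref{lem:n123}. A cleaner route may be: use equations (\ref{eq:n2})–(\ref{eq:n3}) together with $n_1 + n_2 \ge n_3$. From (\ref{eq:n3}), $n_3 = \rho - 2n_4$, and from (\ref{eq:n2}), $n_1 + 2n_2 = 2\rho - n_3 = \rho + 2n_4$. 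Combined with $n_1 + n_2 \ge n_3 = \rho - 2n_4$ one can try to squeeze out structural information when $n_4$ is large; but for $n_4 = 2$ this counting alone is unlikely to be decisive, so the geometric/topological argument about the spine seems unavoidable.

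The main obstacle I expect is the case analysis around a $4$-tile: showing that the two runs through a $4$-tile ``block'' enough further run-centres that no spine-path can leave the immediate neighbourhood of $X$ except through tiles that, chased far enough, force a hole or a contradiction. Concretely, the hard part is ruling out a ``chain'' of $4$-tiles or a $3$-tile bridge between two $4$-tiles: I would need to show that if $Z$ is a $3$- or $4$-tile adjacent to the $4$-tile $X$, then propagating the forced run-directions eventually hits the boundary in a way incompatible with $Z$ being a run-centre, or closes up a hole-free cycle of $4$-tiles whose interior cannot be tiled. Handling all relative orientations of the two runs through $X$ (and through each successive $4$-tile) is the combinatorial heart of the argument, and is where I would expect to spend most of the work; the no-holes hypothesis and the ``at most six tiles at a vertex'' bound (used already in Lemma \ref{lem:lem5}) are the tools I would lean on to close it.
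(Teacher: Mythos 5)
You have the right skeleton --- argue by contradiction, take two $4$-tiles, and use connectivity of the spine to get a path between them --- and your local analysis of a $4$-tile (all four edges shared, each neighbour is an end-tile of one of its two runs, so a $4$-tile is never at the end of any run) is exactly the relevant geometric fact. But the proposal stops short of the decisive step: you explicitly defer the ``combinatorial heart'' (ruling out a chain of $4$-tiles or a $3$-tile bridge between two $4$-tiles), anticipate a lengthy case analysis over orientations, and suggest leaning on the no-holes hypothesis, the six-tiles-at-a-vertex bound, or the counting inequality $n_1+n_2\ge n_3$ --- none of which is what closes the argument, and you yourself concede the counting route is unlikely to work for $n_4=2$. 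As written, the proof is a plan with the crucial lemma-specific argument missing.

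What actually closes it, and what is absent from your proposal, is a short local argument at the \emph{far end} of the spine path. First, two $4$-tiles cannot be adjacent (both would be centres of a run in the direction of the shared edge, forcing a run of length greater than three), so the penultimate tile $C$ in the path to the second $4$-tile $B$ is a $3$-tile. If $C$ is attached to its predecessor by edge $e$, then $C$ is at the end of the run in the direction of $e$ and at the centre of a run in the direction of its other two edges. Now $B$, being the centre of both of its runs and hence never a run-end, cannot be attached to the edge of $C$ parallel to $e$ (that would extend the run through $e$ beyond length three), nor to either of the other two edges of $C$ (both $C$ and $B$ would then be centres of runs in that direction, again giving a run that is too long). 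So $B$ is not adjacent to $C$ at all, contradicting the fact that $C$ immediately precedes $B$ on the spine path. This needs only properties P4--P5 and the definitions of $3$- and $4$-tiles --- no holes argument, no cycle-filling, and no vertex-valency bound --- so the heavy machinery you planned to deploy is unnecessary, but without some version of this step your proposal does not yet prove the lemma.
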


\begin{proof}
Suppose $n_4 > 1$ and choose two 4--tiles, $A$ and $B$. Since the tiling is irreducible, the spine of the tiling is connected and there must be a path between the two tiles.
Clearly, $A$ and $B$ cannot share an edge, because they cannot then both be at the centre of a run in that direction, so consider the penultimate tile, $C$ in the path from
$A$ to $B$. Suppose it is attached by edge $e$ to the preceding tile in the path. Tile $C$ must be at the end of a run in the direction of $e$ and therefore $B$ cannot be
attached to the edge parallel to $e$. But $C$ is also at the centre of a run in the direction of the two edges of $C$ that are not parallel to $e$, so $B$ cannot be attached
to either of these edges, because it is also be at the centre of a run in this direction. Thus a connected spine cannot contain more than one 4--tile.
\end{proof}

We can now give an existence result for irreducible tredoku tilings.
\begin{theorem}
Suppose $\mathcal{T}$ is a tredoku tiling with $\tau$ tiles and $\rho$ runs. Irreducible tilings exist for $\tau=5, \rho=2$ and for $\tau=7, \rho=3$. For any other
irreducible tiling
\begin{equation}
                   2 \rho - 3 \le \tau \le 2 \rho.
\end{equation}
Conversely, for $k = 0, 1, 2, 3$, irreducible tilings exists with $\tau=2 \rho- k$ for $\rho \ge 3+k$.
\label{thm:thm2}
\end{theorem}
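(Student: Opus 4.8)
The plan is to split the statement into the two explicit existence claims (for 5.2 and 7.3), the two-sided bound $2\rho-3\le\tau\le 2\rho$ for every other irreducible tiling, and the converse family of constructions. I would reduce the bound to two self-contained claims. Combining the counting identities~\ref{eq:n2}, \ref{eq:n3} and $n_1+n_2+n_3+n_4=\tau$ with Lemma~\ref{lem:n4} gives $n_1=2\tau-3\rho$, $n_3=\rho-2n_4$ and $n_2=2\rho+n_4-\tau$ with $n_4\in\{0,1\}$; non-negativity of $n_2$ then forces $\tau\le 2\rho+1$, with equality forcing $n_4=1$ and $n_2=0$ (the verdant case), while $\tau\ge 2\rho-3$ is just $n_2\le n_4+3$. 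So the bound follows from: (A) no verdant irreducible tiling has $\rho\ge 4$; and (B) every irreducible tiling satisfies $n_2\le n_4+3$. (For $\rho\le 6$, claim~(B) is already implied by the elementary inequality $\tau\ge 3\rho/2$ of Theorem~\ref{thm:thm1}, so only $\rho\ge 7$ needs attention there.) Both (A) and (B) are genuinely geometric, and I expect (A) to be the main obstacle.

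For (A), the opening move is combinatorial: a verdant tiling has a dual graph with $2\rho$ edges and $\tau=2\rho+1$ vertices, hence a tree, so an irreducible verdant tiling has a tree dual graph whose single degree-$4$ vertex is the unique $4$-tile $A$, whose degree-$1$ vertices are exactly the $\rho+2$ leaves, and whose remaining $\rho-2$ vertices are the $3$-tiles, which form the connected spine subtree $S$. Since $n_2=0$, every leaf is adjacent to a spine tile and every $3$-tile lies at the centre of one run and the end of exactly one other, so, after applying a symmetry that permutes the tile types, the local configuration around $A$ and around each leaf of $S$ is essentially forced. The geometric heart of the argument is that a spine tile which centres a run must carry the two opposite leaves at that run's ends; these leaves are then pinned down, and following them outward along an edge of $S$ one finds that the leaves hanging off two adjacent spine tiles are forced to share an edge (or else to overlap, violating property~P2). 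Either way a supposed leaf acquires a second neighbour, contradicting $n_2=0$. The only configurations that escape this are $S=\{A\}$, which gives 5.2, and $S$ consisting of $A$ together with a single $3$-tile, which gives 7.3; the work is in organising the case analysis, which splits according to whether the relevant edge of $S$ is a BD, BF or DF edge.

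For (B), I would first show (by an Euler-characteristic count, using that the tiled region is simply connected) that the number of interior vertices of $\mathcal T$ equals $2\rho-\tau+1=n_2-n_4+1$, which is also the cyclomatic number of the dual graph $G$; so (B) says an irreducible tiling has at most four interior vertices. A short computation also gives that the spine $S$ of an irreducible tiling has cyclomatic number $1-n_4\le 1$, so $S$ is a tree or unicyclic. The bounded faces of $G$ are exactly the ``rosettes'' of $3$, $4$, $5$ or $6$ tiles surrounding the interior vertices; property~P5 forces any rosette of size $\ge 4$ to be rigid, almost all of its edges being used internally, so two such rosettes cannot be joined through the (tree-like) spine without creating a second $4$-tile, and with $n_4\le 1$ only boundedly many can coexist; together with a count of how hexagon-rosettes can be strung along $S$ this brings the total down to four. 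This step also requires care, though less than (A).

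Finally, for the existence claims I would point to the 5.2 and 7.3 tilings of Figure~\ref{fig:exist1}, noting that since a reducible tiling is a merging of two tilings each with at least five tiles (Lemmas~\ref{lem:lem4} and~\ref{lem:lem5}) it has at least eight tiles, so both 5.2 and 7.3 are automatically irreducible. For the converse I would invoke the internal extensions of Appendix~A tabulated for $\tau=2\rho$, $2\rho-1$, $2\rho-2$ and $2\rho-3$, together with the small tilings of Figure~\ref{fig:exist1}, checking in each case that extending the construction keeps the spine connected, so that the tilings produced are genuinely irreducible.
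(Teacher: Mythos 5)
Your reduction of the theorem to the counting identities together with claim (A) (no irreducible verdant tiling for $\rho\ge 4$) and claim (B) ($n_2\le n_4+3$, equivalently $\tau\ge 2\rho-3$) is the same skeleton as the paper's proof, and your treatment of (A) -- a case analysis of how leaves can be attached around a spine consisting of one $4$--tile and at least two $3$--tiles, with 5.2 and 7.3 as the only escapes -- is essentially the paper's argument (carried out there via Figure~\ref{fig:proof2}), modulo one inaccuracy you would need to fix in the case analysis: the tiles at the ends of a run centred by a spine tile need not be leaves, since they can themselves be spine tiles, so the ``pinned leaves'' step only applies after you have split on the types of the neighbouring spine tiles. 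The existence part (5.2 and 7.3 are automatically irreducible because a merged tiling has at least $5+5-2=8$ tiles; the converse via Donald's constructions plus small examples, checking spine connectivity) also matches the paper, which cites constructions 16--23 together with dap6.3a, dap7.4a, dap8.5a and dap9.6; note that the Table~\ref{tab:exist1} extensions you cite are a different subset of Donald's constructions and are not asserted in the paper to be irreducible, so the ``easily checked'' burden you defer is not quite the one the paper discharges.

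The genuine gap is claim (B). The paper obtains $\tau\ge 2\rho-3n_4$ from the short boundary-walk Lemma~\ref{lem:n123}: walking around the boundary one meets no two consecutive $3$--tiles, hence $n_1+n_2\ge n_3$, which combined with Lemma~\ref{lem:n4} gives the lower bound in two lines. Your proposed replacement -- interior vertices equal the cyclomatic number $2\rho-\tau+1$ of the dual graph, so it suffices to show an irreducible tiling has at most four interior vertices, argued via ``rigidity'' of rosettes of size $\ge 4$ and a count of hexagon-rosettes strung along the spine -- is only a sketch, and its central assertions are unproven and, as stated, in tension with the extremal case: the irreducible 9.6 tiling (the $3\times 3$ rhombus, Figure~\ref{fig:exist1}) attains $\tau=2\rho-3$ with \emph{four} interior vertices of valency $4$, i.e.\ four size-$4$ rosettes coexisting with a single $4$--tile, so the claim that two such rosettes ``cannot be joined through the spine without creating a second $4$--tile'' is false unless heavily qualified (they may all share the one $4$--tile), and nothing in the sketch shows why, say, five valency-$3$ rosettes cannot be strung along a long spine tree. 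There is also a technical wrinkle in the Euler identity itself: if the boundary of the tiling pinches at a vertex (which properties P1--P3 do not forbid), the number of interior vertices exceeds $2\rho-\tau+1$, so even the starting equivalence needs an argument. In short, the step that carries the lower bound is not actually proved in your plan, and the mechanism you propose would need substantial repair; the paper's $n_1+n_2\ge n_3$ lemma is the missing idea.
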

\begin{proof}
We first show that there are no irreducible verdant tilings with $\rho \ge 4$. Recall that a verdant tiling has $\tau = 2 \rho + 1$ and $n_1 = \rho + 2$.
From Lemma~\ref{lem:n4}, an irreducible tiling has $n_4 = 0$ or $n_4=1$. However, for a verdant tiling, we cannot set
$n_4=0$, because this leads to $n_2 = -1$, so an irreducible verdant tiling has $n_4=1$, implying $n_2=0$ and $n_3=\rho-2$.

The verdant tilings 5.2 and 7.3 do indeed have $n_4=1$ and $n_2=0$, but we show that this is not possible for $\rho \ge 4$. The spine of such a verdant tiling would contain
at least two 3--tiles as well as the single 4--tile. Up to congruence, there are only two possible ways of connecting a 4--tile to a 3--tile, shown in the left two panels of
Figure~\ref{fig:proof2}. However, the runs of the second of these cannot be completed without adding the highlighted  2--tile. For the same reason, the only possible way of
connecting a 4--tile and two 3--tiles is shown in the third panel of Figure~\ref{fig:proof2}. But again, the runs of this tiling cannot be completed without the use of at
least one 2--tile, as shown in the right three panels of Figure~\ref{fig:proof2}. So it is impossible to construct an irreducible verdant tiling with $\rho \ge 4$ runs
without the use of 2--tiles, and therefore no such tiling exists.

\begin{figure}[h!]
   \centering
   \makebox[\textwidth][c]{\includegraphics[trim=0in 6.6in 0in 0.01in, clip, width=0.99\textwidth]{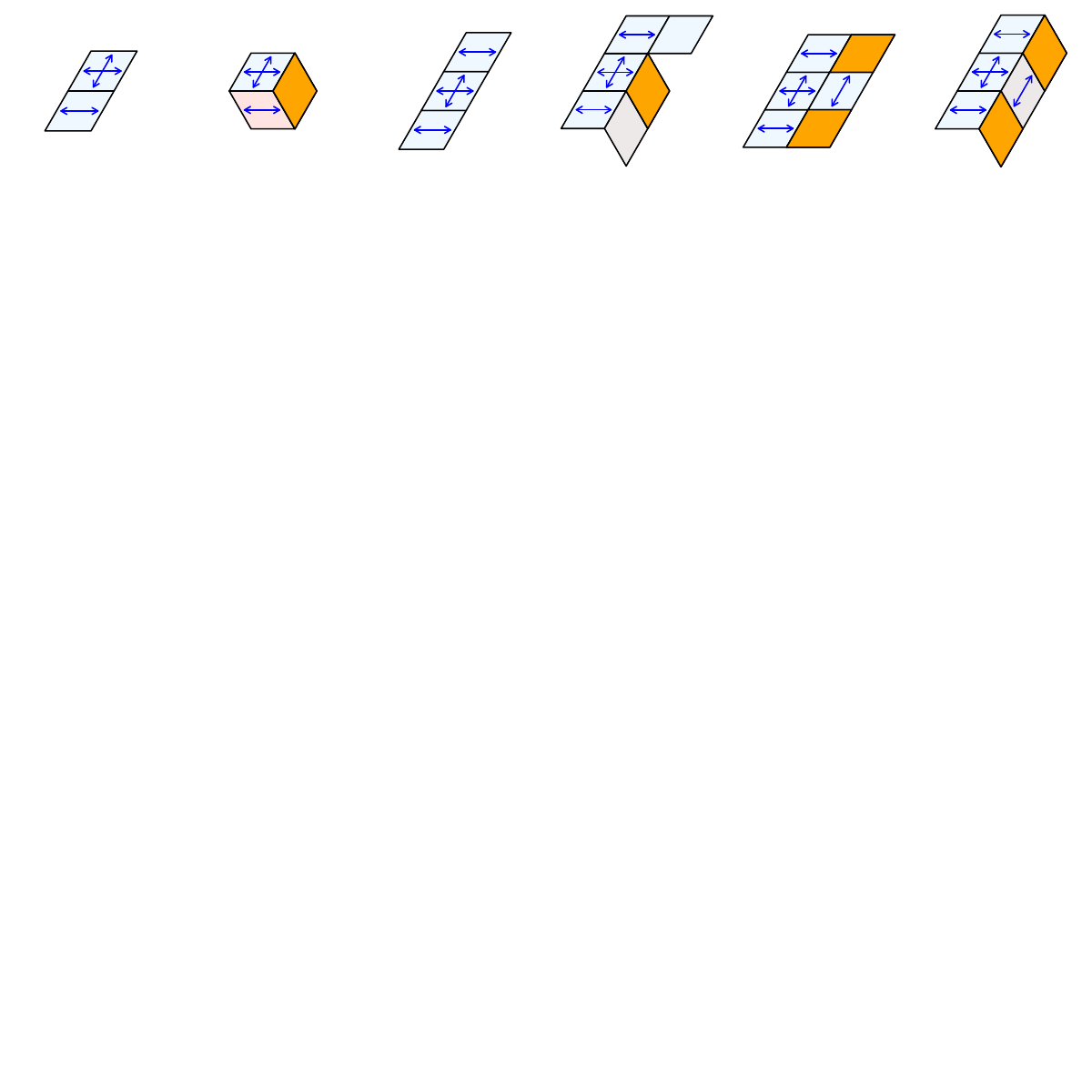}}
   \caption{Arrangements of 3--tiles and 4--tiles. Arrows indicate directions of runs. Tiles that must be 2--tiles are highlighted in orange.}
   \label{fig:proof2}
\end{figure}

Non-verdant tilings have $\tau \le 2 \rho$. From equation~(\ref{eq:n3}), $n_3 = \rho - 2 n_4$ and substituting this and $n_1 = 2 \tau - 3 \rho$ into equation~(\ref{eq:n2}),
we can solve for $n_2$. This leads to $n_1+n_2 = \tau - \rho + n_4$. From Lemma~\ref{lem:n123} we have
\[
     \tau - \rho + n_4 \ge \rho - 2 n_4.
\]
It follows that if $n_4=0$, $\tau=2 \rho$, and if $n_4=1$,
\[
    2 \rho - 3 \le \tau \le 2 \rho.
\]

The second part of the theorem follows directly from Donald's work. It is easily checked that the tilings generated by his constructions 16-23 are all irreducible.
These, together with the particular tilings dap6.3a, dap7.4a, dap8.5a and dap9.6 cover all the cases specified in the Theorem.
\end{proof}

We can now prove several key parts of Theorem~\ref{thm:thm1}.

\begin{corollary}
There is no tredoku tiling with $\tau=12$ and $\rho=8$.
\end{corollary}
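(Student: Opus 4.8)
The plan is to use Theorem~\ref{thm:thm2} to throw everything onto reducible tilings, then run a parameter count that cuts the possibilities down to a short list, and finally kill what remains by inspecting a few small tilings.

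First I would note that a hypothetical $12.8$ tiling must be reducible: since $12 = 2\cdot 8 - 4$ lies below $2\cdot 8 - 3$ and $(12,8)$ is neither $(5,2)$ nor $(7,3)$, Theorem~\ref{thm:thm2} excludes an irreducible $12.8$ tiling. So it would be obtained by merging $m\ge 2$ irreducible tredoku tilings $T_1,\dots,T_m$ with parameters $(\tau_i,\rho_i)$. Next I would set up the bookkeeping. Under the relation ``share a vertex'' the spine components form a tree: it is connected because the whole tiling is (the argument in the proof of Lemma~\ref{lem:lem3}) and acyclic because a cycle of spine components would bound a hole-free region that could only be filled with $4$--tiles, which would fuse those components. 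Hence the tiling is built by $m-1$ binary mergings, say $s$ single and $d$ double with $s+d=m-1$; a single merging identifies one tile and a double merging two, and in every case the identified tiles are leaves of their components that become $2$--tiles, no spine tile being touched. So $n_3$ and $n_4$, and therefore $\rho = n_3 + 2n_4$, are additive over the components, and counting tiles gives
\[
  \sum_{i=1}^m \rho_i = 8, \qquad \sum_{i=1}^m \tau_i \;-\; s \;-\; 2d \;=\; 12 .
\]

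The main body of the argument is then a finite enumeration. From $\rho_i\ge 2$ we get $m\le 4$, and Theorems~\ref{thm:thm1} and~\ref{thm:thm2} limit the admissible $\tau_i$: $\tau_i=5$ if $\rho_i=2$; $\tau_i\in\{6,7\}$ if $\rho_i=3$; $\tau_i\in\{7,8\}$ if $\rho_i=4$; $\tau_i\in\{8,9,10\}$ if $\rho_i=5$; $\tau_i\in\{9,10,11,12\}$ if $\rho_i=6$ (verdant tilings with $\rho_i\ge 4$ are not irreducible, by Theorem~\ref{thm:thm2}). Running through the partitions of $8$ into parts $\ge 2$, the two displayed relations force $\tau\ge 13$ for every $m\in\{3,4\}$ and for every single merging with $m=2$, so all of those are impossible. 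The only numerically possible survivors are double mergings of two components: a $5.2$ tiling with a $9.6$ tiling, a $6.3$ tiling with an $8.5$ tiling, or a $7.4$ tiling with another $7.4$ tiling. A double merging identifies two leaf tiles of each of its two components (Lemma~\ref{lem:lem5b}); but by equation~(\ref{eq:leaves}) a $9.6$ tiling has $2\cdot 9-3\cdot 6=0$ leaves and an $8.5$ tiling has $2\cdot 8-3\cdot 5=1$ leaf, so neither can be a component of a double merging. This leaves only a double merging of two $7.4$ tilings.

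To finish I would rule that out. A $7.4$ tiling has exactly $2\cdot 7-3\cdot 4=2$ leaves, so such a merging would identify both leaves of one copy with both leaves of the other, and by Lemma~\ref{lem:lem5b} (via Lemma~\ref{lem:lem3}) the two identified leaves of each copy must share a vertex; so it suffices to show that the two leaves of a $7.4$ tiling never share a vertex. Each leaf hangs off a spine tile, and the spine of a $7.4$ tiling is a $4$--tile together with two $3$--tiles arranged as in the third panel of Figure~\ref{fig:proof2}; completing this to a full $7.4$ tiling means attaching two $2$--tiles and two leaves so as to close up four runs of length three without producing a run of length four, a hole, or a second $4$--tile, and a short case analysis shows that in every completion the two leaves lie on non-incident spine tiles, or on opposite edges of a single spine tile, hence meet no common vertex. (Equivalently, one can simply check the claim on the four isomorphism classes dap7.4a--d, for instance by reading off their dual graphs.) This contradiction proves the corollary. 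I expect this last step to be the main obstacle: the reduction to reducible tilings is a direct appeal to Theorem~\ref{thm:thm2}, the parameter count is routine, and the $9.6$ and $8.5$ candidates fall to a one-line leaf count, but the $7.4$-with-$7.4$ case is the one place where the geometry of the prototile and the run conditions genuinely have to be used.
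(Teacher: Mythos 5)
Your proposal is correct and follows essentially the same route as the paper: appeal to Theorem~\ref{thm:thm2} to force reducibility, count tiles and runs over possible mergings to reduce to double mergings of 5.2 with 9.6, 6.3 with 8.5, or two 7.4 tilings, kill the first two by leaf counts, and rule out the 7.4--7.4 case by the positions of the leaves. Your explicit tree-of-spine-components bookkeeping for mergings of three or more components just spells out what the paper dismisses as ``easily checked,'' so it is a refinement of, not a departure from, the paper's argument.
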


\begin{proof}
From Theorem~\ref{thm:thm2}, any such tiling would be reducible. A merging of a $\tau_1.\rho_1$ tiling and a $\tau_2.\rho_2$ tiling has $\rho_1+\rho_2$ runs and either
$\tau_1+\tau_2-1$ tiles (single merging) or $\tau_1+\tau_2-2$ tiles (double merging). Since $\tau \ge 5$, the largest tiling that could be involved in a merging has
$\tau=9$. It is easily checked that amongst the possible mergings of tilings with $\tau \le 9$ that yield a tiling with $\tau=12$, including mergings of three 5.2
tilings or two 5.2 tilings and a 6.3 tiling, most have fewer than 8 runs. Some other combinations, such as a double merging of a 6.3 tiling and an 8.5 tiling are not
possible because the 8.5 tiling has only a single leaf. The only possibility that remains is a double merging of two 7.4 tilings. However, it is easily seen by inspection
that it is not possible to double merge tilings from any of the four isomorphism classes of 7.4 tilings, because of the positioning of their leaves. Therefore, no 12.8
tredoku tiling exists.
\end{proof}

\begin{corollary}
The only verdant tredoku tilings that exist are 5.2, 7.3, 9.4, 11.5, 13.6 and 17.8.
\end{corollary}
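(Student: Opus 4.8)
The plan is to use Theorem~\ref{thm:thm2} together with the counting relations to reduce the statement to a handful of candidate mergings, each of which is then excluded by inspection, the whole argument being an induction on $\rho$. Existence for $\rho\in\{2,3,4,5,6,8\}$ is already in hand: 5.2 and 7.3 are the irreducible verdant tilings provided by Theorem~\ref{thm:thm2}, and 9.4, 11.5, 13.6 and 17.8 appear in Figure~\ref{fig:exist1}. So the content is to prove that no verdant tiling exists for any other value of $\rho$. The base case $\rho\le 3$ is immediate: a verdant tiling then has $\tau=2\rho+1\le 7$, so it cannot be reducible (a reducible tiling is a merging of two tilings each having at least five tiles, so has $\tau\ge 8$), hence it is irreducible, and Theorem~\ref{thm:thm2} forces $\rho\in\{2,3\}$.

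For the inductive step, let $\mathcal{T}$ be verdant with $\rho\ge 4$. By Theorem~\ref{thm:thm2} it is not irreducible, hence reducible, so its spine breaks into two or more components, each the spine of a smaller irreducible tiling. For a verdant tiling the counting relations give $n_1=\rho+2$ and $n_1+n_2=\tau-\rho+n_4$, hence $n_2=n_4-1$; since each spine component contains at most one 4--tile (Lemma~\ref{lem:n4}) and the components are linked into a connected structure by the 2--tiles, it follows that there are exactly $n_4$ components and that they are joined in a tree. Peeling off a pendant component therefore expresses $\mathcal{T}$ as a single merging of that component's sub-tiling $T_1$ with the sub-tiling $T_2$ formed by everything else; comparing $\tau$ and $\rho$ shows $T_1$ and $T_2$ are both verdant, so $T_1$ is an irreducible verdant tiling, namely a 5.2 or a 7.3, and $T_2$ is verdant with $\rho_2=\rho-\rho_1\in\{\rho-2,\rho-3\}$ and $\rho_2<\rho$. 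By the induction hypothesis $\rho_2\in\{2,3,4,5,6,8\}$, which forces $\rho\le 11$; and for $\rho\in\{7,9,10,11\}$ the only decompositions not already excluded — because $T_2$ would otherwise be a (non-existent, by the induction hypothesis) verdant tiling with $\rho=7$ or $\rho=9$ — are: a 5.2 merged with an 11.5 or a 7.3 merged with a 9.4 (for $\rho=7$); a 7.3 merged with a 13.6 (for $\rho=9$); a 5.2 merged with a 17.8 (for $\rho=10$); and a 7.3 merged with a 17.8 (for $\rho=11$).

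It remains to rule out these five single mergings, and this geometric case analysis is the main obstacle. In each case one checks that, whichever leaf of $T_1$ is identified with whichever leaf of $T_2$, and however the two pieces are superimposed subject to the identified tile meeting $T_1$ and $T_2$ along non-parallel edges (necessary so that no run of length four is created at that tile), some tile of $T_1$ overlaps a tile of $T_2$, violating P2. I would do this by inspection of Figure~\ref{fig:exist1}: attaching a 5.2 (a windmill) or a 7.3 at a leaf $X$ of $T_2$ amounts to fitting a few further tiles into the exterior wedge at $X$, and in 11.5, 13.6 and 17.8 the tiling already curls around every one of its leaves too tightly to leave room — a point that can be made precise using the fact, exploited in the proof of Lemma~\ref{lem:lem5}, that at most six tiles meet at any vertex — while the two small configurations, a 7.3 with a 9.4 and a 5.2 with an 11.5, are checked directly. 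This establishes that verdant tilings exist exactly for $\rho\in\{2,3,4,5,6,8\}$; applying the same inspection to the mergings that do succeed shows in addition that the verdant tiling is unique up to isomorphism for each of these values, as remarked after Figure~\ref{fig:exist1}.
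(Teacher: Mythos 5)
Your proposal is correct and takes essentially the same route as the paper: Theorem~\ref{thm:thm2} eliminates irreducible verdant tilings with $\rho\ge 4$, the constraint $\tau\le 2\rho+1$ forces the decomposition to be a single merging of two tilings that are themselves verdant, and the argument recurses down to 5.2 and 7.3 building blocks before a finite geometric check of the possible mergings. Your explicit induction on $\rho$, with the spine-tree count $n_2=n_4-1$ and the peeling of a pendant component, is just a more detailed bookkeeping of the paper's recursion, and your sketched case analysis of the five candidate mergings plays exactly the role of the paper's ``easily checked'' verification that only 5.2--5.2, 5.2--7.3, 7.3--7.3 and 7.3--5.2--7.3 occur.
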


\begin{proof}
From Theorem~\ref{thm:thm2}, any verdant tiling other than 5.2 and 7.3 must result from merging two smaller tilings. Moreover, because of the constraint
$\tau \le 2 \rho+1$, this must be a single merging of two tredoku tilings that are themselves both verdant. Thus, all verdant tilings with $\rho \ge 4$ results
from merging 5.2 and/or 7.3 tilings. It is then easily checked that the only possible mergings are 5.2$-$5.2, 5.2$-$7.3, 7.3$-$7.3 and 7.3$-$5.2$-$7.3, which
yield the tilings stated in the corollary.
\end{proof}

Blackburn (2024) also shows that a verdant tiling with $\rho \ge 4$ must be the result of merging two smaller tilings, but by a completely different argument.

\section{Enumerating tredoku tilings}\label{sect:enum}

This section describes some computer enumerations of isomorphism classes of tredoku tilings. We use different approaches to enumerate very small tilings
($\tau \le 11$) and slightly larger tilings \mbox{($12 \le \tau \le 16$)}.

\subsection{Enumerating small tredoku tilings $(\tau \le 11)$}

The simplest way to enumerate tredoku tilings with $\tau$ tiles is to generate all possible arrangements of $\tau$ tiles and count how many of these satisfy the
tredoku properties \mbox{P1--P5}. Although this exhaustive approach is feasible only for small values of $\tau$, it is a useful starting point for more sophisticated approaches.

We build tilings sequentially, at each step adding a tile to the outer boundary of an existing tiling. A tiling constructed in this way automatically satisfies property P1.
We also require the tilings that we construct to have no overlapping tiles (property P2) and not to have any runs of length greater than 3 (property P5), otherwise the
tiling cannot be a tredoku tiling and nor can it be extended to a tredoku tiling be adding further tiles. We call a tiling of $\tau$ tiles that satisfies properties P1,
P2 and P5 a \define{fragment} of size $\tau$.

Figure~\ref{fig:count1} shows two incongruent fragments of size 2 and nine incongruent fragments of size 3. Every fragment of size 2 or 3 is congruent to one of these.
\begin{figure}[h!]
   \centering
   \makebox[\textwidth][c]{\includegraphics[trim=0in 5.3in 0in 0.0in, clip, width=0.7\textwidth]{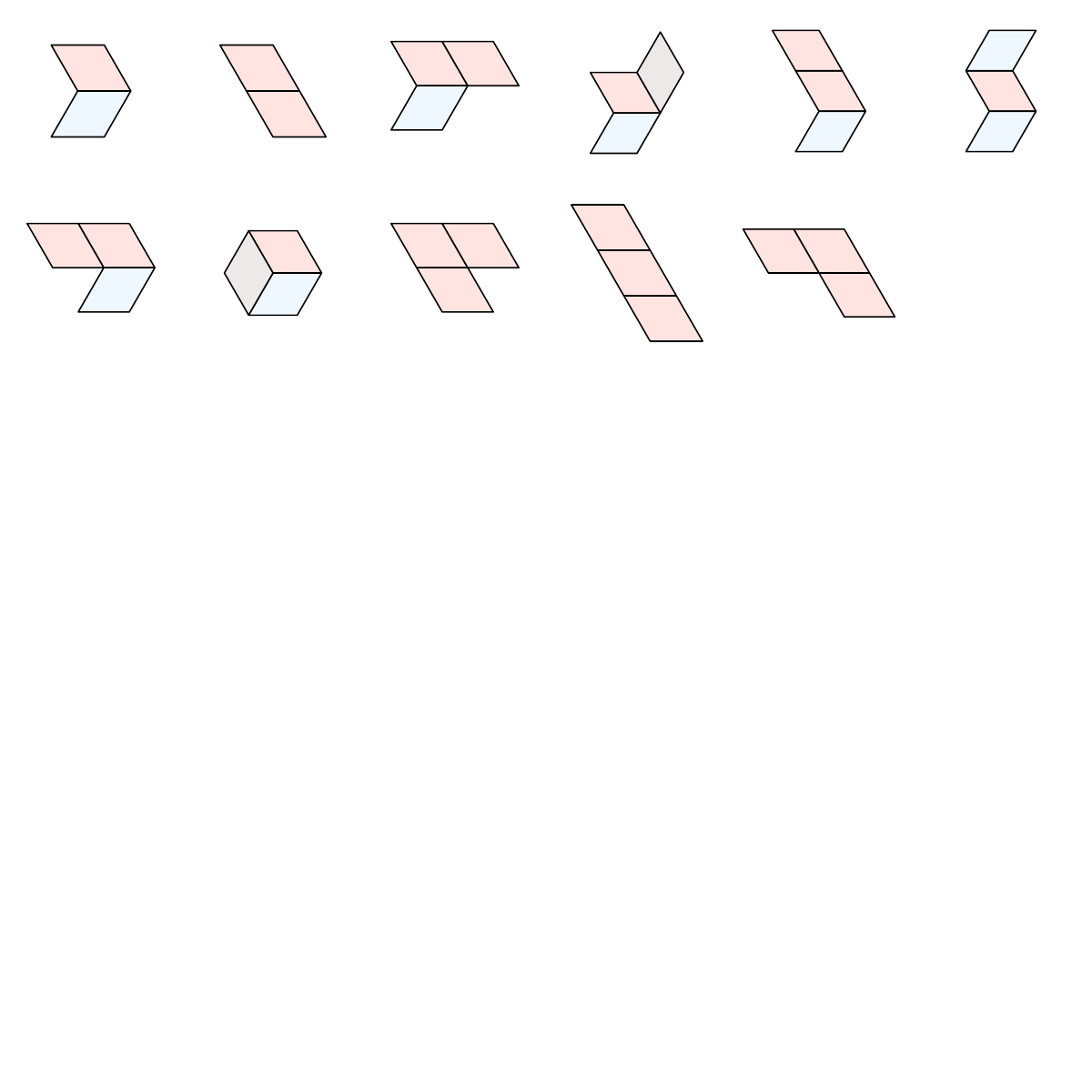}}
   \caption{The two incongruent fragments of size 2 (top row, left) and the nine incongruent fragments of size 3.}
   \label{fig:count1}
\end{figure}

Although we are ultimately interested in isomorphism classes of tilings, we need to retain all distinct incongruent to be able to generate all larger fragments.
For example, consider adding an extra tile to each of the 9 fragments of size 3 shown in Figure~\ref{fig:count1}. This generates 35 incongruent fragments of
size 4, which fall into 6 different isomorphism classes. The top row of Figure~\ref{fig:count2} shows a representative of each class. However, there are tredoku
tilings that cannot be generated from any of these six fragments by adding further tiles. An example is the tiling dap7.4d shown at the bottom right of
Figure~\ref{fig:count2}. This tiling can be generated from exactly three of the 35 incongruent fragments of size 4. These three fragments, which belong to three
different isomorphism classes, are shown in the second row of Figure~\ref{fig:count2}.

\begin{figure}[h!]
   \centering
   \makebox[\textwidth][c]{\includegraphics[trim=0in 5.2in 0in 0.0in, clip, width=0.8\textwidth]{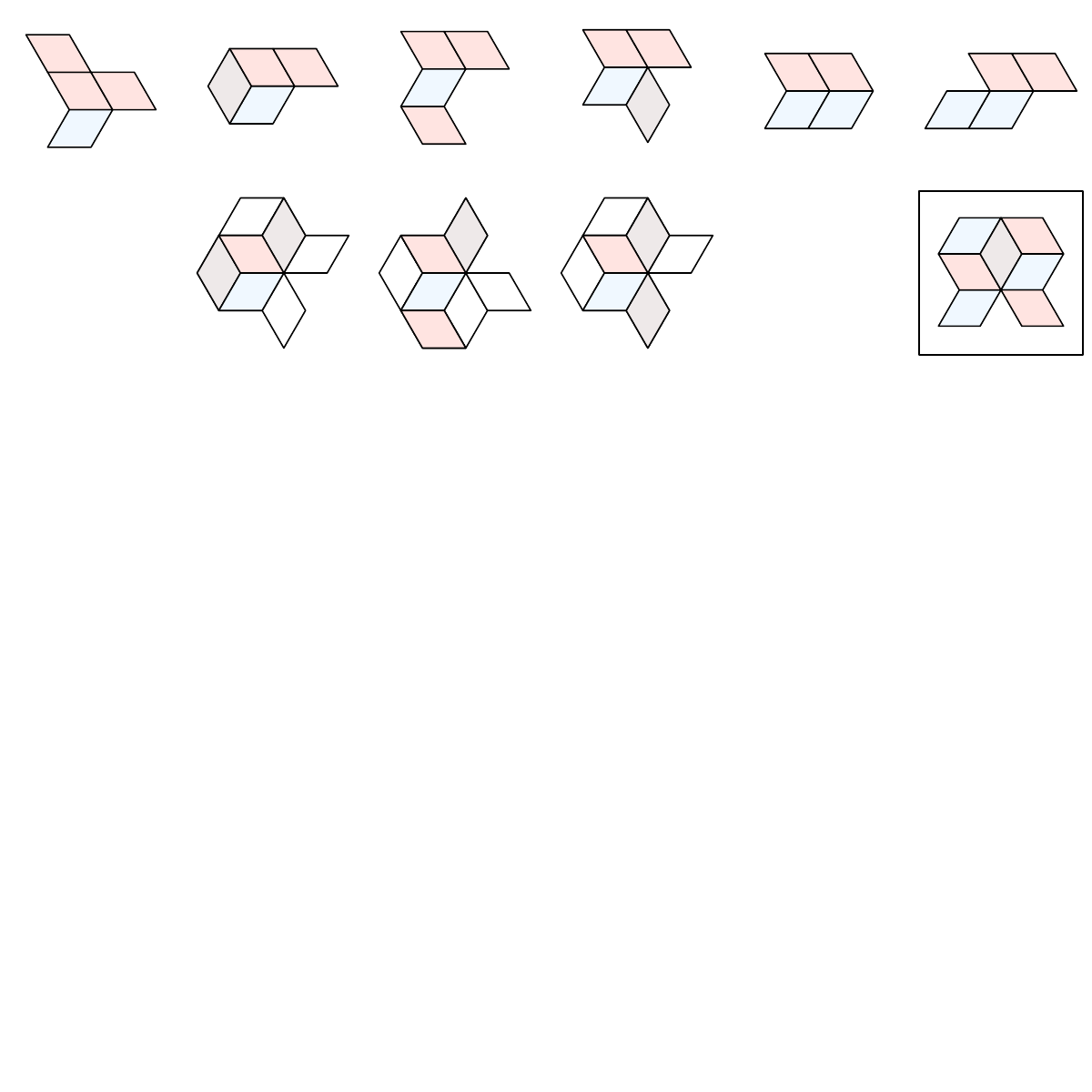}}
   \caption{The upper row shows a representative of each of the 6 isomorphism classes of fragments of size 4. The lower row shows the three fragments of size 4
            that can be extended to create the tredoku tiling dap7.4d shown in the box at the bottom right; they are each isomorphic to the fragment immediately above.}
   \label{fig:count2}
\end{figure}

We can attempt to generate a fragment of size $\tau+1$ from a fragment of size $\tau$ by adding a single tile to an edge of its outer boundary. There will always be
two distinct types of tile that can be added. For each type, we need to check whether the resulting tiling is a valid fragment.

Repeating this process systematically for all edges on the outer boundary of all fragments of size $\tau$ generates all possible fragments of size $\tau+1$. We then
reduce this set of fragments to a single representative of each congruence class. For example, the nine fragments of size 3 in Figure~\ref{fig:count2} were generated
in this way from the two fragments of size 2.

Table~\ref{tab:count1} shows the resulting number of incongruent fragments of size $\tau$ for $1 \le \tau \le 11$. For most of this range, the number of fragments of
size $\tau+1$ is very approximately five times the number of size $\tau$.

\begin{table}[h!]
\begin{center}
\caption{Results of sequential generation of incongruent fragments. Counts of tredoku tilings are the number of isomorphism classes.}
\label{tab:count1}
\begin{tabular}{rrrrr}
\hline
  & \multicolumn{1}{c}{Number of}  & \multicolumn{1}{c}{Number with}  & \multicolumn{2}{c}{Number of tredoku tilings} \\
$\tau$  & \multicolumn{1}{c}{fragments}  & \multicolumn{1}{c}{holes}  & without holes & with holes\\
\hline
 1 &  1      \ \ \ &      0 $\quad$ \ &    0 $\qquad$ \ &    0 $\qquad$ \ \\
 2 &  2      \ \ \ &      0 $\quad$ \ &    0 $\qquad$ \ &    0 $\qquad$ \ \\
 3 &  9      \ \ \ &      0 $\quad$ \ &    0 $\qquad$ \ &    0 $\qquad$ \ \\
 4 &  35     \ \ \ &      0 $\quad$ \ &    0 $\qquad$ \ &    0 $\qquad$ \ \\
 5 &  179    \ \ \ &      2 $\quad$ \ &    1 $\qquad$ \ &    0 $\qquad$ \ \\
 6 &  863    \ \ \ &     37 $\quad$ \ &    2 $\qquad$ \ &    1 $\qquad$ \ \\
 7 &  4291   \ \ \ &    368 $\quad$ \ &    5 $\qquad$ \ &    8 $\qquad$ \ \\
 8 &  21441  \ \ \ &   2869 $\quad$ \ &   10 $\qquad$ \ &    4 $\qquad$ \ \\
 9 &  109195 \ \ \ &  19839 $\quad$ \ &   16 $\qquad$ \ &    39 $\qquad$ \ \\
10 &  561972 \ \ \ & 128252 $\quad$ \ &   37 $\qquad$ \ &    62 $\qquad$ \ \\
11 & 2917524 \ \ \ & 797082 $\quad$ \ &   98 $\qquad$ \ &    666 $\qquad$ \ \\
\hline
\end{tabular}
\end{center}
\end{table}

Of course, this exhaustive generation process is extremely inefficient. Typically, each congruence class is generated many times and this incurs not only the direct
cost of repeated generation but also the subsequent cost of determining that the fragment is congruent to a fragment that has already been found and should therefore
be excluded. For enumerating polyominoes, Redelmeier (1981) avoided this problem by defining a canonical representation of polyominoes and developing an efficient
algorithm to generate each canonical polyomino exactly once, but I have been unable to find a similar canonical representation for tredoku fragments.

For $\tau \ge 5$, an increasing proportion of fragments generated in this way contains holes as shown in Table~\ref{tab:count1}. For a fragment of size $\tau$ that
contains holes, it may be possible to generate a fragment of size $\tau+1$ by adding a tile to fill or partially fill a hole. However, it is always possible to generate
such a fragment by adding a tile to the outer boundary of a (different) fragment of size $\tau$. So to generate all possible fragments, we need only consider additions
to the outer boundary.

Table~\ref{tab:count1} also shows the number of non-isomorphic fragments that are tredoku tilings, with or without holes. We provide more detailed results in
Section~\ref{sect:taule16}, where we describe how the enumeration may be extended beyond $\tau=11$, using ideas from Donald's work.

\subsection{Evaluating larger tredoku tilings ($\tau \le 16)$}\label{sect:taule16}

We can extend the enumeration of tredoku tilings somewhat by generating reducible and irreducible tilings separately.

\subsubsection{Generating irreducible tilings}

The basic approach is illustrated in Figure~\ref{fig:genirr1}. We start with a fragment of size $\sigma$ that has no holes and has at most one 4--tile, and may therefore
be the spine of one or more irreducible tilings. We then generate all tredoku tilings which have this fragment as their spine, by adding tiles to the boundary of the
fragment in all possible configurations. Although we are ultimately interested only in non-isomorphic tilings, at this stage we retain all incongruent tilings to use
in constructing reducible tilings. In the example, this generates the four tredoku tilings shown in Figure~\ref{fig:genirr1}, two incongruent versions of dap8.5a,
dap9.5b and a 10.5 tiling that Donald did not discover.\\[1ex]
\begin{figure}[htb!]
   \centering
   \makebox[\textwidth][c]{\includegraphics[trim=0in 6.5in 0in 0.01in, clip, width=1\textwidth]{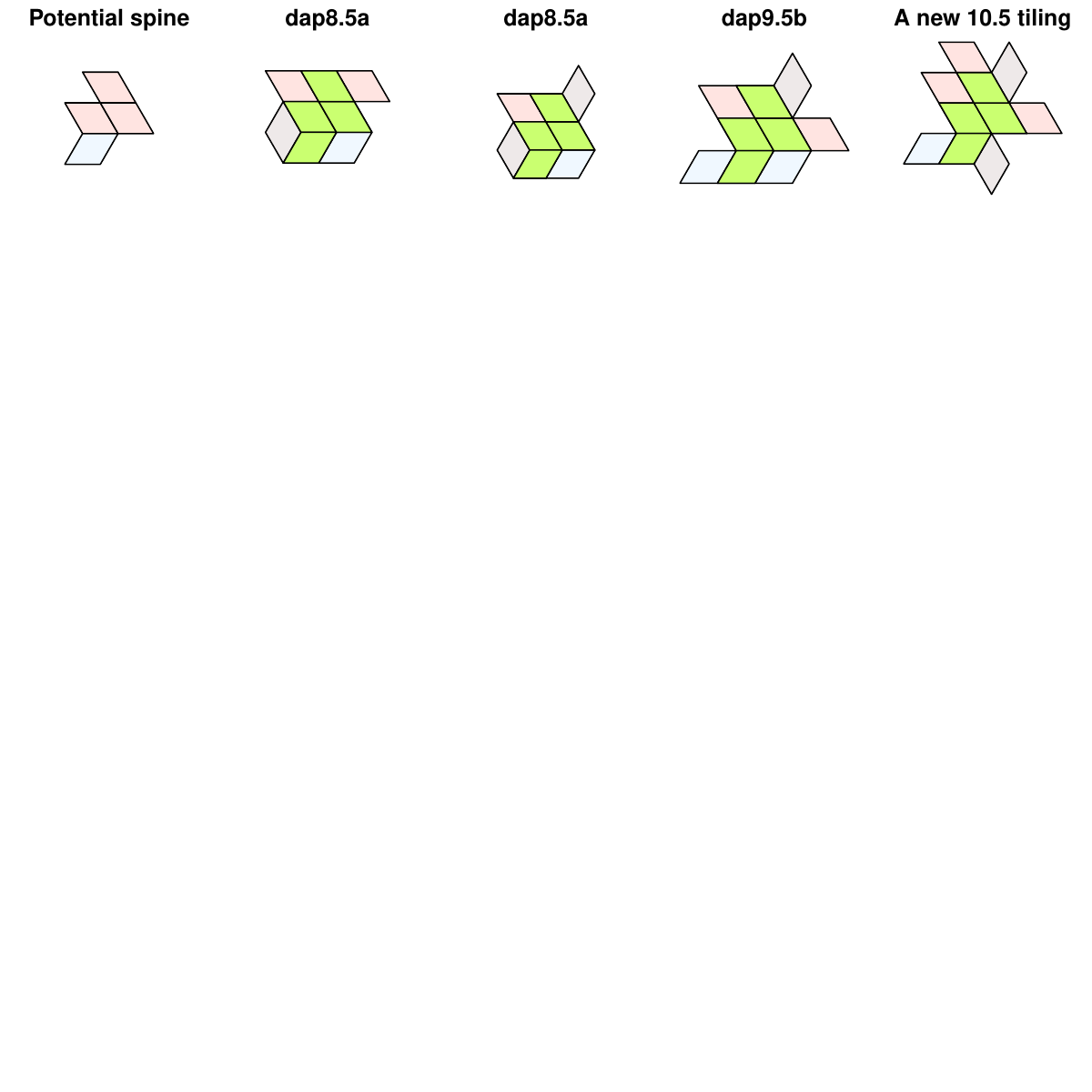}}
   \caption{The four tredoku tilings whose spine is the fragment shown in the left panel.}
   \label{fig:genirr1}
\end{figure}

The following Lemma enables us to determine the range of spine sizes that we need to consider to generate all tredoku tilings up to a given size.
\begin{lemma}
Suppose $\mathcal{T}$ is an irreducible tredoku tiling with $\tau$ tiles and $\rho$ runs that has a spine of size $\sigma \ge 5$ tiles. Then either $\rho=\sigma$ and
$\tau=2\sigma$ or $\rho=\sigma+1$ and $2\sigma -1 \le \tau \le 2\sigma + 2 $.
\end{lemma}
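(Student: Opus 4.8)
The plan is to translate the hypothesis about the spine size into the language of the tile counts $n_i$ and then read off the conclusion from the linear relations already established, together with Lemma~\ref{lem:n4} and the bound $n_1+n_2\ge n_3$.

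The first step is the identification $\sigma = n_3 + n_4$, since the spine is by definition the set of $3$-- and $4$--tiles. Because $\mathcal{T}$ is irreducible, Lemma~\ref{lem:n4} gives $n_4\in\{0,1\}$, and equation~(\ref{eq:n3}) gives $n_3=\rho-2n_4$, so $\sigma=\rho-n_4$. This already produces the two alternatives in the statement: $n_4=0$ forces $\rho=\sigma$, and $n_4=1$ forces $\rho=\sigma+1$.

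Next I would pin down $\tau$ in each case. From the computation in the proof of Theorem~\ref{thm:thm2} one has $n_1+n_2=\tau-\rho+n_4$; combined with the inequality $n_1+n_2\ge n_3=\rho-2n_4$ this gives $\tau\ge 2\rho-3n_4$, i.e.\ $\tau\ge 2\rho$ when $n_4=0$ and $\tau\ge 2\rho-3$ when $n_4=1$. For the matching upper bound I need $\tau\le 2\rho$, i.e.\ that $\mathcal{T}$ is not verdant. This is where the hypothesis $\sigma\ge 5$ is used: by Theorem~\ref{thm:thm2} the only irreducible verdant tilings are 5.2 and 7.3, and for both of these $n_4=1$ and $n_3=\rho-2\le 1$, so $\sigma=n_3+n_4\le 2<5$. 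Hence $\sigma\ge 5$ forces $\mathcal{T}$ to be non-verdant and $\tau\le 2\rho$. Substituting $\rho=\sigma$ in the case $n_4=0$ gives $\tau=2\sigma$, and substituting $\rho=\sigma+1$ in the case $n_4=1$ gives $2(\sigma+1)-3\le\tau\le 2(\sigma+1)$, that is $2\sigma-1\le\tau\le 2\sigma+2$.

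The only point requiring care is the verdant case: the inequality $\tau\le 2\rho$ simply fails for verdant tilings, so the argument genuinely relies on knowing from Theorem~\ref{thm:thm2} that an irreducible tiling whose spine has at least five tiles cannot be verdant. Everything else is elementary manipulation of equations~(\ref{eq:n2}), (\ref{eq:n3}) and the identity $n_1+n_2+n_3+n_4=\tau$.
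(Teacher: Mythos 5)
Your proof is correct and takes essentially the same route as the paper: both identify $\sigma=n_3+n_4$, combine Lemma~\ref{lem:n4} with equations~(\ref{eq:n2})--(\ref{eq:n3}) and Lemma~\ref{lem:n123}, and invoke Theorem~\ref{thm:thm2} to rule out the verdant possibility. The only difference is bookkeeping: the paper expresses the bounds as $0<n_2\le 4$ (with $\tau=2\sigma+3-n_2$), whereas you bound $\tau$ directly via $\tau\ge 2\rho-3n_4$ and $\tau\le 2\rho$, which is equivalent.
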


\begin{proof}
Since the tiling is irreducible, it has either $n_4=0$ or $n_4=1$. If $n_4=0$  then $n_3=\sigma$ and hence $\rho=n_3+2 n_4=\sigma$. Also, from equation~(\ref{eq:n2})
and Lemma~\ref{lem:n123},
\[
   n_2 = n_1+2 n_2 - (n_1+n_2) \le n_1 + 2 n_2 - n_3 = 0.
\]
Hence $n_2=0$ and $n_1=\sigma$, giving $\tau=2\sigma$. So if the spine has no 4--tiles, the only tiling that can be generated has $\tau=2 \sigma$ and $\rho=\sigma$.

Similarly, if $n_4=1$, we find that $n_3=\sigma-1$ and hence $\rho=\sigma+1$, $n_1=\sigma+3-2n_2$, $\tau=2\sigma+3-n_2$ and $n_2\le 4$. We also need $n_2 > 0$,
otherwise the tiling would have $2 \rho+1$ tiles, with $\rho \ge 6$, in contradiction of Theorem~\ref{thm:thm2}. Hence, $2\sigma -1 \le \tau \le 2\sigma + 2 $.
\end{proof}

In particular, it follows from this Lemma that the smallest irreducible tredoku tiling that can be generated from a spine of size 9 has 17 tiles. Therefore, to enumerate
irreducible tredoku tilings of up to 16 tiles, it is sufficient to consider spines consisting of 8 tiles or fewer.

\subsubsection{Generating reducible tilings}

We generate reducible tredoku tilings consisting of $\tau$ tiles by considering systematically all possible single and double mergings of smaller tilings.
We elaborate on the procedure for single merging;  the procedure for double merging is similar.

We first identify the range of possible tiling sizes that can be merged. If $\mathcal{T}_{1}$ and $\mathcal{T}_{2}$ are tredoku tilings consisting of $\tau_1$ and $\tau_2$
tiles respectively then single merging these tilings gives  a tiling with $\tau_1+\tau_2-1$ tiles. We may assume $\tau_1 \le \tau_2$ without loss of generality. We therefore
need to consider all pairs $(\tau_1, \tau_2)$ where, $5 \le \tau_1 \le \tau_2$ and $\tau_1+\tau_2-1 = \tau$. For each such pair we need to consider in turn all possible
tredoku tilings $\mathcal{T}_{1}$ and $\mathcal{T}_{2}$, and attempt to combine them by single mergeing.

To describe the procedure in detail, we need to introduce the idea of a \define{leaf-variant} tiling that is implicit in some of the discussion above. Whenever a tile is
attached to the edge of another tile, there is always an alternative tile type that could potentially be attached. For example, either a B or an F tile can be attached to
the lower edge of an F tile, whilst either a D or an F tile can be attached to its right hand edge. A leaf-variant tiling is created by replacing some of the leaf tiles by
their alternative type, in such a way that the variant remains a valid tredoku tiling. Therefore, if there are $\lambda$ leaf tiles, there are at most $2^\lambda$
leaf-variant tilings.

Figure~\ref{fig:genred1} provides two examples. The top row shows the tilings dap6.3a and dap8.4e. They have 3 and 4 leaves respectively. The second row shows the four
leaf-variant tilings of dap6.3a. Other modifications of the leaf tiles do not lead to valid tilings. All four variants are isomorphic.

\begin{figure}[h!]
   \centering
   \makebox[\textwidth][c]{\includegraphics[trim=0in 2.9in 0in 0.01in, clip, width=0.92\textwidth]{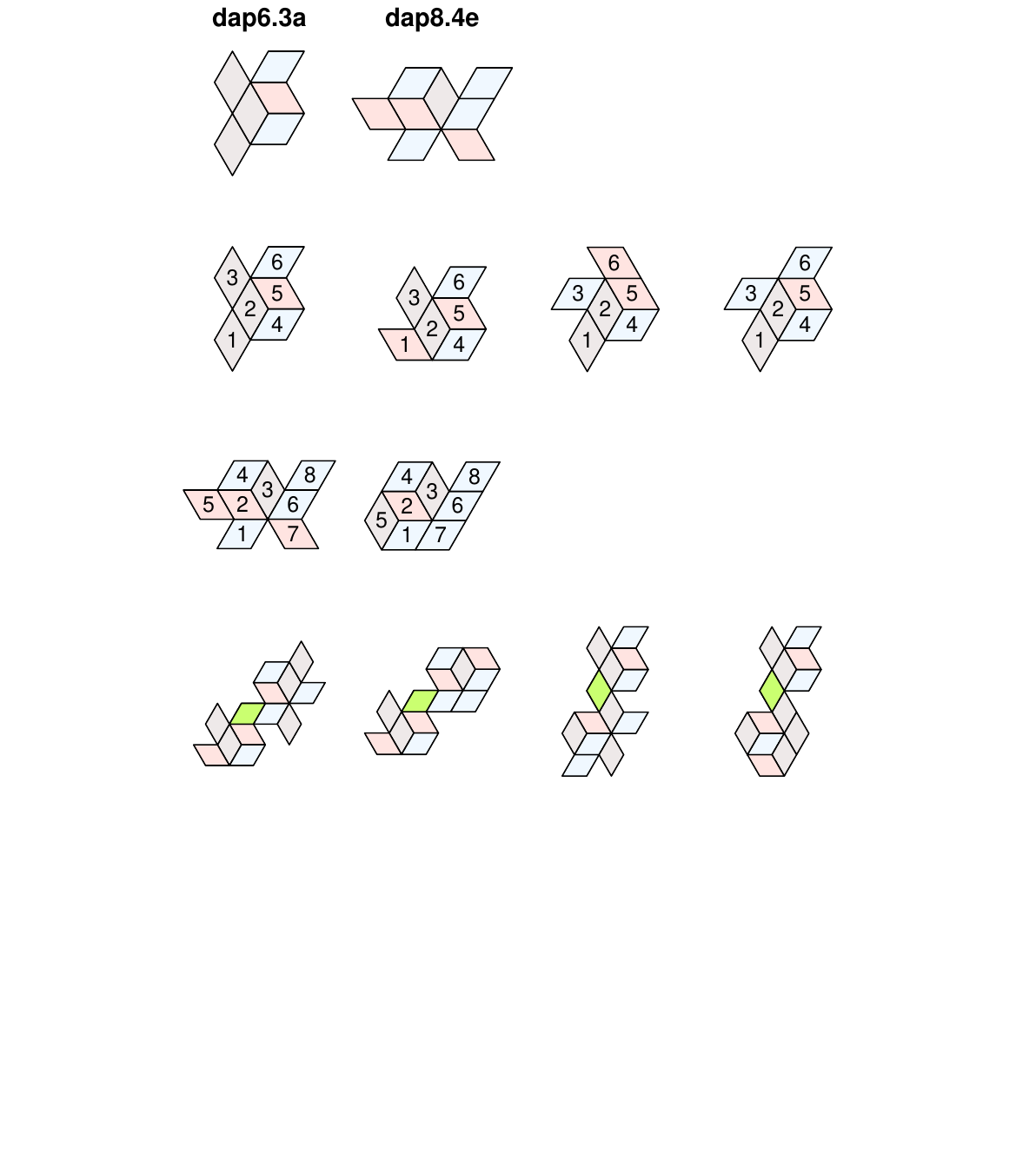}}
   \caption{The top row shows the tilings dap6.3a and dap8.4e. The next two rows show the leaf-variants of these tilings. The bottom row shows the four non-isomorphic
            13.7 tredoku tilings generated by
            single merging dap6.3a and dap8.4e.}
   \label{fig:genred1}
\end{figure}

Although it has more leaves, dap8.4e has only two leaf-variants. These are shown in the third row of Figure~\ref{fig:genred1} and comprise the original tiling along with
the tiling dap8.5c.

The bottom row of Figure~\ref{fig:genred1} shows the four non-isomorphic 13.7 tredoku tilings that can result from a single merging of dap6.3a and dap8.4e.
More specifically, for each leaf-variant of dap6.3a and each leaf-variant of dap8.4e, we  check whether it is possible to overlap a tile of the leaf-variant of
dap6.3a with a tile of the leaf-variant of dap8.4e to generate a valid 13.7 tredoku tiling. To fully explore all possibilities, it is necessary to consider
all twelve isometries of one of the leaf-variants.

The whole procedure yields twelve tredoku tilings. However, these belong to just four isomorphism classes and the bottom row of Figure~\ref{fig:genred1} shows one
tiling from each class. None of these tilings is isomorphic to any of the four 13.7 tilings that Donald found.

\subsection{Results of enumeration}

The complete enumeration of tredoku tilings with up to 16 tiles is shown in Table~\ref{tab:count2}. The smallest reducible tiling is dap8.4a (see Figure~\ref{fig:rungraph2}
later) and irreducible tilings predominate until $\tau=10$. For $\tau=11$ there are equal numbers of reducible and irreducible tilings, and thereafter irreducible tilings
predominate. The number of reducible tilings increases monotonically with $\tau$, but the number of irreducible tilings does not.

\begin{table}[h!]
\begin{center}
\caption{The number of isomorphism classes of irreducible (I) and reducible (R) tredoku tilings containing 16 or fewer tiles. An entry such as 4/6 indicates that there are 6 isomorphism
         classes, of which Donald found 4.}
\label{tab:count2}
\begin{tabular}{ccrrrrrrrrrr}
\hline
Number of        & & \multicolumn{9}{c}{Number of runs, $\rho$} &  \\
tiles, $\tau$ & Type & 2 & 3 & 4 & 5 & 6 & 7 & 8 & 9 & 10 & Total \\
\hline
 5 & I &  1/1 & \add{0} & & & & &\phantom{314} &\phantom{314} & \phantom{314} & 1 \\
 5 & R &  0 & \add{0} & & & & & & &  & 0 \\ [1ex]
 6 & I &     & 2/2 & \add{0} & & & & & &  & 2 \\
 6 & R &     & 0 & \add{0} & & & & & &  & 0 \\ [1ex]
 7 & I &     & 1/1 & 4/4 & & & & & &  & 5 \\
 7 & R &     & 0 & 0 & & & & & &  & 0 \\ [1ex]
 8 & I &     &  & 6/6 & 3/3 & & & & &  & 9 \\
 8 & R &     &  & 1/1 & 0 & & & & &  & 1 \\ [1ex]
 9 & I &     &  & 0 & 9/12 & 1/1 & & & &  & 13 \\
 9 & R &     &  & 1/1 & 2/2 & 0 & & & &  & 3 \\ [1ex]
10 & I &     &  & & 3/8 & 13/15 & & & &  & 23 \\
10 & R &     &  & & 4/6 & 8/8 & & & &  & 14 \\ [1ex]
11 & I &     &  & & 0 & 1/22 & 1/1 & & &  & 23 \\
11 & R &     &  & & 1/1 & 5/22 & 0 & & &  & 23 \\ [1ex]
12 & I &     &  & & & 14 & 30 & \add{0} & &  & 44 \\
12 & R &     &  & & & 23 & 31 & \add{0} & &  & 54 \\ [1ex]
13 & I &     &  & & & 0 & 20 & 8 & &  & 28 \\
13 & R &     &  & & & 1 & 115 & 31 & &  & 147 \\[1ex]
14 & I &     &  & & &   & 3 & 29 & 0 &  & 32 \\
14 & R &     &  & & &  & 52 & 263 & 18 &  & 333 \\ [1ex]
15 & I &     &  & & &  &  \add{0} & 17 & 4 & 0    & 21 \\
15 & R &     &  & & &  &  \add{0} & 445 & 355 & 6 & 806 \\ [1ex]
16 & I &    &  & & &  &  &  4 & 35  & 0 & 39 \\
16 & R &   \phantom{314}  & \phantom{314} & \phantom{314}&\phantom{314} &\phantom{314}  &\phantom{314}  & 84 & 1345 & 314 & 1743 \\ [0.8ex]
\hline
\end{tabular}
\end{center}
\end{table}
\ \\[-4ex]
Although it is not clear to what extent Donald attempted to find all small non-isomorphic tilings, he succeeded in identifying 59/71 isomorphism classes with $\tau \le 10$,
a remarkable achievement for someone armed only with a pencil and a pad of isometric graph paper. For $\tau > 10$, he seems to have been content to find a few representative
examples of $\tau.\rho$ tilings. For example, he identified the unique isomorphism classes for 11.5 and 11.7 tilings, but listed only six of the forty four 11.6 tilings.

\section{Run graphs}\label{sect:rungraphs}

In this section we introduce a new type of graph associated with a tredoku tiling or fragment, the \emph{run graph}. Later in this section, we use run graphs to give another
proof of the non-existence of 5.3, 6.4 and 12.8 tredoku tilings.

The run graph of a tiling is the graph in which the vertices represent the runs of the tiling and there is an edge joining any two \emph{distinct} vertices for which the
corresponding runs have a tile in common. Distinctness ensures that the run graph is a simple graph.

Because a tredoku tiling is connected, every run has a tile in common with at least one other run, so its run graph is connected. Equally, each tile of a run can lie in at most
one other run, so the maximum degree of any vertex is 3. Since two distinct runs can have at most one tile in common, each edge of the run graph corresponds to a unique
tile in the tiling. Leaf tiles lie in only a single run and are therefore not represented in the run graph. Consequently, removing the leaves from a tredoku tiling leaves a
fragment that has the same run graph as the original tiling.

As an example, Figure~\ref{fig:rungraph1} shows the tiling dap15.8e and its run graph. The figure also shows a \emph{coloured run graph}, in which the vertices are coloured to
indicate the direction of the run that they represent. Since distinct runs that share a tile necessarily lie in different directions, this is a proper vertex colouring of the
run graph. In addition, we may colour the edges of a run graph according to the colours of the vertices that they join; the edge colours indicate the different tiles types.

\begin{figure}[h!]
   \centering
   \makebox[\textwidth][c]{\includegraphics[trim=0in 6.6in 0in 0.01in, clip, width=0.8\textwidth]{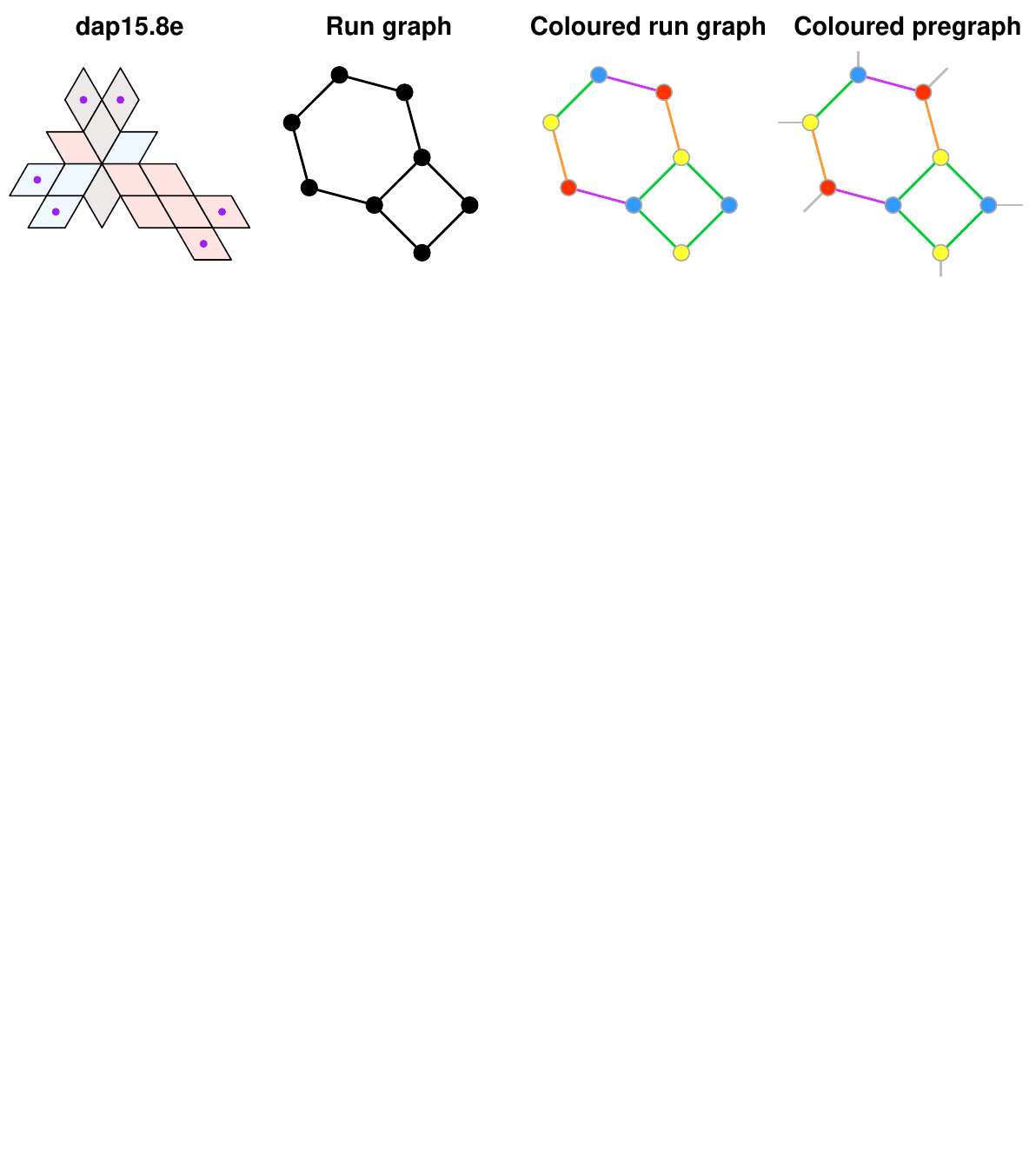}}
   \caption{The tiling dap15.8e and its associated run graph. The figure also shows a version with the vertices coloured to represent directions of runs. In the pregraph, semi-edges
            are added to represent the leaf tiles.}
   \label{fig:rungraph1}
\end{figure}

In the rungraph of dap15.8e, two vertices have degree 3 but the remainder have degree 2. We may add a semi-edge to each of the degree 2 vertices to give what is sometimes called a
\emph{pregraph} (Brinkmann, Van Cleemput \& Pisanski, 2013), as shown in the right hand panel of Figure~\ref{fig:rungraph1}. The semi-edges correspond to the leaf tiles in the tiling.
The semi-edges in Figure~\ref{fig:rungraph1} are coloured in grey because the type of a leaf tile is not always uniquely determined.

In the pregraph, all vertices have degree 3, so it is a \emph{cubic} (3-regular) pregraph. In particular, the run graph of a leafless tiling is a cubic graph. The number of
cubic pregraphs is the sequence A243393 in OEIS. However, not all cubic pregraphs are run graphs of tredoku tilings. Table~\ref{tab:npreg} shows the number of pregraphs with up to
8 vertices that are run graphs of tredoku tilings.
\begin{table}[h!]
\begin{center}
\caption{The number of cubic pregraphs with $n$ vertices (OEIS sequence A243393) and the number of these that are the run graph of at least one tredoku tiling, $R_n$.}
\label{tab:npreg}
\begin{tabular}{lrrrrrrr}
\hline
$n$  & 2 & 3 & 4 & 5 & 6 & 7 & 8 \\
\hline\\[-2ex]
A243393 & \phantom{11}1 & \phantom{11}2 &\phantom{11}6 & \phantom{1}10 & \phantom{1}29 & \phantom{1}64 & 194 \\
$R_n$ & 1 & 2 & 4 & 9 & 24 & 33 & 85 \\
\hline
\end{tabular}
\end{center}
\end{table}
\vspace*{-\baselineskip}
\ \\
The following simple lemma relates the parameters of a tredoku tiling to the parameters of its run graph and allows us to characterise the extreme tilings.
\begin{lemma}
Suppose that a run graph with $\rho$ vertices has $\epsilon$ edges. Then it represents a $\tau.\rho$ tiling, with $\tau = 3\rho-\epsilon$ and $\lambda = 3\rho-2\epsilon$ leaves.
\label{lem:rungraph1}
\end{lemma}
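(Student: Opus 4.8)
The plan is to combine a bijection between edges of the run graph and non--leaf tiles with the leaf--count formula~(\ref{eq:leaves}) that is already established. Both ingredients are essentially present in the discussion preceding the lemma, so the proof should be short, but it is worth spelling out.

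First I would establish that the edges of the run graph are in bijection with the non--leaf tiles of the tiling, i.e.\ the 2--, 3-- and 4--tiles. By Lemma~\ref{lem:type2} a 2--tile lies at the ends of two runs, a 3--tile lies at the end of one run and the centre of another, and a 4--tile lies at the centre of two runs; in every case a non--leaf tile belongs to exactly two \emph{distinct} runs and hence determines an edge of the run graph. Conversely, any edge joins two distinct runs that share a tile, and that shared tile lies in two runs, so it is not a leaf. The correspondence is injective because two distinct runs share at most one tile, so two distinct non--leaf tiles cannot yield the same edge. Therefore $\epsilon = \tau - \lambda$.

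Next I would invoke the leaf formula $\lambda = 2\tau - 3\rho$ from equation~(\ref{eq:leaves}), which itself comes from counting tile--run incidences ($3\rho$ in total, each leaf contributing $1$ and each non--leaf contributing $2$). Eliminating $\lambda$ between $\epsilon = \tau - \lambda$ and $\lambda = 2\tau - 3\rho$ gives $\tau - \epsilon = 2\tau - 3\rho$, hence $\tau = 3\rho - \epsilon$, and substituting back yields $\lambda = \tau - \epsilon = 3\rho - 2\epsilon$. Since the run graph has $\rho$ vertices the tiling has $\rho$ runs, completing the identification of all the parameters.

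The only point requiring care — the nearest thing to an obstacle — is checking that the edge/non--leaf--tile correspondence is genuinely a bijection: that a 2--tile really does lie in two runs (precisely Lemma~\ref{lem:type2}), and that distinctness of runs together with the ``at most one shared tile'' property make the map both well defined and injective. Everything after that is bookkeeping with the two linear relations.
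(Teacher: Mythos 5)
Your proof is correct and amounts to essentially the same counting argument as the paper's: the paper sums degrees in the pregraph (semi-edges correspond to leaves) to read off $\lambda = 3\rho-2\epsilon$ directly, while you obtain the same relations by pairing edges with non-leaf tiles ($\epsilon = \tau - \lambda$) and eliminating against the already-established $\lambda = 2\tau - 3\rho$ — the same double count in slightly different packaging. The bijection you take care to verify is exactly the correspondence the paper records just before the lemma (each edge corresponds to a unique shared tile, and leaves are not represented), so nothing is missing.
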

\begin{proof}
The sum of the degrees of the vertices is $2\epsilon$ and the number of semi-edges of the pregraph is therefore
$\lambda = 3\rho-2\epsilon$. It follows that the tiling has $\tau = 3\rho-\epsilon$ tiles.
\end{proof}
\begin{corollary}
Suppose $G$ is the run graph of a tiling $T$. Then
\begin{itemize}
\item[(a)] $T$ is a leafless tiling if and only if $G$ is a cubic graph.
\item[(b)] $T$ is a verdant tiling, with $\tau = 2 \rho + 1$, if and only if $G$ is a tree.
\end{itemize}
\end{corollary}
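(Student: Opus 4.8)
The plan is to derive both equivalences directly from Lemma~\ref{lem:rungraph1}, which gives $\tau = 3\rho - \epsilon$ and $\lambda = 3\rho - 2\epsilon$ in terms of the number of vertices $\rho$ and edges $\epsilon$ of the run graph $G$, together with two structural facts about run graphs established when they were introduced: $G$ is connected, and every vertex of $G$ has degree at most $3$.

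For part (a), note that $T$ is leafless precisely when $\lambda = 0$, and by Lemma~\ref{lem:rungraph1} this is equivalent to $3\rho - 2\epsilon = 0$, i.e. $2\epsilon = 3\rho$. Since $2\epsilon = \sum_{v} \deg(v)$ and each vertex has degree at most $3$, the equality $\sum_{v}\deg(v) = 3\rho$ forces $\deg(v) = 3$ for every vertex, so $G$ is cubic. Conversely, if $G$ is cubic then $2\epsilon = 3\rho$, whence $\lambda = 3\rho - 2\epsilon = 0$ and $T$ is leafless. The only point needing care is the forward direction, where the degree bound is genuinely used to exclude non-constant degree sequences of average $3$.

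For part (b), I would use that $G$ is connected. By Lemma~\ref{lem:rungraph1}, $T$ is verdant, i.e. $\tau = 2\rho + 1$, exactly when $3\rho - \epsilon = 2\rho + 1$, that is $\epsilon = \rho - 1$. A connected graph on $\rho$ vertices with $\rho - 1$ edges is a tree; conversely a tree on $\rho$ vertices has exactly $\rho - 1$ edges, giving $\tau = 3\rho - (\rho - 1) = 2\rho + 1$. Hence $T$ is verdant if and only if $G$ is a tree.

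I do not anticipate a substantive obstacle: the corollary is essentially bookkeeping built on Lemma~\ref{lem:rungraph1}. The main thing to get right is to \emph{cite} rather than re-prove the two auxiliary properties (connectedness of the run graph and the maximum-degree-$3$ property), both of which were argued at the point where run graphs were defined.
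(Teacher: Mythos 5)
Your proof is correct and follows exactly the route the paper intends: the corollary is stated without proof as an immediate consequence of Lemma~\ref{lem:rungraph1} combined with the connectivity and maximum-degree-$3$ properties of run graphs noted at their introduction, which is precisely the bookkeeping you carry out. Nothing further is needed.
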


\subsection{Relationship to isomorphism}

The following lemma shows that, although isomorphic tilings have isomorphic run graphs, one cannot determine whether two tilings are isomorphic by inspecting their run graphs.
\begin{lemma}
Suppose $T_1$ and $T_2$ are tredoku tilings with run graphs $G_1$ and $G_2$ and coloured run graphs $H_1$ and $H_2$. Then
\begin{itemize}
\item[(a)] If $T_1$ and $T_2$ are isomorphic tilings then $G_1$ and $G_2$ are isomorphic graphs. However, $H_1$ and $H_2$ need not be isomorphic coloured graphs.
\item[(b)] If $T_1$ and $T_2$ are not isomorphic then $G_1$ and $G_2$ may nonetheless be isomorphic graphs and $H_1$ and $H_2$ may be isomorphic coloured graphs.
\end{itemize}
\end{lemma}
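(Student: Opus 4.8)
The plan is to prove the one genuine implication in (a) directly, and to dispose of the three ``need not''/``may'' assertions by exhibiting explicit examples taken from tilings already discussed in the paper.

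For the implication in (a): a combinatorial isomorphism $\phi$ of $T_1$ onto $T_2$ is a bijection of tiles, edges and vertices preserving all incidence relations. The first thing to record is that the notion of a run is purely combinatorial, because two edges of a lozenge are parallel precisely when they are \emph{opposite}, i.e.\ share no vertex. Thus a run $A$--$B$--$C$ with centre $B$ is characterised by the combinatorial conditions ``$A,B$ share an edge $p$; $B,C$ share an edge $q$; $p$ and $q$ are the two non-adjacent edges of $B$'', all of which $\phi$ preserves. Hence $\phi$ carries runs to runs; applying the same argument to $\phi^{-1}$ shows that $\phi$ induces a bijection between the runs of $T_1$ and those of $T_2$, and since ``runs $r$ and $s$ share a tile'' is mapped to ``$\phi(r)$ and $\phi(s)$ share a tile'', this bijection is an isomorphism $G_1\to G_2$. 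I expect this step to be routine; the only real care needed is the ``parallel $=$ opposite'' translation and the observation that a tiling isomorphism preserves which tiles are adjacent.

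For the second sentence of (a), take $T_1 = \mathrm{dap}5.2$ and let $T_2$ be $T_1$ rotated by $60^{\circ}$ about its centroid. Being congruent, $T_1$ and $T_2$ are isomorphic tilings, but a $60^{\circ}$ rotation cyclically permutes the three tile types and hence the three run directions BD, BF, DF. By Lemma~\ref{lem:rungraph1} the run graph of a $5.2$ tiling is a single edge joining its two runs; in $H_1$ the two ends carry two of the three directions, and in $H_2$ they carry the cyclically shifted pair, a different set of two colours. Since a coloured-graph isomorphism must preserve colours, $H_1\not\cong H_2$. (Any tiling in which the three directions occur in unequal numbers works equally well.)

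For (b), recall from the discussion of flip-equivalence that $\mathrm{dap}7.4\mathrm{a}$ and $\mathrm{dap}7.4\mathrm{c}$ are non-isomorphic. For \emph{any} $7.4$ tiling, Lemma~\ref{lem:rungraph1} gives a run graph with $\rho=4$ vertices and $\epsilon=5$ edges; as a run graph is simple, connected, and of maximum degree $3$, the only possibility is $K_4$ with one edge deleted, so $G_1\cong G_2$. For the coloured version, observe that in $K_4$ minus an edge the two degree-$3$ vertices must get distinct colours and the two degree-$2$ vertices must both get the remaining colour; hence the coloured run graph of a $7.4$ tiling is determined up to which direction is ``doubled'', and rotating one of the two tilings by an appropriate multiple of $60^{\circ}$ cycles the doubled direction without changing the tiling's isomorphism class. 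After such a rotation $H_1$ and $H_2$ coincide while $T_1\not\cong T_2$, which proves (b). The main obstacle in this lemma is not depth but precision: making the combinatorial characterisation of runs airtight in (a), and checking the two short counting facts (uniqueness of the $7.4$ run graph, and uniqueness of its proper $3$-colouring up to the cyclic rotation action on directions) in (b).
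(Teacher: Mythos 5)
Your proof of the positive implication in (a) coincides with the paper's, just spelled out in more detail: the paper simply observes that isomorphic tilings have identical incidence relations, hence the same runs and the same run graph, whereas you additionally justify why a run is an incidence-theoretic notion (parallel edges of a lozenge are exactly the opposite, vertex-disjoint ones), which is a worthwhile extra step. Where you differ is in the example parts. For the second claim of (a) the paper argues, exactly as you do, that the run directions of isomorphic tilings need not agree, and then points to the pairs of 9.6 and 15.9 tilings in Figure~\ref{fig:rungraph6}; your dap5.2-rotated-by-$60^{\circ}$ example is a legitimate minimal instance of the same idea, but be aware that it works only under the convention -- the one the paper's own proof uses -- that an isomorphism of coloured run graphs must preserve the actual direction labels; if colours only had to be preserved up to a permutation, a rotated copy of the same tiling would cease to be a counterexample and one would need examples of the kind shown in Figure~\ref{fig:rungraph6}. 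For (b) the paper exhibits the five end-swapped 12.6 tilings of Figure~\ref{fig:rungraph7}, which share literally the same coloured run graph; you instead use the non-isomorphic flip-related pair dap7.4a, dap7.4c, together with the correct observations that any 7.4 run graph must be $K_4$ minus an edge (four vertices and, by Lemma~\ref{lem:rungraph1}, $\epsilon=3\rho-\tau=5$ edges) and that its proper $3$-colouring forces the two non-adjacent runs to carry the same, doubled direction, so that after rotating one of the tilings the two coloured run graphs are isomorphic. Both routes are valid; yours has the merit of being checkable without consulting the figures, at the cost of the small counting arguments and of invoking the earlier fact that dap7.4a and dap7.4c are not isomorphic.
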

\begin{proof}
If $T_1$ and $T_2$ are isomorphic then by definition their tiles can be labelled in such a way that their incidence relations are identical, and hence they have the same runs
and therefore the same run graph. However, the tile types and run directions need not be the same and hence their coloured run graphs may differ. For part (b),
Figure~\ref{fig:rungraph7} shows several non-isomorphic tilings that have the same coloured run graph.
\end{proof}

Figure~\ref{fig:rungraph6} shows two examples of isomorphic tilings whose coloured run graphs are not isomorphic. The upper row shows two 9.6 tilings that are clearly
isomorphic. The fact that the two 15.9 tilings shown in the lower row are isomorphic is less obvious, but is indicated by the numbering of the tiles.

\begin{figure}[hbt!]
   \centering
   \makebox[\textwidth][c]{\includegraphics[trim=0in 4.3in 0in 0.01in, clip, width=0.9\textwidth]{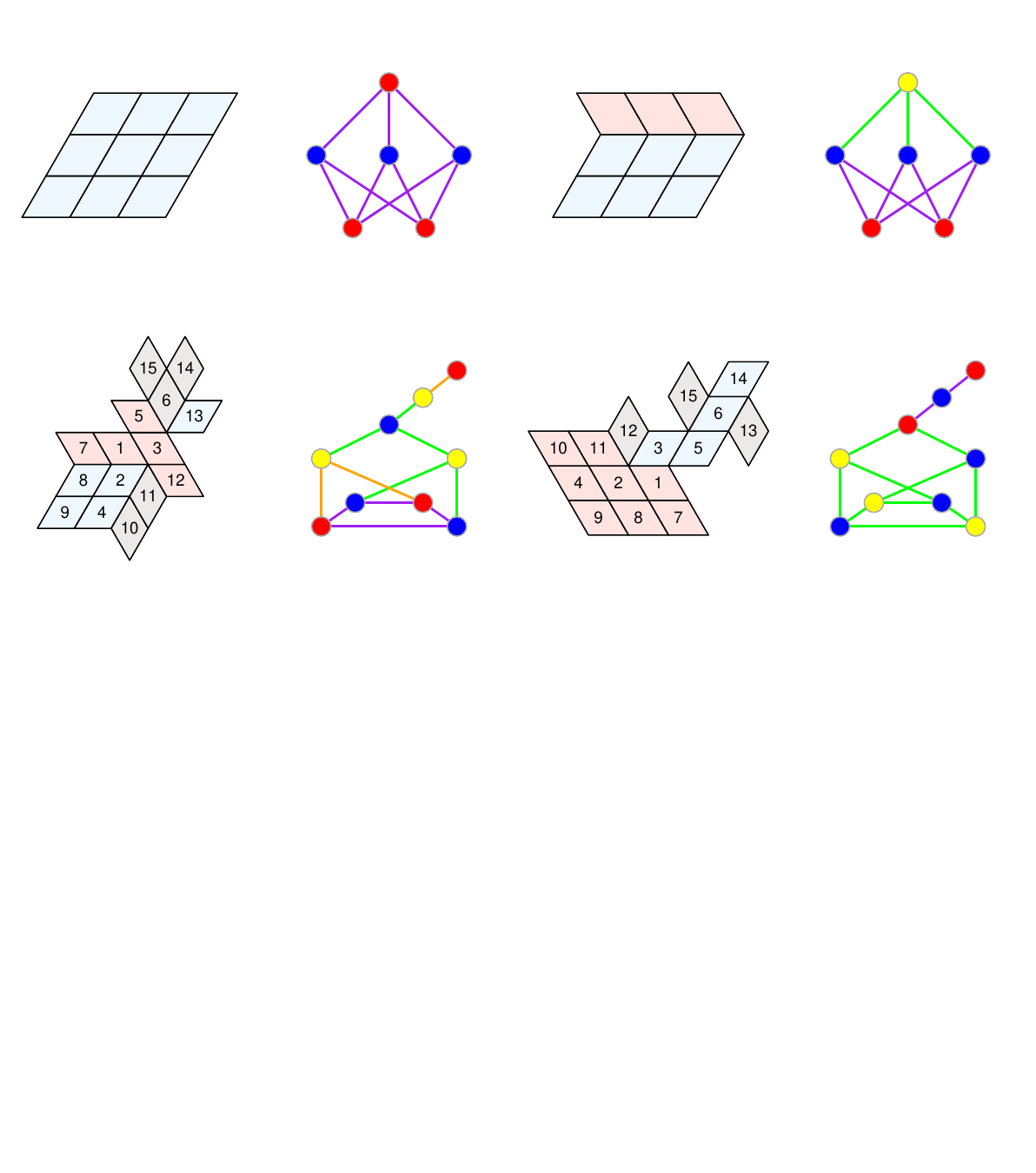}}
   \caption{Two examples of isomorphic tredoku tilings that have coloured run graphs that are not isomorphic. The top row shows two 9.6 tilings and the bottom row two 15.9 tilings.}
   \label{fig:rungraph6}
\end{figure}
In Figure~\ref{fig:rungraph7}, each tiling is obtained from the tiling on its left by moving the highlighted leaf tile to the opposite end of its run, an operation that
we term \emph{end swapping}. Clearly, end swapping does not affect either the direction of the run or its overlap with other runs and the coloured run graph of the tiling
is therefore unchanged. The resulting tiling may or may not be isomorphic to the original tiling. In his notes, Donald occasionally highlighted examples in which a new
non-isomorphic tiling could be generated by end swapping, although he doesn't seem to have made extensive use of this method to construct new tilings.

\begin{figure}[hbt!]
   \centering
   \makebox[\textwidth][c]{\includegraphics[trim=0in 7.3in 0in 0.01in, clip, width=0.99\textwidth]{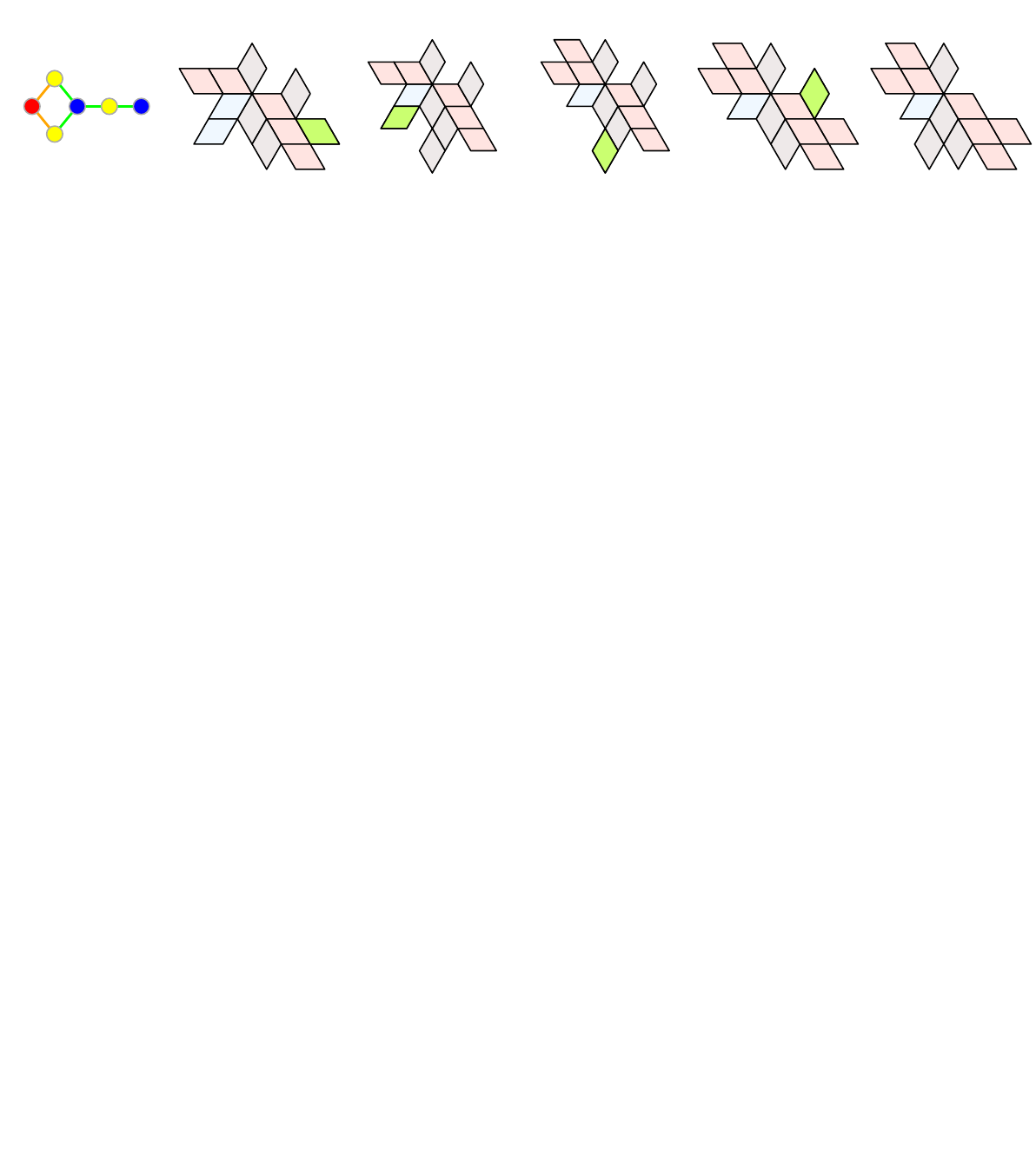}}
   \caption{Five non-isomorphic 12.6 tredoku tilings that have the same coloured run graph. Each tiling is obtained from the tiling on its left by moving the highlighted leaf tile to the
   opposite end of its run.}
   \label{fig:rungraph7}
\end{figure}

Another operation that does not affect the run graph of a tiling, but may lead to a new non-isomorphic tiling, is the flip operation discussed in Section~\ref{sect:flip}.
For example, each of the four 7.4 tilings shown in Figure~\ref{fig:fig2pt3} can be generated from any of the other tilings by a combination of flipping and/or end swapping,
and therefore all four tilings have the same run graph.

\subsection{Reducibility}

In this section we explore the connection between run graphs and reducibility.

We make use of the notion of a \emph{weak tredoku tiling} from Blackburn (2024)\footnote{Blackburn uses the term tredoku \emph{pattern} rather than tredoku tiling.};
a weak tredoku tiling is a tiling that satisfies all of the defining properties of a tredoku tiling except perhaps P3. Unlike a fragment, a weak tredoku tiling cannot
have incomplete runs of length 2.

Recall that a \define{bridge} in a connected graph is an edge whose removal separates the graph into two connected components. More generally, suppose there is a set of
$k$ edges in a connected graph, no two of which have a common endpoint, whose removal separates the graph into two connected components. We call such a
set of edges a \mbox{\emph{$k$-bridge}.} Thus, a \mbox{$k$-bridge} is an edge cut set in which none of the edges share an endpoint. This requirement is motivated by
the fact that in a double merging of two tilings, the two merged tiles do not lie in the same run in either of the component tilings (Lemma~\ref{lem:lem5b}). It implies
that for any $k$-bridge with $k>1$, each of the two separated components contains at least two vertices.

\begin{lemma}
If $T$ is a reducible tredoku tiling, resulting from a single (double) merging of two tredoku tilings $T_1$ and $T_2$, its run graph $G$ has a bridge ($2$-bridge). The
bridge corresponds to the merged leaves of $T_1$ and $T_2$, and the connected components $G_1$ and $G_2$ that result from removing the bridge from $G$ are the run graphs
of $T_1$ and $T_2$.
\label{lemm:runred1}
\end{lemma}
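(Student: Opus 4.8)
The plan is to work directly with the merging structure and track how the runs of $T$ — hence the vertices and edges of its run graph $G$ — are inherited from $T_1$ and $T_2$; write $G_1,G_2$ for the run graphs of $T_1,T_2$. I would first treat a single merging, where $T_1$ and $T_2$ share exactly one tile $X$, a leaf of each. The two facts to establish about runs are: (a) every run of $T$ is a run of $T_1$ or a run of $T_2$, and no run of either $T_i$ is lengthened in $T$; and (b) the runs of $T_1$ and of $T_2$ are distinct as vertices of $G$. Point (b) is immediate, since a run has three tiles whereas $T_1\cap T_2=\{X\}$. For point (a) I would argue that a run of $T$ meeting both $T_1\setminus\{X\}$ and $T_2\setminus\{X\}$ must pass through $X$ with $X$ at its centre, which forces the $T_1$-neighbour and $T_2$-neighbour of $X$ onto parallel edges of $X$; but then the two length-three runs through $X$ in $T_1$ and in $T_2$ fuse into a run of length five in $T$, contradicting P5; the same observation shows no run of $T_i$ is extended in $T$. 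Consequently the vertex set of $G$ is the disjoint union of those of $G_1$ and $G_2$.

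Next I would read off the edges. In $T$ the tile $X$ is adjacent exactly to its (distinct) $T_1$- and $T_2$-neighbours, so $X$ is a $2$--tile; by Lemma~\ref{lem:type2} it lies at the ends of exactly two runs, which must be the run $r_1$ of $T_1$ and the run $r_2$ of $T_2$ that previously contained it. Hence $X$ contributes to $G$ the single edge $r_1r_2$, joining a vertex of $G_1$ to one of $G_2$; in $G_1$ and $G_2$ the leaf $X$ contributed no edge, and every other tile contributes to $G$ exactly the edges it contributes within whichever $G_i$ it lies in. So $G=G_1\cup G_2\cup\{r_1r_2\}$ with $G_1,G_2$ on disjoint vertex sets and both connected; therefore $r_1r_2$ is the unique edge between them, it is a bridge corresponding to the merged leaf tile $X$, and deleting it leaves precisely the components $G_1$ and $G_2$. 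For a double merging the argument is the same, with two shared leaf tiles $X,X'$ (not adjacent to each other, by connectedness of $T_1$ and $T_2$ and $\tau\ge 5$) becoming $2$--tiles that contribute edges $r_1r_2$ and $r_1'r_2'$, the primed runs being those through $X'$; these are the only two edges of $G$ between $G_1$ and $G_2$, and by Lemma~\ref{lem:lem5b} we have $r_1\ne r_1'$ in $T_1$ and $r_2\ne r_2'$ in $T_2$, so the two edges share no endpoint and form a $2$-bridge in the sense defined above, whose removal again leaves $G_1$ and $G_2$.

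I expect the only genuinely delicate point to be step (a): checking that the merging neither creates a new run nor lengthens an existing one — the place where P5 for $T$ is really used — together with verifying that the shared tiles become $2$--tiles so that Lemma~\ref{lem:type2} (and, for double merging, Lemma~\ref{lem:lem5b}) applies. Everything else is bookkeeping about which tile of $T$ contributes which edge of the run graph.
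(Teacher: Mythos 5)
Your proposal is correct and follows essentially the same route as the paper, whose one-line proof simply observes that merging the leaf tiles amounts to combining a semi-edge of $G_1$ with a semi-edge of $G_2$ into an edge of $G$; your argument is a fleshed-out version of this, with the merged leaf becoming a $2$--tile that contributes exactly the bridging edge. Your additional check via Lemma~\ref{lem:lem5b} that the two edges of a double merging share no endpoint matches the paper's own motivation for its definition of a $k$-bridge.
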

\begin{proof}
This follows from the fact that merging a leaf tile of $T_1$ with a leaf tile of $T_2$ corresponds to combining a semi-edge from each of their respective run graphs,
$G_1$ and $G_2$, to create an edge in the run graph, $G$, of the combined tiling.

\end{proof}
The following lemma gives a partial converse:
\begin{lemma}
If $G$ is the run graph of a tiling $T$ and has a bridge ($2$-bridge) that splits $G$ into connected components $G_1$ and $G_2$, then $T$ is a single (double) merging
of two \emph{weak} tredoku tilings, $T_1$ and $T_2$, whose run graphs are $G_1$ and $G_2$.
\label{lemm:runred2}
\end{lemma}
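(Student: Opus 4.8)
The plan is to run Lemma~\ref{lemm:runred1} in reverse: the bridge (or $2$-bridge) of $G$ names the tile (or tiles) of $T$ along which $T$ is to be cut, cutting $G$ there partitions the runs of $T$ into two families, and I would show that each family reassembles into a weak tredoku tiling whose run graph is the corresponding component of $G$.

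Take the single-merging case first. Let $e$ be the bridge, let $X$ be the unique tile of $T$ that $e$ represents, and for $i=1,2$ let $R_i$ be the set of runs of $T$ that are vertices of $G_i$. Define $T_i$ to be the sub-tiling of $T$ consisting of every tile that occurs in some run of $R_i$, with tiles placed exactly as in $T$. Since $T$ is connected with at least two tiles, every tile of $T$ occurs in a run (by P4), and every run is a vertex of $G_1$ or of $G_2$, so $T_1\cup T_2=T$. A tile occurring both in a run of $R_1$ and in a run of $R_2$ lies in two distinct runs joined by an edge of $G$ that crosses between $G_1$ and $G_2$; as $e$ is the only such edge, that tile is $X$, so $T_1\cap T_2=\{X\}$ and $T$ is precisely the single merging of $T_1$ and $T_2$ along $X$. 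It remains to check that each $T_i$ is a weak tredoku tiling with run graph $G_i$.

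Properties P2 and P5 pass to $T_i$ directly: its tiles are a non-overlapping subset of those of $T$, and a run of length $\ge 4$ in $T_i$ would be such a run in $T$. Property P1 holds because the runs of $R_i$ are linked through shared tiles (using that $G_i$ is connected) and the tiles within a run are edge-adjacent. The decisive point is P4, equivalently that $T_i$ has no incomplete run of length $2$: I would show that if tiles $A,B\in T_i$ share an edge $p$, then the length-$3$ run $\mu$ of $T$ through $p$ already belongs to $R_i$, so its third tile lies in $T_i$. Indeed $A$ occurs in some run $r_A\in R_i$; if $r_A=\mu$ we are done, and otherwise $A$ occurs in the two distinct runs $r_A$ and $\mu$, joined by the edge of $G$ representing $A$. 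If $A\ne X$ that edge is not the bridge, hence has both endpoints on the side of $r_A$, so $\mu\in R_i$. If $A=X$, then $\mu$ would be the run of $X$ lying in $R_j$ with $j\ne i$, which makes $B$ a neighbour of $X$ occurring only in runs of $R_j$, contradicting $B\in T_i$. So P4 holds, and with P5 every maximal run of $T_i$ has length exactly $3$. A short bookkeeping step then identifies the maximal runs of $T_i$ with the members of $R_i$, shows that two of them meet in a tile of $T_i$ exactly when the corresponding vertices are adjacent in $G_i$, and matches the leaf tiles of $T_i$ with the semi-edges attached to $G_i$; hence the run graph of $T_i$ is $G_i$.

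The double-merging case is the same argument applied to a $2$-bridge $\{e,e'\}$ representing two tiles $X,Y$: because $e$ and $e'$ share no endpoint, the four runs of $X$ and $Y$ are distinct, so $T_1\cap T_2=\{X,Y\}$ with $X$ and $Y$ lying in different runs of each $T_i$, and $T$ is the double merging of $T_1$ and $T_2$. I expect the main obstacle to be exactly the verification of P4 for $T_i$ — the one place where the hypothesis that the cut set is a \emph{bridge} (and not merely an edge cut) is genuinely used — together with the subsidiary point that restricting to $T_i$ cannot create a hole; the latter I would settle by noting that a hole of $T_i$ would, in the hole-free tiling $T$, have to be filled entirely by tiles of $T_j\setminus T_i$ enclosed by a ring of tiles of $T_i$, and ruling that out from the run structure around the ring.
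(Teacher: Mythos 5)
Your proposal is correct and follows essentially the same route as the paper: split the runs of $T$ into the two families given by the components of $G$ after deleting the bridge ($2$-bridge), take the induced sub-tilings $T_1$, $T_2$, and check that each is a weak tredoku tiling whose run graph is the corresponding component. The paper's own proof is only a two-sentence sketch, so your careful verification of P4 for $T_i$ (every shared edge completes to a run inside $T_i$, because the only tiles lying in runs of both families are those corresponding to the bridge edges) and your attention to the no-holes point merely supply details the paper leaves implicit.
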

\begin{proof}
The edges that comprise the bridge in $G$ become semi-edges in the component graphs $G_1$ and $G_2$. Because $T$ is a tredoku tiling, the component tilings $T_1$ and
$T_2$ are each edge connected and have runs of length 3, and are therefore weak tredoku tilings.
\end{proof}
Recall that we defined $T$ to be a reducible tiling if $T_1$ and $T_2$ are themselves both tredoku tilings. It follows from Theorem 5 of Blackburn (2024) that this is
the case if and only if the tile(s) that correspond to the bridge ($2$-bridge) of $G$ are leaves of both $T_1$ and $T_2$.

Figures~\ref{fig:rungraph2} and~\ref{fig:rungraph3} show by example that a run graph may correspond to a mixture of reducible and irreducible tilings. The first row of
Figure~\ref{fig:rungraph2} shows four of the seven isomorphism classes of 8.4 tredoku tilings. These have a run graph that has a double bridge that splits the tilings
into two components, each of which is a 5.2 tiling. As shown in the next two rows, dap8.4a is a double merging of two 5.2 tredoku tilings, and is therefore reducible.
The remaining tilings are irreducible, dap8.4b and dap8.4c are double mergings of a tredoku tiling and a weak tredoku tiling and dap8.4d is a double merging of two
(congruent) weak tredoku tilings.

The last row of Figure~\ref{fig:rungraph2} shows the three remaining isomorphism classes of 8.4 tredoku tilings. These have a run graph that has a single bridge, so
that each 8.4 tiling is a single merging of a 6.3 tredoku tiling and a (necessarily) weak 3.1 tredoku tiling, the single run of three tiles that appears at the right
had end of each tiling. Hence, all of these tilings are irreducible.

Figure~\ref{fig:rungraph3} shows a slightly more complex example, a 14.8 tiling that has two possible bridges. The first of these splits the tiling into two tredoku
tilings, implying that the tiling is reducible. The second bridge splits the tiling into a tredoku tiling and a weak tredoku tiling.

\begin{figure}[h!]
   \centering
   \makebox[\textwidth][c]{\includegraphics[trim=0in 3in 0in 0.01in, clip, width=0.99\textwidth]{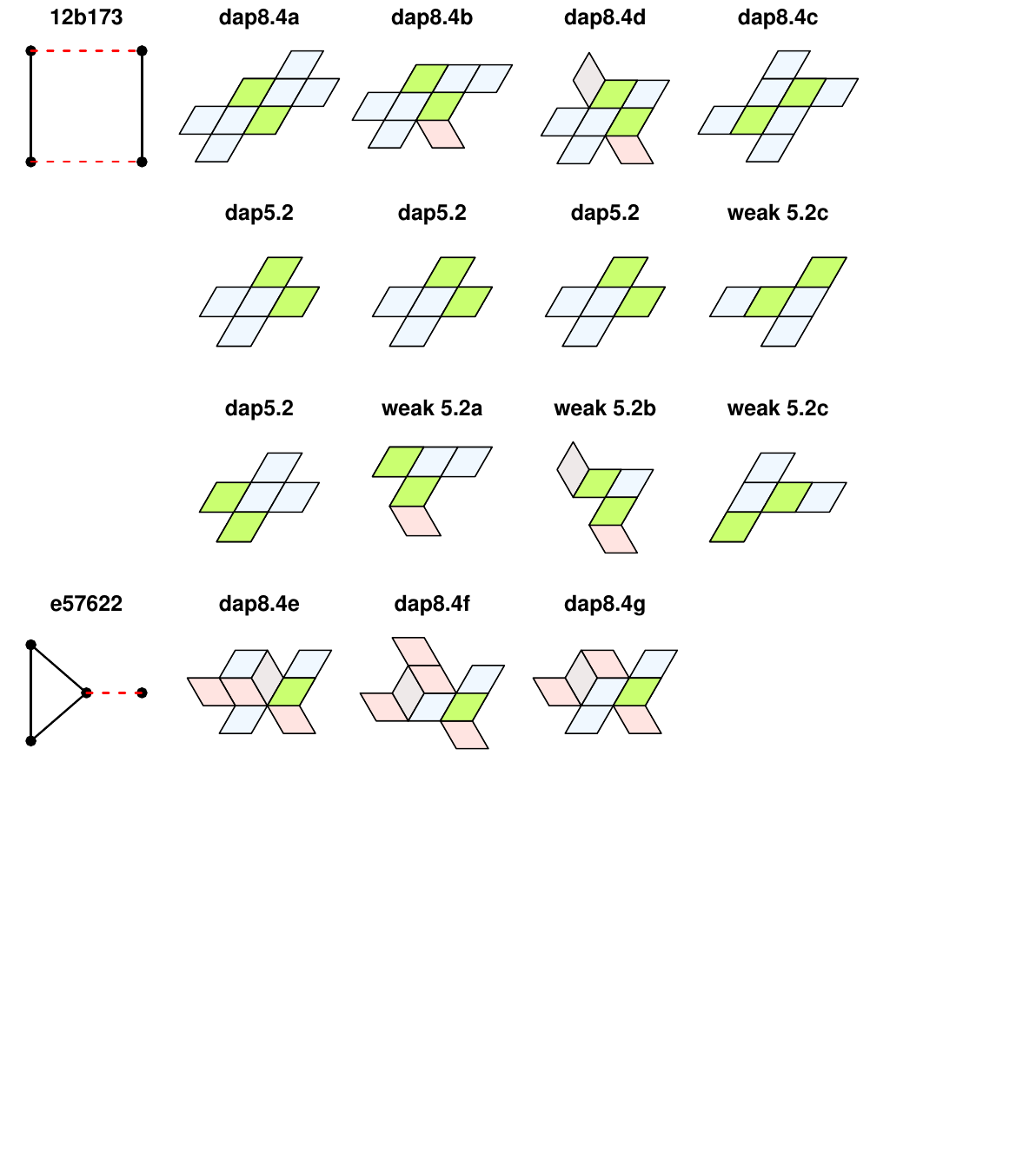}}
   \caption{Examples of single and double bridges in run graphs. Bridges are shown as dashed lines and correspond to the highlighted tiles in the tilings.}
   \label{fig:rungraph2}
\end{figure}

\begin{figure}[h!]
   \centering
   \makebox[\textwidth][c]{\includegraphics[trim=0in 4.5in 0in 0.01in, clip, width=0.99\textwidth]{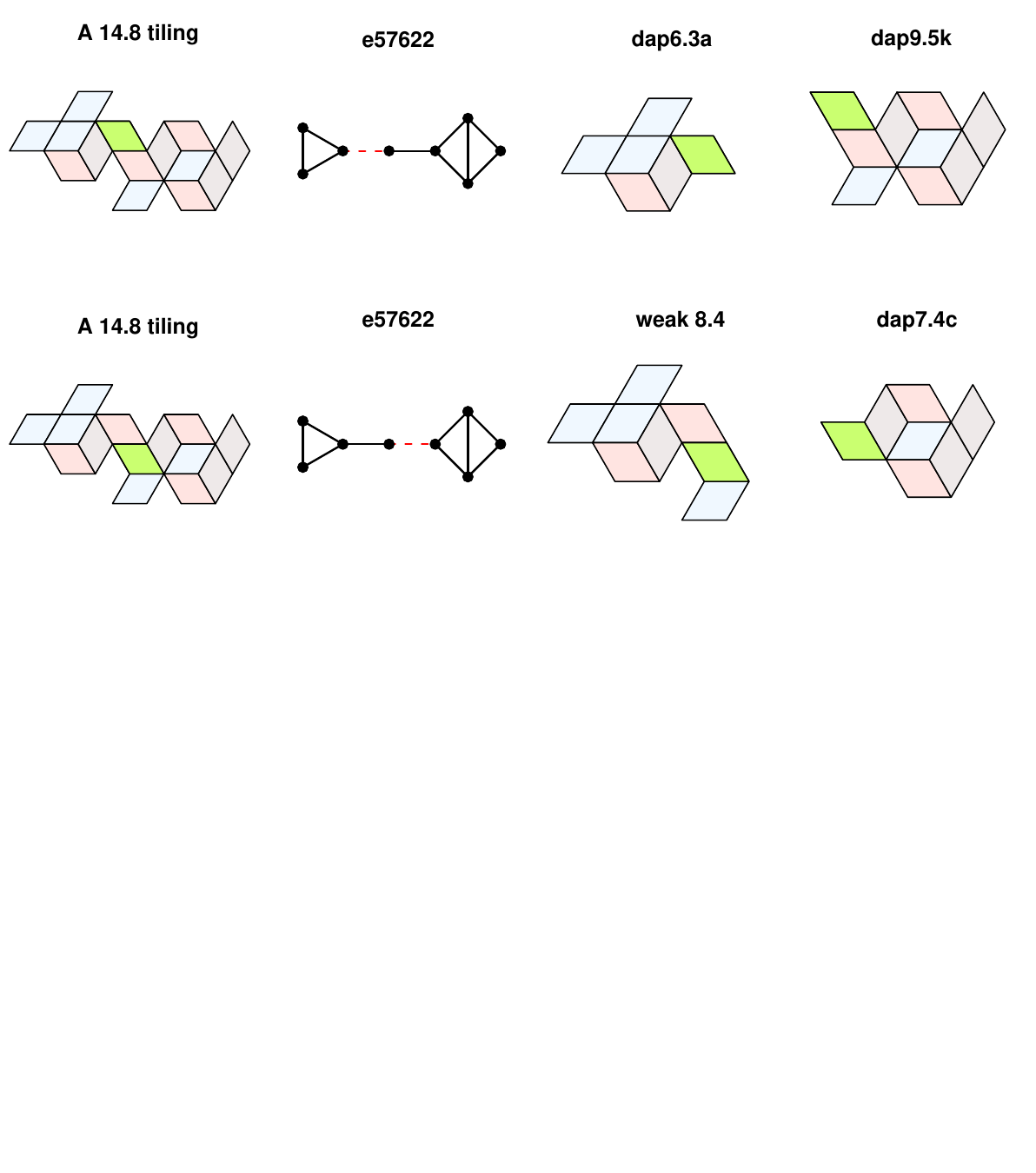}}
   \caption{Two alternative decompositions of a 14.8 tiling whose run graph has two possible bridges that correspond to the tiles that are highlighted. The tiling is reducible because one of these bridges indicates that the
   tiling is a single merging of a 6.3 tiling and a 9.5 tiling.}
   \label{fig:rungraph3}
\end{figure}

\subsection{Non-existence of certain tredoku tilings via run graphs}

One way of proving that a $\tau.\rho$ tredoku tiling does not exist for particular values of $\tau$ and $\rho$ is to show that the corresponding run graph does not exist. This leads
to relatively simple proofs in the case of 5.3, 6.4 and 12.8 tilings.

If a 5.3 tredoku tiling exists then, from Lemma~\ref{lem:rungraph1}, its run graph would have three vertices and four edges. But the maximum number of edges in a simple graph with three vertices is three, so
no 5.3 tiling exists.

Similarly, no 6.4 tredoku tiling exists because, from Lemma~\ref{lem:rungraph1}, its run graph would be the complete graph $K_4$, which has chromatic number four. But the chromatic number of a run graph cannot exceed three.

To prove that no 12.8 tiling exists, we first restate Lemma~\ref{lem:lem5} in the language of run graphs.
\begin{lemma}
If $G$ is the run graph of a tredoku tiling then $G$ does not have a $k$-bridge for $k \ge 3$.
\end{lemma}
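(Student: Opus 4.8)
The plan is to translate the statement into the tiling, reproduce the proof of Lemma~\ref{lem:lem5}, and use the bridge--merging dictionary of Lemmas~\ref{lemm:runred1} and~\ref{lemm:runred2} to connect the two. Suppose, for contradiction, that $G$ is the run graph of a tredoku tiling $T$ and has a $k$-bridge $\{e_1,\dots,e_k\}$ with $k\ge 3$; we may take it minimal, so each $e_i$ joins the two components $G_1,G_2$ left after deleting the bridge. Running the argument of Lemma~\ref{lemm:runred2} with $k$ edges in place of one or two, $T$ is the overlap of two weak tredoku tilings $T_1,T_2$, with run graphs $G_1,G_2$, having exactly the $k$ tiles $X_1,\dots,X_k$ in common, where $X_i$ corresponds to $e_i$. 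Since $e_i$ becomes a semi-edge of both $G_1$ and $G_2$, each $X_i$ is a leaf of $T_1$ and of $T_2$; hence in $T$ it is adjacent to exactly two tiles, a tile $A_i$ of $T_1$ and a tile $B_i$ of $T_2$, and no two of the $X_i$ are adjacent (two adjacent shared tiles would lie in a common run, contradicting that the $e_i$ form a matching). Consequently deleting $X_1,\dots,X_k$ from $T$ leaves two non-empty, connected, mutually non-adjacent pieces $\mathcal R_1 = T_1\setminus\{X_i\}$ and $\mathcal R_2 = T_2\setminus\{X_i\}$.

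Next I would extract the local picture at each shared tile, exactly as in Lemmas~\ref{lem:type2} and~\ref{lem:lem3}. Being a $2$--tile, $X_i$ lies at the end of two runs (Lemma~\ref{lem:type2}), namely its run in $T_1$, through $A_i$, and its run in $T_2$, through $B_i$; so the edges $X_iA_i$ and $X_iB_i$ are not parallel and therefore meet at a vertex $v_i$ common to $X_i$, $A_i$ and $B_i$. The heart of the proof is the claim that all the $v_i$ coincide; this is the run-graph counterpart of the statement in Lemma~\ref{lem:lem3} that two spine components share at most one vertex. Granting the claim, write $v$ for the common vertex. The $k\ge 3$ distinct, pairwise non-adjacent tiles $X_1,\dots,X_k$ all have $v$ as a vertex, as do $A_1,\dots,A_k$ and $B_1,\dots,B_k$. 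No tile can coincide with three of the $A_i$, since a rhombus has only two edges at $v$, so $\{A_1,\dots,A_k\}$ contains at least two distinct tiles, and likewise $\{B_1,\dots,B_k\}$; and the three families $\{X_i\}$, $\{A_i\}$, $\{B_i\}$ are pairwise disjoint, because the $X_i$ are exactly the tiles common to $T_1$ and $T_2$. Hence at least $3+2+2 = 7$ tiles meet at $v$, contradicting the fact that at most six tiles can meet at a vertex of a lozenge tiling.

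The remaining task is to establish the claim, which I expect to be the main obstacle. If $v_i\ne v_j$, then $v_i$ and $v_j$ are two distinct vertices, each lying on a tile of $\mathcal R_1$ and on a tile of $\mathcal R_2$. Joining $A_i$ to $A_j$ by a simple edge-path inside the connected piece $\mathcal R_1$, joining $B_i$ to $B_j$ by a simple edge-path inside $\mathcal R_2$, and closing the loop by passing through the points $v_i$ and $v_j$, produces a simple closed curve $\gamma$ in the region covered by $T$. Since $T$ has no holes, the region bounded by $\gamma$ lies inside that covered by $T$ and so is completely tiled; moreover none of $X_1,\dots,X_k$ can lie inside $\gamma$, since each has an edge on the boundary of the tiling while that boundary is disjoint from the open region bounded by $\gamma$. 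As in Lemma~\ref{lem:lem3}, the tiles bounded by $\gamma$ therefore supply an edge-path between $\mathcal R_1$ and $\mathcal R_2$ using no $X_l$, contradicting the fact that deleting all the $X_i$ disconnects $\mathcal R_1$ from $\mathcal R_2$. The delicate part is to route $\gamma$, and to handle the vertex-only contacts permitted by property P2, so that this interior edge-path genuinely avoids every $X_l$ and cannot be forced only through a point of $\gamma$; this bookkeeping is the only step that is not routine.

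Finally I would record a shorter alternative to the curve argument: because each $X_i$ is a leaf of both $T_1$ and $T_2$, Theorem~5 of Blackburn (2024), as quoted above, shows $T_1$ and $T_2$ are themselves tredoku tilings, so $T$ is reducible; decomposing $T$ along spine components and invoking Lemma~\ref{lem:lem5} directly then gives $k\le 2$. The price of this route is verifying that a $k$-bridge of $G$ corresponds to two \emph{spine} components of $T$ rather than merely to weak tilings, which again rests on Lemmas~\ref{lem:lem3}--\ref{lem:lem5}, so in either formulation the genuinely new work is the run-graph version of "two spine components share at most one vertex''.
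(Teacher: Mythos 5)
Your overall strategy is the paper's own: the paper offers no new argument for this lemma, presenting it as Lemma~\ref{lem:lem5} restated in run-graph language, and your final count (three pairwise non-adjacent shared tiles plus at least two distinct neighbours on each side gives at least seven tiles at one vertex, exceeding six) is precisely the counting that underlies the proof of Lemma~\ref{lem:lem5}. However, as written your proposal has a genuine gap, and it sits exactly where you say the heart of the proof is. First, the step ``each $X_i$ is a leaf of $T_1$ and of $T_2$, hence adjacent in $T$ to exactly two tiles'' is invalid: $T_1$ and $T_2$ are only \emph{weak} tredoku tilings, and a leaf there (Blackburn's sense: lying in a single run) need not sit at the end of that run, since Lemma~\ref{lem:type2} is proved for the full tiling using P3 and the absence of holes and does not transfer to weak components. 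The paper's own Figure~\ref{fig:rungraph2} (bottom row) exhibits single-bridge tiles that are the \emph{middle} tile of the weak 3.1 component (they must be, by Lemma~\ref{lem:type2} applied to $T$), i.e.\ 3--tiles of $T$; a priori a bridge tile could even be a 4--tile, which has no edge on the boundary of the tiling at all. For producing the vertex $v_i$ this does not matter---the two runs through $X_i$ lie in different directions, so a shared edge from each run is non-parallel to the other whatever the position of $X_i$ in its runs---but your later exclusion step does use it.

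Second, and decisively, the claim that all the $v_i$ coincide is never actually proved. Your curve argument is only a sketch: its one critical exclusion (``none of the $X_l$ can lie inside $\gamma$, since each has an edge on the boundary of the tiling'') rests on the unestablished 2--tile property and fails outright if some $X_l$ is a 4--tile, and you explicitly defer the bookkeeping needed to extract from the enclosed tiles an adjacency path between $\mathcal{R}_1$ and $\mathcal{R}_2$ avoiding every $X_l$. The paper avoids these difficulties by arguing with spine components rather than bridge components: there the separating tiles are non-leaf, non-spine tiles of the full hole-free tiling, hence genuine 2--tiles by Lemma~\ref{lem:type2}; Lemma~\ref{lem:lem3} then delivers the single shared vertex, and Lemma~\ref{lem:lem5} finishes with your count. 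Your closing alternative (invoking Blackburn's Theorem~5 to conclude that $T_1$ and $T_2$ are tredoku tilings) repeats the same leaf equivocation---in that statement ``leaf'' must mean a 1--tile, otherwise the irreducible 8.4 tilings of Figure~\ref{fig:rungraph2} would be counterexamples---so it does not close the gap either. To complete the proof you must either establish the single-vertex claim directly for $k$-bridge decompositions (handling, or ruling out, 3-- and 4--tile bridge tiles), or reduce the $k$-bridge situation to two spine components and then quote Lemmas~\ref{lem:lem3} and~\ref{lem:lem5} as the paper does.
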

The run graph of a 12.8 tiling would be a cubic graph with 8 vertices and 12 edges. There are only five such graphs, shown in Figure~\ref{fig:rungraph4}. As indicated in the Figure, two of
these have a 3-bridge and two have a 4-bridge. So only the first graph, which has a 2-bridge could be the run graph of a tredoku tiling. This would be a double merging of two 7.4 tredoku tilings
but, as noted in the earlier proof in Section~\ref{sect:altproof}, this is clearly not possible because of the position of the leaf tiles of 7.4 tilings (Figure~\ref{fig:fig2pt3}).

\begin{figure}[h!]
   \centering
   \makebox[\textwidth][c]{\includegraphics[trim=0in 7in 0in 0.01in, clip, width=0.99\textwidth]{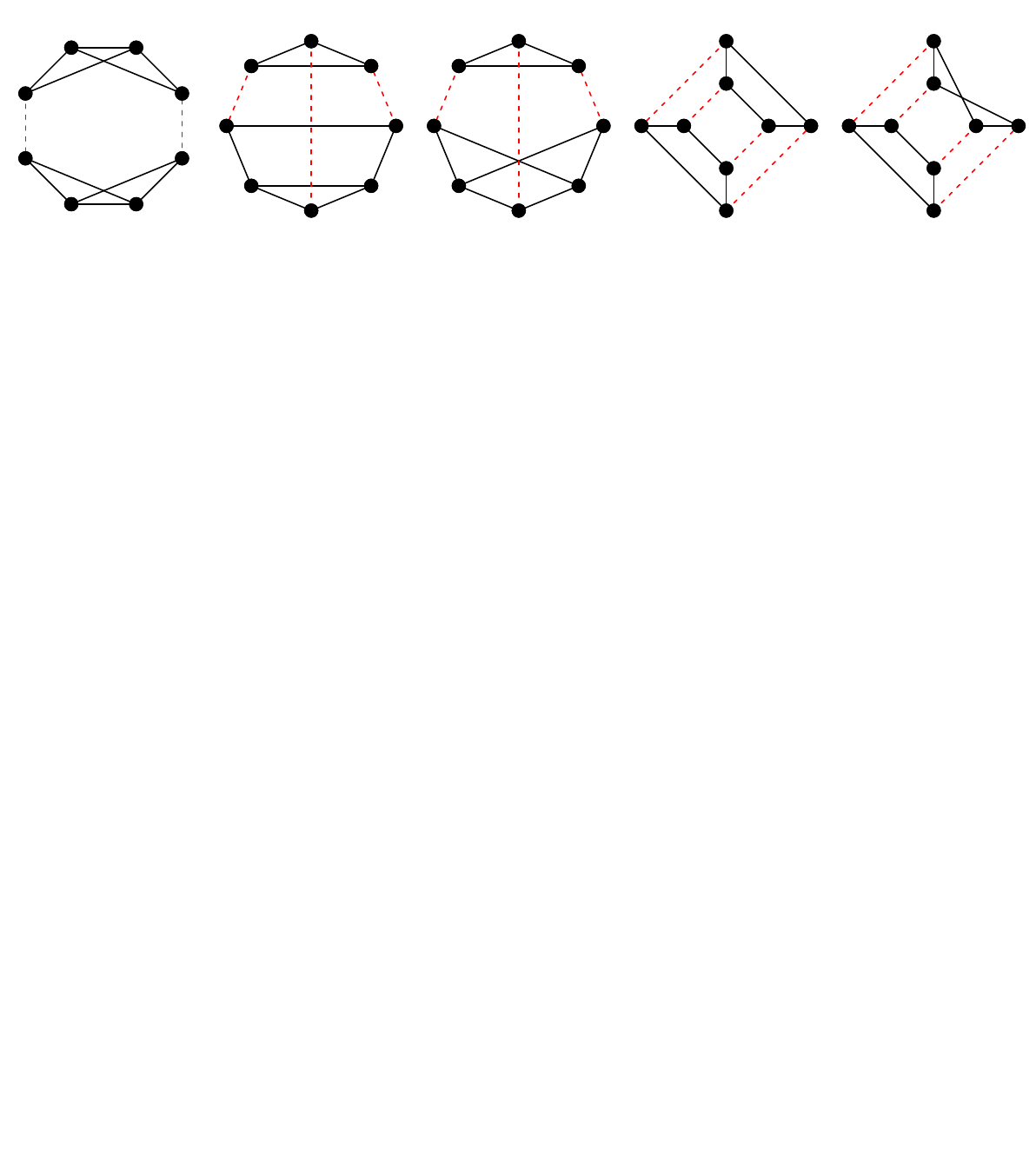}}
   \caption{The five cubic graphs with 8 vertices and 12 edges. Dashed lines are $k$-bridges.}
   \label{fig:rungraph4}
\end{figure}

\section{Quadridoku tilings}\label{sect:quadridoku}

Donald found numerous quadridoku tilings, which are shown in Appendix C. He also formulated the following conjecture about the existence of quadridoku tilings.

\begin{conjecture}
A quadridoku tiling with $\tau$ tiles and $\rho$ runs must satisfy
\begin{equation}
 \max \left ( 9, 2 \rho \right)  \le \tau \le \left \lfloor \frac{5 \rho + 3}{2} \right \rfloor.
\label{eq:quad_inequality}
\end{equation}
Within these constraints:
\begin{itemize}
\item[(i)]
For $\rho \ge 3 $, quadridoku tilings exist for all $\tau \le \left \lfloor \frac{(5 \rho + 1)}{2} \right \rfloor $, except for the combinations 9.4, 10.4, 10.5,
11.5, 18.9, and 22.11.
\item[(ii)]
The only quadridoku tilings that exist with $\tau = \left \lfloor \frac{(5 \rho + 3)} {2} \right \rfloor$ are 9.3, 11.4 and 16.6.
\end{itemize}
\label{conj:quadexist}
\end{conjecture}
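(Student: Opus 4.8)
\emph{(The statement is a conjecture, not an established theorem; what follows is a plan, together with the places where I expect it to break down.)} The plan is to re-run the proof of Theorem~\ref{thm:thm1} with runs of three tiles replaced by runs of four, tracking which steps survive unchanged and which need genuinely new input. Two observations make most of Section~\ref{sect:altprf} portable. First, the hole-avoidance argument behind Lemma~\ref{lem:type2} is not special to tredoku: in any $\kappa$-doku tiling without holes a tile that is crossed straight through the middle of a run must be adjacent to at least three other tiles, since otherwise a path joining its two run-neighbours encloses one of its free edges and forces a hole or a disconnection. Second, the spine/merging machinery of Lemmas~\ref{lem:lem3}--\ref{lem:lem5b} uses only connectivity, the no-hole property, and the fact that at most six tiles meet at a vertex, so it carries over verbatim to quadridoku, with ``spine'' taken to be the set of tiles of degree at least three. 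Granting these, the programme is: (a) prove the inequality~(\ref{eq:quad_inequality}); (b) for part~(i), exhibit constructions realising every claimed pair and rule out the six exceptions; (c) for part~(ii), show that a tiling meeting the upper bound with $\rho$ large must be reducible and then dispose of the finitely many surviving cases, exactly as in the verdant corollary.

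For the inequality, the counting is easy. Each tile lies in one or two runs of four, and a tile in exactly one run is a leaf (an interior tile of a run has degree $\ge 3$ by the observation above, hence lies in a second run), so if $\lambda$ is the number of leaves then double-counting tile--run incidences gives $4\rho = 2(\tau-\lambda)+\lambda$, i.e.\ $\lambda = 2\tau-4\rho$, whence $\tau \ge 2\rho$. The bounds $\rho \ge 3$ and $\tau \ge 9$ come from the run graph: the two interior tiles of a run are shared with two \emph{distinct} other runs, so every vertex of the run graph has degree $\ge 2$; hence $\rho = 2$ is impossible, and for $\rho = 3$ the run graph must be $K_3$, which by the natural analogue of Lemma~\ref{lem:rungraph1} (for runs of four, $\tau = 4\rho-\epsilon$ where $\epsilon$ is the number of run-graph edges) forces $\tau = 9$. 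For the upper bound, classify runs by their number of leaf ends $j\in\{0,1,2\}$ and let $r_j$ be the counts; then $\rho = r_0+r_1+r_2$, the run pregraph is $4$-regular with $2\epsilon = 4r_0+3r_1+2r_2$, and therefore
\[
   \tau \;=\; 4\rho - \epsilon \;=\; 2r_0 + \tfrac52 r_1 + 3r_2 \;=\; 2\rho + \tfrac12\lambda .
\]
A one-line calculation then shows that the target bound $\tau \le \lfloor(5\rho+3)/2\rfloor$ is \emph{equivalent} to
\[
   r_2 \;\le\; r_0 + 3 ,
\]
i.e.\ a quadridoku tiling has at most three more runs with both ends leaves than runs with no leaf ends.

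This last inequality is the heart of the theorem and is where I expect the real work, and the main risk of failure, to lie. My plan would be a planar argument on the arrangement $\mathcal{A}$ of the $\rho$ segments obtained by joining the centres of the four tiles of each run: distinct runs meet only at the (degree-$\ge 3$) tiles they share, so $\mathcal{A}$ is a straight-line planar multigraph whose bounded faces must all be covered by tiles because the tiled region is simply connected. A run with both ends leaves is a segment with two free endpoints forced onto the outer boundary, a leafless run is ``internal'', and applying Euler's formula to $\mathcal{A}$ --- equivalently, a discharging argument in which each run carries charge proportional to its number of leaf ends and charge is balanced across the tiling boundary --- should produce exactly the constant $3$, playing the rôle that the boundary inequality $n_1+n_2\ge n_3$ plays in the tredoku proof. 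The delicate points, which I do not expect to be routine, are controlling how run-segments nest and how the two interior tiles of a run may themselves anchor further runs; a cleaner route might be induction on $\rho$ built on a ``peel off a doubly-pendant run'' move that lowers $\rho$ by one and inverts a single merging, but one must then show this move is always available once $r_2 > r_0 + 3$, and I am not certain it is.

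For part~(i) the existence half should be bookkeeping: Donald's quadridoku tilings (Appendix~C), reorganised into internal and external extensions in the style of Table~\ref{tab:exist1}, ought to cover all pairs with $2\rho \le \tau \le \lfloor(5\rho+1)/2\rfloor$ apart from a few sporadic tilings exhibited directly. The non-existence of $9.4$, $10.4$, $10.5$ and $11.5$ can be handled by finite analysis of the few candidate run graphs (as in Section~\ref{sect:rungraphs} for $5.3$, $6.4$, $12.8$) or by extending the enumeration of Section~\ref{sect:enum} to $\kappa = 4$. The pairs $18.9$ and $22.11$ are the quadridoku analogues of tredoku's $12.8$ --- indeed $\tau = 2\rho$, the leafless case: here I would first prove an analogue of Theorem~\ref{thm:thm2}, in particular that an irreducible quadridoku tiling has at most one degree-$4$ tile (the argument of Lemma~\ref{lem:n4} must be re-examined for longer runs, and this is a second place the plan could stall), so that $18.9$ and $22.11$ are necessarily reducible, and then enumerate the finitely many mergings of smaller quadridoku tilings with those parameters and eliminate each by the positions of its merged leaves, just as in the $12.8$ corollary. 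Finally, part~(ii) follows the verdant corollary: the same irreducibility restriction should force any tiling with $\tau = \lfloor(5\rho+3)/2\rfloor$ and $\rho$ beyond a small bound to be a single merging of two smaller tilings with the same extremal property, leaving $9.3$, $11.4$ and $16.6$ as the only primitive examples and all larger candidates excluded because the required mergings cannot be formed.
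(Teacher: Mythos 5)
You are right to treat this statement as what it is: the paper itself does not prove Conjecture~\ref{conj:quadexist}. What it actually establishes is the bounding inequality~(\ref{eq:quad_inequality}) and the non-existence of 9.4, 10.4, 10.5 and 11.5; the non-existence of 18.9 and 22.11, the existence of tilings for all remaining parameter pairs in part~(i), and part~(ii) are explicitly left open. So most of your programme (the spine/irreducibility analogue of Theorem~\ref{thm:thm2}, the merging analysis for 18.9, 22.11 and for part~(ii)) cannot be compared with anything in the paper; you flag the fragile step there yourself (extending the one-4--tile argument of Lemma~\ref{lem:n4} to runs of four), and indeed nothing in the paper settles it.

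On the part that the paper does prove, your lower-bound argument ($\lambda=2\tau-4\rho\ge 0$ plus the run-counting/run-graph argument forcing at least three runs and nine tiles) is essentially the paper's, and your handling of 9.4--11.5 by finite analysis is in the same spirit as the paper's adaptation of Blackburn's counting (five runs in at most three directions force at least 12 tiles). The genuine gap is the upper bound. Your reduction of $\tau\le\lfloor(5\rho+3)/2\rfloor$ to $r_2\le r_0+3$, equivalently $\lambda\le\rho+3$, identifies exactly the right target, but you leave it to a speculative Euler-formula/discharging argument on the arrangement of run segments, which you yourself call the main risk and do not carry out. The paper closes precisely this gap, for all $\kappa$-doku tilings, by a quite different and entirely concrete route (Section~\ref{maxleaveskappa}): removing the leaves leaves a fragment whose perimeter is exactly $2\rho$ (Lemma~\ref{lem:perim1}, a short edge count); that fragment can be made convex by filling reflex vertices without changing the perimeter or decreasing the number of attachable leaves, so it becomes a rhombus or hexagon of perimeter $2\rho$; and a finite case analysis of how many non-overlapping leaf tiles can be packed along sides of given parity (Figure~\ref{fig:lammax4}, Table~\ref{tab:lammax1}) yields $\lambda_{\max}\le\rho+3$, hence Proposition~\ref{lem:lambound1}. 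If you want a complete proof of the inequality~(\ref{eq:quad_inequality}) rather than a plan, you should either adopt this perimeter-and-convexification argument or actually execute your discharging scheme, including the nesting and anchoring issues you mention; as written, the central inequality of your proposal is asserted, not proved.
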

The proof of the lower bound in (\ref{eq:quad_inequality}) parallels that of the corresponding result for tredoku tilings given in (\ref{eq:inequality1}). Each tile of a run, except perhaps the two end tiles, must also lie in another run, and these additional runs must be distinct. So there must be at least three runs. If a tiling has three runs then each pair of runs must share exactly one tile, giving a total of nine tiles.
A 9.3 tiling does indeed exist, as shown in Figure~\ref{fig:verdquad}, which also shows the tilings dap11.4 and dap16.6. These are all \define{verdant} tilings, that is the number of tiles
achieves the upper bound in (\ref{eq:quad_inequality}). According to part (ii) of the conjecture, these are the only verdant quadridoku tilings that exist.

To complete the proof of the lower bound in (\ref{eq:quad_inequality}), the formula for the number of leaves for quadridoku tiling is
\begin{equation}
 \lambda = 2 \tau - 4 \rho,
\label{eq:leaf_general}
\end{equation}
implying that the number of leaves must be even and that $\tau \ge 2 \rho$. Hence, \mbox{$\max \left ( 9, 2 \rho \right)  \le \tau$.}

Donald didn't have a proof of the upper bound on $\tau$ in (\ref{eq:quad_inequality}), but it follows from the inequality $\lambda \le \rho + 3$, which we establish for
all $\kappa$-doku tilings in Section~\ref{maxleaveskappa}.

\begin{figure}[h!]
   \centering
   \makebox[\textwidth][c]{\includegraphics[trim=0in 5.4in 0in 0.001in, clip, width=0.7\textwidth]{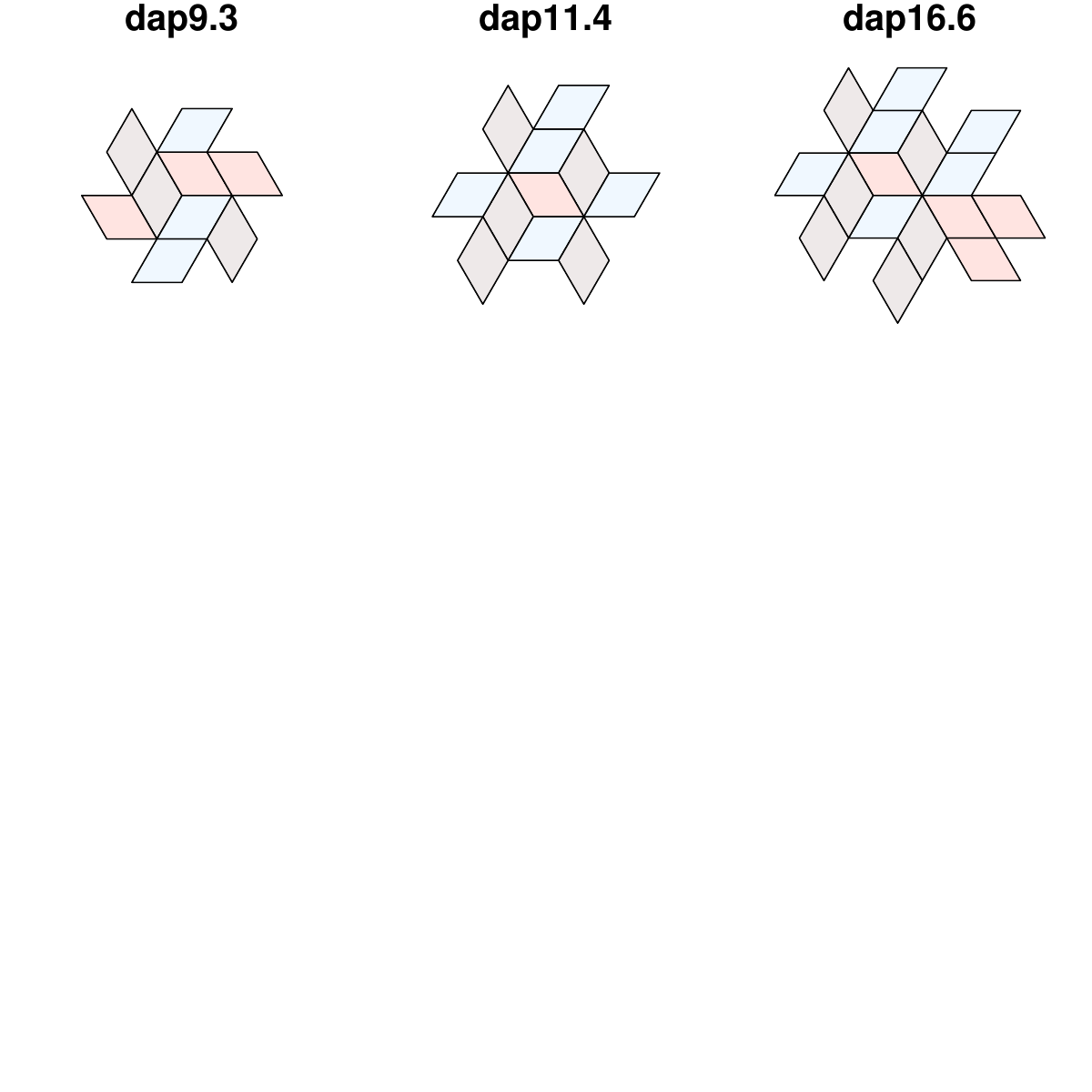}}
   \caption{According to Conjecture~\ref{conj:quadexist}, verdant quadridoku tilings exist only for $\rho=3$, $4$ and $6$.
   }
   \label{fig:verdquad}
 \end{figure}

It is possible to adapt Blackburn's proof that no 6.4 tredoku tiling exists (part of his Lemma 14) to show that there are no 9.4, 10.4, 10.5 or 11.5 quadridoku tilings.
For example, to show that no 11.5 quadridoku tiling exists, we show that any set of five runs requires at least 12 tiles. This is clearly true if three of the runs lie in the same direction, because distinct runs in the same direction have no tiles in common. The only alternative, since there are only three possible directions, is that there are two runs, $r_1$ and $r_2$ in one direction, two runs, $r_3$ and $r_4$ in a second direction, and a single run $r_5$ in the
third direction. But runs in different directions can have at most one tile in common and so $r_1$, $r_2$, $r_3$ and $r_4$ would together require at least 12 distinct tiles.

The rest of the conjecture remains open. For part (i), aside from establishing the non-existence of 18.9 and 22.11 tilings, it will be necessary to establish that
quadridoku tilings do exist for all other parameter values. Donald's collection of quadridoku tilings in Appendix C gives examples of tilings for $\tau \le 30$, but general constructions
analogous to those shown in Appendix A for tredoku tilings, and others given in Blackburn (2024), have yet to be developed for quadridoku tilings.

As an example of what is needed, the proof of the following Proposition  uses an external expansion to show that, although verdant tilings are conjectured not to exist for most values of $\rho$, tilings with one fewer tile generally do exist.

\begin{proposition}
A quadridoku tiling exists with $\tau = \left \lfloor \frac{(5 \rho + 1)}{2} \right \rfloor $ for all $\rho \ge 5$.
\label{lem:almost_verdant}
\end{proposition}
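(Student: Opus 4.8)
The plan is to realise the claimed tilings as two one‑parameter families obtained by iterating a single external expansion on two small base tilings. First rewrite the target relation: by the leaf‑count formula $\lambda = 2\tau - 4\rho$ (equation~\ref{eq:leaf_general}), a tiling with $\tau = \left\lfloor (5\rho+1)/2 \right\rfloor$ has $\lambda = \rho$ when $\rho$ is even and $\lambda = \rho+1$ when $\rho$ is odd. Any external expansion that is iterated adds a fixed number $\Delta\tau$ of tiles and $\Delta\rho$ of runs per step, and for $\tau = \left\lfloor (5\rho+1)/2 \right\rfloor$ to be preserved one needs $2\Delta\tau = 5\Delta\rho$; hence $\Delta\rho$ is even and the smallest useful increment is $(\Delta\tau,\Delta\rho) = (5,2)$. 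Since this fixes the parity of $\rho$, I would handle the two parities separately, via a base tiling with $(\tau,\rho) = (13,5)$ and one with $(\tau,\rho) = (15,6)$.

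Next I would exhibit these two base tilings in a figure — for instance from Donald's collection in Appendix~C, or drawn directly — and check in each case the defining properties of a quadridoku tiling: it is edge‑connected; it is not disconnected by deleting any single tile (connection by a vertex being allowed afterwards); two tiles meet only in a common edge or a single vertex; every tile lies in a completed run of exactly four tiles; and no run has more than four tiles.

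The core of the argument is the expansion itself. I would describe a block $B$ of five lozenges attached along one boundary edge of the current tiling $\mathcal{T}$, with the attachment chosen so that: (i) that edge belongs to a leaf tile $L$ lying at the end of its (unique) run, along an edge of $L$ not parallel to the direction of that run; (ii) $B$, together with $L$ and one further tile of $\mathcal{T}$ adjacent to $L$, completes exactly two new runs of length four, each of the five tiles of $B$ lying in one of them; (iii) no run of $\mathcal{T}$ is lengthened by the attachment, so no run of five tiles is created and $L$, now interior, still lies only in runs of length four; (iv) deleting any single tile of $B\cup\{L\}$ leaves the enlarged tiling connected; and (v) the enlarged tiling again has a leaf tile in a position where the expansion may be applied. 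A single such block, drawn once, settles (i)--(v) by inspection and shows the expansion raises $(\tau,\rho)$ by $(5,2)$. Applying it $k\ge 0$ times to the $13.5$ base gives a quadridoku tiling with $(\tau,\rho) = (13+5k,\,5+2k)$, and $13+5k = \left\lfloor (5(5+2k)+1)/2 \right\rfloor$; applying it to the $15.6$ base gives $(\tau,\rho) = (15+5k,\,6+2k)$ with $15+5k = \left\lfloor (5(6+2k)+1)/2 \right\rfloor$. Together the two families realise every integer $\rho \ge 5$, which proves the Proposition.

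The main obstacle is designing the block $B$ and the attachment rule so that (ii) and (iii) hold simultaneously — every new tile genuinely sits in a completed run of four while no existing run of $\mathcal{T}$ is extended to length five and the former leaf $L$ is not stranded in a run that is too short — and so that, via (v), the same construction works uniformly for both base tilings and all of their iterates. Once a suitable $B$ is pinned down and drawn, verifying (i)--(v) is a finite, routine check; but getting the geometry of $B$ right, so that all the run‑length constraints are met at once, is where the real content of the proof lies.
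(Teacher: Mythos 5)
Your overall strategy is the same as the paper's: split by the parity of $\rho$, take small base tilings, and iterate an external expansion whose increment is $(\Delta\tau,\Delta\rho)=(5,2)$, which you correctly identify as the increment forced by $2\Delta\tau=5\Delta\rho$. The paper does exactly this, citing a $13.5$ tiling from Appendix~C for $\rho=5$ and displaying, in Figure~\ref{fig:almost_verdant}, two explicit external expansions in which a green segment of five tiles is appended $a\ge 0$ times, generating the sequences $15.6, 20.8, 25.10,\ldots$ and $18.7, 23.9, 28.11,\ldots$.

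The gap is that you never actually produce the construction. The entire content of this proposition is the exhibition of the expansion: a concrete block $B$, a concrete attachment site, and the (finite) check that all runs of the enlarged tiling have length exactly four, that no existing run is extended, that the tiling stays $2$-connected in the sense of P3, and that the configuration needed for the next attachment reappears. You list these as conditions (i)--(v) and then state that designing $B$ so that they hold simultaneously ``is where the real content of the proof lies'' -- in other words, the proposal defers precisely the step that constitutes the proof. Two further points would need attention if you completed it. First, your plan uses a single block applied to both the $13.5$ and the $15.6$ base, so you genuinely need the re-applicability property (v) and an inductive argument that the attachment site persists after every iteration; the paper avoids this by drawing each family as a fixed template in which the repeated segment is internal (added $a$ times inside one picture), so the verification is a one-time inspection rather than an induction on an abstractly specified attachment rule, and it uses two different expansions (bases $15.6$ and $18.7$) rather than one block for both parities. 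Second, your condition (ii) silently requires the ``one further tile of $\mathcal{T}$ adjacent to $L$'' to lie currently in only one run (a tile can lie in at most two), which constrains admissible bases and must be checked at every iterate. None of this is fatal -- suitable bases and segments exist, as the paper's figure shows -- but as written the argument establishes only that a construction of this shape \emph{would} suffice, not that one exists.
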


\begin{proof}
Appendix C gives several 13.5 tilings. For $\rho \ge 6$, we may use one of the external expansions shown in Figure~\ref{fig:almost_verdant}, depending on the parity of $\rho$.
\end{proof}

\begin{figure}[h!]
   \centering
   \makebox[\textwidth][c]{\includegraphics[trim=0cm 18.5cm 0cm 0.01cm, clip, width=0.999\textwidth]{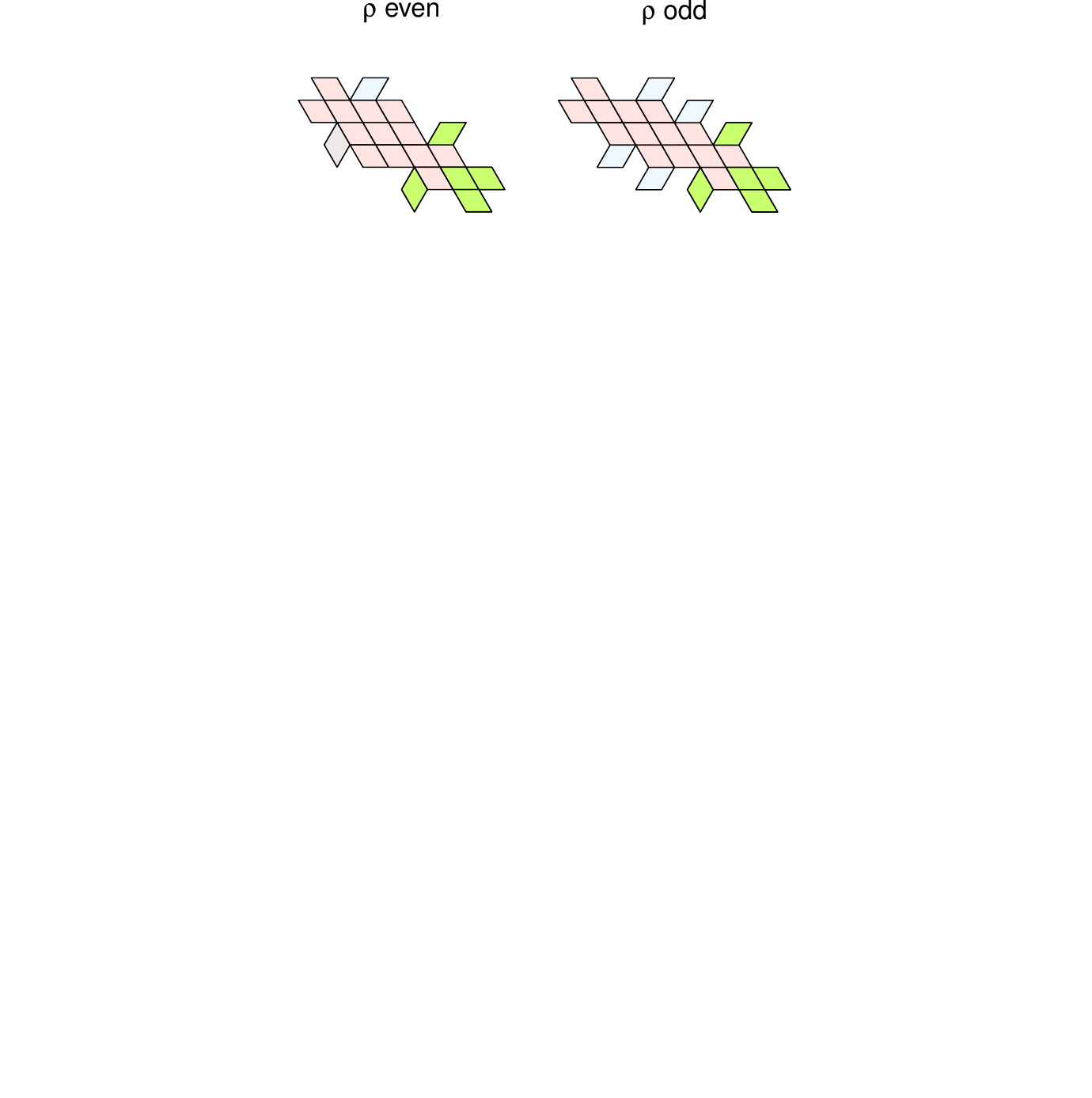}}
   \caption{Example of an external expansion. In each figure, the tiles shown in green can be added to the basic tiling $a$ times ($a \ge 0$). The left hand figure generates the sequence 15.6, 20.8, 25.10 \ldots and
   the right hand figure the sequence 18.7, 23.9, 28.11.}
   \label{fig:almost_verdant}
 \end{figure}

\subsection{Reducible quadridoku tilings} \label{sect:redquad}

Reducible quadridoku tilings can be obtained, as for tredoku tilings, by merging tilings so that either one or two tiles overlap. In Figure~\ref{fig:reduce_quad}, the reducible tiling dap24.10e
is obtained by double merging two copies of the tiling dap13.5b, and dap25.10c by single merging two copies of a tiling that is isomorphic to dap13.5b.
\begin{figure}[h!]
   \centering
   \makebox[\textwidth][c]{\includegraphics[trim=0cm 18cm 0cm 0.0cm, clip, width=0.999\textwidth]{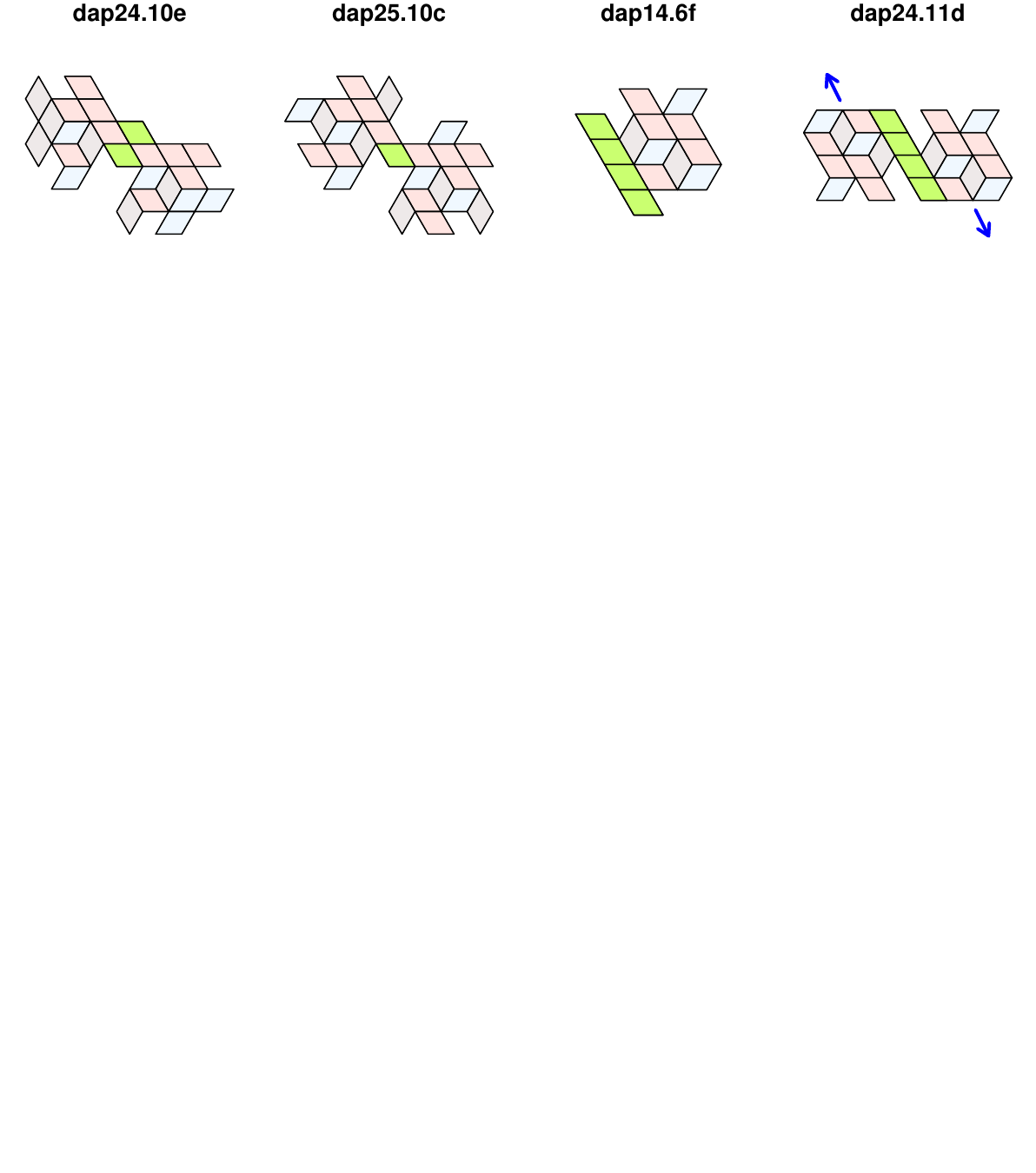}}
   \caption{Examples of reducible quadridoku tilings.
   The tiling dap24.10e is a double merging dap13.5b with itself and dap25.10c is a single merging of two copies of another 13.5 tiling that is isomorphic to dap13.5b.
   Overlapping tiles are shown in green. The tiling dap24.11d results from a new process called four merging that combines two copies of dap14.6f, as described in the text.}
   \label{fig:reduce_quad}
 \end{figure}

In addition, Donald discovered a new method of combination that we term \define{four merging}. The building block for four merging is a tiling that has  a run of four tiles of the same type, with the two end tiles being leaves and with all other tiles
of the tiling laying on one side of this run. The tiling dap14.6f in Figure~\ref{fig:reduce_quad} provides an example.

To generate a new tiling, the tiling is merged with a copy that has been rotated through 180$^\circ$, overlapping the run of four tiles of the same type, which then lies at the centre of the new
tiling. Then, as indicated in Figure~\ref{fig:reduce_quad}, the tiles that lie on either side of this run are shifted by one tile length in the direction of the arrows. This last step
ensures that the new tiling is a quadridoku tiling.

\section{$\kappa$-doku tilings} \label{sect:kdoku}

In this Section we investigate $\kappa$-doku tilings for general values of $\kappa$. We sometimes use the notation $\tau.\rho_{\kappa}$ to denote a $\kappa$-doku tiling with $\tau$ tiles and $\rho$ runs.

The inventors of tredoku also proposed a variant aimed at children, involving runs of length 2  (Mindome, 2013).\footnote{In the puzzle, each tile is replaced by a $2 \times 2$ grid
instead of the usual $3 \times3$ grid of sudoku or tredoku, and only four symbols are used rather than nine, generally resulting in a much simpler puzzle.}
A simple example of a $2$-doku tiling is a hexagon comprising one tile of each type and a more complex example with a hole is given in
the righthand panel of Figure~\ref{fig:hole2} in Section~\ref{sect:holes}. There are only limited possibilities for connecting tiles in a $2$-doku tiling, and
it may be shown that $\tau=\rho-1$ or $\tau=\rho$. This may make 2-doku tilings particularly amenable to study, but we do not pursue this here, since Donald focused his attention on quadridoku tilings ($\kappa=4$) and, to a lesser extent, quindoku tilings ($\kappa=5$).

Donald gave the following construction for a $\kappa$-doku tiling, depending on the parity of $\kappa$.
For even values of $\kappa$ of the form $\kappa=2a$ ($a \ge 1$), a regular hexagon of side $a$ filled with lozenge tiles is a $\kappa$-doku tiling. This is illustrated in the first panel of Figure~\ref{fig:kdoku} for $\kappa=6$. This tiling has $ 3\kappa^2\slash 4$  ($=27$) tiles and $3 \kappa\slash 2$ ($=9$) runs.

\begin{figure}[h!]
   \centering
   \makebox[\textwidth][c]{\includegraphics[trim=0cm 17.4cm 0cm 0.1cm, clip, width=0.95\textwidth]{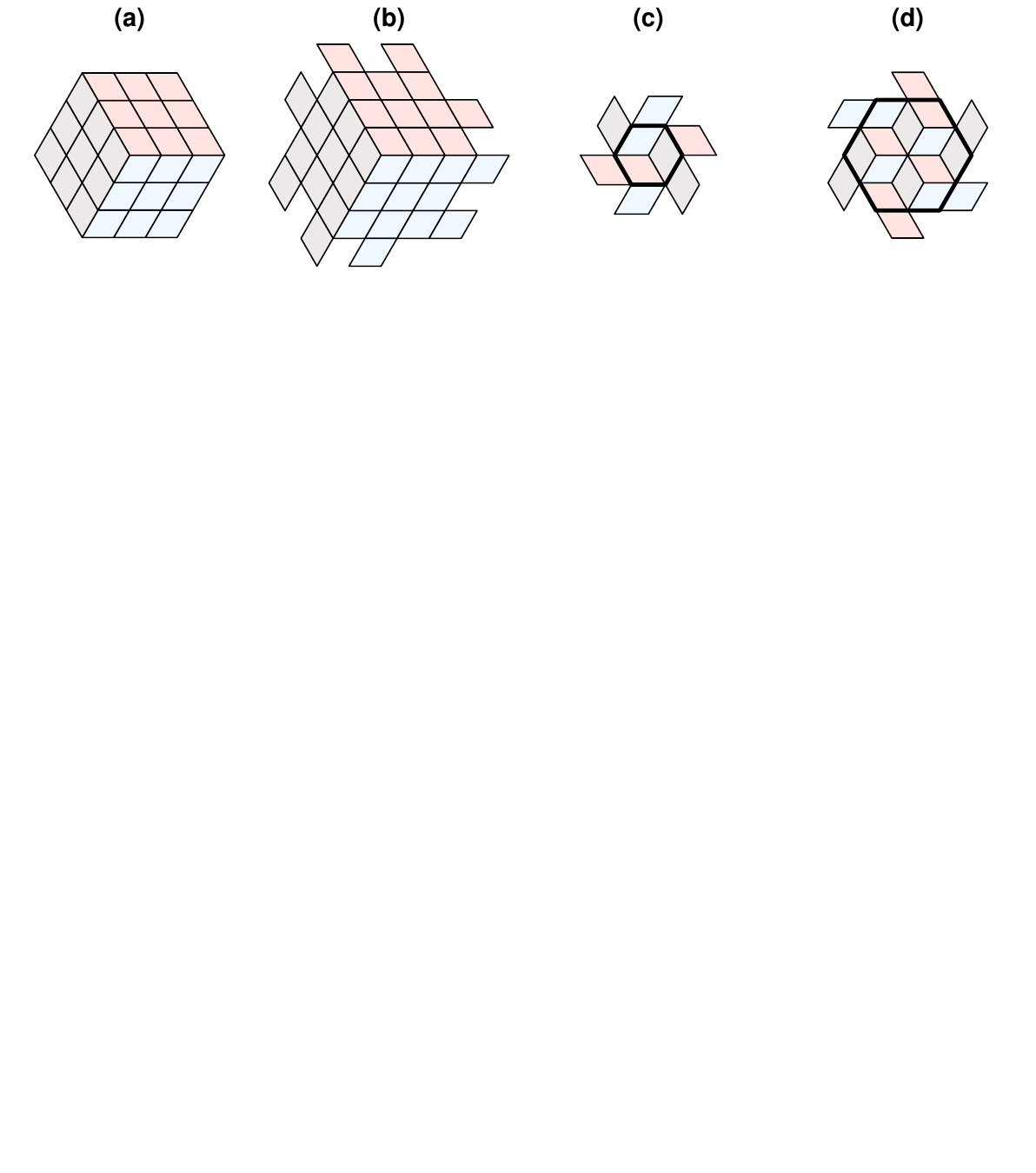}}
   \caption{Examples of $\kappa$-doku tilings: (a) a 6-doku tiling, (b) a 7-doku tiling, (c) the smallest possible $4$-doku tiling, (d) a $5$-doku tiling.}
   \label{fig:kdoku}
 \end{figure}

This hexagonal tiling also forms the basis for a $\kappa$-doku tiling for $\kappa = 2a + 1$, which is obtained by adding leaves to alternate tiles on the perimeter, as illustrated in the second
panel of Figure~\ref{fig:kdoku}. This $7$-doku tiling has  $ 3(\kappa^2-1)\slash 4$ ($= 36$) tiles and $3 (\kappa-1)\slash 2$ ($=9$) runs.

For $\kappa \ge 5$, these are the smallest possible $\kappa$-doku tilings.\footnote{We shall see in
Section~\ref{sect:mintaukappa} that for even values of $\kappa$ there is an alternative tiling with the same number of tiles but one less run.}
This is a consequence of the following Theorem, which generalises the inequalities (\ref{eq:inequality1})
and (\ref{eq:quad_inequality}) for tredoku and quadridoku tilings
respectively.

\begin{theorem}
For $\kappa \ge 5$, a  $\kappa$-doku tiling with $\tau$ tiles and $\rho$ runs must satisfy
\begin{equation}
 \max \left ( \left \lceil \frac{3 (\kappa^2-1)}{4} \right \rceil, \left \lceil \frac{\kappa \rho^{\vphantom{2}}}{2} \right \rceil \right)  \le \tau \le \left \lfloor \frac{(\kappa+1) \rho + 3}{2} \right \rfloor.
\label{eq:thm3}
\end{equation}
\label{thm:thm3}
\end{theorem}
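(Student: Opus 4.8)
My plan is to deduce all three inequalities from the single leaf-counting identity $\lambda = 2\tau - \kappa\rho$ together with the two bounds $0 \le \lambda$ (trivial) and $\lambda \le \rho+3$ (the content of Section~\ref{maxleaveskappa}), and then to prove the extra absolute lower bound $\tau \ge \lceil 3(\kappa^2-1)/4 \rceil$ separately, which is where essentially all the work sits. For the identity: in a $\kappa$-doku tiling every tile lies in exactly one run (if it is a leaf) or exactly two runs (otherwise) --- at most two because a lozenge has only two pairs of parallel edges, and at least two for a non-leaf by the reasoning of Lemma~\ref{lem:type2}, both generalising verbatim from $\kappa = 3$. Double-counting incident (tile, run) pairs therefore gives $\kappa\rho = \lambda\cdot 1 + (\tau-\lambda)\cdot 2 = 2\tau - \lambda$, i.e.\ $\lambda = 2\tau - \kappa\rho$, which is the $\kappa$-doku form of equation~(\ref{eq:leaves}). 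From $\lambda \ge 0$ we get $\tau \ge \kappa\rho/2$, and since $\tau$ is an integer, $\tau \ge \lceil \kappa\rho/2\rceil$; from $\lambda \le \rho+3$ we get $2\tau \le (\kappa+1)\rho+3$, and integrality upgrades this to $\tau \le \lfloor((\kappa+1)\rho+3)/2\rfloor$.

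The substantive claim is the absolute lower bound $\tau \ge \lceil 3(\kappa^2-1)/4\rceil$; this is exactly the assertion (stated just before the theorem) that the hexagonal tiling for even $\kappa$, and its leaf-decorated variant for odd $\kappa$, are the smallest $\kappa$-doku tilings. I would analyse the region $\mathcal{R}$ tiled by $\mathcal{T}$ through its three families of de Bruijn lines. Fix a run $R = (t_1,\dots,t_\kappa)$; since $\kappa \ge 5$, its $\kappa-2 \ge 3$ internal tiles $t_2,\dots,t_{\kappa-1}$ are non-leaves (an internal tile of a run has two run-neighbours), so each lies in a second run, and these second runs are pairwise distinct because two runs in different directions meet in at most one tile. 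The first step is to show that, read along $R$, these transverse runs occur in a \emph{monotone} pattern of the two remaining directions --- a transverse run in one direction cannot be strictly sandwiched along $R$ between two transverse runs in the other direction without forcing overlapping tiles or a hole --- so that $R$ and its transverse fan already form part of a hexagonal patch. Propagating this monotonicity from one run to the next should force $\mathcal{R}$, apart from the leaf tiles, to be a hexagonal region every run of which has exactly $\kappa$ tiles; a direct count of the cells of the smallest such region then gives $3\kappa^2/4$ for even $\kappa$ and $3(\kappa^2-1)/4$ for odd $\kappa$, which is $\lceil 3(\kappa^2-1)/4\rceil$ in both cases.

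The hard part is this last step: converting ``monotone transverse fan'' into a rigorous global statement about the shape of $\mathcal{R}$, correctly handling the genuine freedom at the two ends of each run and tilings that use only two of the three run directions (parallelogram-type tilings, which the bound must also cover), and pinning the constant to exactly $3/4$. I expect the hypothesis $\kappa \ge 5$ to be used precisely here and nowhere in the elementary part: only then does a run have at least three internal tiles, which is what makes the transverse fans interlock and forces the count to grow quadratically. For $\kappa = 3, 4$ the bound is genuinely false --- the tredoku tiling $5.2$ and the quadridoku tiling $9.3$ are smaller than any hexagonal tiling --- so whatever form the argument takes, it must visibly fail for those values.
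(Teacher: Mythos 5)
Your elementary steps are fine and agree with the paper: the double count $\kappa\rho = 2\tau - \lambda$, the bound $\tau \ge \lceil \kappa\rho/2\rceil$ from $\lambda\ge 0$, and the upper bound from $\lambda \le \rho+3$ (which, note, is itself part of what the paper proves for this theorem, in Section~\ref{maxleaveskappa} via Lemma~\ref{lem:perim1} and Proposition~\ref{lem:lambound1}; you cite it rather than prove it). The genuine gap is the absolute lower bound $\tau \ge \lceil 3(\kappa^2-1)/4\rceil$, which you yourself flag as the hard part and leave at the level of ``should force'' and ``I expect''. Worse, the intermediate goal of your sketch is false as stated: it is not true that for every $\kappa$-doku tiling with $\kappa\ge 5$ the region minus its leaves is a hexagon whose runs all have length $\kappa$. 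A $\kappa\times\kappa$ rhombus of a single tile type is a leafless $\kappa$-doku tiling whose region is not such a hexagon, and mergings of smaller tilings produce leaf-stripped fragments of many shapes; in the paper the hexagonal shape characterises only the \emph{extremal} tilings (Lemma~\ref{lem:hexshape}: the fragment is a semi-regular hexagon if and only if $(\tau,\rho,\kappa)$ is a Tur\'an triple). So ``propagating monotonicity'' cannot yield a global structure theorem; what is needed is a quantitative argument that rules out \emph{every} smaller configuration of \emph{any} shape, and your local fan argument only yields $\rho \ge \kappa-1$, hence $\tau \gtrsim \kappa(\kappa-1)/2$, which falls short of the constant $3/4$.

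For contrast, the paper gets the constant combinatorially rather than geometrically: the run graph of the tiling has $\tau = \kappa\rho - \epsilon$ tiles, is properly $3$-colourable (three run directions), hence $K_4$-free, so Tur\'an's theorem gives $\epsilon \le \lfloor \rho^2/3\rfloor$ and $\tau \ge \kappa\rho - \lfloor\rho^2/3\rfloor$ (Proposition~\ref{lem:taubound2}). This bound alone is weak for small $\rho$; the point is to combine it with the leaf bound $\lambda\le\rho+3$, i.e.\ $\tau \le ((\kappa+1)\rho+3)/2$, which forces $\rho$ to be of size roughly $3(\kappa-1)/2$ whenever the Tur\'an bound is binding. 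The resulting extremal cases are the Tur\'an triples of Proposition~\ref{prop:propturantriple}, and for $\kappa\ge 5$ these give exactly $\tau \ge \lceil 3(\kappa^2-1)/4\rceil$ (Proposition~\ref{prop:turan2} then shows the bound is attained, $(26,7,6)$ excepted). This is also where the hypothesis $\kappa\ge 5$ enters: the exceptional triples $(5,2,3)$, $(9,3,4)$, $(11,4,4)$ are precisely the small tredoku and quadridoku tilings you correctly note would break the bound. To repair your proposal you would either have to supply the missing global geometric argument from scratch, or switch to this run-graph/Tur\'an route; as it stands the central inequality in~(\ref{eq:thm3}) is not proved.
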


The proof of Theorem~\ref{eq:thm3} is outlined in the following two Sections which derive the upper and lower bounds in turn.

\subsection{The maximum number of tiles of a $\kappa$-doku tiling} \label{maxleaveskappa}

In this Section, we show that the number of leaves, $\lambda$, of a $\kappa$-doku tiling satisfies $\lambda \le \rho + 3$.
Since $\lambda=2\tau - \kappa \rho$, this leads immediately to the upper bound for $\tau$ in~(\ref{eq:thm3}).

To establish this bound on $\lambda$, we need the following simple lemma.
\begin{lemma}
Suppose we remove any leaves from a $\kappa$-doku tiling that has $\tau$ tiles and $\rho$ runs. The fragment that remains has perimeter $2 \rho$.
\label{lem:perim1}
\end{lemma}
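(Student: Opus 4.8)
The plan is to prove the lemma by a double count of the edges of the lozenges, using only the leaf formula $\lambda = 2\tau - \kappa\rho$ recorded earlier in this section together with the fact that the adjacencies of a $\kappa$-doku tiling are partitioned by its runs.

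First I would count the adjacent pairs of tiles in the whole tiling $\mathcal{T}$. Every pair of edge-sharing tiles extends by P4 to a run, and that run is the unique run in the direction of the shared edge; conversely each run is a straight strip of $\kappa$ tiles — each interior tile of the run shares one parallel pair of its four edges with its two run-neighbours, by P4 and P5 — and so accounts for exactly $\kappa-1$ of these consecutive adjacencies. Hence $\mathcal{T}$ has exactly $\rho(\kappa-1)$ adjacent pairs. Since a leaf is, by definition, a tile with a single neighbour, and no two leaves can be adjacent (the tiling is connected and has $\tau>2$ tiles), exactly $\lambda$ of these adjacencies are incident with a leaf. Consequently the fragment $F$ obtained by deleting all $\lambda$ leaves has $\tau-\lambda$ tiles and $\rho(\kappa-1)-\lambda$ adjacent pairs.

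Now I would feed this into the handshake identity for $F$. Because the tiling is edge-to-edge (P2), each of the four edges of a tile of $F$ is either shared with a unique neighbouring tile of $F$ or lies on the boundary of $F$; summing over the tiles of $F$ gives
\[
 4(\tau-\lambda) = 2\bigl(\rho(\kappa-1)-\lambda\bigr) + \operatorname{perim}(F).
\]
Solving for $\operatorname{perim}(F)$ and substituting $\lambda = 2\tau-\kappa\rho$ collapses the right-hand side to $2\rho$. (Equivalently: $\operatorname{perim}(\mathcal{T}) = 4\tau-2\rho(\kappa-1)$, and deleting a single leaf trades its three boundary edges for one newly exposed edge, lowering the perimeter by $2$, so deleting all $\lambda$ of them lowers it by $2\lambda = 4\tau-2\kappa\rho$.)

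The arithmetic here is a single line, so the only places that need care are the two structural inputs: that the runs partition the adjacencies of $\mathcal{T}$ into $\rho$ classes of size $\kappa-1$ — which rests on a run being a straight $\kappa$-tile strip, a consequence of the definition via P4 and P5, so that distinct runs share no adjacency — and that each leaf has exactly one neighbour, which is just Donald's definition of a leaf together with connectedness. I expect these, rather than the count itself, to be where any subtlety lies, and I would make explicit why distinct runs share no adjacency and why no two leaves are adjacent.
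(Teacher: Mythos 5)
Your proof is correct and follows essentially the same route as the paper: both count the $(\kappa-1)\rho$ shared edges coming from the runs, invoke $\lambda = 2\tau-\kappa\rho$, and deduce the fragment's perimeter by an edge double count (your parenthetical remark — perimeter $4\tau-2(\kappa-1)\rho$ minus $2$ per deleted leaf — is exactly the paper's argument). Your explicit attention to why every adjacency lies in a unique run and why no two leaves are adjacent is a fair tightening of details the paper leaves implicit, but it is not a different method.
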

\begin{proof}
In a $\kappa$-doku tiling with $\tau$ tiles there are a total of $4\tau$ tile edges. However, within every run there are $\kappa-1$ edges that are shared by two tiles. Therefore, the number of tiling edges is $4\tau - (\kappa-1) \rho$, of which $(\kappa-1) \rho$ join tiles within runs. The boundary of the tiling therefore consists of
$  \pi =  4 \tau - 2(\kappa-1)\rho$ edges. Since the tiling has $\lambda = 2 \tau - \kappa \rho$ leaves, the tiling fragment that results from removing the leaves has perimeter $\pi - 2 \lambda = 2 \rho$, independently of $\kappa$.
\end{proof}
\begin{proposition}
A $\kappa$-doku tiling with $\rho$ runs has at most $\rho+3$ leaves.
\label{lem:lambound1}
\end{proposition}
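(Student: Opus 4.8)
The plan is to work with the \emph{core} of the tiling. Let $\mathcal{C}$ be the fragment obtained from the $\kappa$-doku tiling by deleting all $\lambda$ leaves. Since the leaves are pendant tiles and the tiling has no holes, $\mathcal{C}$ is connected and simply connected, and by Lemma~\ref{lem:perim1} its boundary is a closed polygon with exactly $2\rho$ edges. Each leaf was attached along a single edge of $\partial\mathcal{C}$, and distinct leaves were attached along distinct edges, so immediately $\lambda\le 2\rho$; the whole content of the Proposition is the extra saving of $\rho-3$.

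To organise the count I would phrase everything in terms of runs. Because $\kappa\ge 5$, a run has at least three interior tiles, each adjacent to its two run-neighbours and hence not a leaf, so only the two end tiles of a run can be leaves. Deleting the leaves therefore shortens each run by $0$, $1$ or $2$ tiles, and the surviving portion of a run is a de Bruijn line of $\mathcal{C}$; conversely every de Bruijn line of $\mathcal{C}$ arises in this way, so $\mathcal{C}$ has exactly $\rho$ de Bruijn lines. Each de Bruijn line occupies one boundary edge of $\mathcal{C}$ at each of its two ends (the edge one would cross to prolong it), and these $2\rho$ edges are precisely the edges of $\partial\mathcal{C}$. Call an end of a run \emph{truncated} if the end tile there was a leaf. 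Then $\lambda$ equals the number of truncated run-ends, and since each run contributes $(\text{number of its truncated ends})-1$ to the difference $d-u$, where $d$ and $u$ are the numbers of runs with two, respectively zero, truncated ends, we get $d-u=\lambda-\rho$. Hence the Proposition is equivalent to the inequality $d\le u+3$: there are at most three more ``doubly truncated'' runs than ``untruncated'' ones.

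The heart of the matter — and the step I expect to be the main obstacle — is establishing $d\le u+3$. Here one must use more than the shape of $\mathcal{C}$ alone, because the leaves cannot be re-attached arbitrarily: in a $\kappa$-doku tiling each leaf lies in \emph{exactly one} run, so the leaf at a truncated end of a run $R$ must have both of its edges transverse to $R$ lying on the outer boundary of the tiling. My plan is to account for the total turning of $360^{\circ}$ around $\partial\mathcal{C}$: writing the exterior angle at a boundary vertex as $(3-k)\cdot 60^{\circ}$, where the interior angle is $60k^{\circ}$ with $k\in\{1,\dots,5\}$, one has $\sum_v (3-k(v)) = 6$ over the $2\rho$ vertices. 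I would then argue that a doubly truncated run, via this ``exposed transverse edge'' constraint on its two leaves, forces a net positive contribution near each of its ends, whereas an untruncated end can be balanced against negative contributions, so that the fixed budget $6$ (equivalently, the three run-directions) caps the excess at $3$. The delicate points are boundary vertices where up to five tiles meet (interior angle $300^{\circ}$, exterior angle $-120^{\circ}$) and any degenerate ``pinch'' vertices of $\partial\mathcal{C}$, together with verifying that the positive contributions of distinct doubly truncated runs cannot overlap. Finally I would note that the bound is sharp: for the 9.3 quadridoku tiling the core is a single hexagon carrying one leaf on each of its six edges, so $\lambda=6=\rho+3$.
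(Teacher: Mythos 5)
Your bookkeeping is correct and starts from the same place as the paper: by Lemma~\ref{lem:perim1} the core $\mathcal{C}$ obtained by deleting the leaves has perimeter $2\rho$, only end tiles of runs can be leaves, each boundary edge of $\mathcal{C}$ is the end of exactly one run, and the reformulation $\lambda-\rho=d-u$ (with $d$, $u$ the numbers of doubly truncated and untruncated runs) is a faithful restatement, so the proposition is indeed equivalent to $d\le u+3$. But that inequality is precisely where the whole content of the result sits, and you do not prove it: what follows is a plan (``I would then argue that a doubly truncated run \ldots forces a net positive contribution near each of its ends'') whose two load-bearing claims --- that each doubly truncated run can be charged a definite amount of positive exterior angle near its ends, and that the charges of distinct runs are disjoint --- are left unsubstantiated, and it is not clear they can be made local. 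The obstruction to attaching many leaves is not primarily a per-vertex curvature condition: along a long straight stretch of $\partial\mathcal{C}$ the binding constraint is that leaves on consecutive boundary edges must not share an edge (that would put them in a second run and violate the run-length condition), which is a constraint between pairs of adjacent boundary edges rather than at a single vertex, while reflex boundary vertices (interior angle $240^{\circ}$ or $300^{\circ}$) inject negative curvature that your charging scheme must somehow absorb without double counting. As it stands, the argument is an equivalent reformulation plus a heuristic, not a proof.

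For comparison, the paper resolves exactly this difficulty by a different device: it convexifies the core by repeatedly filling in reflex vertices, noting that this preserves the perimeter $2\rho$ and cannot decrease the maximum number of attachable leaves, so it suffices to bound leaf packings around a rhombus or hexagon of perimeter $2\rho$; an explicit extremal analysis of how many pairwise non-edge-sharing leaves fit along a side of given length and parity, together with corner compatibility conditions (no two protruding leaves at a shared corner, except at the $60^{\circ}$ corners of a rhombus), gives $\lambda_{\max}\le\rho+3$ in every case (Table~\ref{tab:lammax1}). If you want to keep your run-end formulation, you will either need a genuinely global argument in place of the vertex-by-vertex charging, or you could import the convexification step, which is what substitutes for your unproven claims. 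Two smaller points: the paper uses this proposition for all $\kappa\ge 2$ (in particular for quadridoku), whereas you assume $\kappa\ge 5$ throughout; and your sharpness example, the $9.3$ quadridoku tiling, has $\kappa=4$ and so falls outside the range you allow yourself.
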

\begin{proof}
In view of Lemma~\ref{lem:perim1}, we need to determine the maximum number of leaves, $\lambda_{\max}$, that can be attached to the boundary a fragment of perimeter $2 \rho$.
More specifically, let $\lambda_{\max}$ denote the maximum number of leaves that can be attached to the boundary in such a way that no two leaves overlap, except perhaps by sharing
a vertex that lies on the boundary of the fragment. In particular, two leaves cannot share an edge and cannot share a vertex that does not lie on the boundary of the fragment,
since this would create a hole in the tiling. Whether a $\kappa$-doku tiling exists with $\lambda_{\max}$ leaves depends on whether it is possible to place the leaves in such a
way that all $\rho$ runs have length $\kappa$.

As an example, the leftmost panel of Figure~\ref{fig:lammax2} shows the verdant tredoku tiling dap11.5 with one additional leaf. The basic tiling has seven leaves attached to the
boundary of a fragment of four tiles that has perimeter 10. However, as shown in the Figure, it is possible to attach a further tile to the boundary, although the resulting tiling
is no longer a tredoku tiling. So for this fragment, $\lambda_{\max} = 8$.

In general, as in this example, the fragment that we are considering will not be convex. However, we can obtain a modified fragment that \emph{is} convex by repeatedly adding a tile
at any reflex vertex\footnote{A reflex vertex of a polygon is a vertex for which the angle between the two edges incident at that vertex toward the interior of the polygon exceeds $180^{\circ}$.} of the fragment, until no reflex vertices remain. Figure~\ref{fig:lammax2} shows this process for the tredoku tilings dap11.5 and dap15.8e. For dap11.5, the fragment expands to a
$3 \times 2$ rhombus, whereas for dap15.8e, the fragment expands to a $3 \times 3 \times 2$ hexagon. Expanding a fragment that has one or more reflex vertices always leads either to
a rhombus or a hexagon.
\begin{figure}[h!]
   \centering
   \makebox[\textwidth][c]{\includegraphics[trim=0cm 18.05cm 0cm 0.01cm, clip, width=0.999\textwidth]{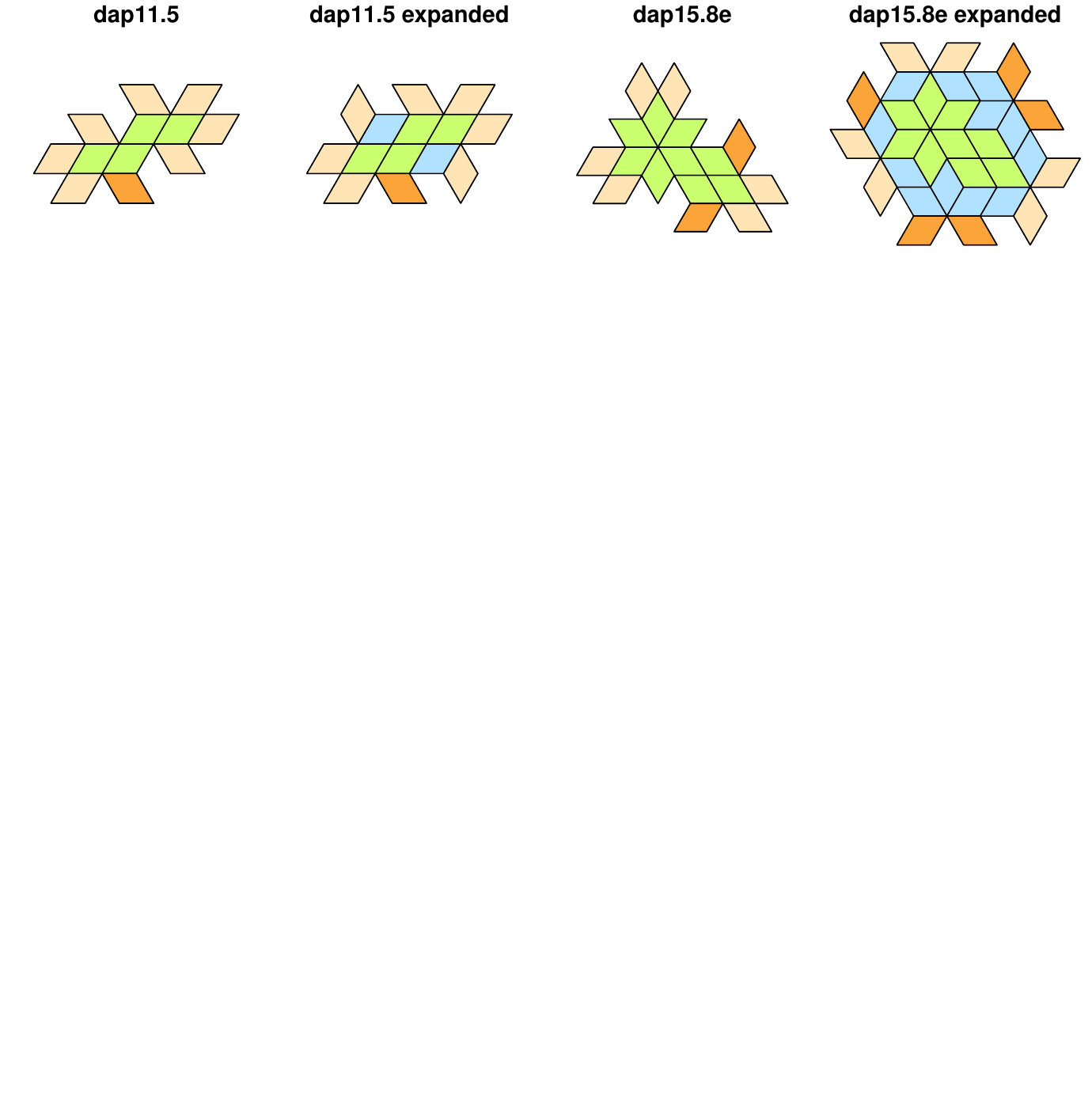}}
   \caption{The first panel shows the verdant tredoku tiling dap11.5 with one additional leaf shown in dark orange. The fragment that remains if the leaves are removed is shown in green. In the
   second panel, this fragment has been expanded to be convex, by adding the blue tiles. The third and fourth panels are similar, but for the tiling dap15.8e.}
   \label{fig:lammax2}
 \end{figure}

Expanding the fragment in this way does not change its perimeter. Moreover, we can ensure, as in Figure~\ref{fig:lammax2}, that leaves are edge-connected, possibly via tiles that have been added,  to the same tiles as for the original fragment. Therefore, the value of $\lambda_{\max}$ for the expanded fragment cannot be less than the value for the original fragment. But it can be greater, as illustrated by the
dap15.8e tiling in Figure~\ref{fig:lammax2}. It is only possible to add two further leaves to dap15.8e, but we can add five leaves to the expanded convex fragment.

So the problem reduces to determining $\lambda_{\max}$ for a convex fragment of perimeter $2 \rho$, either a rhombus or a hexagon.

The maximum number of leaves that we can place along one side of a rhombus or hexagon of length $a$ depends on the parity of $a$ and whether or not we allow the tiles at each end of the side to
protrude beyond the end. The four possibilities are shown in Figure~\ref{fig:lammax4}.
\begin{figure}[h!]
   \centering
   \makebox[\textwidth][c]{\includegraphics[trim=0cm 18.5cm 0cm 0.01cm, clip, width=0.999\textwidth]{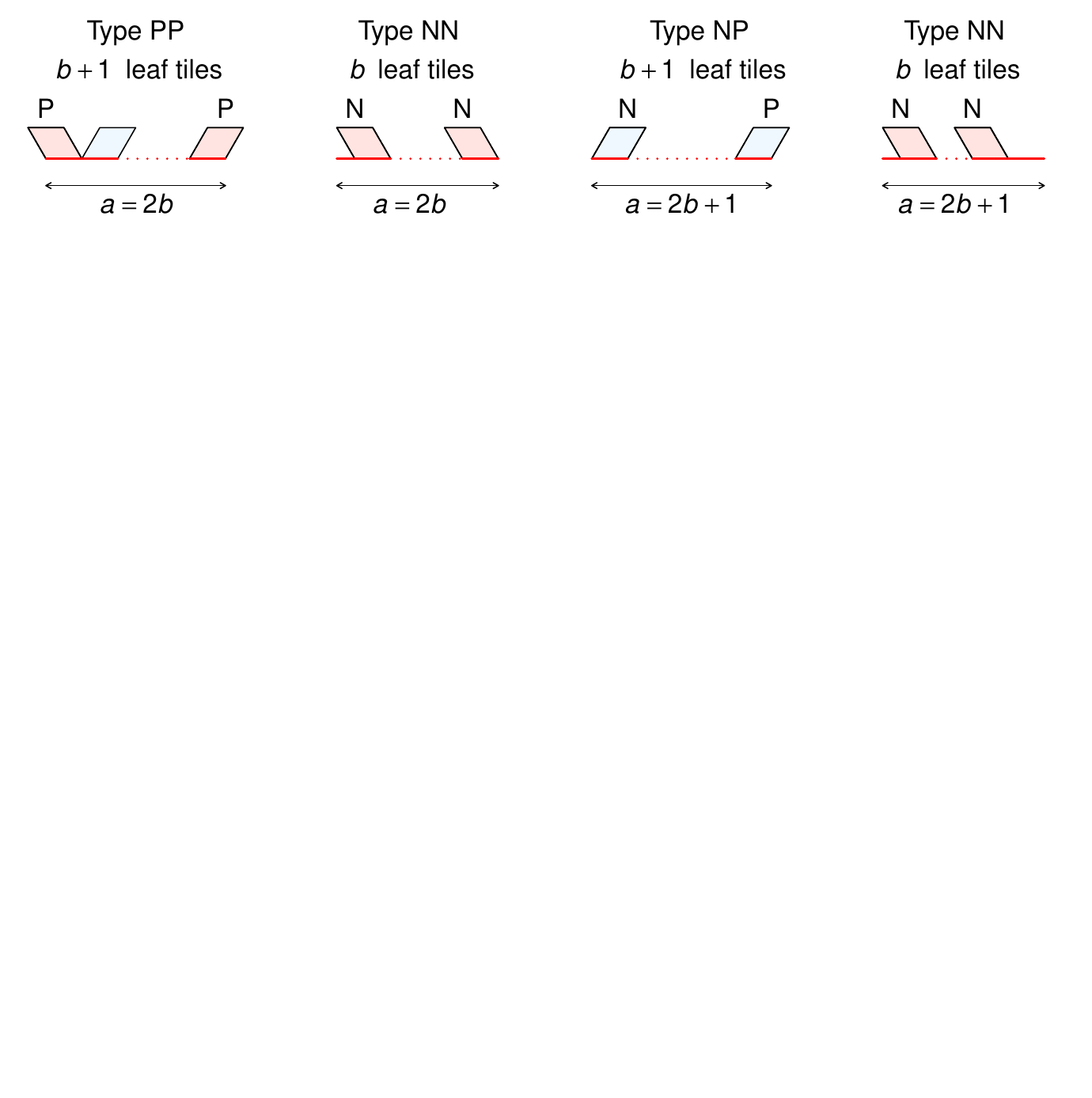}}
   \caption{The maximum number of leaf tiles that can be placed along a side of length $a$, depending on the parity of $a$ and whether tiles at each end are allowed to protrude beyond the
   end of the side (P = protruding allowed, N=protruding not allowed).}
   \label{fig:lammax4}
 \end{figure}

Sides of a hexagon that meet at a corner cannot both have a protruding leaf tile at that corner, because the leaves would share a common edge. For a rhombus, the situation is
a little more complicated. At the two corners where the interior angle is 60$^\circ$ (which, without loss of generality, we take to be the corners joining sides 1 and 2 and sides 3 and 4), both sides can have a protruding leaf tile, but at the other two corners they cannot.

Table~\ref{tab:lammax1} then shows the maximum number of leaves that can be placed around an $a \times c$ rhombus or an $a \times c \times e$ hexagon of perimeter $2 \rho$. We need to consider the parity of the side lengths and the optimal sequence of sides to use, where sides are classified as PP, PN, NP or PP as in Figure~\ref{fig:lammax4}.

\begin{table}[h!]
\begin{center}
\caption{The maximum number of leaves, $\lambda_{\max}$, that can be placed around an $a \times c$ rhombus or an $a \times c  \times e$ hexagon of perimeter $2\rho$, depending on the parity
of $a$, $c$ and $e$. The optimal
sequence refers to the arrangement of leaves along each side of the rhombus or hexagon.}
\label{tab:lammax1}
\begin{tabular}{cccccc}
\hline
\multicolumn{3}{c}{Dimensions} & & & \\
$a$ & $c$ & $e$ &  Optimal sequence & $\rho$ & $\lambda_{\max}$ \\
\hline\\[-1.5ex]
\multicolumn{3}{l}{Rhombus $a \times c$\ \ \ \ \ \ \ }  & & & \\
$2b$ & $2d$ &  &  PP--NN--PP--NN & $2(b+d) \phantom{+ 2} \,\, $& $\rho+2$ \\
$2b$ & $2d+1$ &  &  PP--PN--PP--PN & $2(b+d) + 1 $ & $\rho+3$ \\
$2b+1$ & $2d+1$ &  &  NP--NP--NP--NP & $2(b+d) + 2 $ & $\rho+2$ \\[0.75ex]
\multicolumn{3}{l}{Hexagon $a \times c \times e$\ \ \ \ } & & & \\
$2b$ & $2d$ & $2f$ &  PP--NN--PP--NN--PP--NN & $2(b+d+f) \phantom{+ 2} \,\, $ & $\rho+3$ \\
$2b$ & $2d$ & $2f+1$ & PP--NN--PN--PP--NN--PN   & $2(b+d+f)+1 $ & $\rho+3$ \\
$2b$ & $2d+1$ & $2f+1$ & PP--NP--NP--NN--PN--PN   & $2(b+d+f) +2 $ & $\rho+3$ \\
$2b+1$ & $2d+1$ & $2f+1$ & PN--PN--PN--PN--PN--PN   & $2(b+d+f) + 3 $ & $\rho+3$ \\
\hline
\end{tabular}
\end{center}
\end{table}

For example, consider the penultimate row of Table~\ref{tab:lammax1}, where four sides of the hexagon have odd length. For these sides we can select an NP (or PN) side or an NN side, the latter having one less tile. Similarly, for the two sides of even length, we have a choice between PP or NN, where again the latter has one less tile. We want to assemble the
sides to maximise the
total number of leaves, subject to the constraint that we cannot join two sides if the joined ends both have a protruding tile. This is achieved by minimizing the number of NN sides.
The solution in Table~\ref{tab:lammax1}, which has $\lambda_{\max}=\rho+3$,  has a single NN side.

From Table~\ref{tab:lammax1},
the number of leaves of any $\kappa$-doku tiling satisfies
\begin{equation}
  0 \le \lambda \le \rho+3.\\[-2ex]
\label{eq:boundlam1}
\end{equation}

\end{proof}

In fact, by comparing the parities of $\lambda = 2 \tau - \kappa \rho$ and $\rho+3$, we can give a slightly stronger result.
\begin{corollary}
For $\kappa \ge 2$, the number of leaves, $\lambda$, of a $\kappa$-doku tiling with $\rho$ runs satisfies
\[
   0 \le \lambda \le \left \lbrace \begin{array}{ll} \rho + 3 \quad  & \mathrm{if\ } \kappa \mathrm{\ is\ even\  and\ } \rho \mathrm{\ is \ odd}, \\
   \rho + 2 \quad & \mathrm{otherwise.} \end{array} \right.
\]
\end{corollary}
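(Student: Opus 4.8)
The plan is to deduce this Corollary from Proposition~\ref{lem:lambound1} by a short parity argument, exactly as the preceding sentence of the text suggests: compare the parity of $\lambda = 2\tau - \kappa\rho$ with that of $\rho+3$. The lower bound $\lambda \ge 0$ is immediate, since $\lambda$ counts tiles, and the inequality $\lambda \le \rho+3$ is already established in Proposition~\ref{lem:lambound1} (equation~\eqref{eq:boundlam1}), so the only thing that remains is to show that the bound can be sharpened to $\rho+2$ in every case except when $\kappa$ is even and $\rho$ is odd.

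The key observation is that $2\tau$ is even, so $\lambda \equiv \kappa\rho \pmod 2$. Hence if $\kappa$ is even then $\lambda$ is even, and if $\kappa$ is odd then $\lambda \equiv \rho \pmod 2$. First I would treat the exceptional case: when $\kappa$ is even and $\rho$ is odd, $\rho+3$ is even and $\lambda$ is even, so there is no parity obstruction and we simply keep $\lambda \le \rho+3$. In every other case I claim $\lambda$ and $\rho+3$ have opposite parities, so $\lambda = \rho+3$ is impossible and therefore $\lambda \le \rho+2$. Indeed: if $\kappa$ is even and $\rho$ is even, then $\lambda$ is even while $\rho+3$ is odd; and if $\kappa$ is odd, then $\lambda \equiv \rho$, whereas $\rho + 3 \equiv \rho + 1 \pmod 2$, so they differ in parity whatever the parity of $\rho$. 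Collecting the cases gives the stated two-line bound on $\lambda$.

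There is essentially no obstacle here — all the substance lies in Proposition~\ref{lem:lambound1}, and what remains is a congruence check on $\lambda = 2\tau - \kappa\rho$ against $\rho+3$. The only points requiring care are confirming that $\lambda \equiv \kappa\rho \pmod 2$ (immediate, since $2\tau$ is even) and bookkeeping the four parity combinations of $(\kappa,\rho)$ to verify that the exceptional case is precisely ``$\kappa$ even and $\rho$ odd''.
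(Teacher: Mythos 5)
Your argument is correct and is precisely the paper's intended proof: the paper derives the corollary from Proposition~\ref{lem:lambound1} by the same parity comparison of $\lambda = 2\tau - \kappa\rho$ with $\rho+3$, which it states in one sentence and you have simply written out in full. No gaps.
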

Moreover, we conjecture that the quadridoku tiling dap9.3 shown in Figure~\ref{fig:verdquad}, which has six leaves, is the only $\kappa$-doku tiling for which $\lambda = \rho+3$.

\subsection{The minimum number of tiles of a $\kappa$-doku tiling} \label{sect:mintaukappa}

For a $\kappa$-doku tiling with $\rho$ runs, the requirement $\lambda \ge 0$ leads to the simple bound
\begin{equation}
    \tau \ge \left \lceil \frac{\kappa \rho^{\vphantom{2}}}{2} \right \rceil.
\label{eq:taueasy}
\end{equation}
However, as we have seen for both tredoku and quadridoku tilings, this bound may not be tight for small values of $\rho$. Proposition~\ref{lem:taubound2}
below gives an alternative bound for $\tau$. And it follows from Propositions~\ref{prop:propturantriple} and~\ref{prop:turan2} that, for $\kappa \ge 5$,  any tiling that achieves
the alternative bound has
\begin{equation}
\tau \ge  \left \lceil \frac{3 (\kappa^2-1)}{4} \right \rceil
\label{eq:tauhard}
\end{equation}
Combining (\ref{eq:taueasy}) and (\ref{eq:tauhard}) gives the lower bound for $\tau$ in~(\ref{eq:thm3}).

These arguments generalise the earlier proofs of the non-existence of 5.3 and 6.4 tredoku tilings (Theorem~\ref{thm:thm1}) and 9.4, 10.4, 10.5 and 11.5 quadridoku tilings (Conjecture~\ref{conj:quadexist}).

\begin{proposition}
For $\kappa \ge 2$, the number of tiles, $\tau$, of a $\kappa$-doku tiling with $\rho$ runs satisfies
\begin{equation}
   \tau \ge \kappa \rho - \left \lfloor \frac{\rho^2}{3} \right \rfloor.
\label{eq:taubound}
\end{equation}
\label{lem:taubound2}
\end{proposition}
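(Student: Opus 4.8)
The plan is to bound $\tau$ from below by counting, in two ways, the pairs of runs that share a tile. First I would record two structural facts. Every tile of a $\kappa$-doku tiling lies in \emph{at least} one run: the tiling is connected with more than one tile, so each tile is adjacent to another, and the $\kappa$-doku analogue of property~P4 guarantees that any shared edge belongs to a run of length $\kappa$. Every tile lies in \emph{at most} two runs: there are only three run directions, each of the three tile types occurs in exactly two of them (the BD, BF and DF directions), and distinct runs in a common direction are disjoint by maximality. Writing $a_1$ and $a_2$ for the numbers of tiles lying in exactly one and exactly two runs, we have $a_1 + a_2 = \tau$, while counting tile--run incidences, each run contributing $\kappa$ of them, gives $a_1 + 2a_2 = \kappa\rho$. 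Hence exactly $a_2 = \kappa\rho - \tau$ tiles lie in two runs.

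Next I would count the unordered pairs of runs with a common tile. Each tile lying in two runs determines such a pair, and since two distinct runs meet in at most one tile, distinct tiles give distinct pairs; so the number of these pairs is exactly $a_2 = \kappa\rho - \tau$. On the other hand, two runs sharing a tile necessarily point in different directions, so if $\rho_1,\rho_2,\rho_3$ denote the numbers of runs in the three directions (thus $\rho_1+\rho_2+\rho_3 = \rho$), the number of such pairs is at most $\rho_1\rho_2 + \rho_1\rho_3 + \rho_2\rho_3$. This gives $\kappa\rho - \tau \le \rho_1\rho_2 + \rho_1\rho_3 + \rho_2\rho_3$.

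Finally I would bound the right-hand side by an elementary extremal estimate. Using $2(\rho_1\rho_2 + \rho_1\rho_3 + \rho_2\rho_3) = \rho^2 - (\rho_1^2 + \rho_2^2 + \rho_3^2)$ together with $\rho_1^2 + \rho_2^2 + \rho_3^2 \ge \rho^2/3$ (Cauchy--Schwarz), we get $\rho_1\rho_2 + \rho_1\rho_3 + \rho_2\rho_3 \le \rho^2/3$, and since the left-hand side is an integer it is at most $\lfloor \rho^2/3 \rfloor$ (this is just the edge count of the Turán graph $T(\rho,3)$, attained by the most balanced partition). Combining with the previous inequality yields $\kappa\rho - \tau \le \lfloor \rho^2/3 \rfloor$, i.e.\ $\tau \ge \kappa\rho - \lfloor \rho^2/3 \rfloor$. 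The only step requiring care is the claim that every tile lies in exactly one or two runs, which I would justify from the $\kappa$-doku axioms and the three-direction structure as indicated above; everything else is routine double counting and a one-line inequality.
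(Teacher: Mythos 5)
Your proof is correct, and it follows the same skeleton as the paper's, but with one worthwhile difference at the key step. The paper phrases your double count in terms of the run graph (Lemma~\ref{lem:rungraph1}): the tiles lying in two runs are exactly the edges of that graph, so $\tau=\kappa\rho-\epsilon$, which is your identity $a_2=\kappa\rho-\tau$. Where the paper then observes only that the run graph is $3$-colourable, hence $K_4$-free, and invokes Tur\'an's theorem to get $\epsilon\le\lfloor\rho^2/3\rfloor$, you use the stronger structural fact that the run graph is actually tripartite --- runs sharing a tile lie in different directions --- and bound the number of cross-direction pairs by $\rho_1\rho_2+\rho_1\rho_3+\rho_2\rho_3\le\rho^2/3$ via Cauchy--Schwarz, then round down by integrality. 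Your route is more self-contained (no appeal to Tur\'an's theorem as a black box, only the easy tripartite case), and you also spell out the facts the paper takes as read at this point, namely that every tile lies in at least one and at most two runs and that two distinct runs meet in at most one tile; these do hold for general $\kappa$-doku tilings by the same arguments as in the tredoku case. What the paper's formulation buys in exchange is brevity and continuity: casting the count as the edge count of the run graph lets it reuse that object directly in the subsequent results (the characterisation of Tur\'an triples and the hexagon-shape lemma), whereas your argument would need to be re-expressed in that language to feed into them.
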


\begin{proof}
Consider the run graph of a $\tau.\rho_{\kappa}$
tiling. Following the argument of Lemma~\ref{lem:rungraph1}, the number of tiles is $\tau = \kappa \rho - \epsilon$, where $\epsilon$ is
the number of edges in the run graph. Thus, for a given value of $\rho$, $\tau$ is minimised by maximising $\epsilon$.

In addition, because the run graph has chromatic number at most 3, it cannot contain the complete graph $K_4$ as a subgraph. Tur\'{a}n's theorem (e.g., Aigner, 1995) then gives
\[
   \epsilon \le \left \lfloor \frac{\rho^2}{3} \right \rfloor,
\]
which in turn leads to (\ref{eq:taubound}).
\end{proof}

Suppose a $\tau.\rho_{\kappa}$ tiling achieves the bound~(\ref{eq:taubound}).
Then the number of leaves must satisfy~(\ref{eq:boundlam1}), which leads to the following necessary condition.

\begin{corollary}
If a $\tau.\rho_{\kappa}$ tiling exists with $\tau = \kappa \rho - \left \lfloor \frac{\rho^2}{3} \right \rfloor$, then
\begin{equation}
   2 \left \lfloor \frac{\rho^2}{3} \right \rfloor \le \kappa \rho \le \rho +  2 \left \lfloor \frac{\rho^2}{3} \right \rfloor + 3.
\label{eq:boundkr}
\end{equation}
\label{bound841}
\end{corollary}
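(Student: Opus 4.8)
The plan is to substitute the hypothesised value of $\tau$ into the leaf-count formula and then invoke the leaf bound~(\ref{eq:boundlam1}). Recall that for any $\kappa$-doku tiling the number of leaves is $\lambda = 2\tau - \kappa\rho$ (this identity is already used in the proof of Proposition~\ref{lem:lambound1}; it follows from counting tile--edge incidences within runs exactly as in the tredoku and quadridoku cases). First I would compute, assuming $\tau = \kappa\rho - \left\lfloor \rho^2/3 \right\rfloor$, that
\[
  \lambda = 2\left(\kappa\rho - \left\lfloor \frac{\rho^2}{3}\right\rfloor\right) - \kappa\rho = \kappa\rho - 2\left\lfloor \frac{\rho^2}{3}\right\rfloor .
\]

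Next I would apply the two-sided inequality $0 \le \lambda \le \rho + 3$ supplied by~(\ref{eq:boundlam1}). The lower bound $\lambda \ge 0$ rearranges to $\kappa\rho \ge 2\left\lfloor \rho^2/3 \right\rfloor$, and the upper bound $\lambda \le \rho + 3$ rearranges to $\kappa\rho \le \rho + 2\left\lfloor \rho^2/3 \right\rfloor + 3$. Concatenating these two inequalities gives precisely~(\ref{eq:boundkr}), which finishes the argument.

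Since this is essentially a one-line substitution followed by an application of an inequality established earlier, there is no real obstacle; the only points that need care are that the leaf identity $\lambda = 2\tau - \kappa\rho$ holds in this generality and that~(\ref{eq:boundlam1}) was proved for \emph{all} $\kappa$-doku tilings rather than just tredoku or quadridoku ones — both of which are already in hand from Proposition~\ref{lem:lambound1}. One could also remark that the stronger, parity-dependent form of the leaf bound in the corollary following Proposition~\ref{lem:lambound1} would tighten~(\ref{eq:boundkr}) by one in the relevant cases, but the stated version follows from~(\ref{eq:boundlam1}) alone.
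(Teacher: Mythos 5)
Your argument is exactly the paper's: substitute $\tau = \kappa\rho - \lfloor \rho^2/3 \rfloor$ into the leaf formula $\lambda = 2\tau - \kappa\rho$ and apply the bound $0 \le \lambda \le \rho+3$ from~(\ref{eq:boundlam1}), which is precisely how the paper deduces the corollary from Proposition~\ref{lem:lambound1}. The proposal is correct and follows the same route.
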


We call a triple $(\tau, \rho, \kappa)$ that satisfies the conditions of Corollary~\ref{bound841} a \define{Tur\'{a}n triple}.
Straightforward algebra establishes the following explicit characterisation of Tur\'{a}n triples.
\begin{proposition}
Let $\mathcal{S} = (\tau, \rho, \kappa)$ be a Tur\'{a}n triple with $\kappa \ge 2$.
\begin{itemize}
\item[(i)] If $\kappa$ is odd then $\mathcal{T} = (5, 2, 3)$ or
\begin{equation}
  \rho = \frac{3 \kappa-3}{2}, \ \ \tau = \frac{3 \kappa^2-3}{4}, \qquad {or} \qquad \rho = \frac{3 \kappa-1}{2}, \ \ \tau = \frac{3 (\kappa^2+1)}{4}.
\label{eq:kapodd}
\end{equation}
\item[(ii)] If $\kappa$ is even then $\mathcal{T} = (9, 3, 4), \, (11, 4, 4), \,  (26, 7, 6) $ or
\begin{equation}
  \rho = \frac{3 \kappa-2}{2}, \ \ \tau = \frac{3 \kappa^2}{4}, \qquad {or} \qquad \rho = \frac{3 \kappa}{2}, \ \ \tau = \frac{3 \kappa^2}{4},
\label{eq:kapeven}
\end{equation}
\end{itemize}
\label{prop:propturantriple}
\end{proposition}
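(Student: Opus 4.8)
The plan is to reduce the defining condition of a Turán triple to a single inequality in $\kappa$ and $\rho$ and then run a short case analysis modulo $3$. Since $\lambda = 2\tau - \kappa\rho$, substituting $\tau = \kappa\rho - \lfloor \rho^2/3\rfloor$ gives $\lambda = \kappa\rho - 2\lfloor\rho^2/3\rfloor$, so the condition of Corollary~\ref{bound841} is equivalent to requiring $\tau = \kappa\rho - \lfloor\rho^2/3\rfloor$ together with $0 \le \kappa\rho - 2\lfloor\rho^2/3\rfloor \le \rho+3$; that is, $\kappa\rho$ must lie in the closed interval $I_\rho$ of length $\rho+3$ whose left endpoint is $2\lfloor\rho^2/3\rfloor$. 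For $\rho \ge 4$ we have $\rho+3 < 2\rho$, so $I_\rho$ contains at most two multiples of $\rho$, and hence there are at most two admissible values of $\kappa$ for each such $\rho$. The degenerate case $\rho = 1$ I would discard at the outset (a single run is not a tiling), and $\rho \in \{2,3\}$ I would handle directly: for $\rho = 2$ the condition reads $0 \le 2\kappa - 2 \le 5$, which with $\kappa \ge 2$ gives the triples $(3,2,2)$ and $(5,2,3)$; for $\rho = 3$ it reads $0 \le 3(\kappa-2) \le 6$, giving $(3,3,2)$, $(6,3,3)$ and $(9,3,4)$.

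For $\rho \ge 4$ I would split according to $\rho \bmod 3$, using $\lfloor\rho^2/3\rfloor = 3n^2,\ 3n^2+2n,\ 3n^2+4n+1$ for $\rho = 3n,\ 3n+1,\ 3n+2$. Computing $2\lfloor\rho^2/3\rfloor$ modulo $\rho$ — the residues are $0$, $2n$ and $n$ respectively — locates the multiples of $\rho$ inside $I_\rho$ and so pins down $\kappa$: for $\rho = 3n$ one gets $\kappa \in \{2n,\, 2n+1\}$ (with a third value $\kappa = 2n+2$ admissible only when $n=1$, i.e. $\rho=3$); for $\rho = 3n+1$ one gets $\kappa = 2n+1$, with $\kappa = 2n+2$ also admissible precisely when $n \in \{1,2\}$; for $\rho = 3n+2$ one gets $\kappa = 2n+2$ only. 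Substituting back and re-expressing in terms of $\kappa$ collapses these one-parameter solutions into exactly the two families of the statement: $\kappa = 2n$ from $\rho = 3n$ yields $\rho = 3\kappa/2$, and $\kappa = 2n+2$ from $\rho = 3n+2$ yields $\rho = (3\kappa-2)/2$, both even with $\tau = 3\kappa^2/4$, matching (\ref{eq:kapeven}); while $\kappa = 2n+1$ from $\rho = 3n$ yields $\rho = (3\kappa-3)/2$, $\tau = (3\kappa^2-3)/4$ and $\kappa = 2n+1$ from $\rho = 3n+1$ yields $\rho = (3\kappa-1)/2$, $\tau = (3\kappa^2+1)/4$, matching (\ref{eq:kapodd}). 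The leftover solutions are the ``extra'' values $\kappa = 2n+2$ with $n=1$ in the $\rho = 3n+1$ case (the triple $(11,4,4)$) and with $n=2$ (the triple $(26,7,6)$), together with the $(9,3,4)$ found for $\rho = 3$ and the $(5,2,3)$ found for $\rho = 2$; these are exactly the listed exceptions, and one checks that every other small triple produced above — for instance $(3,2,2)$, $(3,3,2)$, $(6,3,3)$ — already belongs to one of the two families with the correct $\kappa$.

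The arithmetic is routine, so the step that needs care is the bookkeeping at the two ends of the argument. First, I must be scrupulous about whether the right endpoint $2\lfloor\rho^2/3\rfloor + \rho + 3$ of $I_\rho$ is itself a multiple of $\rho$, because this is exactly where a second admissible $\kappa$ appears or vanishes; getting the thresholds slightly wrong (it is $n\in\{1,2\}$ in the $\rho = 3n+1$ case, and $n=1$ is the only value of $\rho\equiv 0$ admitting a third $\kappa$) would spuriously add or drop exceptional triples. Second, at the end I must verify that the two infinite families are disjoint and that every small triple thrown up by the case analysis is either on the exception list or genuinely a member of a family with the correct value of $\kappa$, so that the final classification is simultaneously complete and non-redundant. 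With those checks in place, the explicit formulas (\ref{eq:kapodd}) and (\ref{eq:kapeven}) follow by direct substitution.
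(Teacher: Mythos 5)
Your case analysis is correct, and it is precisely the ``straightforward algebra'' that the paper invokes without writing out, so there is no real methodological difference to compare: you reduce the Tur\'an-triple condition to $0\le\kappa\rho-2\lfloor\rho^2/3\rfloor\le\rho+3$, locate the at most two multiples of $\rho$ in the interval by working modulo $3$, and your threshold bookkeeping ($n=1$ for $\rho=3n$, $n\in\{1,2\}$ for $\rho=3n+1$, none for $\rho=3n+2$) checks out and reproduces exactly the listed exceptions. One point you should state explicitly rather than pass over: your derived value $\tau=(3\kappa^2+1)/4$ for the family $\rho=(3\kappa-1)/2$ is the correct one, but it does not literally match the displayed formula in (\ref{eq:kapodd}), which reads $3(\kappa^2+1)/4$ and is not even an integer for odd $\kappa$ (for $\kappa=3$ it gives $7.5$, whereas the triple is $(7,4,3)$, the familiar $7.4$ tredoku parameters); the printed statement evidently contains a typo that your computation silently corrects. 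Similarly, your discarding of $\rho=1$ is a sensible but unstated strengthening of the hypotheses, since degenerate triples such as $(3,1,3)$ satisfy the purely numerical Tur\'an-triple conditions yet belong to neither family nor the exception list, so it is worth recording that the classification is asserted only for $\rho\ge 2$.
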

Given a Tur\'{a}n triple $\mathcal{T}=(\tau, \rho, \kappa)$,
no $\kappa$-doku tiling with $\rho$ runs exists with fewer than $\tau$ tiles.
Proposition~\ref{prop:turan2} below shows that, with one exception, $\tau.\rho_{\kappa}$ tilings exist for all Tur\'{a}n triples. The proof uses the following lemmas.
\begin{lemma}
The minimum perimeter of a lozenge tiling that consists of $\tau$ tiles is
\begin{equation}
 \pi_{\min}(\tau) = 2 \left \lceil \sqrt{3 \tau} \right \rceil. \\[-6ex]
\label{eq:harary}
\end{equation}
\label{lem:minkappa0}
\end{lemma}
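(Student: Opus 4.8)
The plan is to prove the lower bound $\pi \ge 2\lceil\sqrt{3\tau}\rceil$ on the perimeter $\pi$ of an arbitrary (connected) lozenge tiling with $\tau$ tiles, and then to exhibit a tiling attaining it. For the lower bound, note first that $\pi$ is even: each of the $\tau$ tiles has four edges and every interior edge is shared by two tiles, so $4\tau = 2 e_{\mathrm{int}} + \pi$. Now subdivide every lozenge into two unit equilateral triangles along its short diagonal; the new diagonal edges are interior, so the boundary is unchanged and the tiling becomes a polyiamond — an edge-connected union of $2\tau$ cells of the triangular lattice — with the same perimeter $\pi$. The minimum-perimeter (isoperimetric) result for polyiamonds (Harary \& Harborth, 1976) gives $\pi \ge \sqrt{6\cdot 2\tau} = 2\sqrt{3\tau}$; being an even integer that is at least $2\sqrt{3\tau}$, $\pi$ is at least the smallest such even integer, namely $2\lceil\sqrt{3\tau}\rceil$. (Holes or disconnections only increase $\pi$, so it is harmless to assume the tiling is connected and its region simply connected.)

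For the matching construction, put $m = \lceil\sqrt{3\tau}\rceil$ and write $m = a+b+c$ with $a,b,c \ge 0$ as nearly equal as possible. The hexagon with successive side lengths $a,b,c$ (a rhombus if one of them is $0$) is tiled by $ab+bc+ca$ lozenges and has perimeter $2m$; a short calculation over the residues of $m$ modulo $3$ shows $ab+bc+ca = \lfloor m^2/3\rfloor$, which is also the maximum of $ab+bc+ca$ subject to $a+b+c = m$. Since $\lceil\sqrt{3\tau}\rceil = m$ is equivalent to $\lfloor (m-1)^2/3\rfloor < \tau \le \lfloor m^2/3\rfloor$, it remains only to delete $\lfloor m^2/3\rfloor - \tau$ lozenges from this hexagon without changing its perimeter. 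I would do this by peeling lozenges off along a staircase anchored at one of the $120^{\circ}$ corners: removing a lozenge that currently has exactly two edges on the boundary deletes those two edges and exposes two formerly interior ones, leaving the perimeter equal to $2m$. The resulting tiling has $\tau$ tiles and perimeter $2m = 2\lceil\sqrt{3\tau}\rceil$, which simultaneously attains the bound and shows it is tight.

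The main obstacle is the staircase step: one must check, uniformly over the residue of $m$ modulo $3$ and including the degenerate rhombus cases, that a single corner staircase long enough to remove the required number of lozenges — at most $\lfloor m^2/3\rfloor - \lfloor(m-1)^2/3\rfloor - 1$, which is roughly $2m/3$ — is always available, with every intermediate tiling still having perimeter $2m$. This is elementary but a little tedious. An alternative that avoids it is to grow a ``hexagonal spiral'' of lozenges from the centre and prove by induction on $\tau$ that after placing the $\tau$-th lozenge the perimeter is exactly $2\lceil\sqrt{3\tau}\rceil$, each step adding one tile and leaving the perimeter unchanged except that it jumps by $2$ precisely when $\lceil\sqrt{3\tau}\rceil$ increases.
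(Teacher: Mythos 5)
Your proposal is correct, and its core is the same as the paper's: the paper's entire proof is to subdivide each lozenge into two equilateral triangles and invoke Harary \& Harborth's Theorem 4 with $n=2\tau$, whose formula $2\lceil(n+\sqrt{6n})/2\rceil-n$ simplifies to $2\lceil\sqrt{3\tau}\rceil$ when $n$ is even. Your lower bound reaches the same ceiling by a slightly different (and equally valid) route, using only the weaker isoperimetric inequality $\pi\ge\sqrt{6n}$ together with the parity observation $4\tau=2e_{\mathrm{int}}+\pi$. Where you genuinely go beyond the paper is attainability: the paper's ``setting $n=2\tau$ and simplifying gives the result'' silently treats the Harary--Harborth minimum as achieved by a lozenge tiling, which is not automatic (their extremal spiral polyiamonds need not decompose into lozenges), so your explicit construction --- the near-equilateral hexagon with semiperimeter $m=\lceil\sqrt{3\tau}\rceil$ containing $\lfloor m^2/3\rfloor$ tiles, followed by perimeter-preserving deletions --- is a worthwhile addition rather than redundancy. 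The staircase step you flag does go through: if you tile the extremal hexagon in the ``cube-corner'' fashion (three large rhombi of the three orientations), then peeling the boundary row and then the boundary column of one $k\times k$ rhombus, starting at the $120^{\circ}$ corner, removes tiles that each have exactly two boundary edges at the moment of removal, giving roughly $2m/3$ perimeter-preserving deletions, which covers the at most $\lfloor m^2/3\rfloor-\lfloor(m-1)^2/3\rfloor-1$ needed; the rhombus (degenerate hexagon) cases such as the $2\times 3$ parallelogram for $\tau=6$ are handled the same way. So the only thing missing from your write-up is this routine case-check over the residue of $m$ modulo $3$, which you correctly identify as elementary, and which the paper does not supply either.
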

\begin{proof}
Divide each lozenge tile into two equilateral triangles. The resulting tiling is called a \define{polyiamond} (e.g.\ Yang \& Meyer, 2002). Theorem 4 of Harary \& Harborth (1976) gives a formula for the
minimum perimeter of a polyiamond\footnote{Harary \& Harborth use the term \define{triangular animal} rather than polyiamond. Their formula is given in the proof of
Proposition~\ref{eq:inhole1} below.}
 consisting of $n$ tiles. Setting $n = 2 \tau$ and simplifying gives the result.
\end{proof}
The function $\pi_{\min}(\tau)$ is non-decreasing, but not strictly increasing.
Let $\tau_{\max}(\pi)$ denote
the largest value of $\tau$ for which a tiling exists with perimeter $\pi$.

\begin{lemma}
Let $\mathcal{T}$ be a $\tau.\rho_{\kappa}$ tiling and let $\mathcal{F}$ denote the fragment that results from removing any leaf tiles. $\mathcal{F}$ fills a semi-regular  $a \times b \times c$ hexagon if and
only if $(\tau, \rho, \kappa)$ is a Tur\'{a}n triple.
\label{lem:hexshape}
\end{lemma}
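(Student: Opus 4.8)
The plan is to push everything through the run graph and Turán's theorem, invoking lozenge‑tiling geometry only for the steps the combinatorics cannot detect. Recall from the proof of Proposition~\ref{lem:taubound2} that if $G$ is the run graph of $\mathcal{T}$, with $\rho$ vertices and $\epsilon$ edges, then $\tau=\kappa\rho-\epsilon$ and $\lambda=\kappa\rho-2\epsilon$, that $G$ has chromatic number at most $3$ (so contains no $K_4$), and hence by Turán's theorem $\epsilon\le\lfloor\rho^2/3\rfloor$ with equality if and only if $G$ is the balanced complete tripartite Turán graph $T(\rho,3)$. By Corollary~\ref{bound841}, $(\tau,\rho,\kappa)$ is a Tur\'an triple exactly when $\tau=\kappa\rho-\lfloor\rho^2/3\rfloor$ (for a genuine tiling the accompanying inequality follows from $\lambda\ge0$ and Proposition~\ref{lem:lambound1}), i.e.\ exactly when $\epsilon=\lfloor\rho^2/3\rfloor$, i.e.\ exactly when $G\cong T(\rho,3)$. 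So the lemma is equivalent to: $\mathcal{F}$ fills a semi-regular hexagon if and only if $G$ is complete tripartite with parts of nearly equal size. Since $\mathcal{F}$ is obtained by deleting leaves from the hole-free tiling $\mathcal{T}$ it is a simply connected lozenge-tiled region, and I will treat the degenerate case $\rho=2$ (where $\mathcal{F}$ reduces to a single tile) separately.

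For the forward implication, suppose $\mathcal{F}$ fills the semi-regular hexagon $H(a,b,c)$. A lozenge tiling of $H(a,b,c)$ has $a+b+c$ worms, with $a$ in each of three directions, a direction-$1$ worm meeting exactly the $b+c$ worms of the other directions (hence of length $b+c$, since in a convex region two worms share at most one tile), and two worms crossing precisely when they point in different directions; so $G=K_{a,b,c}$, $\rho=a+b+c$ (matching the perimeter $2\rho$ of $\mathcal{F}$ from Lemma~\ref{lem:perim1}) and $\epsilon=ab+bc+ca$. Because $\mathcal{F}$ comes from $\mathcal{T}$ by removing leaves, and every leaf of $\mathcal{T}$ is an endpoint of its unique run, each worm of $\mathcal{F}$ is a run of length $\kappa$ with $0$, $1$ or $2$ endpoints deleted, so the three worm lengths $b+c,\,a+c,\,a+b$ all lie in $[\kappa-2,\kappa]$ and therefore $\max(a,b,c)-\min(a,b,c)\le2$. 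The crux is to exclude a difference of exactly $2$. If $\max-\min=2$ then, since the longest worm is $2$ longer than the shortest and at most $\kappa$, the shortest worm length is exactly $\kappa-2$; this class comprises the $\max(a,b,c)$ worms of the largest family, and since $\max(a,b,c)\ge3$ and each such worm needs a leaf attached at both of its ends, each of the two opposite length-$\max(a,b,c)$ sides of $H(a,b,c)$ must carry a leaf on every one of its (at least three) edges. But along any side, two leaves on adjacent edges cannot lean the same way (they would share an edge, creating a worm of length at least $2$ and strictly less than $\kappa$, contradicting the $\kappa$-doku run-length conditions) and cannot lean towards each other (they would overlap, violating P2); a direct check of the four possibilities shows the only admissible consecutive pair is ``lean outward, then lean inward'', so a side can carry at most two such leaves — a contradiction. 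Hence the sides are balanced, $\epsilon=\lfloor\rho^2/3\rfloor$, and $(\tau,\rho,\kappa)$ is a Tur\'an triple.

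For the converse, suppose $G\cong T(\rho,3)=K_{p,q,r}$ with $p,q,r$ as equal as possible and (for $\rho\ge3$) positive. Then every two worms of $\mathcal{F}$ in different directions cross, and no two in the same direction cross. A reflex vertex of $\mathcal{F}$ would be the common endpoint of two boundary edges in different directions whose worms fail to meet, so $\mathcal{F}$ has no reflex vertex; by the angle-sum bookkeeping used in the proof of Proposition~\ref{lem:lambound1}, a simply connected lattice region with all interior angles in $\{60^\circ,120^\circ\}$ and no reflex vertex is either an equilateral triangle or a (possibly degenerate) semi-regular hexagon $H(a,b,c)$, and the triangle is not lozenge-tileable; thus $\mathcal{F}=H(a,b,c)$ and $G=K_{a,b,c}$, which matched against $K_{p,q,r}$ gives $\{a,b,c\}=\{p,q,r\}$. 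In particular $a,b,c\ge1$, so $\mathcal{F}$ is the semi-regular hexagon $H(p,q,r)$.

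The main obstacle is the step that forbids side lengths differing by $2$: the leaf-count bound of Proposition~\ref{lem:lambound1} does not exclude such hexagons, so one genuinely needs the refined local analysis of how leaves may be attached along one side of a hexagon — in particular the fact that two consecutive boundary leaves may not lean the same way (a strengthening of the reasoning behind Figure~\ref{fig:lammax4}) — to see that a worm family of size at least three cannot consist of worms two short of the full length $\kappa$. By contrast, the convexity argument in the converse is routine lozenge-tiling geometry, and the reduction to the run graph is immediate from the earlier results.
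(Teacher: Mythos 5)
Your proposal takes a genuinely different route from the paper. The paper's proof is short: $\mathcal{F}$ has perimeter $2\rho$ (Lemma~\ref{lem:perim1}) and $\kappa\rho-\tau$ tiles, and Theorems 12 and 13 of Yang \& Meyer say that among lozenge-tiled regions of perimeter $2\rho$ the maximum tile count is $\lfloor\rho^2/3\rfloor$, attained exactly by the semi-regular hexagons; so $\mathcal{F}$ is such a hexagon iff $\epsilon=\lfloor\rho^2/3\rfloor$, i.e.\ iff $(\tau,\rho,\kappa)$ is a Tur\'an triple. You avoid Yang \& Meyer and instead use the uniqueness part of Tur\'an's theorem (extremal $K_4$-free graph $=$ balanced complete tripartite) plus direct worm geometry. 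Note that the paper's reading of ``semi-regular'' (via Yang \& Meyer) already builds in balancedness, whereas you read it as any $H(a,b,c)$ and therefore must do extra work; your forward ``crux'' -- a side of length at least $3$ cannot carry a leaf on every edge, because two leaves on adjacent collinear boundary edges cannot lean the same way (they would be adjacent, which is impossible for leaves) nor towards each other -- is sound, proves the slightly sharper fact that an unbalanced hexagon cannot arise as $\mathcal{F}$ at all, and is essentially the same packing argument the paper uses to exclude the $26.7_6$ tiling in Proposition~\ref{prop:turan2}.

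The converse, however, rests on two geometric assertions that you do not prove, one of which is false as stated. First, the claim that a reflex vertex yields two differently-directed worms that fail to meet is true but not obvious: at a $240^\circ$ corner the worm ending at one boundary edge propagates into the cone spanned by $v,w$ and the other into the cone spanned by $-u,w$ (in lattice-vector terms), and these cones overlap in the direction $w$, so ``they diverge'' is not an argument. What saves the claim is a quantitative drift comparison: the first worm's crossed edges move left by at most $1/2$ per level gained, while every crossed edge of the second worm at the same height has moved left by strictly more, so they can never share a tile (the $300^\circ$ case is easier, the two worms lying weakly above and weakly below the corner line). Second, ``no reflex vertex $\Rightarrow$ equilateral triangle or (degenerate) $H(a,b,c)$'' does not follow from angle bookkeeping alone: a convex equiangular lattice hexagon need not have opposite sides equal (e.g.\ side lengths $3,1,2,2,2,1$), so you must invoke the fact that $\mathcal{F}$ is lozenge-tiled -- the up/down triangle balance is what forces opposite sides equal -- which is precisely the input the paper obtains by citing Yang \& Meyer. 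Smaller loose ends: the promised degenerate case $\rho=2$ (where $\mathcal{F}$ is a single tile and the ``hexagon'' is $1\times1\times0$) is never treated, and you should state explicitly that the proper $3$-colouring of the run graph by directions must coincide with the tripartition of $K_{p,q,r}$ (true, since each colour class is independent). None of this is unfixable, but as written the converse leans on unproved geometric facts that are exactly what the paper's citation supplies.
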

\begin{proof}
The proof is based on Theorem 13 of Yang \& Meyer (2002) which relates to polyiamonds. Re-expressed in terms of lozenge tilings, it indicates that a lozenge tiling with $\tau$ tiles and
perimeter $\pi$ is a semi-regular hexagon with sides $ a \times b \times c$ if and only if $\tau = \tau_{\max}(\pi)$. From Lemma~\ref{lem:perim1}, $\mathcal{F}$ has perimeter $\pi = 2\rho$ and
it may be shown from Theorem 12 of Yang \& Meyer that $\tau_{\max}(2 \rho) =  \left \lfloor \frac{\rho^2}{3} \right \rfloor$.
From the  proof of Lemma~\ref{lem:taubound2}, this is the number of edges in the run graph of $\mathcal{T}$, and hence the number of tiles in $\mathcal{F}$,  if and only if $(\tau, \rho, \kappa)$ is a Tur\'{a}n triple.
\end{proof}

The values of $a$, $b$ and $c$ may be deduced from Theorem 13 of Yang \& Meyer (2002); we give some specific values in the proof of the following proposition.

\begin{proposition}
If $\mathcal{S} = (\tau, \rho, \kappa)$ is a Tur\'{a}n triple then a $\tau.\rho_{\kappa}$ tiling exists except when $\mathcal{S}=(26,7,6)$.
\label{prop:turan2}
\end{proposition}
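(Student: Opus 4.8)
The plan is to combine the explicit description of Tur\'{a}n triples in Proposition~\ref{prop:propturantriple} with Lemma~\ref{lem:hexshape}, which already forces the deleafed fragment $\mathcal{F}$ of any candidate $\tau.\rho_{\kappa}$ tiling to fill a semi-regular hexagon of perimeter $2\rho$. For the construction direction I would run this backwards: fix the hexagon, tile it by lozenges, and attach leaves to its boundary until every run has length $\kappa$. Since every lozenge tiling of an $a\times b\times c$ hexagon has run graph $K_{a,b,c}$, the runs split into three families of sizes $a$, $b$, $c$ with lengths $b+c$, $c+a$, $a+b$; a run in the family of size $a$ therefore needs $\kappa-(b+c)$ attached leaves, and summing shows the total number of leaves required is exactly $\lambda=2\tau-\kappa\rho$, in agreement with Lemma~\ref{lem:perim1}. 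So the only real issue is whether these leaves can be placed without overlapping.

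First I would clear the sporadic triples: $(5,2,3)$, $(9,3,4)$ and $(11,4,4)$ are realised by dap5.2, dap9.3 and dap11.4 (Figures~\ref{fig:exist1} and~\ref{fig:verdquad}). Among the infinite families of Proposition~\ref{prop:propturantriple}, the cases $\rho=3\kappa/2$ (even $\kappa$) and $\rho=(3\kappa-3)/2$ (odd $\kappa$) are precisely the regular-hexagon and regular-hexagon-with-alternate-perimeter-leaves constructions of Section~\ref{sect:kdoku}. For the second odd family, $\rho=(3\kappa-1)/2$ with $\kappa=2a+1$, the fragment is the $(a{+}1)\times a\times a$ hexagon; only the $a+1$ runs of the size-$(a{+}1)$ family are short, of length $2a=\kappa-1$, each needing a single leaf, and these can be placed by alternating along the two opposite sides of length $a+1$. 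Because those two sides share no corner and no other side carries a leaf, there are no conflicts, so this case works for every $a$.

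The remaining case is the second even family, $\rho=(3\kappa-2)/2$ with $\kappa=2a$, where the fragment is the $a\times a\times(a-1)$ hexagon, two separate run-families are short (each of length $\kappa-1$), and together they need $2a$ leaves distributed over four of the six sides; note this produces a tiling with the same number of tiles, $3a^2$, as the regular hexagon but one fewer run. The two leaf-bearing families meet at two corners of the hexagon, so a leaf from one family and a leaf from the other can collide there. I would resolve this by choosing, for each short run, which of its two boundary ends receives the leaf so that the unit-edges at those two problematic corners are left free; a short case analysis on the parity of $a$ (equivalently on which row of Table~\ref{tab:lammax1} the hexagon falls into) shows a valid choice always exists, ideally packaged as one ``modified regular hexagon'' construction. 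This corner bookkeeping is the step I expect to be the main obstacle; everything else is essentially forced.

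Finally, for the exceptional triple the deleafed fragment of a hypothetical $26.7_{6}$ tiling is forced (uniquely, since $3=3+2+2$ is the only partition of $7$ with $ab+bc+ca=16$) by Lemma~\ref{lem:hexshape} to be the $3\times2\times2$ hexagon, with $\lambda=\rho+3=10$, the extremal value of Proposition~\ref{lem:lambound1}. Its size-$3$ run-family consists of three runs of length $2+2=4$ exiting one length-$3$ side at its three unit-edges. Each such run must gain two tiles to reach $\kappa=6$; those two tiles cannot both be attached at one end, for then the inner of the two would be a $2$-tile (hence non-leaf) lying outside the hexagon, contradicting the fact that the deleafed fragment \emph{is} the hexagon. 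Hence each of the three runs carries a leaf on its own unit-edge of that side, so all three unit-edges of a single length-$3$ side carry a leaf. But leaves on two adjacent unit-edges of a straight side necessarily overlap or share an edge, so such a side carries at most two non-overlapping leaves --- a contradiction. Therefore no $26.7_{6}$ tiling exists, and the proposition follows.
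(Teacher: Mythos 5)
Your proposal is correct and takes essentially the same route as the paper: the sporadic triples are covered by Donald's explicit tilings, each infinite family of Proposition~\ref{prop:propturantriple} is realised by attaching leaves to the boundary of the appropriate semi-regular hexagon, and $(26,7,6)$ is excluded by forcing the deleafed fragment to be the $3\times2\times2$ hexagon and noting that a side of length $3$ cannot carry three non-overlapping leaves. The differences are only bookkeeping: you justify the leaf counts via the run graph $K_{a,b,c}$ and identify the fragment for $\rho=(3\kappa-1)/2$ as the $(\kappa+1)/2\times(\kappa-1)/2\times(\kappa-1)/2$ hexagon, which is indeed what the parameters force (the regular hexagon stated in the paper's construction (ii) appears to be a slip), while the paper pins down the $3\times2\times2$ hexagon in the exceptional case via the minimum-perimeter lemma rather than your partition count, and neither you nor the paper spells out the (true, and easily checked) corner placement in the even case $\rho=(3\kappa-2)/2$ in full detail.
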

\begin{proof}
Donald gave examples of tilings for $\mathcal{S} = (5,2,3), \, (9,3,4)$ and $(11,4,4)$.  Other tilings are constructed as follows:
\begin{itemize}
\item[(i)] For odd $\kappa \ge 3$ and $\rho = (3\kappa-3) \slash 2$, add $3(\kappa-1) \slash 2$ leaves to the perimeter of a \mbox{$(\kappa-1)\slash 2 \times (\kappa-1)\slash 2 \times
(\kappa-1)\slash 2$} hexagon.

\item[(ii)] For odd $\kappa \ge 3$ and $\rho = (3\kappa-1) \slash 2$, add $(\kappa+1) \slash 2$ leaves to the perimeter of a
\mbox{$(\kappa-1)\slash 2 \times (\kappa-1)\slash 2 \times
(\kappa-1)\slash 2$} hexagon.

\item[(iii)] For even $\kappa \ge 2$ and $\rho = (3\kappa-2) \slash 2$, add $\kappa$ leaves to the perimeter of a \mbox{$\kappa\slash 2 \times \kappa\slash 2 \times
(\kappa-2) \slash 2$} hexagon.\footnote{For $\mathcal{S}=(3,2,2)$ this gives a $1 \times 1 \times 0 $ `hexagon', which should be interpreted as a $1 \times 1$ rhombus, i.e.\ a single tile.}

\item[(iv)] For even $\kappa \ge 2$  and $\rho = 3\kappa \slash 2$, any lozenge tiling of a \mbox{$\kappa \slash 2 \times \kappa\slash 2 \times
\kappa\slash 2$} hexagon is a $\tau.\rho_{\kappa}$ tiling.
\end{itemize}

Finally, to show that no $26.7_6$ tiling exists,
consider the fragment that results from removing the leaves. This has 16 tiles and, from Lemma~\ref{lem:perim1} its perimeter is 14. From Lemma~\ref{lem:minkappa0}
this is the minimum possible perimeter for a fragment of 16 tiles. Any fragment of more than 16 tiles has perimeter greater than 14 and the shape of the 16-tile fragment is
therefore uniquely determined as a $3 \times 2 \times 2$ hexagon.
The three runs that start and end
on the two sides of the hexagon of length 3 consist of only four tiles and so a leaf tile must be added at each end of each run. But the maximum number of leaf tiles
that can be attached to a side of length 3 is two (see Figure~\ref{fig:lammax4}) and hence no $26.7_6$ tiling exists.

\end{proof}

Figure~\ref{fig:kdoku_keven} shows an example of each of the constructions (i)--(iv). Constructions (i) and (iv) are the constructions that Donald gave for odd and even values of $\kappa$.
It is less clear whether he appreciated the generality of (ii) and (iii), but he certainly constructed specific examples -- the quindoku tilings
dap19.7a and dap19.7b (Appendix D) and the quadridoku tilings dap12.5a and dap12.5b (Appendix C).

\begin{figure}[h!]
   \centering
   \makebox[\textwidth][c]{\includegraphics[trim=0cm 17.2cm 0cm 0.01cm, clip, width=0.9\textwidth]{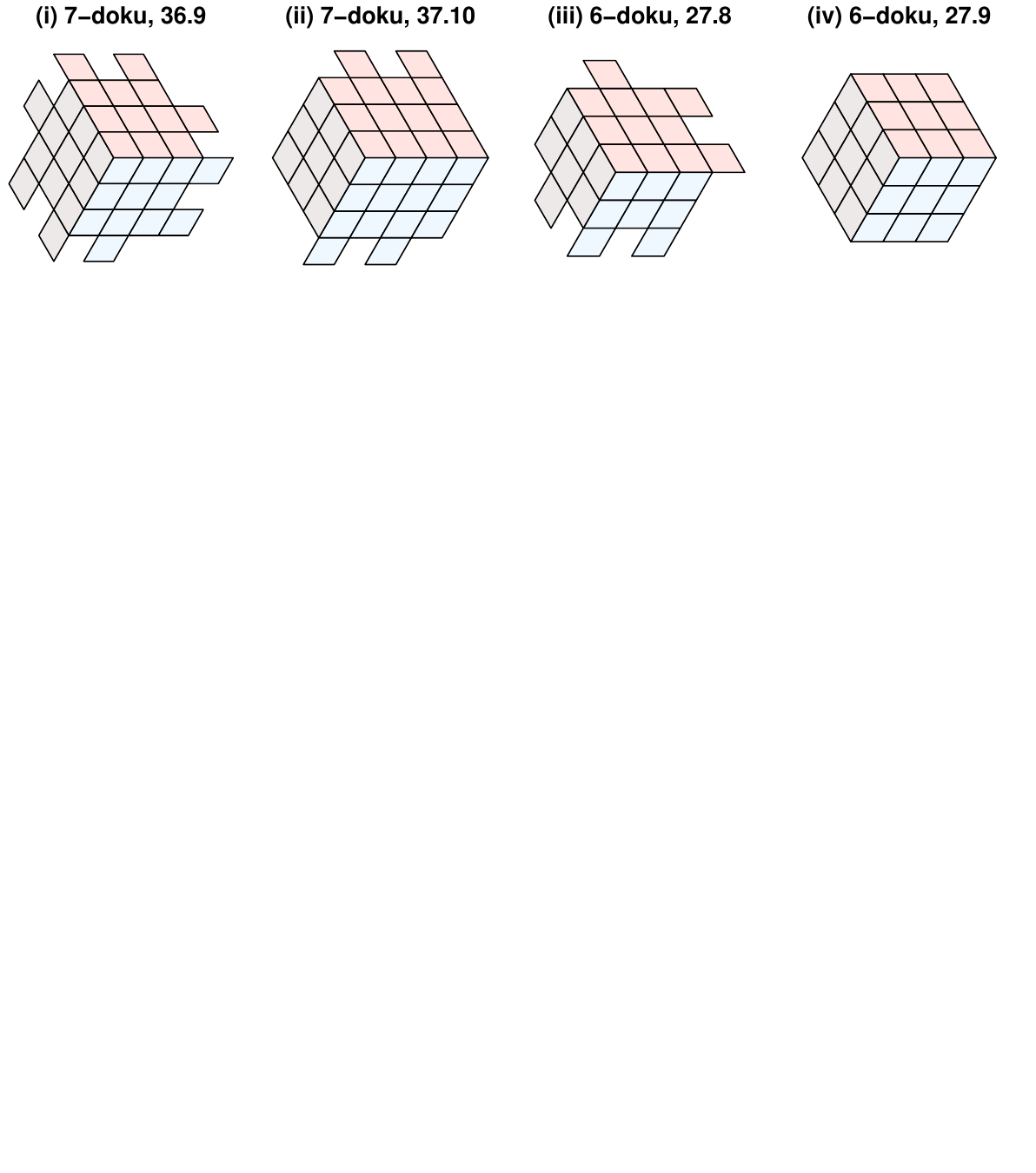}}
   \caption{Examples of the constructions (i)--(iv) given in the proof of Proposition~\ref{prop:turan2} for $\kappa=6$ and $\kappa=7$.}
   \label{fig:kdoku_keven}
 \end{figure}

\subsection{Classification of the smallest $\kappa$-doku tilings} \label{sect:kappastruct}

We have seen that the $\kappa$-doku tilings with the fewest tiles consist of a hexagonal shape, possibly with additional leaf tiles, as illustrated by constructions (i), (iii) and (iv) above.
It is of interest to determine the number of non-isomorphic tilings of each type and in this Section we summarise a few known results.

A regular hexagon of side $a$ can be filled with tredoku tiles in more interesting ways than that shown in Figure~\ref{fig:kdoku}(a), as for example in Figure~\ref{fig:kdoku}(d). Because any such tilings are flip-equivalent, they will always have equal numbers of the three tile types, B, D and F. David \& Tomei (1989) give an elegant visual proof of this result, formulating the
problem as one of fitting calissons\footnote{A calisson is a French sweet that has the shape of a lozenge tile.} into a hexagonal tin.
They show that lozenge tilings of a hexagon with opposite sides of lengths $a$, $b$ and $c$ are in bijection with plane partitions contained in an $a \times b \times c$ box,\footnote{Donald's notes
indicate that he was aware of this connection.} the number of which is given by a famous formula of MacMahon (see e.g.\ Gorin, 2021, Theorem 1.1), namely
\[
  \prod_{i=1}^a \prod_{j=1}^b \prod_{k=1}^c \frac{i+j+k-1}{i+j+k-2}.
\]
When $a = b = c$ this reduces to
\[
       \prod_{i = 0}^{a-1} \frac{i! (i+2n)!}{(i+n)!^2},
\]
and generates the sequence that begins $2, 20, 980, 232848, \ldots$ and appears in the Online Encyclopedia of Integer Sequences (OEIS) as sequence A008793. So this is the number of $\kappa$-doku tilings with $\kappa = 2a$ that fill a regular hexagon of side $a = 1, 2, 3, 4, \ldots$. The number of isomorphism classes is also known (Taylor, 2015) and appears as sequence A066931 in
OEIS. The sequence begins $1, 6, 113, 20174, \ldots$. Note that for tilings that fill a regular hexagon, two tilings are isomorphic if and only if they are congruent. Figure~\ref{fig:quadiso6}
shows a representative of each of the 6 isomorphism classes of 12.6 quadridoku tilings that lie within a regular hexagon of side $a=2$.

\begin{figure}[h!]
   \centering
   \makebox[\textwidth][c]{\includegraphics[trim=0cm 19.5cm 0cm 0.1cm, clip, width=0.999\textwidth]{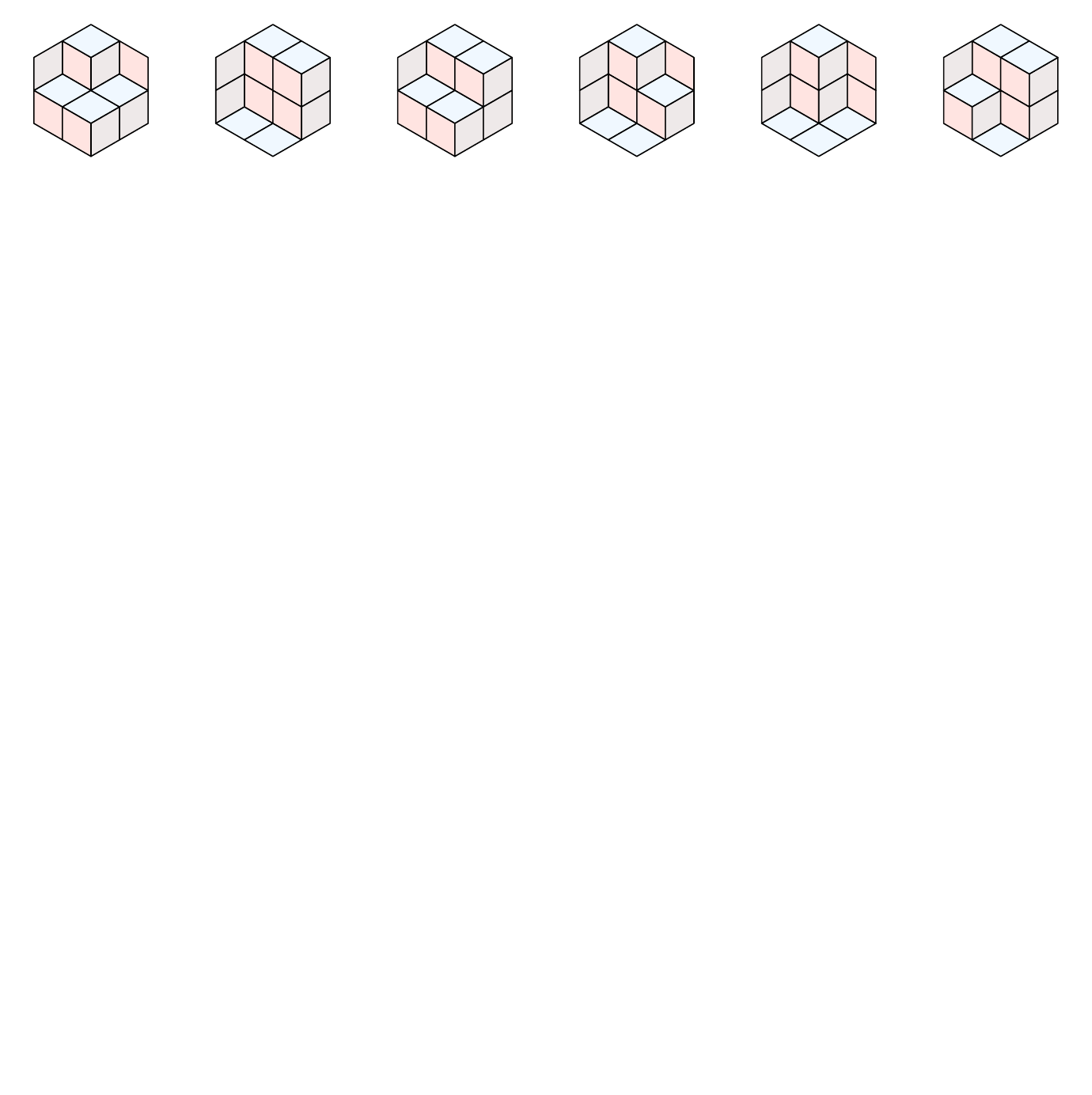}}
   \caption{Representatives of the 6 isomorphism classes of 12.6 quadridoku tilings that lie in a regular hexagon of side 2. The isomorphism classes have size 1, 2, 2, 3, 6, 6.}
   \label{fig:quadiso6}
 \end{figure}

As noted above, we can add $a$ leaves to a hexagonal $2a$-doku tiling to obtain a \mbox{$(2a+1)$-doku} tiling.
For $a=1$, this results in the two non-isomorphic tredoku tilings dap6.3a and dap6.3b.

For $a=2$, the number of non-isomorphic tilings that are obtained depends on the isomorphism
class of the quadridoku tiling. A computer search indicates that the numbers of non-isomorphic quindoku tilings generated by the
six isomorphism classes of quadridoku tilings shown in Figure~\ref{fig:quadiso6}
are, from left to right, 7, 9, 9, 15, 23 and 23, giving a total of 86 isomorphism classes of 18.6 quindoku tilings of this form. Two of these appear in Donald's small collection of quindoku tilings
(Appendix D).

\section{Holes} \label{sect:holes}

Although Donald focused almost entirely on tilings without holes, from the outset tredoku puzzles have sometimes been based on tilings with holes. Figure~\ref{fig:hole2} shows some examples.
This section provides a brief introduction to tilings with holes. Donald's notes contain just six tilings with holes, four tredoku tilings and two quadridoku tilings, and these are shown at the end of this section.

\begin{figure}[h!]
   \centering
   \makebox[\textwidth][c]{\includegraphics[trim=0in 6.5in 0in 0.3in, clip, width=0.94\textwidth]{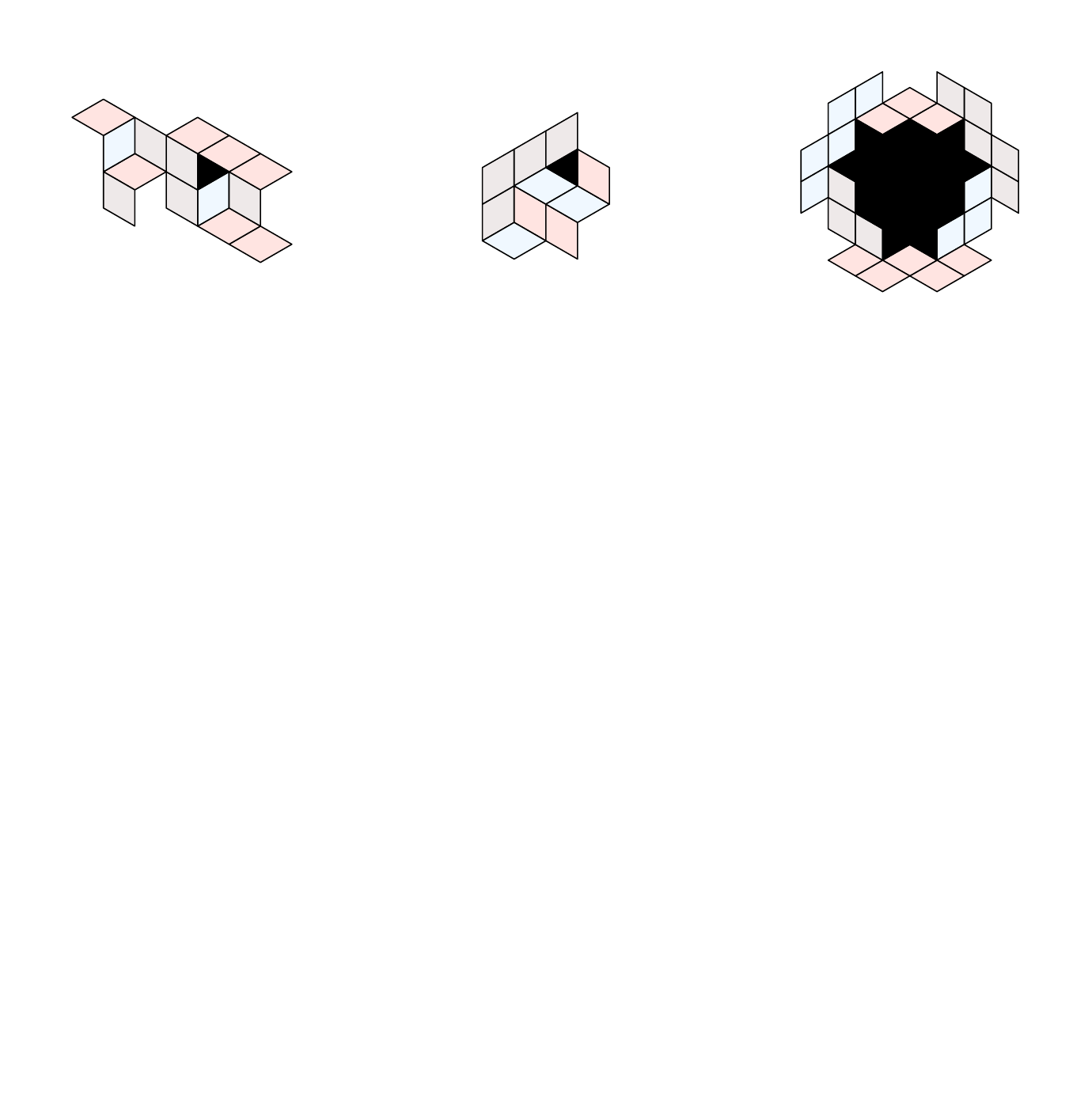}}
   \caption{Examples of tilings with holes that underlie published tredoku puzzles. From left to right, a 14.8 tiling from launch publicity for tredoku (no longer available on the web), a 10.6 tiling from the cover of a book of tredoku puzzles (Mindome 2010), and a 24.24 2-doku tiling from the back cover of a book of puzzles for children (Mindome 2013).}
   \label{fig:hole2}
\end{figure}

We extend the notation for tilings to $\tau.\rho.\eta$, where $\eta$ is the number of holes and may be omitted
if $\eta=0$, for consistency with the earlier notation.
Several of the results that we have described earlier no longer hold for tilings with holes, or at least require modification, as discussed below.

\subsection{Existence}

Theorem~\ref{thm:thm1} requires some modification when holes are allowed. The inequalities~(\ref{eq:inequality1}) continue to hold, but only if we adopt Blackburn's (2024) definition of a leaf as a tile that appears in only a single run of length 3, rather than Donald's definition as a tile that shares an edge with only one other tile.

The 11.5.3 tiling shown in Figure~\ref{fig:singlerun} provides an example of the distinction. According to Donald's definition, there is a single leaf, tile 11. But according Blackburn's definition, tiles 2, 4, 8, 9 and 10 are
also leaves, with one edge bordering a
hole and the opposite edge on the outer boundary of the tiling, as is tile 3, which has two parallel edges bordering holes. This gives a total of $\lambda=7$ leaves, so that
$\lambda = 2\tau - 3\rho$, in agreement with~(\ref{eq:leaves}).

\begin{figure}[h!]
   \centering
   \makebox[\textwidth][c]{\includegraphics[trim=0cm 16cm 0cm 0.1cm, clip, width=0.95\textwidth]{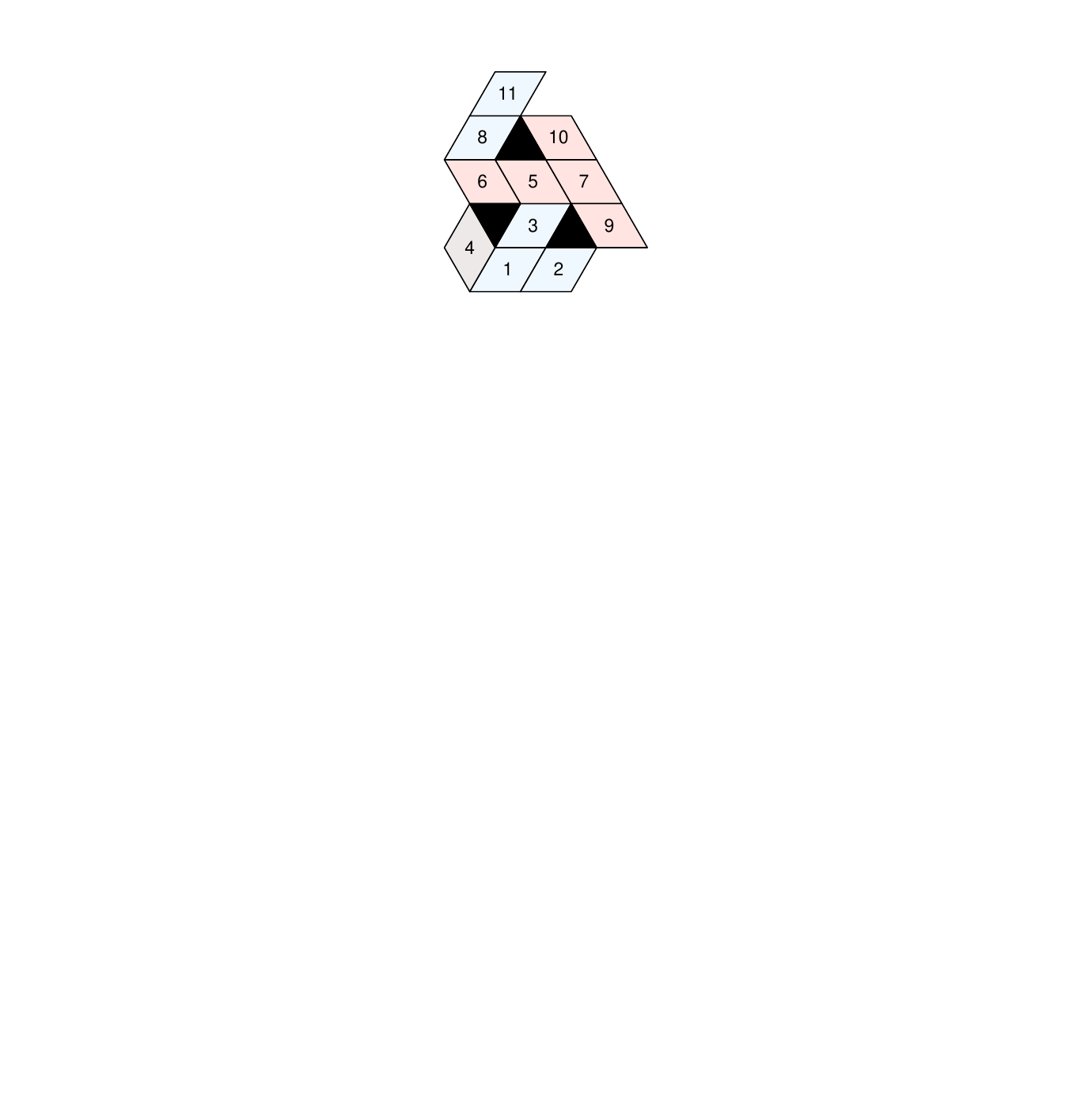}}
   \caption{An 11.5.3 tredoku tiling. Seven of the 11 tiles occur in only a single run of length 3, the other four occur in two runs}
   \label{fig:singlerun}
 \end{figure}

A computer search suggests that it remains the case that no 5.3, 6.4 or 12.8 tiling exists, even if holes are allowed.
However, part (ii) of Theorem~\ref{thm:thm1} is no longer true when holes are allowed. Verdant tredoku tilings, with $\tau=2 \rho + 1$, exist for all values of $\rho \ge 2$.
We have already seen examples  without holes for $\rho = 2, 3, 4$ (Figure~\ref{fig:exist1}) and
Figure~\ref{fig:holes4} shows simple constructions for $\rho \ge 5$. The left panel shows an 11.5.3 tiling that has been extended by adding the four highlighted tiles to the topmost tile to give a 15.7.4 tiling. This addition can be repeated as often as
required to give the sequence 11.5.3, 15.7.4, 19.9.5, $\ldots$. The right panel is similar, but starts with a 13.6.3 tiling and gives the sequence 13.6.3, 17.8.4, 21.10.5, $\ldots$.

\begin{figure}[h!]
   \centering
   \makebox[\textwidth][c]{\includegraphics[trim=0cm 14cm 0cm 2cm, clip, width=0.85\textwidth]{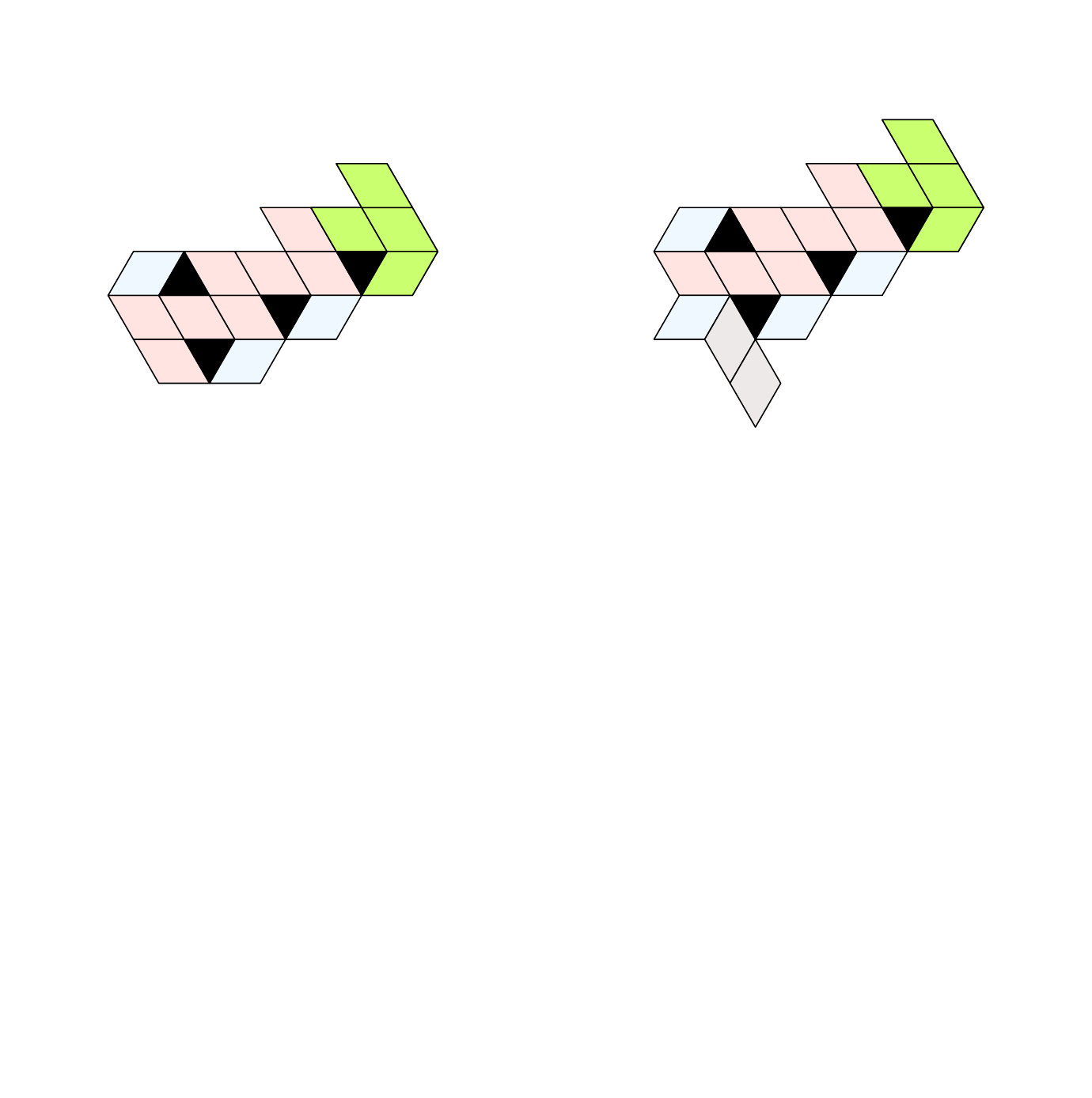}}
   \caption{Constructions used to generate a verdant tredoku tiling with $\tau=2 \rho + 1$ for all $\rho \ge 5 $ when holes are allowed.}
   \label{fig:holes4}
 \end{figure}

What if we insist on tilings having at least one hole?
For example, for which (necessarily even) values of $\rho$ does a leafless tiling exist, with $\tau=3\rho/2$ tiles, if the tiling is required to have at least one hole? Figure~\ref{fig:holes5} shows that such tilings exist
for $\rho=18$ and $\rho=30$, for example, and by mixing the `arms' of these tilings one can also create 33.22.1 and 39.26.1 tilings. The tiling dap54.36.1 shown later in Figure~\ref{fig:hole1} provides another example of a leafless tiling, and Figure~\ref{fig:holes10} includes some further examples.
But it remains to fully characterise the values of $\tau$ and $\rho$ for which a leafless tiling
exists, and more generally to develop a version of the existence theorem that applies to tilings that have at least one hole.

\begin{figure}[h!]
   \centering
   \makebox[\textwidth][c]{\includegraphics[trim=0cm 13cm 0cm 1cm, clip, width=0.85\textwidth]{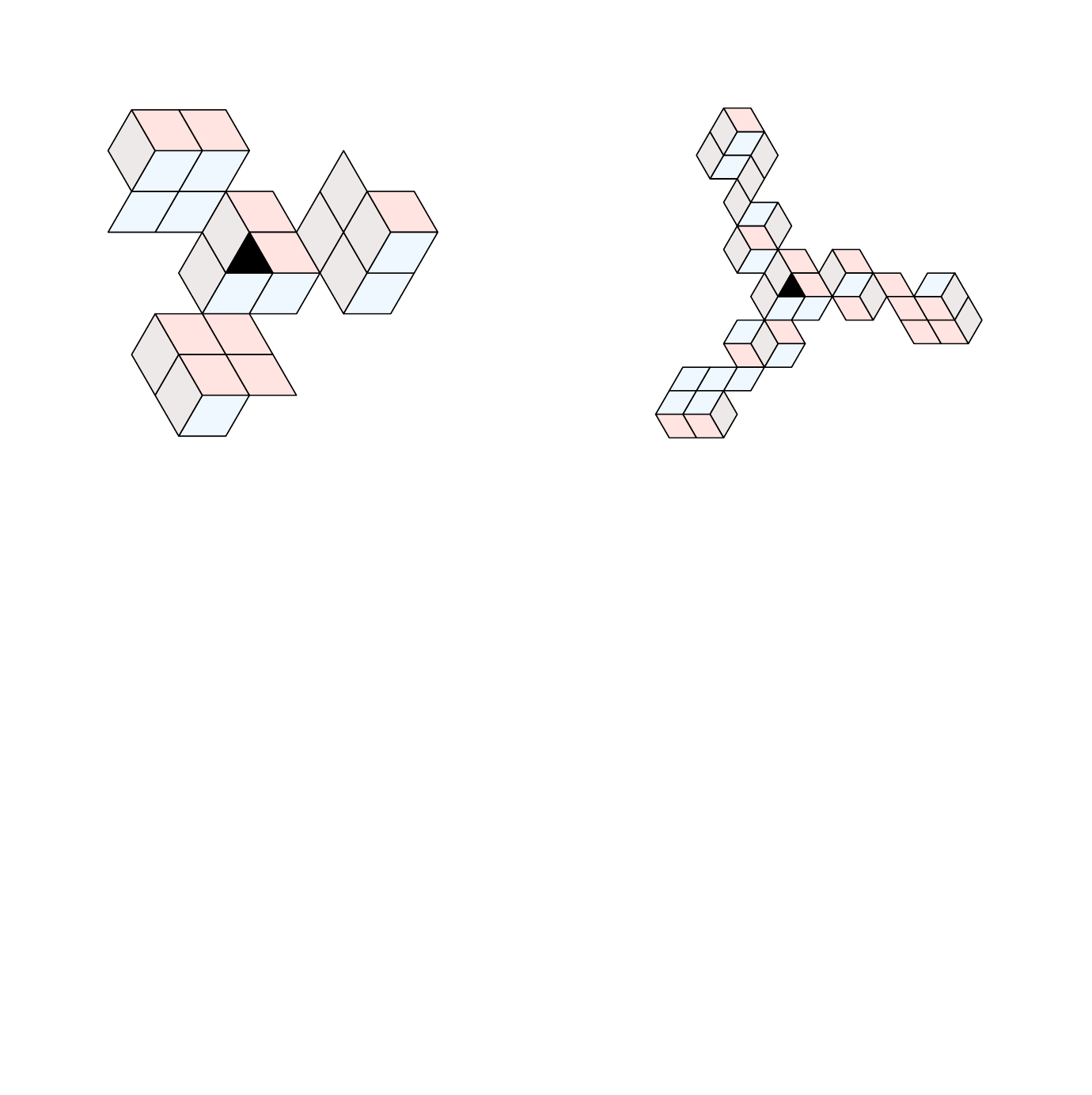}}
   \caption{Examples of leafless tilings with a single small hole, a 27.18.1 tiling on the left and a 45.30.1 tiling on the right.}
   \label{fig:holes5}
 \end{figure}

\subsection{Flip-equivalence}

We saw in Section~\ref{sect:flip} that for tilings without holes, the number of alternative tilings that cover the same region of the plane is given by the determinant of the Kastelyn
matrix. When the tiling has holes, one instead needs to calculate the permanent of this matrix.

We also saw that any tiling that is flip-equivalent to a tredoku tiling is itself a tredoku tiling. This is no longer guaranteed if the tiling has holes.
The smallest counterexamples involve 10 tiles and two such examples are shown in
Figure~\ref{fig:holes8}. The left panel shows a 10.5.1 tiling with a single triangular hole. The adjacent panel shows the only other way of tiling the same region of the plane, but this is not a tredoku tiling because it has a run
of length 5. The right two panels provide a similar example with two holes.

\begin{figure}[h!]
   \centering
   \makebox[\textwidth][c]{\includegraphics[trim=0in 7.2in 0in 0.2in, clip, width=1\textwidth]{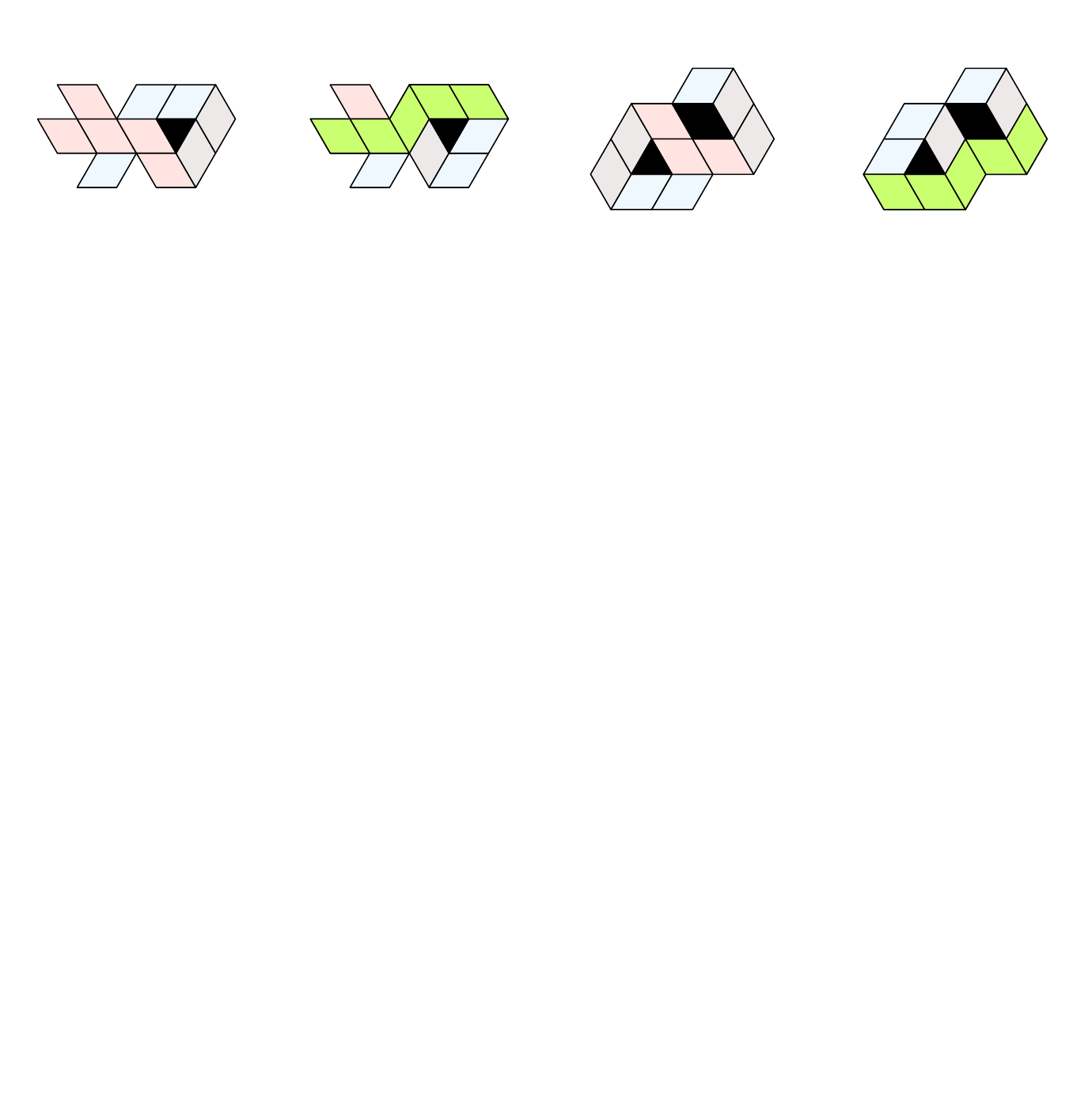}}
   \caption{Examples of tredoku tilings for which the only alternative tiling of the same region of the plane is not a tredoku tiling. Runs of length greater than
   3 are shown in green.}
   \label{fig:holes8}
\end{figure}

\subsection{Examples of tilings with holes}

Table~\ref{tab:count4} gives a computer-based enumeration of isomorphism classes of tredoku tilings with up to 12 tiles, classified by the number of holes that they contain.
\begin{table}[h!]
\begin{center}
\caption{Number of isomorphism classes of tredoku tilings with up to 12 tiles, classified by the number of holes that they contain.}
\label{tab:count4}
\begin{tabular}{crrrrr}
\hline
  & \multicolumn{4}{c}{Number of holes} \\
 Tiling $\qquad$ & 0 $\qquad$ & 1 $\qquad$ &  2 $\qquad$ & 3 $\qquad$  & Total $\qquad$ \\
\hline
 5.2 $\qquad$ &  1 $\qquad$   &  $\qquad$    &  $\qquad$  &  \phantom{158}  $\qquad$ & 1 $\qquad$ \\[0.75ex]
 6.3 $\qquad$ &  2  $\qquad$ &  1  $\qquad$&  $\qquad$ &    $\qquad$ & 3  $\qquad$ \\[0.75ex]
 7.3 $\qquad$ &  1   $\qquad$ &  7  $\qquad$ &  1 $\qquad$ &    $\qquad$ & 9 $\qquad$ \\
 7.4 $\qquad$ &  4   $\qquad$ &     $\qquad$ &   $\qquad$ &    $\qquad$ & 4 $\qquad$ \\[0.75ex]
 8.4 $\qquad$ &  7   $\qquad$ &  4  $\qquad$ &   $\qquad$ &    $\qquad$ & 11 $\qquad$ \\
 8.5 $\qquad$ &  3   $\qquad$ &   $\qquad$ &   $\qquad$ &    $\qquad$ & 3 $\qquad$ \\[0.75ex]
 9.4 $\qquad$ &  1   $\qquad$ &  34 $\qquad$ & 3  $\qquad$ &    $\qquad$ & 38 $\qquad$ \\
 9.5 $\qquad$ &  14   $\qquad$ &  2 $\qquad$ &   $\qquad$ &    $\qquad$ & 16 $\qquad$ \\
 9.6 $\qquad$ &  1   $\qquad$ &   $\qquad$ &   $\qquad$ &    $\qquad$ & 1 $\qquad$ \\[0.75ex]
 10.5 $\qquad$ &  14   $\qquad$ & 57  $\qquad$ &  4 $\qquad$ &    $\qquad$ & 75 $\qquad$ \\
 10.6 $\qquad$ &  23   $\qquad$ &  1  $\qquad$ &   $\qquad$ &    $\qquad$ & 24 $\qquad$ \\[0.75ex]
 11.5 $\qquad$ &  1   $\qquad$ &  143 $\qquad$ & 139  $\qquad$ &  5  $\qquad$ & 288 $\qquad$ \\
 11.6 $\qquad$ &  44   $\qquad$ & 58  $\qquad$ &  2 $\qquad$ &    $\qquad$ & 104 $\qquad$ \\
 11.7 $\qquad$ &  1   $\qquad$ &   $\qquad$ &   $\qquad$ &    $\qquad$ & 1 $\qquad$ \\[0.75ex]
 12.6 $\qquad$ &  37   $\qquad$ & 458  $\qquad$ & 158  $\qquad$ &  5  $\qquad$  & 658 $\qquad$ \\
 12.7 $\qquad$ &  61   $\qquad$ & 45  $\qquad$ &   $\qquad$ &    $\qquad$  & 106 $\qquad$ \\
\hline
\end{tabular}
\end{center}
\end{table}
\ \\[-8ex]

For given values of the parameters $\tau$, $\rho$ and $\eta$, tilings may exist with holes of different areas, as illustrated by the top row of Figure~\ref{fig:holes7}, which shows three 7.3.1 tilings, each with a different sized hole. Indeed, hole area is not necessarily preserved, even in isomorphic tilings, as illustrated in the bottom row of Figure~\ref{fig:holes7}, which shows two 9.4.1 tilings which are clearly isomorphic but have holes of different area.

\begin{figure}[h!]
   \centering
   \makebox[\textwidth][c]{\includegraphics[trim=0cm 14cm 0cm 0.01cm, clip, width=0.85\textwidth]{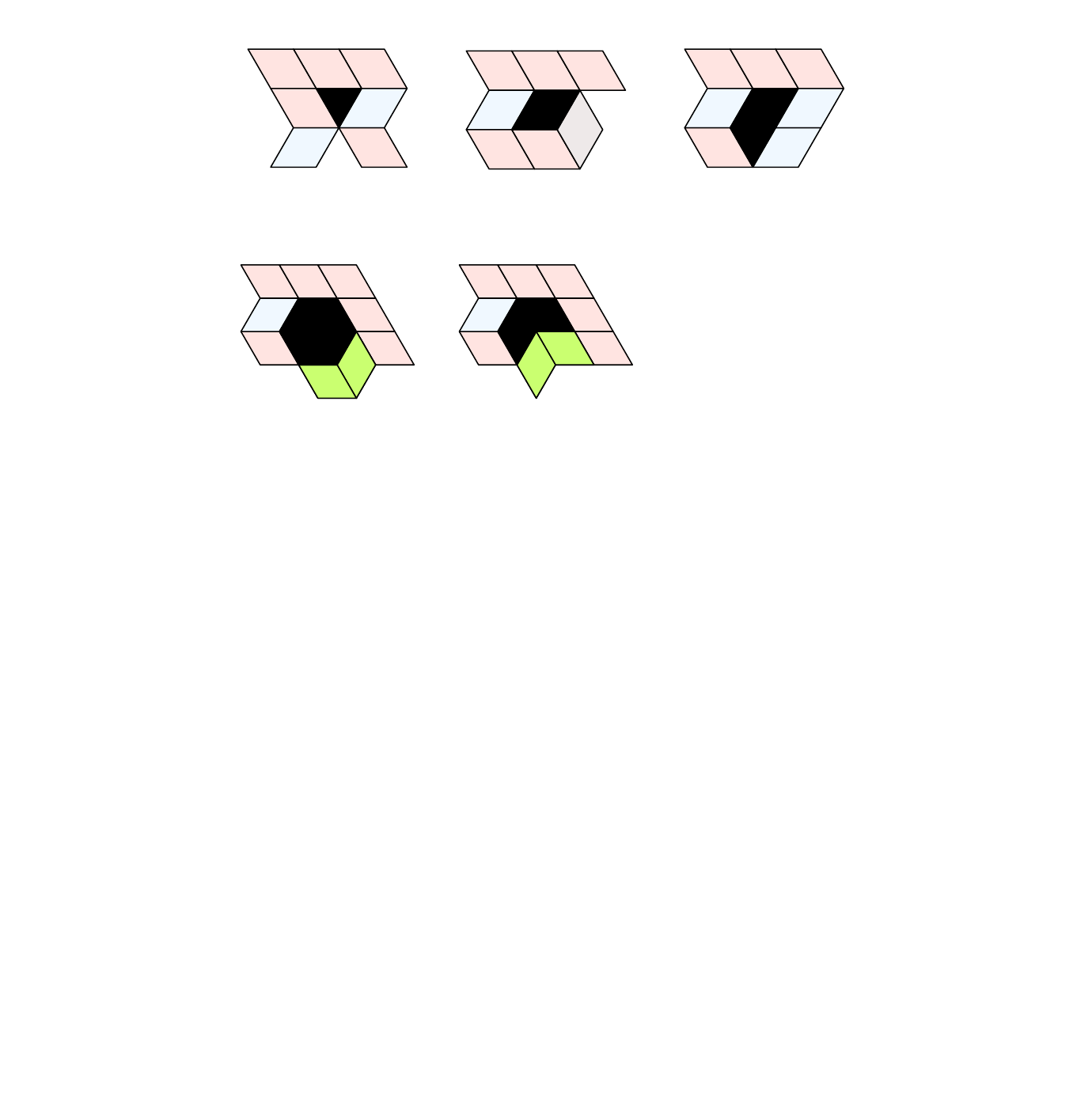}}
   \caption{Examples of tilings with holes of differing area. Top row, three 7.3.1 tilings. Bottom row, two 9.4.1 tilings that are clearly isomorphic, because they differ only in the tiles
   highlighted in green.}
   \label{fig:holes7}
 \end{figure}

However, the total perimeter, $\pi$, of a $\tau.\rho.\eta$ tiling, i.e.\ the sum of the outer perimeter and the perimeters of any holes, is fixed and is given by the formula
$  \pi =  4 (\tau - \rho)$ that appears in the proof of Proposition~\ref{lem:perim1}.

Figures~\ref{fig:holes6}
shows a single example of a $\tau.\rho.\eta$ tiling for all combinations of parameter values with $\tau \le 12$ and $\eta \ge 1$ for which a tiling exists. Starting with the smallest tredoku tiling with a hole, the 6.3.1 tiling, which brings to mind a Penrose triangle, several of these tilings can be viewed as representations of `impossible' objects.

\begin{figure}[h!]
   \centering
   \makebox[\textwidth][c]{\includegraphics[trim=0cm 0.01cm 0cm 0.01cm, clip, width=0.9\textwidth]{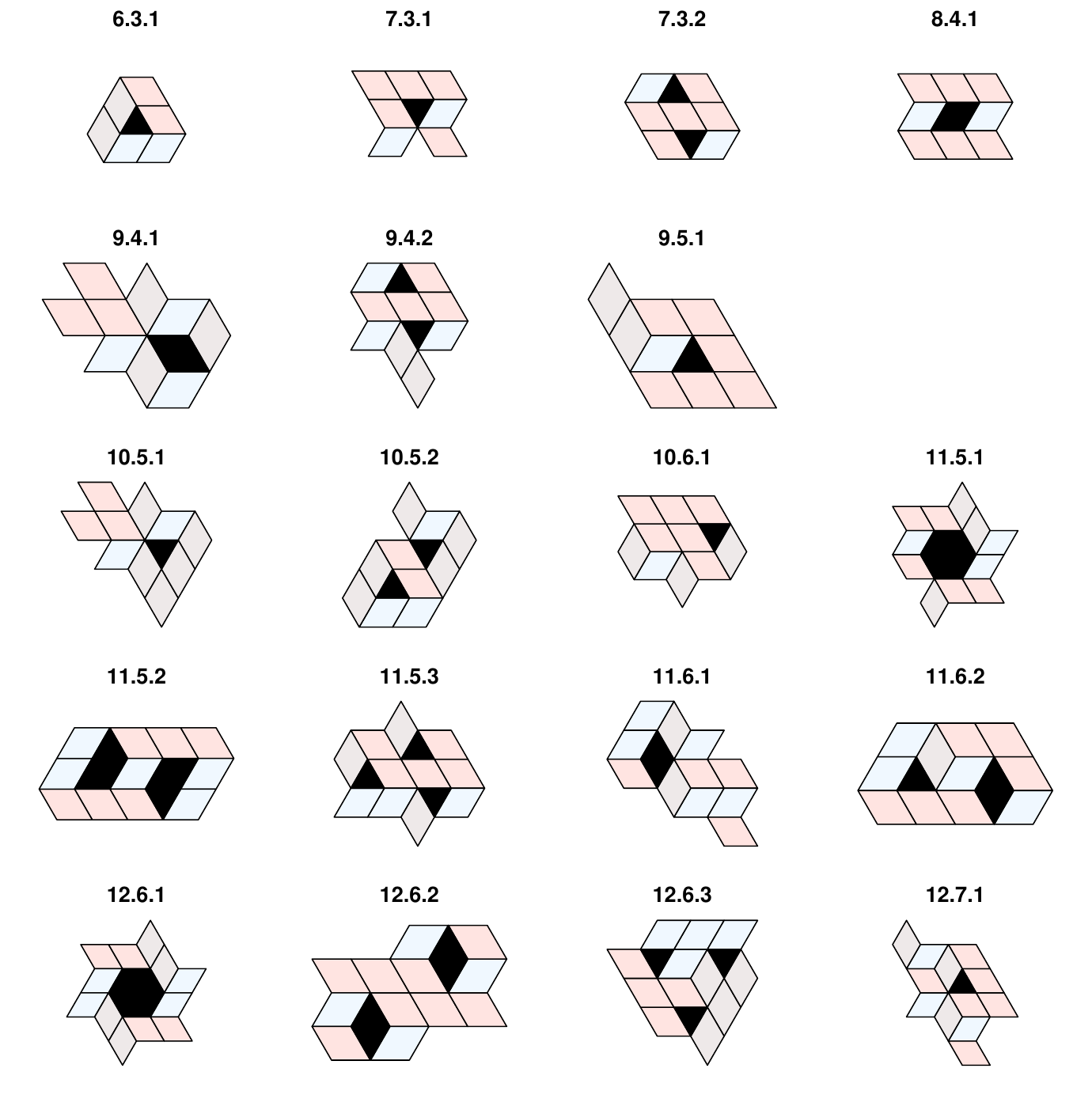}}
   \caption{Examples of tilings with holes.}
   \label{fig:holes6}
 \end{figure}

\subsection{The smallest tredoku tiling with a specified number of holes}

What is the smallest number of tiles that a tredoku tiling with $\eta$ holes can have? Malen \& Rold\'an (2020a,\,b) have solved the corresponding problem for polyiamonds. A \define{polyiamond} may be thought of as a finite monohedral tiling that satisfies the properties P1--P3, but where the prototile is an equilateral triangle. In particular, if we partition each tile of a tredoku tiling into its two constituent equilateral triangles, the resulting tiling is a polyiamond.

It seems plausible that the smallest tredoku tiling with a given number of holes arises when each hole is of the smallest possible size, that is an equilateral triangle consisting of half
a tredoku tile. If this is the case, the following result leads to a lower bound on $\tau$.

\begin{proposition}
If $\mathcal{T}$ is a tredoku tiling with $\tau$ tiles and $\eta$ holes that are equilateral triangles of minimal size, then
\begin{equation}
2 \left \lfloor \frac{\tau+1}{2} \right \rfloor \ge  \left \lceil \frac{1}{2}\left (\eta  + \sqrt{6 (2\tau+ \eta)} \right) \right \rceil + \eta.
\label{eq:inhole1}
\end{equation}
\end{proposition}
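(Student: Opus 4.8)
The plan is to bisect the lozenge tiles into unit equilateral triangles, fill in the holes, and invoke the Harary \& Harborth minimum-perimeter bound for polyiamonds. First I would split each of the $\tau$ tiles into its two constituent equilateral triangles; this turns $\mathcal{T}$ into a polyiamond with $2\tau$ cells in which each minimal triangular hole is a single missing cell. Since such a hole is bordered by tiles along all three of its edges, filling in these $\eta$ cells yields a \emph{simply connected} polyiamond $P^{+}$ with exactly $2\tau+\eta$ cells, whose outer boundary coincides with the outer boundary of $\mathcal{T}$.

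Next I would compute that outer perimeter. By the identity $\pi=4(\tau-\rho)$ for the total perimeter of a tredoku tiling (from the proof of Proposition~\ref{lem:perim1}), and since each of the $\eta$ minimal holes contributes exactly $3$ to $\pi$, the outer perimeter of $\mathcal{T}$ --- equivalently, that of $P^{+}$ --- equals $4(\tau-\rho)-3\eta$. Theorem~4 of Harary \& Harborth (1976), expressed in lozenge-edge units as in Lemma~\ref{lem:minkappa0}, states that a polyiamond with $N$ cells has perimeter at least $p_{\min}(N)$, the least integer that is at least $\sqrt{6N}$ and congruent to $N$ modulo~$2$. Applying this with $N=2\tau+\eta$ gives $4(\tau-\rho)-3\eta\ge p_{\min}(2\tau+\eta)$.

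Finally I would eliminate $\rho$. Inequality~(\ref{eq:inequality1}) continues to hold when holes are present, so $\tau\le 2\rho+1$ forces $\rho\ge\lfloor\tau/2\rfloor$, hence $\tau-\rho\le\lceil\tau/2\rceil=\lfloor(\tau+1)/2\rfloor$. Combining with the previous inequality gives $4\lfloor(\tau+1)/2\rfloor\ge p_{\min}(2\tau+\eta)+3\eta$. Because $2\tau+\eta\equiv\eta\pmod 2$, the integer $\eta+p_{\min}(2\tau+\eta)$ is even and is the least even integer that is at least $\eta+\sqrt{6(2\tau+\eta)}$, i.e.\ it equals $2\lceil\tfrac12(\eta+\sqrt{6(2\tau+\eta)})\rceil$. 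Substituting this and dividing by~$2$ yields the claimed inequality.

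The step I expect to be the main obstacle is the last one: pinning down Harary \& Harborth's formula in precisely the form ``the least integer $\ge\sqrt{6N}$ with the parity of $N$'' --- I have checked agreement with Lemma~\ref{lem:minkappa0} for even $N$ and directly for small odd $N$ --- and then propagating the floor/ceiling and parity bookkeeping so that the two sides match with no off-by-one error. A secondary point needing care is the claim that a minimal triangular hole is always enclosed by tiles on all three edges, which is what makes filling the holes leave the outer perimeter unchanged and produce a genuine simply connected polyiamond.
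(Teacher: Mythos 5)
Your proposal is correct and follows essentially the same route as the paper: fill the triangular holes to obtain a polyiamond with $2\tau+\eta$ cells, bound its exterior perimeter $4(\tau-\rho)-3\eta$ below by the Harary--Harborth minimum and above via $\tau\le 2\rho+1$, then simplify. Your parity formulation of $p_{\min}(N)$ as the least integer of the same parity as $N$ that is at least $\sqrt{6N}$ is equivalent to the paper's explicit formula $p_{\min}(n)=2\bigl\lceil\tfrac12(n+\sqrt{6n})\bigr\rceil-n$, and your floor/ceiling bookkeeping correctly reproduces inequality~(\ref{eq:inhole1}), which the paper leaves as ``some simplification''.
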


\begin{proof}
The exterior perimeter of the tiling is
 $\pi_{\mathrm{ext}} = 4(\tau-\rho) - 3 \eta$. Since $\rho \ge \left \lceil (\tau-1)\slash 2  \right \rceil$, from (\ref{eq:inequality1}),
this implies
\[
\pi_{\mathrm{ext}} \le 4 \left \lfloor \frac{\tau+1}{2} \right \rfloor - 3\eta.
\]
Now take the polyiamond derived from $\mathcal{T}$ and fill in the holes. This gives a polyiamond consisting of $n = 2 \tau + \eta$ triangular tiles that has the same external perimeter
as $\mathcal{T}$. Harary \& Harborth (1976) show that the minimum perimeter of a polyiamond with $n$ tiles is
\[
   p_{\mathrm{min}}(n) = 2 \left \lceil \frac{1}{2}\left (n + \sqrt{6n} \right) \right \rceil - n.
\]
Thus, the external perimeter must satisfy the bounds
\begin{equation}
p_{\mathrm{min}}(2 \tau + \eta) \le \pi_{\mathrm{ext}} \le  4 \left \lfloor \frac{\tau+1}{2} \right \rfloor - 3\eta .
\label{eq:inhole2}
\end{equation}
After some simplification, this leads to the inequality~(\ref{eq:inhole1}).
\end{proof}
Let $\tau^{*}(\eta)$ denote the smallest value of $\tau$ satisfying inequality~(\ref{eq:inhole1}), and $\tau_{\mathrm{min}}(\eta)$ the minimum value of $\tau$ for which a tredoku tiling can
be constructed with $\tau$ tiles and $\eta$ holes.
The best estimates of $\tau_{\min}(\eta)$ that I have been able to obtain for $\eta = 1, \ldots, 6$ are as follows:
\begin{center}
\begin{tabular}{rcccccc}
\hline\\[-2.8ex]
$\eta$                       & 1  & 2  & 3  & 4  & 5  & 6  \\
\hline\\[-2.5ex]
$\tau^{*}(\eta)$  & 5  & 7  & 11 & 13 & 15 & 17 \\
\hline\\[-2.5ex]
$\tau_{\mathrm{min}}(\eta)$  & 6  & 7  & 11 & 13 & 17 & 18 \\
\hline
\end{tabular}
\end{center}

These are based on the tilings shown in Figure~\ref{fig:bestholes}.
We have already seen that the smallest tredoku tiling with a hole has six tiles.
For $\eta = 2, 3, 4$, tilings exists with $\tau_{\mathrm{min}}(\eta) = \tau^{*}(\eta)$, but it may be possible to obtain tilings with fewer tiles
for $\eta = 5, 6$.

\begin{figure}[h!]
   \centering
   \makebox[\textwidth][c]{\includegraphics[trim=0in 7.6in 0in 0in, clip, width=0.99\textwidth]{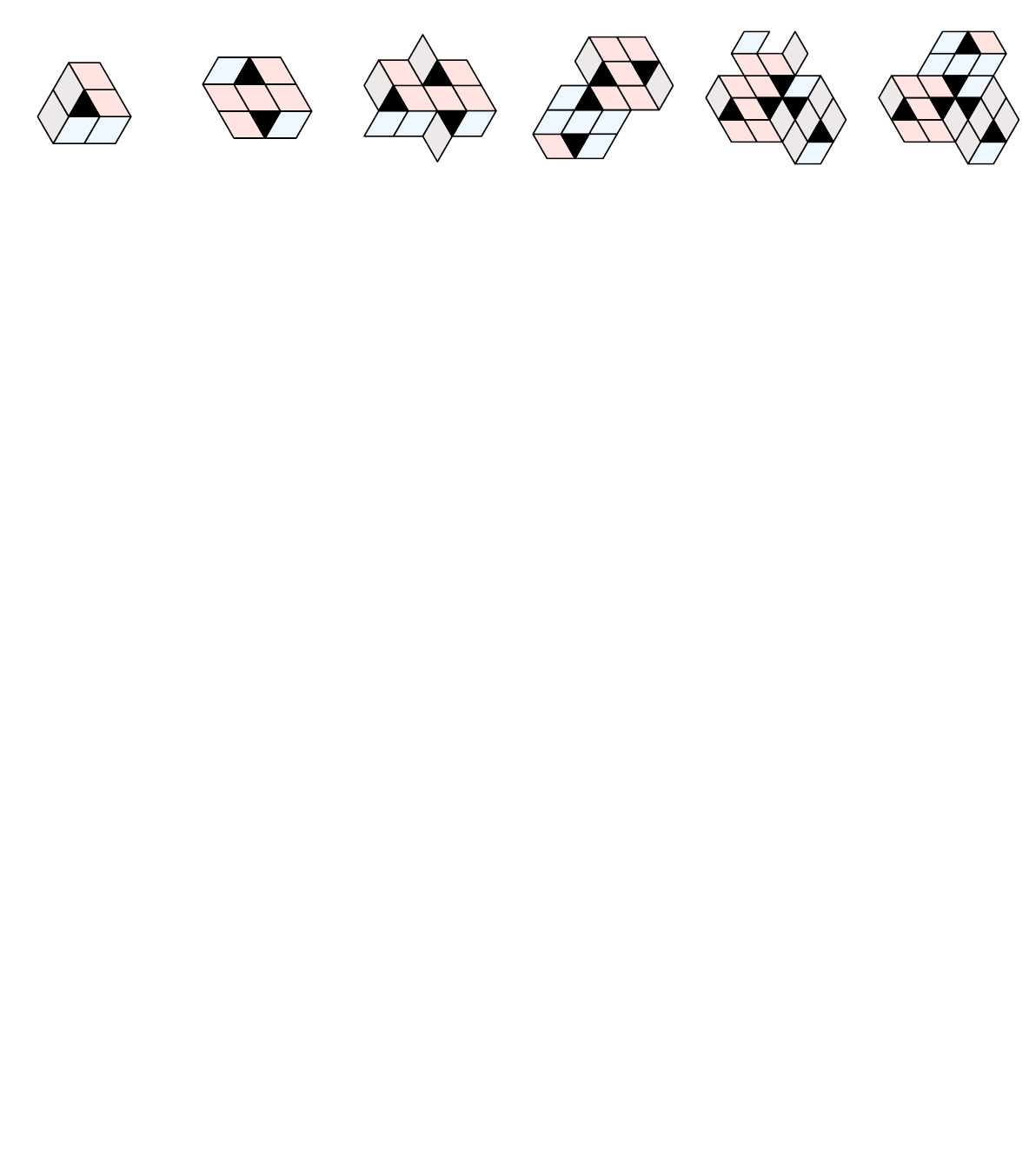}}
   \caption{Tilings with $\eta$ triangular holes ($\eta = 1, \ldots, 6$) that have few tiles. Tilings with fewer tiles do not exist for $\eta \le 4$, but it may be possible to find solutions
   for $\eta = 5, 6$ that use fewer tiles.}
   \label{fig:bestholes}
\end{figure}

\subsection{Donald's tilings with holes}\label{dapholes}

Figure~\ref{fig:hole1} shows the four tredoku tilings with holes that Donald produced. The first three of these share a common hexagonal core, as shown in the top row of the Figure. The tiling dap60.36.1
is particularly interesting as it can be used as a prototile for tiling the entire plane (with holes); Figure~\ref{fig:hole228_144}, shows a 228.144.7 tiling that comprises seven of these prototiles.
A tredoku puzzle based on dap60.36.1 exists online.\footnote{\url{https://pbs.twimg.com/profile_images/507609945232007168/hx3KigKB_400x400.jpeg}}
I don't know whether this image, which dates from September 2014, has any connection with Donald's work.

\begin{figure}[h!]
   \centering
   \makebox[\textwidth][c]{\includegraphics[trim=0in 2.6in 0in 0in, clip, width=0.99\textwidth]{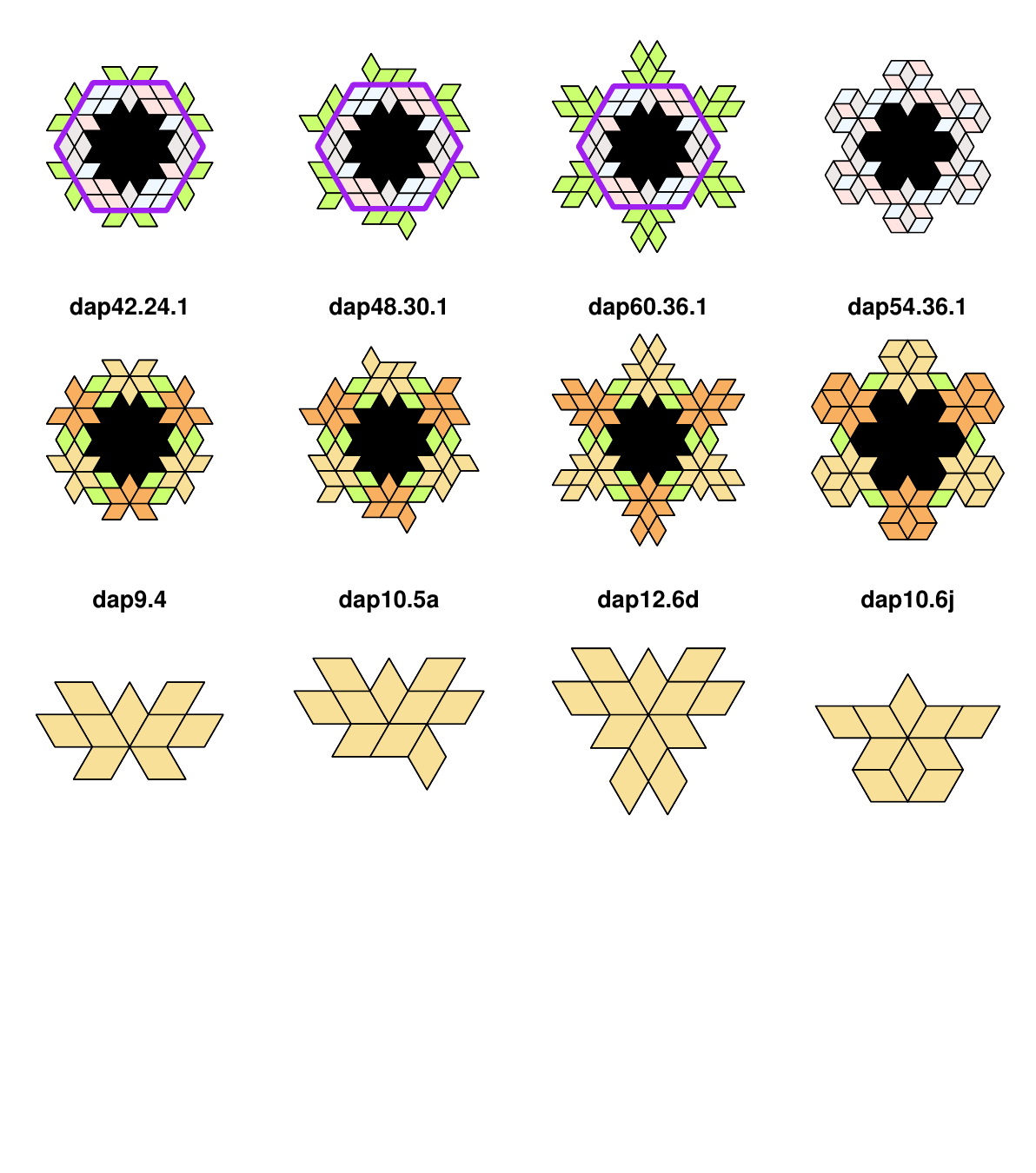}}
   \caption{The four tredoku tilings with holes that appear in Donald's notes. The top row shows the tilings, emphasising the common hexagonal core of the first three tilings.
   The second row shows how the tilings are composed of six copies of a simpler tiling (displayed in the third row), shown in alternating light and dark orange, with overlapping
   tiles shown in green.}
   \label{fig:hole1}
\end{figure}

The tilings in Figure~\ref{fig:hole1} consist of six copies of a simpler tredoku tiling, shown in the bottom row, that are single or double merged, and this is probably how Donald created
the tilings. Many other tilings with holes can be created in a similar way and Figure~\ref{fig:holes10} shows some examples.

\begin{figure}[h!]
   \centering
   \makebox[\textwidth][c]{\includegraphics[trim=0in 0.9in 0in 0in, clip, width=0.45\textwidth]{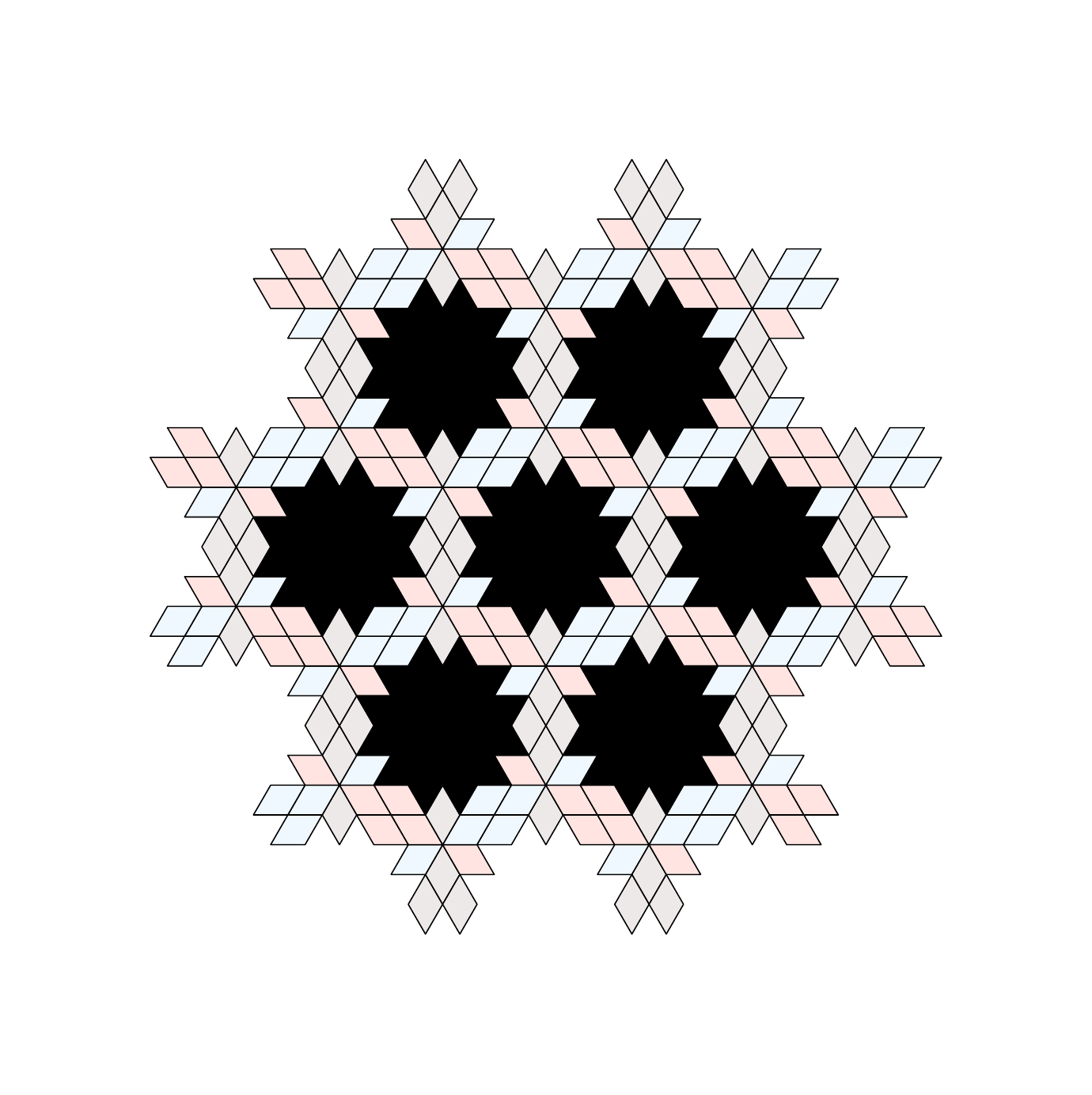}}
   \caption{A 228.144.7 tiling made from seven copies of the tiling dap60.36.1.\ \\}
   \label{fig:hole228_144}
\end{figure}

\begin{figure}[h!]
   \centering
   \makebox[\textwidth][c]{\includegraphics[trim=0in 4.7in 0in 0.01in, clip, width=0.85\textwidth]{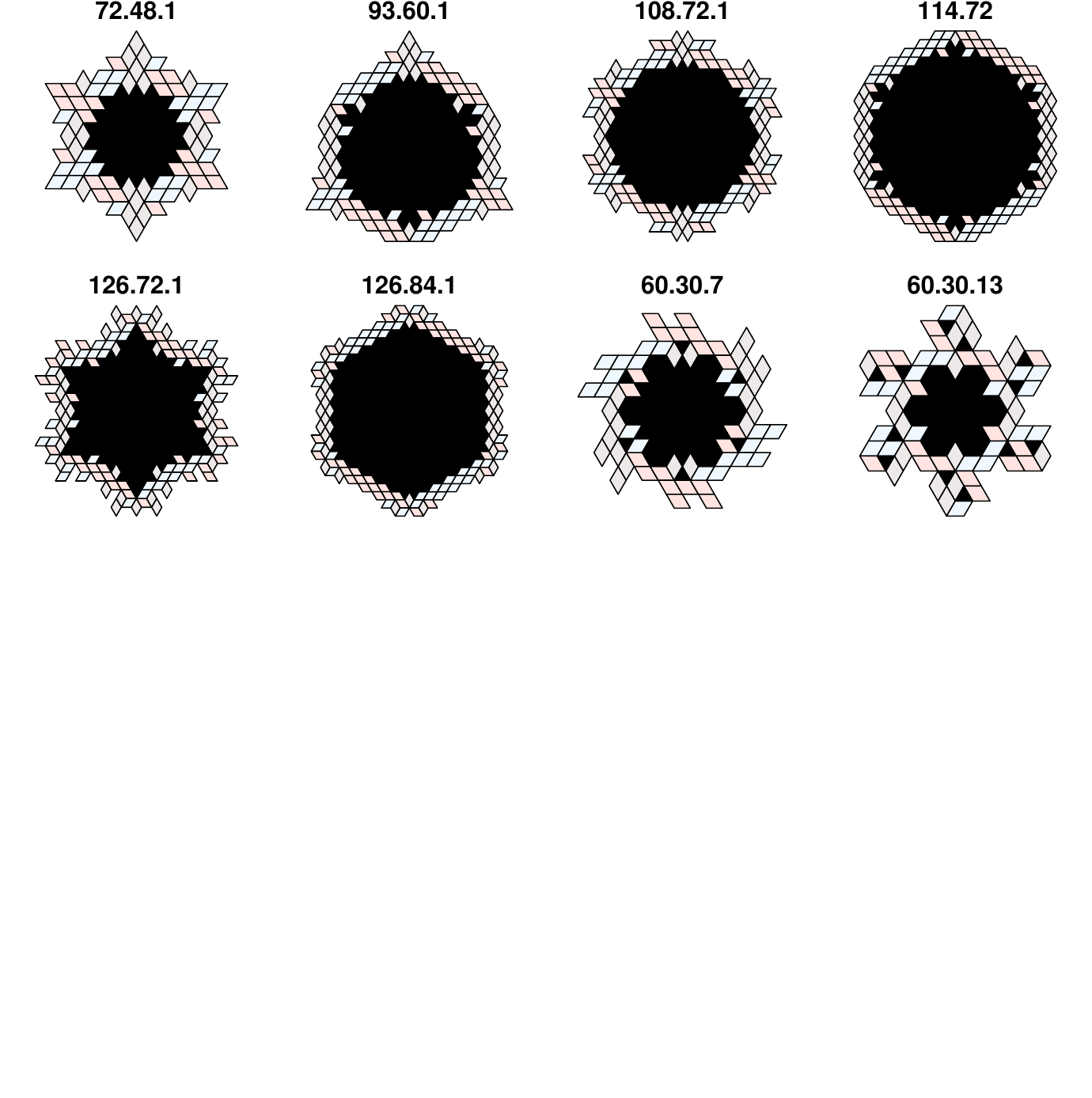}}
   \caption{Some further examples of tredoku tilings with holes that are created by merging smaller tilings.\ \\}
   \label{fig:holes10}
\end{figure}

Finally, Figure~\ref{fig:quadholes} shows Donald's two quadridoku tilings with holes. The tiling dap66.30.1 is created by single merging six copies of dap12.5a and dap114.54.1
by double merging six copies of dap21.9h.
\begin{figure}[h!]
   \centering
   \makebox[\textwidth][c]{\includegraphics[trim=0in 4.7in 0in 0.01in, clip, width=0.6\textwidth]{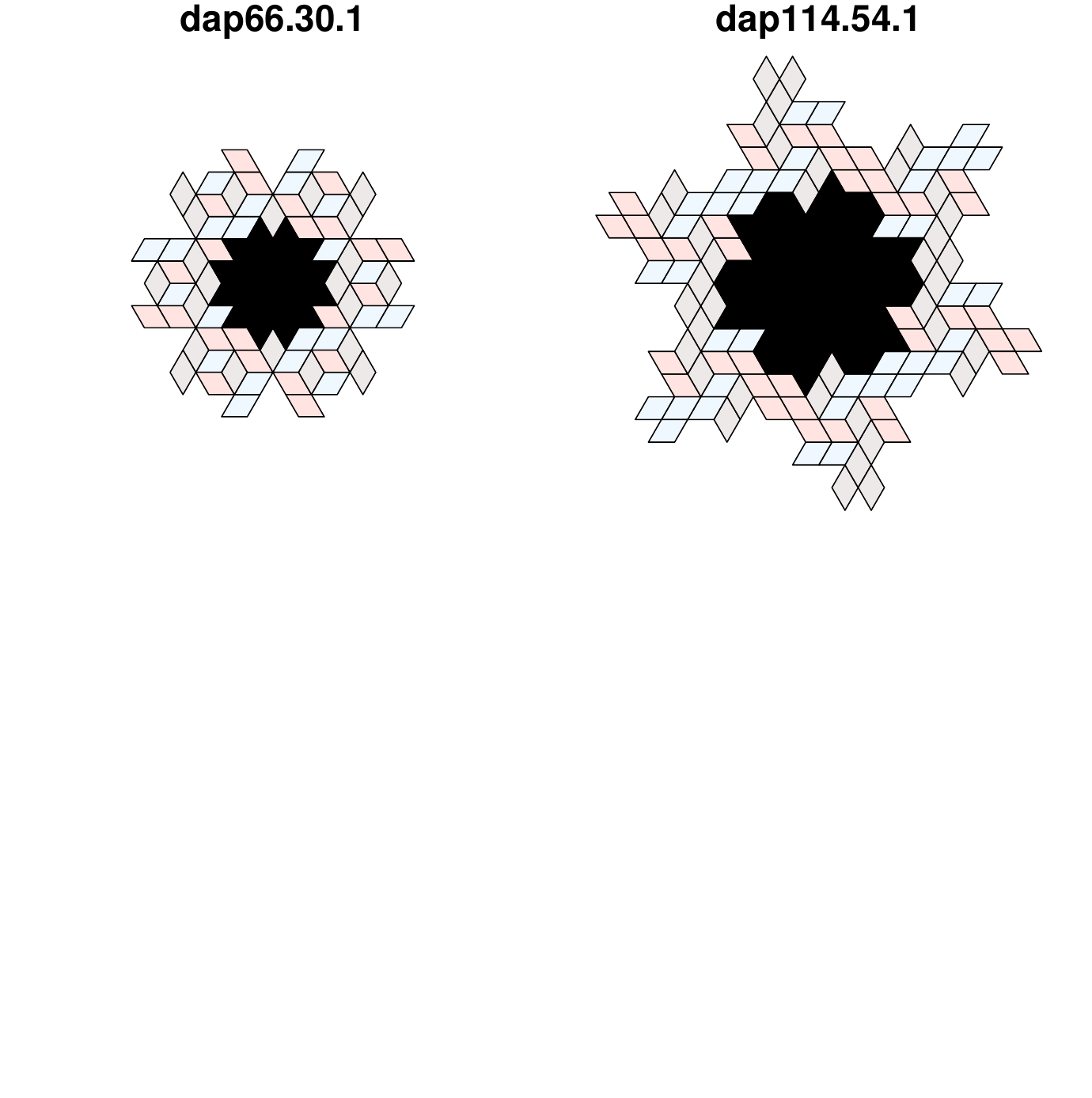}}
   \caption{The two quadridoku tilings with holes that appear in Donald's notes.}
   \label{fig:quadholes}
\end{figure}

\section{Conclusion}\label{sect:discuss}

Donald would undoubtedly have been delighted to see Simon Blackburn's proof of his conjecture about the existence of tredoku tilings. Although this completed a key aspect of
 Donald's work, there remain many avenues for further investigation. Here are some specific possibilities:
\begin{enumerate}
\item  The enumerations in Section~\ref{sect:enum} would benefit from independent verification. Can better counting techniques be developed that would extend to
larger tilings? This would seem to be essential for enumerating $\kappa$-doku tilings with $\kappa > 3$.

\item  What general constructions, analogous to those in Appendix A or in Blackburn (2024), are available for $\kappa$-doku tilings with $\kappa > 3$. For quadridoku tilings, we saw
that 4-merging is a new way to combine existing tilings. What other new possibilities emerge as $\kappa$ increases?

\item Can any of the proofs of the non-existence of a 12.8 tredoku tiling be adapted to show the non-existence of 18.9 and 22.11 quadridoku tilings?
More generally, is Donald's conjecture about the existence of quadridoku tilings (Conjecture~\ref{conj:quadexist}) correct?

\item Theorem \ref{eq:thm3} gives bounds on the number of tiles, $\tau$, of a $\kappa$-doku tiling with $\rho$ runs. Is there a simpler proof? For tredoku tilings and also quadridoku tilings, if Donald's conjecture is correct, tilings exist for most values of $\tau$, except at the upper limit. Is this true also of general $\kappa$-doku tilings?

\item Given the strong constraints on $2$-doku tilings, what can be said about their structure and properties?

\item What else can run graphs reveal about the structure of tredoku tilings? What about run graphs of general $\kappa$-doku tilings?

\item We noted in Section~\ref{dapholes} that the tiling dap60.36.1 can serve as a prototile for a tiling of the plane, with holes. For what other tilings with holes is
this possible?

\end{enumerate}

\ \\[3ex]
\textbf{\Large References}
\ \\


Aigner, M. (1995) Tur\'an's graph theorem. \emph{The American Mathematical Monthly}, \textbf{102}, 808--816.

Bailey, R.A. (2014). Donald Arthur Preece: A life in statistics, mathematics and music. \textit{arXiv} 1402.2220v1, 10 Februrary 2014.

Blackburn, S.R. (2024). Tredoku patterns. \textit{arXiv} 2407.10752v3, 6 December 2024.

Brinkmann, G., Van Cleemput, N. \& Pisanski, T. (2013).  Generation of various classes of trivalent graphs. \textit{Theoretical Computer Science},
\textbf{502}, 16--29.

David, G. \& Tomei, C. (1989). The problem of the calissons. \textit{The American Mathematical Monthly}, \textbf{96}, 429--431.

Frettl\"{o}h \& D., Harriss, E. (2013). Parallelogram tilings, worms, and finite orientations. \emph{Discrete \& Computational  Geometry}, \textbf{49}, 531–-539.

Gorin, V. (2021). \emph{Lectures on Random Lozenge Tilings}. Cambridge University Press.

Gr\"{u}nbaum, B. \& Shephard, G.C. (2016) \emph{Tilings \& Patterns}, 2nd edition. Dover, New York.

Harary, F. \& Harborth, H. (1976). Extremal animals. \textit{Journal of Combinatorics, Information \& System Sciences}, \textbf{1}, 1--8.

Hormann, K. \& Agathos, A. (2001). The point in polygon problem for arbitrary polygons. \textit{Computational Geometry}, \textbf{20}, 131–-144.

Junttila, T. \& Kaski, P. (2007). Engineering an efficient canonical labeling tool for large and sparse graphs. In \emph{Proceedings of the Ninth Workshop on Algorithm Engineering
                and Experiments and the Fourth Workshop on Analytic Algorithms and Combinatorics} New Orleans, USA. D. Applegate, G. Brodal, D. Panario \& R. Sedgewick (eds.),
                pp. 135--149. Society for Industrial \& Applied Mathematics.

Junttila, T. \& Kaski, P. (2011). Conflict propagation and component recursion for canonical labeling. In \emph{Theory and Practice of Algorithms in (Computer) Systems --
               First International {ICST} Conference, {TAPAS}} Rome, Italy. A. Marchetti{-}Spaccamela \& M. Segal (eds.), pp. 151--162. Springer.

Kenyon, R. (1993). Tiling a polygon with parallelograms. \textit{Algorithmica}, \textbf{9}, 382--397.

Malen, G. \& Rold\'an, \'E. (2020a). Polyiamonds attaining extremal topological properties, i. \textit{Geombinatorics}, \textbf{30}, 14--25.

Malen, G. \& Rold\'an, \'E. (2020b). Polyiamonds attaining extremal topological properties, ii. \textit{Geombinatorics}, \textbf{30}, 63--76.

Mindome Games (2010). \emph{Tredoku Medium-hard, Book 1}.  Mindome, Ltd.

Mindome Games (2013). \emph{Tredoku Kids, Book 1}. Mindome, Ltd.

Redelmeier, D.J. (1981). Counting polyominoes: yet another attack. \textit{Discrete Mathematics}, \textbf{36}, 191--203.

Taylor, P.J. (2015) Counting distinct dimer hex tilings. Available at \url{https://cheddarmonk.org/papers/distinct-dimer-hex-tilings.pdf}. Accessed 26/08/25.

Yang, R. \& Meyer, W. (2002). Maximal \& minimal polyiamonds. Technical report, University of Wisconsin-Madison. Available at
\url{https://minds.wisconsin.edu/handle/1793/64366}. Accessed 26/08/25.

\begin{center}
\emph{Appendices A--D are provided as separate ancillary files.}
\end{center}

\end{document}